\documentclass{book}
\usepackage[mems,lang=american]{ems-book}
\usepackage{cite}
\usepackage{extarrows}
\theoremstyle{plain}
\newtheorem{theorem}{Theorem}[chapter]
\newtheorem{lemma}[theorem]{Lemma}
\newtheorem{prop}[theorem]{Proposition}
\newtheorem{corollary}[theorem]{Corollary}
\theoremstyle{definition}
\newtheorem{definition}[theorem]{Definition}
\newtheorem{remark}[theorem]{Remark}
\numberwithin{equation}{chapter}

\newcommand{\norm}[1]{\left\lVert#1\right\rVert}
\newcommand{\cM}{{\mathcal M}}
\newcommand{\cN}{{\mathcal N}}
\newcommand{\cC}{{\mathcal C}}
\newcommand{\cD}{{\mathcal D}}
\newcommand{\cL}{{\mathcal L}}
\newcommand{\cF}{{\mathcal F}}

\begin{document}
\frontmatter

\def\abstractname{Abstract}
\def\contentsname{Contents}
\def\chaptername{Chapter}
\def\refname{References}
\def\bibname{References}
\def\appendixname{Appendix}
\def\proofname{Proof}

\title{Extreme points of the unit balls and isometries of noncommutative quasi-Banach Marcinkiewicz spaces}
\author{Kai Fang, Yi Gao, Jinghao Huang and Fedor Sukochev}
\date{}
\maketitle

\begin{abstract}
We characterize extreme points of the (positive) unit balls of Marcinkiewicz spaces with respect to their natural quasi-norms. 
Having these results at hand, we show that every   positive surjective isometry on a noncommutative  
Marcinkiewicz space  is  of elementary form. 
In particular, 
 a   linear mapping  on a noncommutative  
 Marcinkiewicz space  over an atomic von Neumann algebra with all atoms having the same trace is a positive surjective isometry if and only if it is the restriction of  a  Jordan $*$-isomorphism
 which preserves the singular value functions. 
 
 We also study 
   (not necessarily positive) surjective isometries on weak $L_p$-spaces affiliated with a $\sigma$-finite non-atomic von Neumann algebra for $1<p<\infty$, Marcinkiewicz sequence spaces, and weak $\ell_p$ operator ideals, $p>0$,
   as well as one-parameter groups of isometries on the latter two.
\end{abstract}

\keywords{Weak $\ell_p$ operator ideal, Marcinkiewicz space, isometry, extreme point, quasi-norm}

\classification[46B85, 46L52, 46A16]{46B04}

\begin{funding}
K. Fang, Y. Gao and J. Huang were supported by the NNSF of China
(No.  12301160, 12471134 and 12671159);
F. Sukochev was supported by the ARC (DP230100434).
\end{funding}

\memsparagraph{Author addresses}
Kai Fang, Yi Gao and Jinghao Huang:
Institute for Advanced Study in Mathematics of HIT,
Harbin Institute of Technology, Harbin, 150001, China.
\par
\email{kaifang.8.25@gmail.com}; \email{yi.gao@stu.hit.edu.cn};
\email{jinghao.huang@hit.edu.cn}.
\par
Fedor Sukochev:
School of Mathematics and Statistics, University of NSW,
Sydney, 2052, Australia.
\par
\email{f.sukochev@unsw.edu.au}.

\tableofcontents
\mainmatter

\chapter{Introduction} \label{intro}
\section{Motivation}

The description of (surjective) isometries on (noncommutative) Banach function spaces has been widely studied.
Based on the work of Banach and Lamperti \cite{B32, Lamperti}, Lumer, Zaidenberg, Kalton, and others obtained extensive results on the characterization of isometries on various symmetric Banach function spaces (see, e.g., \cite{L63, Z95, KR, FJ}),
as well as  isometries on Banach sequence spaces  (see, e.g., \cite{A85, AC17}).
In the noncommutative setting, Kadison, Yeadon, Sourour,  the third and fourth co-authors and others obtained characterizations of isometries on operator algebras, noncommutative \(L_p\)-spaces, and noncommutative (Banach) symmetric spaces (see, e.g., \cite{K51, Y81, S81, FGHS, HS}).

However, many important spaces in classical and noncommutative analysis  cannot be equipped with a Banach norm, but only with a quasi-norm. 
In particular, for \(0<p\leq 1\), the weak \(L_p\)-function spaces \(L_{p,\infty}(0,\infty)\) and weak $\ell_p$-sequence spaces \(\ell_{p,\infty}\) cannot be equivalently renormed as Banach spaces \cite{Hunt, Ariono}.
In noncommutative setting,  the special place is occupied by the family of operator ideals $\mathcal{C}_{p,\infty}$ (see \cite{CCR, LMSZ23, LSZ, LSZ21}) given by the following formula:
\[\mathcal{C}_{p,\infty}:=\left\{x\in C_0(H):\norm{x}_{\mathcal{C}_{p,\infty}}:=\sup_{n\ge0}(n+1)^{1/p}\mu(n;x)<\infty\right\},\]
where 
$\mu(x)$ denotes the singular value sequence of $x\in C_0(H)$, the set of all compact operators on an infinite-dimensional  Hilbert space $H$ 
\cite{LMSZ23, LSZ, LSZ21}.
These spaces, as well as their commutative counterparts, admit an equivalent Banach norm if and only if $p>1$. 

Some characterizations of linear isometries on quasi-Banach symmetric spaces are also known.
Lamperti \cite{Lamperti} characterized linear isometries on classical \(L_p\)-spaces over \(\sigma\)-finite measure spaces for \(0<p<1\), while Sherman \cite{Sherman} considered surjective isometries between noncommutative \(L_p\)-spaces in the same range. 
More generally, Huang, Sukochev and Zanin \cite{HSZ} studied order-preserving isometries between symmetrically quasi-normed operator spaces under the strictly log-monotone quasi-norm assumption. 
However, weak \(L_p\)-spaces, as well as weak $\ell_p$-ideals $\mathcal{C}_{p,\infty}$,  equipped with quasi-norms do not satisfy the strictly log-monotone quasi-norm assumption \cite{SZ23}, so the techniques used in all results listed above are no longer applicable,
even in the commutative setting.
This forces us to develop new techniques.

Our main motivation in this paper is to describe extreme points of the (positive) unit balls of weak $L_p$-spaces and their surjective isometries under the natural quasi-norms for all $p\in (0,\infty)$.
Let us describe a general framework in which we present our results.

\section{General setting}

Recall that a non-decreasing function $\Phi:[0,\infty) \to[0,\infty)$ satisfies the $\Delta_2$-condition if there exists a constant $C_\Phi\in [1,\infty)$ such that for all $t\in[0,\infty)$,
\[\Phi(2t)\le C_\Phi\Phi(t)\]
(see \cite[Definition 2.1]{MNPT}).
A function $\Phi:[0,\infty) \to[0,\infty)$ is called {\itshape admissible} if it satisfies the following three conditions (see \cite[Definition 4.2]{MNPT}):
\begin{enumerate} [label=(\roman*)]
    \item $\Phi$ is non-decreasing and right-continuous.
    \item $\Phi$ satisfies the $\Delta_2$-condition.
    \item $\Phi(t)=0$ if and only if $t=0$.
\end{enumerate}

Let $\Phi:[0,\infty) \to[0,\infty)$ be admissible. 
Let $H$ be an infinite-dimensional Hilbert space and
$\mathcal{M}$ be a von Neumann algebra  on $H$ equipped with a  semifinite faithful normal trace $\tau$. 

If $\mathcal{M}=L_\infty\left(0,\infty \right)$, then the Marcinkiewicz function space $M_\Phi\left(0,\infty\right)$ is defined by
\[M_\Phi\left(0,\infty\right):=\left\{ x\in S\left(0,\infty\right): \sup_{t>0}\Phi\left(t\right)\mu\left(t;x\right)<\infty\right\} \]
equipped with the quasi-norm (see \cite[Proposition 4.3]{MNPT})
$$\norm{x}_{M_\Phi\left(0,\infty\right)}=\sup_{t>0}\Phi\left(t\right)\mu\left(t;x\right)<\infty.$$
The quasi-Banach Marcinkiewicz space $M_\Phi\left(0,\infty\right)$ is isomorphic to some Banach space if and only if 
\[\lim_{\lambda\to \infty }\inf_{t>0}\frac{\lambda\Phi(t)}{\Phi(\lambda t)}=\infty\] 
when $\Phi:[0,\infty)\to [0,\infty)$ is admissible, satisfying $\frac1 \Phi\in S\left(0,\infty\right)$  \cite[Theorem 7.6]{KS}.
The noncommutative Marcinkiewicz space 
\[M_\Phi\left(\mathcal{M},\tau\right)=\left\{x\in S\left(\mathcal{M},\tau\right): \sup_{t>0}\Phi\left(t\right)\mu\left(t;x\right)<\infty \right\}\]
equipped with 
 the quasi-norm (see 
 \cite{S14}) 
\[\norm{x}_{M_\Phi\left(\mathcal{M},\tau\right)}= \sup_{t>0}\Phi\left(t\right)\mu\left(t;x\right), 
~ x \in M_\Phi\left(\mathcal{M},\tau\right), \]
is a quasi-Banach  space, where $\mu(x)$ stands for the singular value function of  $x\in  S(\cM,\tau)$,  the $*$-algebra   of all $\tau$-measurable operators \cite{DPS,LMSZ23, LSZ, LSZ21}. 

Moreover, if $\mathcal{M}=\ell_\infty $  is equipped with the counting measure, then the Marcinkiewicz sequence space  $m_\Phi$ is defined by
\[m_\Phi:=\left\{x\in \ell_\infty: \norm{x}_{m_\Phi}:=\sup_{n\ge 0}l_{\Phi} \left(n\right) 
\mu\left(n;x\right) <\infty \right\},\]
where  $\mu\left(\cdot;x\right)$    stands for the decreasing rearrangement of $|x|$ and    $l_{\Phi}(n):=\Phi\left((n+1)_-\right) = \lim_{t\uparrow n+1} \Phi(t),~\forall n\in\mathbb{N}$.

Let $0<p<\infty$.
 Let $\Phi\left(t\right)= t^{1/p},~t\ge0$, and, in this case, $\Phi$ is admissible. 
The space $M_\Phi \left(\cM,\tau\right) $ becomes the weak 
$L_p$-space 
\[
L_{p,\infty} \left(\cM,\tau\right):=\left\{x\in S\left(\mathcal{M},\tau\right):
\norm{x}_{L_{p,\infty}\left(\mathcal{M},\tau\right)}:=\sup\limits_{t>0}t^{1/p}\mu\left(t;x\right)<\infty\right\},
\]
which is equipped with a natural quasi-norm $\norm{\cdot}_{L_{p,\infty}(\cM,\tau)}$.
In the special setting when $\cM=\ell_\infty $, 
$L_{p,\infty }\left(\cM,\tau\right)$ is the classical  
 weak $\ell_p$-sequence space $\ell_{p,\infty}$. 
If $\mathcal{M}=L_\infty\left(0,\infty\right)$ with  a semifinite faithful normal trace $\tau$ given by 
\[\tau\left(x\right)=\int_0^\infty xdm, ~0\le x\in L_\infty(0,\infty),\] 
then $L_{p,\infty}\left(\cM,\tau\right)$ is the 
 weak $L_p$-function space $L_{p,\infty}\left(0,\infty\right)$ on $\left(0,\infty\right)$. 
If $\mathcal{M}=B\left(H\right)$ equipped with the standard trace $\tau$, then $L_{p,\infty}(\cM,\tau)$ is the weak $L_p$-ideal $\cC_{p,\infty }$, whose importance  has been emphasized in monographs by  Simon \cite{Simon}, by Birman and Solomyak \cite{BS03}, and  \cite{LSZ,LSZ21,LMSZ23}. 
 The results in this paper apply to the setting of finite-dimensional operator ideals  \cite{V37}. 
 Although for $1<p<\infty$, the weak 
$L_p$-space $L_{p,\infty} \left(\cM,\tau\right)$ can be equivalently renormed as Banach spaces \cite{DPS, BS}, this is impossible when $0<p\le1$ and $\cM$ is infinite-dimensional. 
If $\cM$ is non-atomic or atomic with all atoms having the same trace,  
then surjective isometries on $L_{p,\infty} \left(\cM,\tau\right), ~1<p<\infty$, have been fully described  in \cite{FGHS,AC17,AC,KR} 
with respect to its equivalent norm.

Throughout this paper, we only consider linear isometries, and therefore, we always omit the term ``linear''.

 We only consider surjective isometries  $T$ on weak $L_p$-spaces $L_{p,\infty}\left(\mathcal{M},\tau\right)$ because 
 the disjointness-preserving property\footnote{For two  $\tau$-measurable  operators $x,y$, we write $x\perp y$ if $l(x)l(y)=0=r(x)r(y)$ ( 
 if, in addition, $x,y$ are self-adjoint, then $x\perp y$ if and only if $xy=0$).
 The mapping $T$ is called disjointness-preserving if $x,y\in M_\Phi\left(\mathcal{M},\tau\right)$ with $x\perp y$ implies $T(x)\perp T(y)$.} may fail when we consider non-surjective isometries.

\begin{remark}
 Assume that $0<p<\infty$.
Let $\{e_1,e_2\}$ be the standard basis of the $2$-dimensional weak $\ell_p$-sequence space.
Let $\{f_1,f_2,f_3\}$ be the standard basis of the $3$-dimensional weak $\ell_p$-sequence space.
Define a mapping $T:\ell_{p,\infty} \left(e_1,e_2\right)\to \ell_{p,\infty}\left(f_1,f_2,f_3\right)$ by setting $T\left(e_1\right) =f_1+\frac{3^{-1/p}}{1+2^{-1/p}}f_3$ and  $T\left(e_2\right) = f_2+\frac{3^{-1/p}}{1+2^{-1/p}}f_3 $. 
This is a non-surjective isometry  which does not preserve the disjointness of elements in $\ell_{p,\infty}(e_1,e_2)$. 
\end{remark}


\section{Extreme points of  the (positive) unit ball of a  noncommutative quasi-Banach Marcinkiewicz space}
 We call $x\in W$ an {\itshape extreme point} of a subset $W$ of a quasi-Banach space $X$ if $x=\frac{y+z}{2}$, $y,z\in W$, implies that $y=z=x$.


In Chapter \ref{unit ball},
we provide a complete description of extreme points of the unit ball of a Marcinkiewicz space $M_\Phi\left(\mathcal{M},\tau\right)$ with a quasi-norm.

\begin{theorem} \label{atomless extreme points}
    Let $\mathcal{M}$ be a semifinite non-atomic von Neumann algebra equipped with a semifinite faithful normal trace $\tau$. 
   Let $\Phi:[0,\infty) \to[0,\infty)$ be admissible
    satisfying $\frac1\Phi\in \left(L_1+L_\infty\right)\left(0,\infty\right)$ and $\Phi\left(\infty\right):=\displaystyle\lim_{t \to \infty} \Phi\left(t\right)=\infty$. 
    Assume that $x\in M_\Phi\left(\mathcal{M},\tau\right)$ with quasi-norm $1$. 
    Then,  $x$ is an extreme point of the unit ball of $M_\Phi\left(\mathcal{M},\tau\right)$ if and only if 
    \[\mu\left(t;x\right) =\frac{1}{\Phi\left(t\right)},~\forall t\in \left(0,\tau\left(\mathbf{1}\right)\right).\]
    
\end{theorem}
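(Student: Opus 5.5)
We prove the two implications separately. Throughout, the assumption $\|x\|_{M_\Phi}=1$ means $\mu(t;x)\le 1/\Phi(t)$ for all $t>0$.

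\emph{Sufficiency.} Assume $\mu(t;x)=1/\Phi(t)$ on $(0,\tau(\mathbf 1))$, and suppose $x=\frac{y+z}{2}$ with $\|y\|,\|z\|\le 1$. Then $\mu(y),\mu(z)\le \mu(x)$ pointwise.

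The natural tool is submajorization $\mu(y+z)\prec\prec \mu(y)+\mu(z)$, which gives
\[
\int_0^s 2\mu(x)\le \int_0^s\big(\mu(y)+\mu(z)\big)\le \int_0^s 2\mu(x)
\]
for every $s$. Since $1/\Phi\in L_1+L_\infty$, these integrals are finite, so equality holds throughout. Hence $\mu(y)=\mu(z)=\mu(x)$ almost everywhere, and therefore everywhere by right-continuity.

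We also have equality $\int_0^s\mu(y+z)=\int_0^s(\mu(y)+\mu(z))$ for all $s$. To conclude $y=z=x$, we use the equality case of the Ky Fan type inequality: for $\tau$-measurable operators in a semifinite algebra, equality in $\mu(y+z)\prec\prec\mu(y)+\mu(z)$ for every $s$ forces a common ``alignment''. The concrete route is as follows.
\begin{itemize}
\item[(a)] First treat $x\ge 0$ by applying the argument to the real parts, and then to $x^*x$ via polar decomposition: write $x=u|x|$ and replace $y,z$ by $u^*y,u^*z$ compressed to $r(x)$.
\item[(b)] For $x\ge0$ whose distribution is strictly decreasing, so that $\Phi(\infty)=\infty$ makes $\mu(x)\to 0$ and $\mu(x)$ injective in the relevant sense, the spectral projections $E_{(\mu(s;x),\infty)}(x)$ have trace exactly $s$. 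The equality $\tau(xp)=\int_0^s\mu(x)$, with $p$ this projection, together with $\tau(yp)\le\int_0^s\mu(y)$ and $\tau(zp)\le \int_0^s\mu(z)$, forces $\tau(yp)=\int_0^s\mu(y)$.
\item[(c)] Maximality of the trace on projections of trace $s$, combined with $\mu(y)=\mu(x)$ being strictly decreasing, shows that $p$ is a spectral projection of $\mathrm{Re}\,y$ and that $y$ commutes with $p$ with the same restricted distribution.
\end{itemize}
Doing this for all $s$ yields $y=\int \lambda\, dE^x_\lambda=x$, and likewise $z=x$.

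The main obstacle is this rigidity step (b)--(c), especially for non-self-adjoint $y$. I would first reduce to the self-adjoint case using $\|\mathrm{Re}\, y\|\le$ a bound that holds via $\mu(\mathrm{Re}\,y)\prec\prec\mu(y)$. Since $\mu(x)=1/\Phi$ is strictly decreasing only where $\Phi$ is strictly increasing, flat pieces of $\Phi$ must be handled by block-wise arguments on spectral subspaces. There, equality of averages still forces $y$ to coincide with $x$ on each level set.

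\emph{Necessity.} Suppose instead that $\mu(t_0;x)<1/\Phi(t_0)$ for some $t_0<\tau(\mathbf 1)$. By right-continuity of $\Phi$ and of $\mu$, the strict inequality persists on an interval $[t_0,t_0+\delta)$, with a gap $\varepsilon>0$. Using non-atomicity, we choose a projection $q$ of trace in this window, lying in a spectral projection of $|x|$ corresponding to values $\approx\mu(t_0;x)$. If $\mu(x)$ is constant there, this is a reduced piece of the spectral projection, where $|x|$ acts as a scalar $c$. On $q$ we split $q=q_1+q_2$ with $q_1\sim q_2$, which is possible by non-atomicity. We then set
\[
y=x+\eta\, u(q_1-q_2), \qquad z=x-\eta\, u(q_1-q_2),
\]
with $x=u|x|$ and $\eta$ small. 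The perturbation raises and lowers the values of $|x|$ on $q_1,q_2$ by at most $\eta$ within the window. Thus $\mu(y)$ exceeds $\mu(x)$ only on a set where $\mu(x)\le 1/\Phi-\varepsilon$, and a rearrangement estimate ($\mu(t;y)\le\mu(t-\tau(q);x)+\ldots$) keeps $\|y\|,\|z\|\le1$. This is where $\Phi(\infty)=\infty$ and the choice of $q$ inside the gap matter. If $\mu(x)$ is not constant near $t_0$, we pick $q$ inside $E_{[a,b)}(|x|)$ with $b-a$ tiny and use $\eta<\varepsilon/2$. Then $y\ne z$, so $x$ is not extreme. When $|x|$ is zero near $t_0$, i.e.\ $\tau(\mathrm{supp}\, x)<\tau(\mathbf 1)$, we perturb instead by $\pm\eta q$ with $q\perp \mathrm{supp}\,x$ of small trace, placed where $1/\Phi$ exceeds $\mu(x)$.
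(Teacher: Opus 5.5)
\textbf{Sufficiency.} Your first step is correct: submajorization together with $\frac1\Phi\in L_1+L_\infty$ gives $\mu(y)=\mu(z)=\mu(x)$. This is exactly the first half of the paper's proof of Theorem~\ref{posi non-atomic}. The gap is the rigidity step $y=z=x$, which is the whole content of this direction; (a)--(c) do not prove it.
\begin{itemize}
\item Passing to real parts throws information away. Even granting $\mathrm{Re}\,y=\mathrm{Re}\,z=x$, nothing in your outline forces $\mathrm{Im}\,y=0$.
\item Step (c) is the equality case of Ky Fan's maximum principle. It needs proof in the semifinite setting even for self-adjoint $y$, and for non-self-adjoint $y$ you only assert it.
\item After the polar reduction, $u^*y=|x|$ does not give $y=x$ unless you also show $l(y)\le l(x)$.
\end{itemize}
The paper gets the rigidity in one line from \cite[Theorem 1.1]{CKS}: $x$ is an extreme point of its orbit $\Omega(x)$, and $B_{M_\Phi(\cM,\tau)}\subset\Omega(x)$ because $\mu(y)\le\frac1\Phi=\mu(x)$.

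A direct proof is possible, but you must keep $y$ non-self-adjoint. Write $|x|=\frac{u^*y+u^*z}{2}$ and take $p=E^{|x|}(\lambda,\infty)$. Here $\tau(p)<\infty$ for every $\lambda>0$, and indexing by $\lambda$ removes any need to treat flat pieces of $\Phi$.
\begin{itemize}
\item We have $|\tau(pu^*yp)|\le\|pu^*yp\|_1\le\int_0^{\tau(p)}\mu(y)=\tau(p|x|p)<\infty$, and likewise for $z$.
\item A point of a circle is not the midpoint of two other points of the closed disc. Hence $\tau(pu^*yp)=\|pu^*yp\|_1$, so $pu^*yp\ge 0$ and $\mu(pu^*yp)=\mu(pu^*y)$.
\item A bounded operator monotone function such as $t\mapsto t/(1+t)$, together with $\tau(p)<\infty$, then gives $pu^*y(\mathbf 1-p)=0$. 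The same argument applied to $(u^*y)^*$ gives $(\mathbf 1-p)u^*yp=0$.
\item Comparing $u^*y$ and $|x|$ on the blocks $E^{|x|}(\lambda_1,\lambda_2]$ yields $u^*y=|x|$.
\item Lemma~\ref{lemma lu rx} applied to $y^*$ gives $l(y)\le uu^*$, hence $y=x$.
\end{itemize}

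\textbf{Necessity.} This is the same idea as the paper. You perturb on a spectral piece of $|x|$ whose positions lie where $\mu(x)$ stays $\varepsilon$ below $\frac1\Phi$, and you add a disjoint piece when $\tau(r(x))<\tau(\mathbf 1)$. The paper does the bookkeeping in a commutative model via \cite[Lemma 1.3]{CKS92}, where the distribution functions of the three disjoint pieces simply add.

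Your last case, however, uses the wrong perturbation: a projection $q\perp\operatorname{supp}x$ does not work.
\begin{itemize}
\item If you require $q\le\mathbf 1-(l(x)\vee r(x))$, such $q$ may be $0$.
\item If you require only $q\le\mathbf 1-r(x)$, it can fail. Take $l(x)=\mathbf 1-r(x)$ in a finite algebra. Then $q\le l(x)$ and $(x\pm\eta q)(x\pm\eta q)^*=xx^*+\eta^2q$. So $\mu(x\pm\eta q)\ge\mu(x)$, it still vanishes beyond $\tau(r(x))$, and it is strictly larger somewhere on $(0,\tau(r(x)))$. This breaks the norm bound precisely in the case that matters, $\mu(x)=\frac1\Phi\chi_{(0,\tau(r(x)))}$.
\end{itemize}
Use instead a partial isometry $w$ with $w^*w\le\mathbf 1-r(x)$ and $ww^*\le\mathbf 1-l(x)$, so that $x\perp w$ in the paper's sense, as the paper does.
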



The above result is applicable to $L_{p,\infty}\left(\mathcal{M},\tau\right), ~1<p<\infty$, when $\mathcal{M}$ is non-atomic because $t\mapsto t^{-1/p}\in \left(L_1+L_\infty\right)\left(0,\infty\right)$ for $1<p<\infty$ (see Corollary \ref{1<p<infty}). 
For $0<p\le1$, the function $t\mapsto t^{-1/p}$ does not belong to $(L_1+L_\infty)\left(0,\infty\right)$ and the results of Theorem \ref{atomless extreme points} are not applicable in this case. In particular, as it turned out,  the  unit ball of $L_{p,\infty}\left(\mathcal{M},\tau\right)$ has no extreme point 
and the positive unit ball of $L_{p,\infty}\left(\mathcal{M},\tau\right)$ has a unique extreme point $0$
when $\mathcal{M}$ is non-atomic (see Corollary \ref{0<p le1} and Remark~\ref{positive no extr}).
 
The following theorem provides a complete description of extreme points of the unit ball of  a  quasi-Banach Marcinkiewicz space $M_\Phi\left(\mathcal{M},\tau\right)$ when $\mathcal{M}$ is atomic with all atoms having trace equal to $1$.
In this case, we do not need the condition that $\frac1\Phi\in (L_1+L_\infty) (0,\infty)$.

\begin{theorem} \label{atomic}
    Let $\mathcal{M}$ be a semifinite atomic von Neumann algebra equipped with a semifinite faithful normal trace $\tau$ and all atoms have trace equal to one. 
   Let $\Phi:[0,\infty) \to[0,\infty)$ be admissible satisfying $\Phi\left(\infty\right):=\displaystyle\lim_{t \to \infty} \Phi\left(t\right)=\infty$. 
    Assume that $x\in M_\Phi\left(\mathcal{M},\tau\right)$ with quasi-norm $1$.
    Then,  $x$ is an extreme point of the unit ball of $M_\Phi\left(\mathcal{M},\tau\right)$ if and only if 
 \[\mu\left(n;x\right) =\frac{1}{l_\Phi\left(n\right)},~ \forall n\in\mathbb{N}, ~ n<\tau\left(\mathbf{1}\right).\]
\end{theorem}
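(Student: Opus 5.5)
We prove the two implications separately, working mostly with singular value sequences. Since all atoms have trace one, $\mu(x)$ is a step function that is constant on each $[n,n+1)$. Hence
\[
\norm{x}_{M_\Phi}=\sup_{n}\sup_{t\in[n,n+1)}\Phi(t)\mu(n;x)=\sup_n l_\Phi(n)\mu(n;x),
\]
using right-continuity and monotonicity of $\Phi$. So $\norm{x}\le 1$ if and only if $\mu(n;x)\le 1/l_\Phi(n)$ for all $n<\tau(\mathbf 1)$.

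\textbf{Sufficiency.} Assume $\mu(n;x)=1/l_\Phi(n)$ for all $n$, and let $x=\frac{y+z}{2}$ with $\norm{y},\norm{z}\le1$.

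First, $x$ is compact-like: since $\Phi(\infty)=\infty$, we have $\mu(n;x)\to0$. Write $x=u|x|$ and use the spectral decomposition $|x|=\sum_k \lambda_k p_k$, where the $\lambda_k$ are the distinct values $1/l_\Phi(n)$ and $p_k$ is a finite-rank spectral projection of trace equal to the multiplicity of $\lambda_k$ (possibly infinite only if $\lambda=0$, which does not occur).

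We then argue by induction on the spectral levels, starting from the top. Let $q_1=p_1$, with $\tau(q_1)=m_1$. By Ky Fan's inequality, the sum of the top $m_1$ singular values is subadditive:
\[
\sum_{n<m_1}\mu(n;x)\le \tfrac12\Big(\sum_{n<m_1}\mu(n;y)+\sum_{n<m_1}\mu(n;z)\Big)\le \sum_{n<m_1}1/l_\Phi(n).
\]
The left-hand side equals the right-hand side, so equality holds throughout and $\mu(n;y)=\mu(n;z)=1/l_\Phi(n)$ for $n<m_1$. Moreover $\tau(u^*x q_1)=\sum_{n<m_1}\mu(n;x)$, which must then equal $\frac12(\tau(u^*yq_1)+\tau(u^*zq_1))$, while each of these is at most $\sum_{n<m_1}\mu(n;y)$.

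The equality case of the trace inequality $|\tau(v^*wq)|\le\sum_{n<\tau(q)}\mu(n;w)$ should give $yq_1=u\lambda_1 q_1=xq_1$, and symmetrically $q_1'y=x q_1$ with $q_1'=uq_1u^*$. I expect this equality-case analysis in the noncommutative setting to be the main obstacle. In finite dimensions it is the standard fact that a contraction $A$ with $\mathrm{Tr}(PA P)=\mathrm{Tr}P$ satisfies $AP=P$. I would first reduce to it by compressing $y$ by $\lambda_1^{-1}$: since $\mu(0;y)\le\lambda_1$, the operator $\lambda_1^{-1}u^*y$ is a contraction, and its compression to $q_1$ has trace $\tau(q_1)$. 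A contraction on the finite-dimensional range of $q_1$ with full trace fixes $q_1$, and then the off-diagonal blocks vanish by the contraction property. So $y=x$ on the top level, and likewise $z=x$.

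Next we remove this level. Set $y_1=y(1-q_1)$, and use that $y$ decomposes as $xq_1\oplus (1-q_1')y(1-q_1)$. Then $\mu(y_1)$ is the shift of $\mu(y)$ by $m_1$. Repeating the Ky Fan argument on the next block with the bounds $1/l_\Phi(n)$ for $n\ge m_1$ gives agreement on $p_2$, and so on. Since $\sum_k p_k=s(|x|)$, we get $y=x$ on the support of $x$.

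It remains to handle $s(|x|)\ne\mathbf 1$, i.e.\ when $\tau(\mathbf 1)$ exceeds the number of nonzero terms. This cannot happen, because all $\mu(n;x)=1/l_\Phi(n)>0$. Hence the left and right supports of $x$ are both $\mathbf 1$, which forces $y=x$ entirely.

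\textbf{Necessity.} Suppose $\mu(n_0;x)<1/l_\Phi(n_0)$ for some $n_0$, and take the smallest such $n_0$. We build a perturbation $h\ne0$ with $\norm{x\pm h}\le1$. There are two cases.

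\emph{Case 1: $\mu(n_0;x)>0$.} Take the spectral projection of $|x|$ for the level $\mu(n_0;x)$; it contains a rank-one atom $e$ with $|x|e=\mu(n_0;x)e$ (if the level is shared, choose the last position among equal values). Set $h=\varepsilon ue$. For small $\varepsilon$, $x\pm h$ changes only this eigenvalue, to $\mu(n_0;x)\pm\varepsilon$. The rearranged sequence then stays below $1/l_\Phi$: the earlier indices are unaffected, or only swapped among equal values bounded by $1/l_\Phi(n_0)>\mu(n_0;x)+\varepsilon$, using monotonicity of $1/l_\Phi$.

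\emph{Case 2: $\mu(n_0;x)=0$.} Then the projection $\mathbf 1-s(|x|)$, or the corresponding left-support complement, contains an atom $e$. We take $h=\varepsilon e'$, where $e'$ is a partial isometry between atoms of $\mathbf 1-l(x)$ and $\mathbf 1-r(x)$. These complements are nonzero because $\tau(s(|x|))=n_0<\tau(\mathbf 1)$; in the non-factor case we choose the atoms in the same central summand. Then $\mu(x\pm h)$ is $\mu(x)$ with $\varepsilon$ inserted at position $n_0$, and this stays within the bound.

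In both cases $x=\frac{(x+h)+(x-h)}2$ with $h\ne0$, so $x$ is not extreme.
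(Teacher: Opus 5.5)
Your necessity argument is fine, and so is the level-by-level peeling in the sufficiency part. One small point: run the equality-case argument in the algebra $q_1\cM q_1$, where $\tau$ is faithful, not on $q_1H$, which need not be finite-dimensional.

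The gap is in the last step of the sufficiency. You claim that $\mu(n;x)>0$ for all $n<\tau(\mathbf 1)$ forces $l(x)=r(x)=\mathbf 1$. This holds when $\tau(\mathbf 1)<\infty$, but it fails when $\tau(\mathbf 1)=\infty$. In that case you only get $\tau(r(x))=\infty=\tau(\mathbf 1)$, which says nothing about $\mathbf 1-r(x)$. For instance, in $\ell_\infty$ the sequence $x=(l_\Phi(0)^{-1},0,l_\Phi(1)^{-1},0,l_\Phi(2)^{-1},0,\dots)$ satisfies the hypothesis and has infinitely many zero coordinates. What your induction actually proves, after letting $Q_k=p_1+\dots+p_k\uparrow r(x)$ and $uQ_ku^*\uparrow l(x)$, is only
\[
y=x+w,\qquad w=(\mathbf 1-l(x))\,y\,(\mathbf 1-r(x)).
\]
Nothing so far rules out $w\ne0$, so the conclusion ``$y=x$ entirely'' is unjustified.

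The fix is short but needs an extra step. Since $x\perp w$, you have $|y|=|x|+|w|$ with orthogonal supports, so $d(s;|y|)=d(s;|x|)+d(s;|w|)$ for all $s>0$. Because $\mu(n;x)=1/l_\Phi(n)\to0$ (this is where $\Phi(\infty)=\infty$ enters), $d(s;|x|)<\infty$ for every $s>0$. If $w\ne0$, then $d(s;|w|)\ge1$ for $0<s<\norm{w}_\infty$, since every nonzero projection in $\cM$ majorizes an atom and so has trace at least one. Hence $d(s;|y|)>d(s;|x|)$ for such $s$. This contradicts $\mu(y)\le\mu(x)$, which is exactly what $\norm{y}_{M_\Phi}\le1$ means here.

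With this added, your proof is a direct, elementary alternative to the paper's sufficiency argument. The paper instead tensors with $L_\infty(0,1)$ to pass to a non-atomic algebra. It then invokes the Chilin--Krygin--Sukochev description of extreme points of the orbit $\Omega(\mathbf 1\otimes x)$, which is phrased purely in terms of $\mu$, so the support issue never arises there.
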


Letting $\Phi(t)=t^{1/p}, ~t\ge0$, and in this case, $l_\Phi(n)=\lim_{t\uparrow n+1}\Phi(t)=\Phi(n+1)=(n+1)^{1/p}, ~n\ge0$, we get a characterization of extreme points of the unit ball of $\mathcal{C}_{p,\infty}$ for $0<p<\infty$ (see Corollary~\ref{atom extremes}).


Throughout this paper, 
we denote the positive unit ball (i.e., the set of all positive elements in the unit ball) of a Marcinkiewicz space $M_{\Phi}\left(\mathcal{M},\tau\right)$ by $B^+_{M_{\Phi}\left(\mathcal{M},\tau\right)}$.
Denote the set of all extreme points of $B^+_{M_{\Phi}\left(\mathcal{M},\tau\right)}$ (respectively, $B_{M_\Phi\left(\cM,\tau\right)}$, i.e. the set of all elements in the unit ball)
by $\operatorname{extr}\left(B^+_{M_{\Phi}\left(\mathcal{M},\tau\right)}\right)$ (respectively, $\operatorname{extr}\left(B_{M_\Phi\left(\cM,\tau\right)}\right)$).

In Chapter \ref{positive unit ball},
we  provide a characterization of extreme points of the positive unit ball of a quasi-Banach Marcinkiewicz space as follows.

\begin{theorem} \label{posi non-atomic}
    Let $\mathcal{M}$ be a semifinite non-atomic von Neumann algebra equipped with a semifinite faithful normal trace $\tau$
    and $\Phi:[0,\infty) \to[0,\infty)$ be admissible satisfying
    $\frac1\Phi\in \left(L_1+L_\infty\right)\left(0,\infty\right)$ and $\Phi\left(\infty \right):=\displaystyle\lim_{t \to \infty} \Phi\left(t\right)=\infty$. 
    Then, \[\operatorname{extr}\left(B^+_{M_{\Phi}\left(\mathcal{M},\tau\right)}\right)=
    \left\{x\in B^+_{M_{\Phi}\left(\mathcal{M},\tau\right)}:\mu\left(x\right)=\frac{1}{\Phi}\chi_{_{\left(0,\tau\left(r(x)\right)\right)}}\right\}.\]
\end{theorem}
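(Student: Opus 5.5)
We prove the two inclusions separately. Throughout, write $p=r(x)=s(x)$ for the support projection of a positive $x$, and recall that $\mu(t;x)\le 1/\Phi(t)$ for all $t>0$ whenever $x\in B^+_{M_\Phi(\cM,\tau)}$.

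\emph{Sufficiency.} Let $x\ge0$ with $\mu(x)=\frac1\Phi\chi_{(0,\tau(p))}$, and suppose $x=\frac{y+z}{2}$ with $y,z\in B^+_{M_\Phi}$.

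First, the supports are controlled. Since $0\le y\le 2x$, we get $s(y)\le p$, and likewise $s(z)\le p$. Hence $y,z$ belong to the reduced algebra $p\cM p$, which is again non-atomic and has trace $\tau|_{p\cM p}$ with $\tau(p)$ playing the role of $\tau(\mathbf 1)$.

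Second, inside $p\cM p$ we have $\mu(t;x)=1/\Phi(t)$ for all $t\in(0,\tau(p))$. So $x$ is an extreme point of the unit ball of $M_\Phi(p\cM p)$ by Theorem~\ref{atomless extreme points}; its proof only uses the hypotheses on $\Phi$ and non-atomicity, which $p\cM p$ inherits.

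Consequently $y=z=x$. One point needs checking here: for $y\in p\cM p$, the quasi-norm of $y$ in $M_\Phi(p\cM p)$ coincides with its quasi-norm in $M_\Phi(\cM)$, because $\mu(y)$ is the same in both algebras. This is routine.

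\emph{Necessity.} Let $x\in\operatorname{extr}(B^+_{M_\Phi})$ with $x\neq0$. We aim to show $\mu(t;x)=1/\Phi(t)$ for $t<\tau(p)$. Suppose instead that $\mu(t_0;x)<1/\Phi(t_0)$ for some $t_0<\tau(p)$. By right-continuity of $\mu$ and of $\Phi$, together with $\frac1\Phi\in L_1+L_\infty$ (which gives finite and controlled behaviour on bounded intervals away from $0$), the strict inequality persists on an interval $[t_0,t_1)$ with a uniform gap $\varepsilon$ on a slightly smaller subinterval.

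We then use non-atomicity to perturb $x$ on that part of the spectrum. Take the spectral projection $q=E^x(a,b]$ corresponding to the values $\mu(t;x)$ for $t$ in the subinterval. If $x$ is constant there, choose a projection of appropriate trace below the spectral projection of that value. Since $q$ commutes with $x$, we can split $q=q_1+q_2$ with $\tau(q_1)=\tau(q_2)$, using non-atomicity, and set
\[
y=x+\delta(q_1-q_2),\qquad z=x-\delta(q_1-q_2),
\]
with $\delta$ small and $q\le p$.

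For $\delta<\varepsilon$ and $\delta$ below the minimum of $x$ on $q$, the elements $y,z$ remain positive. Their rearrangements differ from $\mu(x)$ only by at most $\delta$ and only in the region $[t_0,t_1)$, possibly shifted slightly, where $1/\Phi$ leaves room. Hence $\|y\|,\|z\|\le1$, while $y\neq z$, contradicting extremality.

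The delicate case is when $\mu(x)$ jumps at $t_0$, or when the slack occurs only on a set where $\mu(x)$ is not constant. I would handle this the way the proof of Theorem~\ref{atomless extreme points} presumably does: choose $q$ inside a single level region $x\,E^x(\lambda-\eta,\lambda+\eta]$, so that perturbing within it moves rearrangements by at most $\delta+2\eta$ on an interval contained in the slack region.

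\emph{Main obstacle.} The main obstacle is the rearrangement estimate. We must control $\mu(y)$ after the perturbation, because raising values on $q_1$ can push mass to the left in the rearrangement, into regions where $\mu(x)=1/\Phi$ and there is no slack. I would take $q_1$ so that $x\,q_1$ sits well below the preceding values, using an interval strictly inside the slack region away from its left endpoint. The $\Delta_2$-condition and monotonicity of $\Phi$ then give the uniform gap needed.
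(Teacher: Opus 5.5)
Your sufficiency argument is correct, and it takes a slightly different route from the paper's. You use $0\le y,z\le 2x$ to place $y,z$ in the corner $p\cM p$, and then apply Theorem~\ref{atomless extreme points} there (ultimately \cite[Theorem 1.1]{CKS}). The paper instead stays in $\cM$. It compares $\int_0^t\mu(v)$ and $\int_0^t\mu(w)$ with $\int_0^t\frac1\Phi$ to force $\mu(v)=\mu(w)=\mu(y)$ with equal support traces, and then invokes Lemma~\ref{Lemma 5.4}. Your corner reduction is a clean shortcut. Note that the hypothesis $\frac1\Phi\in(L_1+L_\infty)(0,\infty)$ enters through this direction (via $\Omega(x)$), not through the necessity as your remark suggests.

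The necessity direction follows the same perturbation strategy as the paper, but the step that carries the proof is not established. That step is $\mu\left(x\pm\delta(q_1-q_2)\right)\le\frac1\Phi$. You flag it as the main obstacle, but the remedies you sketch are not arguments and are not what is needed: placing $xq_1$ ``well below the preceding values'', the $\Delta_2$-condition, and level bands $E^x(\lambda-\eta,\lambda+\eta]$. Your claim that the rearrangement changes only on $[t_0,t_1)$ is also false in general, since raised values can move to the left.

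The missing idea is that this reordering is harmless once $x$ is split into three mutually orthogonal pieces and distribution functions are compared piece by piece.
\begin{itemize}
\item Choose $0<t_0<t_1<\tau(p)$ and $\delta\le\mu(t_1;x)$ with $\mu(t;x)+\delta\le\frac1{\Phi(t)}$ on $[t_0,t_1]$.
\item Using non-atomicity, take projections $e_0\le e_1$ commuting with $x$, with $\tau(e_i)=t_i$ and $E^x(\mu(t_i;x),\infty)\le e_i\le E^x[\mu(t_i;x),\infty)$. Put $q=e_1-e_0$.
\item Then $\mu(xe_0)=\mu(x)\chi_{(0,t_0)}$, $\mu(xq)=\mu(\mu(x)\chi_{(t_0,t_1)})$ and $\mu(x(\mathbf 1-e_1))=\mu(\mu(x)\chi_{(t_1,\infty)})$. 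This is \cite[Lemmas 3.7.8, 3.7.10]{DPS} in the paper's abelian model.
\item For $q_1,q_2\le q$ you have $0\le xq\pm\delta(q_1-q_2)\le xq+\delta q$, so $\mu(xq\pm\delta(q_1-q_2))\le\mu\left(\frac1\Phi\chi_{(t_0,t_1)}\right)$.
\item The three pieces are orthogonal, so their distribution functions add, and
\[
d\left(s;x\pm\delta(q_1-q_2)\right)\le d\left(s;\tfrac1\Phi\chi_{(0,t_0)}\right)+d\left(s;\tfrac1\Phi\chi_{(t_0,t_1)}\right)+d\left(s;\tfrac1\Phi\chi_{(t_1,\infty)}\right)=d\left(s;\tfrac1\Phi\right).
\]
\end{itemize}
This gives $\mu(x\pm\delta(q_1-q_2))\le\frac1\Phi$ at once. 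No $\Delta_2$ or separation argument is needed. Splitting $q=q_1+q_2$ is also superfluous: the paper simply perturbs by $\pm\frac\varepsilon2\chi_B$ on the middle piece.
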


An application of the above result yields a characterization of extreme points of the positive unit ball of a weak $L_p$-space $L_{p,\infty}\left(\mathcal{M},\tau\right), ~p>1$, when $\mathcal{M}$ is non-atomic (see Corollary \ref{posi non-atomic1}), i.e.,
\[\operatorname{extr}\left(B^+_{L_{p,\infty}\left(\mathcal{M},\tau\right)}\right)=
    \left\{x\in B^+_{L_{p,\infty}\left(\mathcal{M},\tau\right)}:\mu\left(t;x\right)=t^{-1/p}\chi_{_{\left(0,\tau\left(r(x)\right)\right)}}(t), ~ 0<t<\tau\left(\mathbf{1}\right)\right\}.\]

It is noteworthy that the positive unit ball of a quasi-Banach Marcinkiewicz space $M_\Phi\left(\mathcal{M},\tau\right)$ does not necessarily have nonzero extreme points when
$\mathcal{M}$ is non-atomic and $\frac1\Phi\notin \left(L_1+L_\infty\right)\left(0,\infty\right)$ (see Remark \ref{positive no extr}).

If $\cM$ is atomic with all atoms having trace equal to $1$, then the above result may be sharpened.

\begin{theorem}  \label{extr point atomic}
    Let $\mathcal{M}$ be a semifinite atomic von Neumann algebra with a semifinite faithful normal trace $\tau$ 
     and $\Phi:[0,\infty) \to[0,\infty)$ be admissible satisfying $\Phi\left(\infty \right):=\displaystyle\lim_{t \to \infty} \Phi\left(t\right)=\infty$. 
    If  the trace of each  atom  in $\cM$  is $1$, 
    then  \[\operatorname{extr}\left(B^+_{M_{\Phi}\left(\mathcal{M},\tau\right)}\right)=
    \left\{x\in B^+_{M_{\Phi}\left(\mathcal{M},\tau\right)}:\mu\left(n;x\right)=\frac1{l_\Phi(n)},~\forall n\in \mathbb{N},n<\tau\left(r(x)\right)\right\}.\] 
\end{theorem}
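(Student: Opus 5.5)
We prove the two inclusions separately. Throughout, $x\ge0$ lies in $B^+_{M_\Phi(\cM,\tau)}$, so $\mu(n;x)\le 1/l_\Phi(n)$ for every $n$. Since all atoms have trace one, $\mu(x)$ is a step function constant on each $[n,n+1)$, and $\mu(n;x)$ is the $n$-th eigenvalue of $x$ counted with multiplicity. It is nonzero exactly for $n<\tau(r(x))$. Because $\Phi(\infty)=\infty$, compactness-type arguments give that $x$ is diagonalizable: $x=\sum_{n<\tau(r(x))}\mu(n;x)p_n$ with mutually orthogonal minimal projections $p_n$. For $x\in M_\Phi$ we have $\mu(n;x)\le \|x\|/l_\Phi(n)\to0$, so every nonzero eigenvalue has finite multiplicity.

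\textbf{Sufficiency.} Assume $\mu(n;x)=1/l_\Phi(n)$ for all $n<N:=\tau(r(x))$, and let $x=\frac{y+z}2$ with $y,z\in B^+$.
\begin{enumerate}
\item Set $r=r(x)$. From $0\le y\le 2x$ we get $y=ryr$, and similarly $z=rzr$.
\item Cut down by the spectral projections $q_k=\sum_{n<k}p_n$, chosen compatibly with ties in the eigenvalues. We have $\tau(q_k xq_k)=\sum_{n<k}1/l_\Phi(n)$, and by Ky Fan $\tau(q_kyq_k)\le\sum_{n<k}\mu(n;y)\le\sum_{n<k}1/l_\Phi(n)$, with the same bound for $z$. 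Averaging forces equality throughout.
\item Argue by induction on $k$. Equality in Ky Fan means $q_k$ is a spectral projection of $y$ for its top $k$ eigenvalues, and the partial sums force $\mu(n;y)=1/l_\Phi(n)$. Since $p_k=q_{k+1}-q_k$ is minimal, $p_kyp_k=\mu(k;y)p_k$.
\item Conclude $y=\sum_n \mu(n;y)p_n$ and $\mu(n;y)=\mu(n;x)$, hence $y=x$, and likewise $z=x$.
\end{enumerate}
The only subtlety is when $N=\infty$ and the sums diverge; this is harmless since we work with finite $k$ at each step. Degenerate eigenvalues (equal consecutive values of $l_\Phi$) are handled by grouping equal eigenvalues into blocks and using that on a block $y$ is a positive operator with trace equal to (dimension)$\times$(bound) and norm at most the bound, hence equal to the bound times the identity.

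\textbf{Necessity.} Suppose $\mu(m;x)<1/l_\Phi(m)$ for some $m<N$; we produce a nontrivial decomposition. The main obstacle is perturbing without violating the constraint at other indices, because the rearrangement can shift. Take the smallest eigenvalue block containing index $m$, namely the eigenvalue $\lambda=\mu(m;x)$ with multiplicity block $[a,b]$. Choose $\varepsilon>0$ small and set $y=x+\varepsilon p$ and $z=x-\varepsilon p$, where $p$ is the minimal projection $p_b$ at the last index of the block.
\begin{enumerate}
\item Since $\lambda>0$, $z\ge0$ for small $\varepsilon$.
\item For $y$, the eigenvalue $\lambda+\varepsilon$ moves into position $a$, so we need $\lambda+\varepsilon\le 1/l_\Phi(a)$. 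This holds with room: $l_\Phi$ is nondecreasing, so $1/l_\Phi(a)\ge 1/l_\Phi(m)>\lambda$. The remaining indices shift down by at most one within the block, where they keep value $\lambda\le 1/l_\Phi(n)$.
\item For $z$, the eigenvalue decreases, so $\mu(z)\le\mu(x)$ and $z$ stays in the ball.
\end{enumerate}
Thus $x$ is not extreme. Finally, the zero element is included in the statement vacuously (no $n<0$), consistent with $0$ being extreme in a positive cone ball.
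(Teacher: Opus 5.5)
Your proof is correct. The necessity direction follows the paper's perturbation idea, but the sufficiency direction takes a different route.

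\textbf{Sufficiency.} The paper does not stay inside the atomic algebra. It embeds $x\mapsto\mathbf{1}\overline{\otimes}x$ into the non-atomic algebra $L_\infty(0,1)\overline{\otimes}\mathcal{M}$ and reruns the argument of Theorem~\ref{posi non-atomic}. There, submajorization gives $\mu(y)=\mu(z)=\mu(x)$, and the passage from equal singular value functions to $y=z=x$ is delegated to the non-atomic Lemma~\ref{Lemma 5.4} from \cite{CKS}.

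You stay in $\mathcal{M}$. Testing Ky Fan against the specific projections $q_k$ built from eigenvectors of $x$ yields two things: $\mu(y)=\mu(x)$, and that every $q_k$ is a Ky Fan maximiser for $y$. Minimality of the $p_n$ then lets you identify $y$ with $x$ directly. This is more elementary and self-contained: no tensor product and no appeal to \cite{CKS}. The price is that it relies on the discrete eigenvalue structure and so does not transfer to the non-atomic case. The paper's route is shorter once the non-atomic theory is available.

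\textbf{A step to tighten.} Your Step~3 claim that Ky Fan equality makes $q_k$ a spectral projection of $y$ is not justified by itself. In your write-up it is actually proved by the block argument, and that argument must be run as an induction over eigenvalue blocks.
\begin{itemize}
\item Once $yq_a=xq_a$ is known, the remaining nonzero eigenvalues of $(\mathbf{1}-q_a)y(\mathbf{1}-q_a)$ are $1/l_\Phi(n)$ for $a\le n<\tau(r(x))$, so its norm is $\lambda=1/l_\Phi(a)$.
\item A positive operator of norm $\lambda$ that attains $\lambda$ on a unit vector has that vector as an eigenvector. Hence $PyP=\lambda P$ upgrades to $yP=\lambda P$.
\item This upgrade is what kills the off-diagonal terms $p_jyp_k$ in Step~4.
\item When $\tau(r(x))=\infty$, finish with $q_k\uparrow r(x)$ strongly and $y$ bounded.
\end{itemize}

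\textbf{Necessity.} You perturb one eigenvalue, as in the proof of Theorem~\ref{atomless extreme points}. You control the rearrangement explicitly: the increased eigenvalue moves to the top index $a$ of its block, and there $1/l_\Phi(a)\ge 1/l_\Phi(m)>\lambda$. The paper instead splits the perturbed element into three mutually disjoint pieces, each dominated by the corresponding piece of the extremal profile. Comparing distribution functions then handles the rearrangement without tracking where the perturbed eigenvalue lands.
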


In particular, in the case when $\Phi(t)=t^{1/p}, ~t\ge0$, the above result implies that
\[\operatorname{extr}\left(B^+_{\mathcal{C}_{p,\infty}}\right)=
    \left\{x\in B^+_{\mathcal{C}_{p,\infty}}:\mu\left(n;x\right)=(n+1)^{-1/p},~\forall n\in \mathbb{N},n<\tau\left(r(x)\right)\right\}.\]

\section{Positive  surjective isometries on a  noncommutative quasi-Banach Marcinkiewicz space}






Analogously to the case of weak $L_p$-spaces discussed above,
the main result in \cite{HSZ} does not apply to isometries on {quasi-Banach} Marcinkiewicz spaces in the present paper\footnote{ 
The Marcinkiewicz quasi-norms in the present paper are not necessarily monotone  
 with respect to the logarithmic submajorization\cite{HSXZ,SZ23}.
On the other hand,
 all 
  quasi-Banach symmetric  spaces
 have equivalent    symmetric  quasi-norms which are monotone with respect to the logarithmic submajorization\cite{K98}. }.

In Chapter \ref{positive isometries}, we study positive isometries on quasi-Banach Marcinkiewicz spaces $M_\Phi\left(\mathcal{M},\tau\right)$ when $\Phi$ is admissible and $\mathcal{M}$ is non-atomic or atomic with all atoms having trace equal to $1$.
An important ingredient of the proofs for the following two theorems 
(the  main results of Chapter \ref{positive isometries}) is a (quasi-Banach) noncommutative version of Abramovich's theorem \cite{Abr88} (see also \cite{Veksler1984} due to Veksler), which shows that any positive surjective isometry on Banach lattices is an order isomorphism.
This extends results in  \cite[Lemma 3.2]{CMS} and  \cite[Lemma 5.1]{HSZ}, and is of interest in its own right. 

{Having this at hand, we prove that all  positive surjective isometries on $M_\Phi\left(\mathcal{M},\tau\right)$ are disjointness-preserving and thereby obtain the following description.}



\begin{theorem} \label{Marcin6}
   Let $\Phi:[0,\infty) \to[0,\infty)$ be admissible  and  strictly increasing satisfying $\frac1\Phi\in \left(L_1+L_\infty\right)\left(0,\infty\right)$  and $\Phi\left(\infty\right):=\displaystyle\lim_{t \to \infty} \Phi\left(t\right)=\infty$.
     Let $\mathcal{M}$ be a semifinite non-atomic von Neumann algebra equipped with a semifinite faithful normal trace $\tau$ and $T$ be a positive surjective isometry on a quasi-Banach Marcinkiewicz space $M_{\Phi}\left(\mathcal{M},\tau\right)$. 
     Then, there  exist  a positive scalar $b$ and  a  (normal) Jordan $*$-isomorphism $J:S(\cM,\tau)\to S(\cM,\tau)$
     such that
    \[T\left(x\right)=bJ\left(x\right),~\forall x \in M_{\Phi}\left(\mathcal{M},\tau\right),\]
    where $J$ is such that 
   $\tau\left(J\left(p\right)\right)= \tau\left(J\left(q\right)\right)$ for all $\tau$-finite projections $p,q$   satisfying $\tau\left(p\right)=\tau\left(q\right)$.
\end{theorem}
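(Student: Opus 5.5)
Since $T$ is a positive surjective isometry, the first step is to show that $T^{-1}$ is also positive, so that $T$ is an order isomorphism of $M_\Phi(\cM,\tau)$. This is the noncommutative quasi-Banach version of Abramovich's theorem announced above. I would prove it with Theorem \ref{posi non-atomic}. An isometry preserves the unit ball, and a positive map sends $B^+$ into $B^+$. I would first show that $T$ maps $\operatorname{extr}(B^+)$ onto itself. Then I would use the fact that extreme points $e$ with $\mu(e)=\frac1\Phi\chi_{(0,\tau(r(e)))}$ carry the support structure: every positive element of norm at most one is dominated, up to a scalar, by combinations of such extreme points. From this I would deduce that $T^{-1}$ is positive.

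With $T$ an order isomorphism, I would show that $T$ preserves disjointness. The plan is to use extreme points again. For positive $x\perp y$, the extreme points of $B^+$ supported on $r(x)+r(y)$ split into disjoint sums, and $\Phi$ strictly increasing forces the rigidity $\mu(e)=\frac1\Phi\chi$. Concretely, if $e_1\perp e_2$ are extreme with traces $s_1$ and $s_2$, then an element $e$ with $\mu(e)=\frac1\Phi\chi_{(0,s_1+s_2)}$ relates to them by rearrangement. Strict monotonicity of $\Phi$ prevents overlapping supports from attaining the sup-norm $1$ on the whole interval. Alternatively, I would use the order-isomorphism property directly: positive $x\perp y$ is equivalent to $|x-y|=x+y$, i.e.\ to the lattice-type condition $x\wedge y=0$ in the projection sense. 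This transfers through an order isomorphism between the positive cones, which preserves the order structure on $S(\cM,\tau)_+$ restricted to the space.

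Next, a positive, disjointness-preserving bijection maps projections of finite trace to positive operators with disjoint supports. I would define $J$ on projections by $J(p)=s(T(p))$, the support projection, and show that $T(p)=b_p J(p)$, i.e.\ $T(p)$ is a multiple of a projection. The argument: $p$ is extreme-related, since $\Phi(\tau(p))^{-1}$-scaled pieces of $p$ can be assembled into extreme points. If $T(p)$ had two distinct spectral values, splitting along the spectral decomposition and pulling back by $T^{-1}$ would contradict disjointness and the isometry property. Additivity on orthogonal projections then gives an orthoadditive map on the projection lattice. A Dye/Gleason-type argument, or the standard extension used in \cite{HSZ,FGHS}, turns this into a normal Jordan $*$-homomorphism $J$, surjective because $T$ is. Normality comes from order continuity of the order isomorphism.

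Finally I would show that the scalar $b_p$ does not depend on $p$ and that $\tau(J(p))$ depends only on $\tau(p)$. For this, compute $\|T(p)\|=b_p\,\Phi(\tau(J(p))_-)^{-1}$-type expressions against $\|p\|=\sup_{t<\tau(p)}\Phi(t)$. Comparing $p$, $q$ and $p+q$ for orthogonal $p,q$ of equal trace, strict monotonicity of $\Phi$ forces $\tau(J(p))=\tau(J(q))$ and $b_p=b_q$. The non-atomic assumption lets every pair of projections be connected through chains of such comparisons, so $b_p=b$ is constant. The identity $T=bJ$ then extends from finite-rank combinations of projections to all of $M_\Phi$ by density in the order/monotone sense together with normality.

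I expect the main obstacle to be the first step, positivity of $T^{-1}$. There is no convexity or duality in the quasi-Banach setting, so Abramovich's lattice argument must be replaced by the extreme-point description of Theorem \ref{posi non-atomic}.
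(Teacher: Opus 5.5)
\textbf{Step 1 is circular, and every later step depends on it.} You plan to show that $T$ maps $\operatorname{extr}(B^+)$ onto itself and then deduce that $T^{-1}$ is positive, but the implication only runs the other way. Take $x\in\operatorname{extr}(B^+)$ and suppose $T(x)=\frac{y+z}{2}$ with $y,z\in B^+$. Without positivity of $T^{-1}$ you only get $x=\frac{T^{-1}y+T^{-1}z}{2}$, where $T^{-1}y,T^{-1}z$ are self-adjoint elements of the full unit ball. By Theorem \ref{atomless extreme points}, an $x$ with $\mu(x)=\frac1\Phi\chi_{_{(0,\tau(r(x)))}}$ and $\tau(r(x))<\tau(\mathbf{1})$ is not extreme in the full ball. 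Indeed, $x=\frac{(x+w)+(x-w)}{2}$ for a self-adjoint $w\perp x$ with $\mu(w)=\mu(\frac1\Phi\chi_{_{(\tau(r(x)),\tau(\mathbf{1}))}})$, so nothing excludes $T(x\pm w)\ge 0$. This is why the paper, in Lemma \ref{Marcin1}, first invokes Proposition \ref{T(x)ge0} and only then concludes that $T$ preserves $\operatorname{extr}(B^+)$.

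Your domination remark does not close the circle either. Knowing $0\le y\le c\,e$ says nothing about the sign of $T^{-1}(y)$. Moreover, a bounded positive element is not even a finite positive combination of nonzero extreme points, since those are all unbounded ($\Phi(0)=0$).

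What is missing is a functional $\kappa$ with four properties: it is invariant under surjective isometries, vanishes only at $0$, satisfies $\kappa(|x|)=\kappa(x)$, and is monotone in the operator sense $-a\le h\le a\Rightarrow\kappa(h)\le\kappa(a)$. The quasi-norm itself is not monotone in this sense: for $\Phi(t)=t$ on $M_2(\mathbb{C})$, $a=e_{11}+\frac12e_{22}$ and $h=\frac1{\sqrt2}(e_{12}+e_{21})$ satisfy $-a\le h\le a$, but $\norm{h}=\sqrt2>1=\norm{a}$. The paper takes $\kappa(x)=\sup_{L\in\cL}\norm{Lx}_p$ over contractions into $L_p(\cM,\tau)$, $0<p<1$. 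It proves the monotonicity in Lemma \ref{longlong}: write $h=a^{1/2}ca^{1/2}$, split $c$ along the dyadic spectral projections of $a$, Fej\'er-average, and use subharmonicity of $\lambda\mapsto\norm{f(\lambda)}_p^p$ on a strip (Lemma \ref{analytic}). An Abramovich-type argument with $x_++nx_-$ then gives $x_-=0$ (Proposition \ref{T(x)ge0}).

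\textbf{Step 2 has a second, independent error.} Your `alternative', that positive disjointness is a lattice-type condition carried over by order isomorphisms, is false noncommutatively. The positive cone is not a lattice, and $|x-y|$ is not an order invariant. For instance, with $a=e_{11}+e_{12}+e_{22}$ the map $x\mapsto axa^*$ is an order automorphism of $M_2(\mathbb{C})$. Yet $ae_{11}a^*=e_{11}$ and $ae_{22}a^*=e_{11}+e_{12}+e_{21}+e_{22}$ are not orthogonal.

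The extreme-point route is the right one (Lemma \ref{Marcin1}), but it presupposes Step 1, and its mechanism is not strict monotonicity of $\Phi$. Suppose $x$ and $x+y$ are both in $\operatorname{extr}(B^+)$ and $T(x)\not\perp T(y)$. Then for some $a>0$ one gets $\int_0^a\mu(T(x+y))\ge\int_0^a\mu(r(T(x))T(x+y)r(T(x)))>\int_0^a\mu(T(x))$. Both ends equal $\int_0^a\frac1\Phi$, which is finite precisely because $\frac1\Phi\in L_1+L_\infty$. Since the sum of two disjoint extreme points is not extreme, you then still need the approximation argument of Proposition \ref{non-atomic posi1} to reach arbitrary positive $x\perp y$.

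\textbf{Step 3 understates what needs proving.} The claim that every $T(p)$ is a scalar multiple of a projection is not a side remark. In the commutative case $T(f)=b\,(f\circ\phi)$ it amounts to $b$ being constant, which is exactly the part of Theorem \ref{non-atomic posi2} that needs Cases 1 and 2. Disjointness gives no contradiction there. What works is an explicit norm comparison, such as $(1+\delta)q_1+q_2$, $q_1+(1+\delta)q_2$ and $q_1+q_2$ against their images, using that $\Phi$ is strictly increasing. Once that is in place, your remaining deductions are fine: $b_p$ is independent of $p$, and $\tau(J(p))$ is determined by $\tau(p)$. Extending $J$ to $S(\cM,\tau)$ still requires showing $J$ is continuous in measure.
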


{
In Yeadon's description of positive surjective isometries on noncommutative $L_p$-spaces, $b$ is affiliated with $Z(\cM)$ but need not be a scalar \cite[Theorem 2]{Y81}.
For (noncommutative) symmetric spaces which do not coincide with $L_p$-spaces, the description of positive surjective isometries, established in  \cite[Theorem 7.2]{KR}, \cite{zaidenberg, z77, R97} and \cite[Theorem~7.0.5]{FGHS}, showed that $b$ is necessarily a scalar.
Surprisingly, we obtain the same conclusion for the quasi-Banach Marcinkiewicz spaces, as established in Theorem~\ref{Marcin6}.
}

 In the special  setting when $\cM =B(H)$, 
  characterizations of positive surjective isometries  on 
  Marcinkiewicz operator ideals are given (the  weak $\ell_p$ operator ideals $C_{p,\infty}$, $0<p<\infty$, are covered by the result below).

\begin{theorem} \label{atomic J}
    Let $\Phi:[0,\infty) \to[0,\infty)$ be 
    admissible
    satisfying $\Phi\left(\infty\right):=\displaystyle\lim_{t \to \infty} \Phi\left(t\right)=\infty$.
     Let $\mathcal{M}$ be a semifinite atomic von Neumann algebra with all atoms having trace equal to one
         and $T$ be a positive surjective linear operator on a quasi-Banach Marcinkiewicz space $M_{\Phi}\left(\mathcal{M},\tau\right)$.
     Then, 
     $T$ is an isometry if and only if there exists a (normal) Jordan $*$-isomorphism $J:\cM\to \cM$ 
     such that
    \[T\left(x\right)=J\left(x\right),~\forall x \in M_{\Phi}\left(\mathcal{M},\tau\right),\]
    where $J$ preserves the singular value functions.
\end{theorem}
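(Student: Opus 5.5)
The easy direction comes first. If $T=J|_{M_\Phi}$ for a Jordan $*$-isomorphism $J$ with $\mu(J(x))=\mu(x)$, then $\norm{T(x)}_{M_\Phi}=\sup_t\Phi(t)\mu(t;J(x))=\norm{x}_{M_\Phi}$. Since $J^{-1}$ is again a Jordan $*$-isomorphism preserving singular values, $T$ is also surjective, positive and isometric. The whole effort therefore goes into the converse.

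For the converse, let $T$ be a positive surjective isometry. The first step is to show that $T^{-1}$ is also positive, so that $T$ is an order isomorphism of $M_\Phi(\cM,\tau)^+$ onto itself. This is the quasi-Banach noncommutative Abramovich-type result announced before the theorem, which we take as available. Consequently $T$ maps $B^+_{M_\Phi}$ bijectively onto itself and preserves extreme points of the positive unit ball.

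By Theorem~\ref{extr point atomic}, these extreme points are exactly the positive $x$ with $\mu(n;x)=1/l_\Phi(n)$ for $n<\tau(r(x))$. In particular, for a projection $p$ of trace one (an atom, i.e. a minimal projection), the element $x=p/l_\Phi(0)$ is extreme. The second step is to identify these rank-one extreme points intrinsically, for instance as the nonzero extreme points $x$ such that $x$ is not comparable to any strictly larger extreme point. Equivalently, $\tau(r(x))=1$ is characterized through the order structure: $y\ge x$ extreme implies $y=x$ fails, while the minimal nonzero ones in the order among extreme points have rank one. Indeed, if $x\le y$ with both extreme, then $r(x)\le r(y)$, and the extreme points of support rank $k$ are characterized by "the longest chain below them has length $k$". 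Hence $T(p)=c_p q$ for a minimal projection $q$, and the singular value condition forces $c_p=1$ (here $l_\Phi(0)$ cancels). Since $T$ and $T^{-1}$ preserve $\le$ and $p\perp p'$ for atoms iff $p+p'$ times a suitable scalar structure is extreme, one obtains that $T$ maps orthogonal atoms to orthogonal atoms. More directly, $(p+p')$ rescaled as $p/l_\Phi(0)+p'/l_\Phi(1)$ is extreme exactly when $p\perp p'$.

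The third step extends from atoms to all finite-rank operators and then to all of $M_\Phi$. Every finite-rank positive operator is a finite sum $\sum\lambda_i p_i$ of orthogonal atoms, so $T$ preserves orthogonality of finite families of projections and sends finite-rank projections to projections of the same trace. Restricted to finite-rank operators in each finite corner, $T$ is then a positive linear map sending projections to projections and preserving orthogonality. By Dye's theorem (or a direct argument on $M_n$), combined with additivity and the fact that $T$ is unital on increasing corners $p_n\cM p_n$, $T$ agrees there with a Jordan $*$-homomorphism $J$. These maps are compatible, so they glue to a normal Jordan $*$-isomorphism of $\cM$, which is surjective because the same argument applies to $T^{-1}$. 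As $J$ maps atoms to atoms, it preserves $\mu$ on finite-rank operators and hence on all of $S(\cM,\tau)$. Finally, $T=J$ on $M_\Phi$: both maps are positive, so for $0\le x\in M_\Phi$ one approximates $x$ from below by finite-rank spectral truncations $x_n\uparrow x$. Then $T(x)\ge T(x_n)=J(x_n)$, and the isometry gives $\norm{T(x)-J(x_n)}=\norm{x-x_n}$. Convergence in measure then identifies $T(x)=J(x)$, and the identity extends to all of $M_\Phi$ by linearity.

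The main obstacle is the second step: proving that $T$ sends atoms to atoms and preserves orthogonality using only the description of extreme points. I would first try the chain-length characterization of the support rank among extreme points, ordered by $\le$, using that $T$ is an order isomorphism.
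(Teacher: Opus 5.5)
Your route differs from the paper's, and the atom-level part of it is sound. The paper first shows that $T$ preserves disjointness of \emph{all} positive pairs (Proposition~\ref{atomic posi}), then uses \cite{HSZ} to get $T=bJ$ on the whole space, and uses atoms only to force $b=\mathbf{1}$. You instead read the structure off the extreme points.

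\begin{itemize}
\item The characterization that works is: the minimal nonzero extreme points of $B^+_{M_\Phi}$ are exactly the elements $p/l_\Phi(0)$ with $p$ an atom. Your first phrasing, ``not comparable to any strictly larger extreme point'', is false, since $p/l_\Phi(0)\le p/l_\Phi(0)+p'/l_\Phi(1)$.
\item The test ``$p/l_\Phi(0)+p'/l_\Phi(1)$ is extreme iff $p\perp p'$'' is correct, so $T$ sends orthogonal atoms to orthogonal atoms.
\item Linearity then gives $T(x^2)=T(x)^2$ for self-adjoint $x\in\cF(\cM,\tau)$; Dye's theorem is not needed. Hence $T|_{\cF}$ is a Jordan $*$-isomorphism of $\cF(\cM,\tau)$ onto itself.
\item Your ``gluing'' to a normal Jordan $*$-isomorphism of $\cM$ still needs an argument. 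For example: $T|_{\cF}$ is operator-norm isometric, so it extends to the norm closure $\bigoplus_{c_0}\mathcal{K}(H_i)$, and its second adjoint is a normal Jordan $*$-isomorphism of $\cM$. This is standard.
\end{itemize}

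The genuine gap is the final step, where you identify $T=J$ on all of $M_\Phi$. The relation $\norm{T(x)-J(x_n)}_{M_\Phi}=\norm{x-x_n}_{M_\Phi}$ gives you nothing, because finite-rank truncations do not converge in the Marcinkiewicz quasi-norm. Take $\mu(k;x)=1/l_\Phi(k)$ and let $x_n$ be the top-$n$ spectral truncation. Then $\norm{x-x_n}_{M_\Phi}=\sup_k l_\Phi(k)/l_\Phi(k+n)$, which equals $1$ for every $n$ when $\Phi(t)=t^{1/p}$.

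Since all atoms have trace one, convergence in measure here is operator-norm convergence. The only bound you have is $\norm{T(x)-J(x_n)}_\infty\le\norm{x-x_n}_{M_\Phi}/l_\Phi(0)$, which is bounded but not small. So ``convergence in measure identifies $T(x)=J(x)$'' is unsupported. More generally, agreement on $\cF$ cannot be propagated by continuity, because $\cF$ is not dense in the non-separable space $M_\Phi$.

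The missing ingredient is order continuity, and you already have what produces it. Because $T$ and $T^{-1}$ are positive (Proposition~\ref{T(x)ge0}) and $M_\Phi$ is an order ideal in $S(\cM,\tau)$, the relation $x_n\uparrow x$ forces $T(x_n)\uparrow T(x)$ in $S(\cM,\tau)$:
\begin{itemize}
\item Let $y=\sup_n T(x_n)$. Then $0\le y\le T(x)$, so $y\in M_\Phi$.
\item $T^{-1}(y)\ge x_n$ for all $n$, so $T^{-1}(y)\ge x$, that is, $y\ge T(x)$.
\end{itemize}
This is exactly the normality of Proposition~\ref{order isom}. Combined with normality of $J$, it gives $T(x)=\sup_n J(x_n)=J(x)$.

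This is precisely what the paper obtains for free from the \cite{HSZ} representation, which yields $T=bJ$ on all of $M_\Phi$ at once. Your more elementary, atom-by-atom construction must supply both the extension to $\cM$ and this order-continuity step by hand.
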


\section{Isometries on  noncommutative weak \texorpdfstring{$L_p$}{Lp}-spaces, \texorpdfstring{$1<p<\infty $}{1 < p < infinity} and \texorpdfstring{$\mathcal{C}_{p,\infty}, ~0<p<\infty$}{C(p,infinity), 0 < p < infinity}}


 
In Chapter \ref{s6}, we 
study not necessarily positive isometries on a wide class of noncommutative 
quasi-Banach symmetric spaces, whose quasi-norms have   equivalent symmetric norms. 
In particular, we obtain the following descriptions of isometries on  noncommutative weak $L_p$-spaces when $1<p<\infty $. 
The main ingredients of this chapter  are   the Banach envelope norm introduced by Peetre in \cite{Peetre}, and descriptions of isometries on (not necessarily separable) noncommutative symmetric spaces obtained in \cite{FGHS,AC17,AC,KR}. 

We emphasize that the results  in Theorem \ref{sigma-finite} are logically independent of those presented in the preceding sections.  


\begin{theorem} \label{sigma-finite}
    Assume that $1<p<\infty$. 
    Let $\mathcal{M}$ be a $\sigma$-finite non-atomic von Neumann algebra equipped with a semifinite faithful normal trace $\tau$.
    Let $T$ be a surjective linear  mapping on a weak $L_p$-space $L_{p,\infty}\left(\mathcal{M},\tau\right)$.
    Then, $T$ is an isometry if and only if there exist a (normal) Jordan $*$-isomorphism $J:S\left(\mathcal{M},\tau\right)\to S\left(\mathcal{M},\tau\right)$, a unitary operator $u$ in $\mathcal{M}$ and a positive scalar $a$ such that
    \[T(x)=auJ(x), ~\forall x\in L_{p,\infty}\left(\mathcal{M},\tau\right)\]
    and 
    \[\tau(J(x))=a^{-p}\tau(x), ~\forall x\in  S\left(\mathcal{M},\tau\right)_+ .\]

    If, in addition, $\tau(\mathbf{1})<\infty$, then
    $T$ is an isometry if and only if  there exist
    a trace-preserving (normal) Jordan $*$-isomorphism $J:S\left(\mathcal{M},\tau\right)\to S\left(\mathcal{M},\tau\right)$ and a unitary operator $u\in\mathcal{M}$ such that
    \[T(x)=uJ(x), ~\forall x\in L_{p,\infty}\left(\mathcal{M},\tau\right).\]
\end{theorem}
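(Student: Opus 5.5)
The plan is to reduce the statement to the known description of surjective isometries on noncommutative symmetric Banach spaces, using the Banach envelope of the quasi-norm. Write $X=L_{p,\infty}(\mathcal{M},\tau)$ with quasi-norm $\norm{\cdot}_X=\sup_{t>0}t^{1/p}\mu(t;\cdot)$, and consider Peetre's Banach envelope norm
\[
\norm{x}_{\widehat X}:=\inf\Big\{\sum_k \norm{x_k}_X : x=\sum_k x_k\Big\}.
\]
Because $1<p<\infty$, this is an equivalent norm on $X$. It is also intrinsically determined by the quasi-norm, so any surjective quasi-norm isometry $T$ of $X$ is automatically a surjective isometry of $(X,\norm{\cdot}_{\widehat X})$.

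The first step is to identify $\norm{\cdot}_{\widehat X}$ concretely. I expect it to be the Marcinkiewicz norm
\[
\norm{x}=\sup_{t>0}\frac{t^{1/p}}{t}\int_0^t\mu(s;x)\,ds,
\]
possibly up to the constant $p/(p-1)$. More robustly, I would show only that it is a symmetric (rearrangement-invariant) norm that is fully symmetric and not a multiple of an $L_q$-norm. The direction $\ge$ (up to constants) follows from the triangle inequality for the Calderón norm and the fact that it dominates $\norm{\cdot}_X$. The reverse direction uses non-atomicity: a function with $\mu(x)\prec\prec c\,t^{-1/p}$ is approximated by averages of rearrangements of $c\,t^{-1/p}$, i.e. convex combinations of elements of quasi-norm $c$. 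The envelope's unit ball is the closed convex hull of the quasi-norm unit ball, and in the noncommutative setting one gets this via unitary orbits.

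Next, I apply the description of surjective isometries on noncommutative symmetric spaces on $\sigma$-finite non-atomic algebras from \cite{FGHS} (see also \cite{AC17,AC,KR}) to the Banach space $(X,\norm{\cdot}_{\widehat X})$, which is not an $L_q$-space. This yields $T(x)=uJ(x)\cdot b$ with $u\in\mathcal{M}$ unitary, $J$ a normal Jordan $*$-isomorphism, and $b$ a positive central element. Since the space is not $L_q$, the symmetric-space results force $b=a$ to be a scalar, and $J$ must satisfy $\mu(J(x))=\mu(x)(a^{p}\,\cdot)$. The scaling follows from the dilation invariance of $t^{1/p}$: the norm of $\chi_{[0,s)}$ is $s^{1/p}$, so $\norm{aJ(e)}=\norm{e}$ for projections $e$ gives $a\,\tau(J(e))^{1/p}=\tau(e)^{1/p}$, hence $\tau(J(e))=a^{-p}\tau(e)$. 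Normality then extends this to all positive $x$.

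For the converse, take any $T=auJ$ with $\tau\circ J=a^{-p}\tau$. Then $\mu(t;J(x))=\mu(a^{p}t;x)$, so
\[
\sup_t t^{1/p}\,a\,\mu(a^{p}t;x)=\norm{x}_X.
\]
Multiplication by the unitary $u$ preserves singular values, so $T$ is an isometry. It is surjective because $J$ is invertible. In the finite case, $\tau(J(\mathbf{1}))=\tau(\mathbf{1})<\infty$ forces $a=1$, so $J$ is trace-preserving.

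The main obstacle is the first step: showing that the Banach envelope of the weak quasi-norm is a genuine symmetric norm on a space to which the cited classification applies, and in particular that it is not an $L_q$ norm and that the Jordan part preserves the trace up to the scalar. If an exact identification is hard, I would try to prove only that the envelope unit ball is the closed convex hull of the set $\{x:\mu(x)\le t^{-1/p}\}$. This is rearrangement-invariant and suffices to invoke \cite{FGHS}.
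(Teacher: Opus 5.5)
Your plan follows the paper's proof step by step. The paper identifies the decomposition (Banach envelope) norm with $\frac{1}{p'}\|\cdot\|_{(p,\infty)}$ in Lemma~\ref{equi} and transfers the isometry to that norm via Lemma~\ref{isometric norm}. It then applies \cite[Theorem~7.0.5]{FGHS} through Theorem~\ref{general}. It obtains $\tau\circ J=a^{-p}\tau$ from the same projection computation you give, packaged as Corollary~\ref{non-atomic posi3} applied to $u^*T$.

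Two remarks. First, the paper does not prove the hard inequality $\rho\le\frac{1}{p'}\|\cdot\|_{(p,\infty)}$; it cites \cite{BKS} and \cite[Theorem~2.9]{CK17}. Your averaging sketch would need the approximation to hold in the $L_{p,\infty}$ quasi-norm, not merely in measure, and that is exactly what those results supply. Second, the paper also uses the Fatou property of $\|\cdot\|_{(p,\infty)}$ \cite[Proposition~6.7.8]{DPS}, which is a hypothesis of \cite{FGHS}. Your fallback of showing only rearrangement invariance would not provide it.
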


\begin{remark}
    We do not know the description of surjective isometries on $L_{p,\infty}\left(\mathcal{M},\tau\right), ~0<p\le1$, when $\mathcal{M}$ is   non-atomic. 
\end{remark}
 








In Chapter \ref{s5},  we consider isometries on $\cC_{p,\infty}$ when $0< p \le 1 $. The main idea of Chapter \ref{s5} is to extend an isometry on $\cC_{p,\infty}$ to the Hilbert space $\cC_2$.
This trick, together with the results obtained in Chapter \ref{s6}, yields the following. 
 
\begin{theorem} \label{111} 
    Assume that $0<p<\infty$. 
   Let $T$ be a   surjective linear mapping  on a weak $\ell_p$-sequence space $\ell_{p,\infty}$. 
Then, $T$ is an isometry if and only if there exists a permutation $\sigma$ of the set $\{0,1,2,\dots\}$ and  a 
unimodular
sequence $\{u_k\}_{k\ge0}$ such that 
    \[T\left(x\right)\left(n\right)=u_n x\left(\sigma\left(n\right)\right),~\forall x\in \ell_{p,\infty}.\]
\end{theorem}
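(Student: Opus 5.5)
The ``if'' direction is immediate: a map $x\mapsto (u_n x(\sigma(n)))_n$ leaves the decreasing rearrangement unchanged, $\mu(x)=\mu(Tx)$, so it preserves $\norm{\cdot}_{\ell_{p,\infty}}$. Its inverse has the same form, so it is surjective. The real work is the ``only if'' direction, which I split according to $p$.

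\textbf{Case $1<p<\infty$.} Here $\ell_{p,\infty}$ carries an equivalent symmetric Banach norm. I would pass to the Banach envelope norm of Peetre, as described for Chapter \ref{s6}. A surjective isometry of the quasi-norm is also a surjective isometry of the envelope norm, since the envelope is canonically determined by the unit ball. The envelope is a symmetric sequence space distinct from $\ell_2$. I would then invoke the known descriptions of surjective isometries on symmetric sequence spaces \cite{AC17,AC}, which give exactly weighted permutations with unimodular weights. Because the quasi-norm is already symmetric, no further normalisation of the weights is needed.

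\textbf{Case $0<p\le 1$.} Here I would work with the extreme points from Theorem \ref{atomic}, applied with $\cM=\ell_\infty$. The extreme points of the unit ball are exactly the $x$ with $\mu(n;x)=(n+1)^{-1/p}$ for all $n$. A surjective isometry maps the unit ball onto itself and is linear, so it permutes this set. I would encode the coordinates through these extreme points. Take an extreme point $x$ and change a single coordinate $x(k)$ to $\lambda x(k)$, with the modification chosen (e.g. placing the modified entry so that the rearrangement profile forces it) to characterise ``$k$-th coordinate''. From this, $T$ should be shown to preserve the condition ``$x-y$ is supported on one coordinate'' for suitable pairs of extreme points. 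The tool is a sharper statement: two extreme points whose difference is a multiple of some $e_k$ have the unique-profile property. This would show that $T(e_k)$ is a multiple of a single $e_{\sigma'(k)}$, and the norm forces the multiple to be unimodular.

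An alternative for this case, following the hint in the introduction, is to show directly that $T$ is disjointness-preserving. I would do this via finitely supported elements: for disjoint $x,y$ one has $\norm{x+\theta y}=\norm{x+y}$ for all unimodular $\theta$. The aim is to transfer the finite-dimensional sections to an isometry of $\ell_2$, i.e. to extend $T$ to $\cC_2$, and then use the Banach result.

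\textbf{Main obstacle.} The main difficulty is proving that $T$ maps each $e_k$ to a unimodular multiple of some $e_j$ when $p\le 1$. I would first try the extreme-point route. Consider the extreme points $z$ of the form $z=\sum_n (n+1)^{-1/p}\theta_n e_{\pi(n)}$. The element $e_k$ is recovered as a limit of differences of such points divided by the appropriate difference of profile values. One must then argue that $T$ respects this construction, which requires a metric characterisation of ``differ in exactly one coordinate swap'' among extreme points. Once this holds, $T(e_k)=u e_{\sigma(k)}$ with $|u|=1$. Linearity and continuity then extend the formula to finitely supported sequences, and monotone approximation of $\mu$ extends it to all of $\ell_{p,\infty}$. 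Surjectivity gives that $\sigma$ is a bijection.
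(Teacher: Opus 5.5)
Your treatment of $1<p<\infty$ is fine and coincides with the paper's alternative route in Chapter~\ref{s6}. There $T$ preserves the decomposition norm $\rho_p^{\ell_\infty}$ (Lemma~\ref{isometric norm}), which is an equivalent symmetric norm not proportional to $\norm{\cdot}_2$, and then \cite{AC17,AC} apply; those results need the Fatou property, which is Lemma~\ref{Fatou}.

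The genuine gap is the case $0<p\le 1$, which is the real content of the theorem. The step you yourself call the main obstacle, a ``metric characterisation'' of pairs of extreme points differing in one coordinate, is only announced, and neither of your sketched routes delivers it:
\begin{itemize}
\item The identity $\norm{x+\theta y}_{\ell_{p,\infty}}=\norm{x+y}_{\ell_{p,\infty}}$ for all $\theta\in\mathbb{T}$ holds for disjoint $x,y$, but it does not characterise disjointness. Take $x=Me_1+e_0$, $y=e_0$ with $M\ge 2^{1+1/p}$. So this identity cannot be transported through $T$.
\item You give no mechanism producing an $\ell_2$-isometric extension of $T$. Even if you had one, it would not by itself force a permutation form.
\end{itemize}

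The idea you are missing is used in the proof of Theorem~\ref{thm:isom on Marcin seq}, uniformly for all $0<p<\infty$, and it needs no characterisation of pairs. Fix $i$, set $\phi_i=\delta_i\circ T$, and suppose $\phi_i(e_k)\neq 0$. For an extreme point $y$ with all entries nonzero, $y(\lambda)=y+(\lambda-1)y_ke_k$ is extreme for every $\lambda\in\mathbb{T}$, hence so is $T(y(\lambda))$. Put $a=\phi_i(y-y_ke_k)$ and $b=y_k\phi_i(e_k)\neq0$. Then $|\phi_i(y(\lambda))|=|a+\lambda b|$ takes values in the totally disconnected set $\{0\}\cup\{(n+1)^{-1/p}:n\ge 0\}$. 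By connectedness of $\mathbb{T}$ this modulus is constant, which forces $a=0$, i.e.\ $\phi_i(y)=\phi_i(e_k)y_k$. If instead $\phi_i(e_k)=0$ for every $k$, running the same argument for $T^{-1}$ gives a contradiction.

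Your last step also fails as stated. Finitely supported sequences are not dense in $\ell_{p,\infty}$: $x=((n+1)^{-1/p})_{n\ge0}$ is at quasi-norm distance at least $1$ from each of them. So linearity and continuity determine $T$ only on the separable part. Since $T$ is not assumed positive, there is no order continuity that would make ``monotone approximation of $\mu$'' converge.

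For $p=1$ the step is not even true with continuity alone. Let $\omega$ be a Banach limit and set $\varphi(x)=\omega\left(\left(\frac{1}{\log(n+2)}\sum_{k=0}^n x_k\right)_{n\ge0}\right)$. Then $x\mapsto x+\varphi(x)e_0$ is a continuous linear bijection of $\ell_{1,\infty}$ that fixes every $e_k$ without being the identity. So the isometry property must be used again beyond the basis vectors.

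The paper avoids density altogether. It obtains $\phi_i=\phi_i(e_k)\delta_k$ on the set $\Sigma$ of extreme points with nonzero entries. Lemma~\ref{Sigma} then shows that every $x\in m_\Phi$ equals $\frac{c}{2}(y+z)$ for some $y,z\in\Sigma$ and $c>0$. This is an exact algebraic spanning statement, so the formula holds on the whole space at once.
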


\begin{theorem} \label{final}
    Assume that $0<p<\infty$. 
    Let $H$ be a complex separable infinite-dimensional Hilbert space. 
    Let $T$ is a  surjective linear mapping from a weak $\ell_p$-ideal $\mathcal{C}_{p,\infty}$ onto $\mathcal{C}_{p,\infty}$. 
    Then, $T$ is an isometry if and only if there exist unitary operators $U,V$ such that one of the following two statements holds:
    \begin{enumerate} [label=(\roman*)]
    \item $T(x)=UxV^*, ~\forall x\in \mathcal{C}_{p,\infty}$;
    \item $T(x)=Ux'V^*, ~\forall x\in \mathcal{C}_{p,\infty}$, where $x'$ denotes the transpose of $x$. 
    \end{enumerate}
\end{theorem}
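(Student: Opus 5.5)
The "if" direction is immediate. The maps $x\mapsto UxV^*$ and $x\mapsto Ux'V^*$ preserve singular values, so they are surjective isometries of $\mathcal{C}_{p,\infty}$. For the transpose we use $\mu(x')=\mu(x)$, since $x'=Cx^*C$ for a conjugation $C$.

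For the "only if" direction I will follow the strategy announced for Chapter \ref{s5}: extend $T$ to the Hilbert–Schmidt class $\mathcal{C}_2$ and invoke the known description of isometries there.

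\textbf{Step 1: $T$ preserves disjointness.} First I show that $T$ maps rank-one operators to rank-one operators and preserves disjointness $x\perp y$. By Theorem \ref{atomic} the extreme points of the unit ball are exactly the $x$ with $\mu(n;x)=(n+1)^{-1/p}$. A surjective isometry permutes extreme points, and the same holds for the family of "partial" extreme points of faces.

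To detect rank one intrinsically, I would use a metric characterization. An element $x$ of norm one has rank one iff $\|x+y\|=\max(\|x\|,\|y\|)$-type identities hold for "many" $y$. More concretely, for $x,y$ of norm one with $x\perp y$ and both rank one, we have $\|x+\lambda y\|=\max(1,2^{1/p}|\lambda|)$. Disjoint systems can be encoded through such norm identities: $\|\sum_{k\le n}\lambda_k x_k\|=\sup_j (j+1)^{1/p}\lambda^*_j$ for disjoint normalized rank-one $x_k$. Conversely, I need to show that any pair realizing these identities for all scalars must be disjoint and rank one. I would prove this by computing singular values of $x+\lambda y$ for small and large $\lambda$, comparing with the rigid formula, and using the strictness of $(n+1)^{1/p}$ increments.

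\textbf{Step 2: reduction to $\mathcal{C}_2$.} Once $T$ maps orthonormal-type matrix units $e_{ij}=\xi_i\otimes\eta_j$ to a family of the same type, up to phases, $T$ restricted to finite-rank operators preserves $\mu$. Since $\|x\|_{\mathcal{C}_2}$ depends only on $\mu(x)$, $T$ is $\|\cdot\|_2$-isometric on finite-rank operators. It therefore extends to a surjective isometry of $\mathcal{C}_2$; surjectivity follows by applying the same argument to $T^{-1}$.

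\textbf{Step 3: identifying the form of $T$.} Rather than use the Hilbert-space structure alone, which gives only a unitary on $\mathcal{C}_2$, I would use the Chapter \ref{s6} result for non-Hilbertian unitarily invariant norms. A cleaner route is to note that $T$ preserves $\mu$ on finite-rank operators, hence is an isometry for the operator norm and for every Schatten norm $\mathcal{C}_q$, $q\neq 2$. By the Arazy/Sourour characterization of isometries of $\mathcal{C}_q$, $T(x)=UxV^*$ or $Ux'V^*$ on finite-rank operators. The extension to all of $\mathcal{C}_{p,\infty}$ then follows by continuity in a weaker topology: both $T$ and the model map are bounded from $\mathcal{C}_{p,\infty}$ into $\mathcal{C}_{q}$ for some large $q$, where finite-rank operators are dense. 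Alternatively one can use weak$^*$/entrywise continuity, since the entries $\langle T(x)\xi,\eta\rangle$ are determined by the rank-one duality.

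\textbf{Main obstacle.} The main obstacle is Step 1: proving intrinsically, from the quasi-norm alone, that $T$ preserves disjointness and rank one. The quasi-norm is not strictly convex, so I expect to rely heavily on the extreme point description of Theorem \ref{atomic} and on carefully chosen two- and three-element norm identities.
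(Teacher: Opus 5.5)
Your Step 1 is where the proof has to happen, and you give no argument for it. Worse, the mechanism you suggest cannot work, because the function $\lambda\mapsto\norm{x+\lambda y}_{\mathcal{C}_{p,\infty}}$ does not detect rank. Take orthonormal systems $\{\xi_i\}$, $\{\eta_i\}$ and set $x=\xi_0\otimes\eta_0$, $y=\xi_1\otimes\eta_1$, $\tilde y=\xi_1\otimes\eta_1+\delta\,\xi_2\otimes\eta_2$ with $0<\delta\le 3^{-1/p}$. Then $\norm{\tilde y}_{\mathcal{C}_{p,\infty}}=1$, and checking the three terms of the supremum gives
\[
\norm{x+\lambda \tilde y}_{\mathcal{C}_{p,\infty}}=\norm{x+\lambda y}_{\mathcal{C}_{p,\infty}}=\max\left\{\max(1,|\lambda|),\,2^{1/p}\min(1,|\lambda|)\right\},\quad \forall\lambda\in\mathbb{C}.
\]
(Your formula $\max(1,2^{1/p}|\lambda|)$ is only correct for $|\lambda|\le 1$.) The same computation shows that a sufficiently small disjoint tail does not change the norm of any linear combination of a fixed finite disjoint family of rank-one operators. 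So ``two- and three-element norm identities'' cannot certify rank one.

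There is a second problem in Step 2. Even granting rank-one and disjointness preservation, your claim that matrix units go to matrix units needs a rank-one preserver theorem, since $\xi_i\otimes\eta_j$ and $\xi_i\otimes\eta_k$ are not disjoint.

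The paper never proves rank-one preservation. For $0<p\le1$ it uses three facts:
\begin{enumerate}
\item[(a)] $T$ permutes extreme points;
\item[(b)] every extreme point has the same Hilbert--Schmidt norm $\left(\sum_{n\ge0}(n+1)^{-2/p}\right)^{1/2}$;
\item[(c)] $x+\lambda e_0\otimes f_0+\eta 2^{-1/p}e_1\otimes f_1$ stays extreme for all $\lambda,\eta\in\mathbb{T}$.
\end{enumerate}
Expanding $\norm{T(\cdot)}_2^2$ in $\lambda,\eta$ then gives $\mathcal{C}_2$-orthogonality and equal $\mathcal{C}_2$-norms of the images of disjoint rank-one operators (Lemma \ref{constant}). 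Hence $T=cU_1$ on finite-rank operators, with $U_1$ a $\mathcal{C}_2$-isometry. For $1<p<\infty$ the paper takes a different route. $T$ preserves the decomposition norm $\rho_p^{B(H)}$ (Lemma \ref{isometric norm}), which is a Fatou symmetric norm (Lemma \ref{Fatou}), and the known description of isometries of Banach symmetric ideals \cite{AC17,AC} applies.

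Your extension from finite-rank operators to all of $\mathcal{C}_{p,\infty}$ is also invalid. Finite-rank operators are not dense in $\mathcal{C}_{p,\infty}$; their closure is the separable part, where $(n+1)^{1/p}\mu(n;x)\to0$. Knowing that $T$ is continuous from $(\mathcal{C}_{p,\infty},\norm{\cdot}_{\mathcal{C}_{p,\infty}})$ into $\mathcal{C}_q$ does not help. To extend an identity off a subset you need that subset to be dense in a topology on the domain in which $T$ is continuous. Neither $\mathcal{C}_q$-continuity nor weak$^*$-continuity of $T$ on the domain is known a priori. What you must exclude is an isometry that agrees with $x\mapsto UxV^*$ on the separable part but not elsewhere.

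The paper handles this in Lemma \ref{C p, infty}. For an extreme $x$ and a rank-one $e\otimes f$, it multiplies $x$ by finite-rank perturbations of the identity to get an extreme $y$ with $y-x$ finite rank and $e\otimes f$ a Schmidt component of $y$. The unimodular-rotation trick then gives $\langle T(x),T(e\otimes f)\rangle_2=c^2\langle x,e\otimes f\rangle_2$. Corollary \ref{linear combination} extends this to every $x$, so $U_1^*T(x)=cx$ on all of $\mathcal{C}_{p,\infty}$. The final form comes from $U_1$ preserving the K\"othe dual norm $\sum_n(n+1)^{-1/p}\mu(n;\cdot)$ and Sourour's theorem on the separable ideal $\mathcal{E}_p$, not from Schatten-norm isometries.
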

In Chapter \ref{s5}, we also study isometries on  more general quasi-Banach Marcinkiewicz sequence spaces/operator ideals.  

Using the characterization of surjective isometries on $\cC_{p,\infty}$ and $\ell_{p,\infty}$ for $0<p<\infty$ (see Theorem \ref{111} and Theorem \ref{final}), we provide  a complete description of strongly continuous one-parameter groups of isometries of $\cC_{p,\infty}$ and $\ell_{p,\infty}$ in Chapter \ref{s8}.

\chapter{Preliminaries}
In this chapter, we recall the general concepts of $\tau$-measurable operators, the singular value functions, Jordan $*$-isomorphisms and develop some of their basic properties. 
In what follows, $H$ is a complex Hilbert space, $\mathcal{M}$ is a von Neumann algebra on $H$ and $B\left(H\right)$ denotes the $*$-algebra of all bounded linear operators on $H$. The lattice of all orthogonal projections in $\mathcal{M}$ (respectively, $B\left(H\right)$) is denoted by $P\left(\mathcal{M}\right)$ (respectively, $P\left(B\left(H\right)\right)$). 
The commutant of $\mathcal{M}$ is denoted by $\mathcal{M}'$.
For the convenience of the reader, some of the basic definitions from \cite{DPS} are recalled.

\section{\texorpdfstring{\(\tau\)}{tau}-measurable operators and the singular value functions} 
Throughout this paper, $H$ is an infinite-dimensional Hilbert space and $\mathcal{M}$ is assumed to be a semifinite von Neumann algebra on $H$ with a given semifinite faithful normal trace $\tau$.
A linear operator $x:\cD(x)\to H$ is said to be affiliated with 
$\mathcal{M}$, denoted by $x\eta \mathcal{M}$, 
if $yx \subseteq xy$ for all $y\in \cM'$.
A linear operator $x:\cD(x)\to H$ is termed measurable with respect to the von Neumann algebra $\mathcal{M}$, if $x$ is closed, affiliated with $\mathcal{M}$, and there exists a sequence $\{p_n\}_{n=1}^\infty$ in $P\left(\mathcal{M}\right)$ such that $p_n \uparrow \mathbf{1},~p_n\left(H\right) \subseteq \cD(x) $ and $p_n ^\perp$ is a finite
projection for every $n=1,2,\dots$.
The collection of all operators which are measurable with respect to $\mathcal{M}$ is denoted by $S\left(\mathcal{M}\right)$. 
The set $S\left(\mathcal{M}\right)$ is a complex $*$-algebra with unit element $\mathbf{1}$, with respect to the operations of strong sum, strong product, denoted simply by $x+y$ and $xy$ for any $x,y\in S\left(\mathcal{M}\right)$, respectively, and the $*$-operation of taking adjoints. 
The von Neumann algebra $\mathcal{M}$ is a $*$-subalgebra of $S\left(\mathcal{M}\right)$.

A projection $p\in P\left(\mathcal{M}\right)$ is called $\tau$-finite if $\tau\left(p\right)<\infty$. 
An operator $x\in S(\cM)$ is called $\tau$-measurable if   there exists a sequence $\{p_n\}_{n=1}^\infty\subset P\left(\mathcal{M}\right)$ such that $p_n\uparrow \mathbf{1},~p_n\left(H\right)\subseteq \cD(x)$ and $\tau\left(p_n^\perp\right)<\infty$ for all $n$. 
The set of all $\tau$-measurable operators is denoted by $S\left(\mathcal{M},\tau\right)$, which is a $*$-subalgebra of $S\left(\mathcal{M}\right)$.
The set of all self-adjoint elements in $\cM$ is denoted by $\cM_h$.
The set of all positive operators in $S(\cM,\tau)$ (respectively, in $\cM$, etc.) is denoted by $S(\cM,\tau)_+$ (respectively, $\cM_+$, etc.).
If $x\in S\left(\mathcal{M},\tau\right)$, then the spectral distribution function $d\left(|x|\right):[0,\infty)\to [0,\infty]$ of $|x|$ is defined by setting
\[d\left(s;|x|\right)=\tau\left(e^{|x|}\left(s,\infty\right)\right),~s\ge0.\]
Note that $d\left(|x|\right)$ is decreasing and right-continuous.
\begin{definition}
    For $x\in S\left(\mathcal{M},\tau\right)$, the singular value function $\mu\left(x\right):[0,\infty)\to [0,\infty]$ is defined to be the right-continuous inverse of the spectral distribution function $d\left(|x|\right)$, that is,
    \[\mu\left(t;x\right)=\inf\{s\ge0:d\left(s;|x|\right)\le t\},~t\ge0.\]
\end{definition}
From the properties of the right-continuous inverse, it follows that 
\[d\left(s;|x|\right)=m\{t\ge0:\mu\left(t;x\right)>s\}=d\left(s;\mu\left(x\right)\right),~s\ge0.\]
If $\mathcal{M}=L_\infty\left(0,\infty\right)$ with semifinite faithful normal trace $\tau$ given as Chapter \ref{intro}, then the distribution function $d\left(|x|\right)$ of $|x|$ and the decreasing rearrangement of the function $|x|$ are defined by
\[d\left(s;|x|\right)=m\{t>0:|x\left(t\right)|>s\}, ~s\in[0,\infty)\]
and
\[\mu\left(t;x\right)=\inf\{s\ge0:d\left(s;|x|\right)\le t\}, ~t\ge 0,\]
 respectively.
 For a closed, densely defined linear operator $x:\cD(x)\to H$, the null projection $n\left(x\right)$ of $x$ is the projection onto $\operatorname{Ker}\left(x\right)$, the range projection $l\left(x\right)$ of $x$ is the projection onto $\overline{\operatorname{Ran}\left(x\right)}$ and the support projection $r\left(x\right)=\mathbf{1}-n\left(x\right)$ 
 of $x$
 is the projection onto $\overline{\operatorname{Ran}\left(x^*\right)}$. 
 A two-sided ideal $\mathcal{F}\left(\mathcal{M},\tau\right)$ in $\mathcal{M}$ is defined by 
 \[\mathcal{F}\left(\mathcal{M},\tau\right)=\{x\in\mathcal{M}:\tau\left(r\left(x\right)\right)<\infty\}=\{x\in\mathcal{M}:\tau\left(l\left(x\right)\right)<\infty\}.\] 
 If $\mathcal{M}=B\left(H\right)$ is equipped with the standard trace, then $\mathcal{F}\left(\mathcal{M},\tau\right)$ consists of all finite-rank operators on $H$.

Recall that a linear subspace $E(\cM,\tau)$ of $S(\cM,\tau)$, equipped with a quasi-norm $\norm{\cdot}_{E\left(\cM,\tau\right)}$, is called {\itshape symmetric}
if $y\in E(\cM,\tau),x\in S\left(\cM,\tau\right)$ and $\mu(x)\le \mu(y)$ imply that $x\in E(\cM,\tau)$ and $\norm{x}_{E\left(\cM,\tau\right)}\le \norm{y}_{E\left(\cM,\tau\right)}$.
The set of all self-adjoint elements in $E\left(\cM,\tau\right)$ is denoted by $E\left(\cM,\tau\right)_h$.

\section{Jordan $*$-isomorphism} 
Let $(\cM,\tau)$ and $(\cN,\nu)$ be two von Neumann algebras equipped with semifinite faithful normal traces $\tau$ and $\nu$, respectively. 
A complex-linear mapping $J:\cM\to \cN$ (respectively, $S(\cM,\tau)\to S(\cN,\nu)$) is called a Jordan $*$-homomorphism if 
\[
J(x^*)=J(x)^*~ \text{and}~ J\left(x^2\right)=J(x)^2,~x\in \cM \,
\text{(respectively, $x\in S(\cM,\tau)$)},
\]
equivalently, (the following equation is equivalent to $J(x^2)=J(x)^2$)
\[
J(xy+yx)=J(x)J(y)+J(y)J(x),~x,y\in \cM\,
\text{(respectively, $x,y\in S(\cM,\tau)$)}
\]
(see \cite{BR,Y81}).
A Jordan $*$-homomorphism $J$ is called a {\itshape Jordan $*$- monomorphism} if $J$ is injective.
If $J$ is a bijective Jordan $*$-homomorphism, then $J$ is said to be a {\itshape Jordan $*$- isomorphism}. 

Recall that a net $\{x_\alpha\}_{\alpha\in \Lambda}$ of self-adjoint elements in $S\left(\mathcal{M},\tau\right)$ is called increasing (denoted simply by $x_\alpha\uparrow$) if $x_\alpha\le x_\beta$ whenever $\alpha\le \beta$ in $\Lambda$.
If $\{x_\alpha\}_{\alpha\in \Lambda}$ is increasing and $x=\sup_{\alpha}x_\alpha$ exists, then we write $x_\alpha\uparrow x$ \cite{DPS}.

Let $\mathcal{M}_1$ and $\mathcal{M}_2$ be two semifinite von Neumann algebras equipped with semifinite faithful normal traces $\tau_1$ and $\tau_2$, respectively.
Note that a Jordan $*$-isomorphism $J$ from $S\left(\mathcal{M}_1,\tau_1\right)$ onto $S\left(\mathcal{M}_2,\tau_2\right)$ is necessarily normal.
Indeed, let 
$\{x_\alpha\}_{\alpha\in \Lambda}\subset S\left(\mathcal{M}_1,\tau_1\right)$ with 
$0\le x_\alpha\uparrow x\in S\left(\mathcal{M}_1,\tau_1\right)$.
Note that $J$ and $J^{-1}$ both are order-preserving (e.g., see the proof of \cite[p. 113]{DPS}).
Then, $0\le J(x_\alpha)\le J(x)$,
which shows that $\sup_{\alpha\in \Lambda}J(x_\alpha)\le J(x)$.
Let $0\le y\in S\left(\mathcal{M}_2,\tau_2\right)$ with $\sup_{\alpha\in \Lambda}J\left(x_\alpha\right)\le y$.
Then, $x_\alpha\le J^{-1}(y)$ for all $\alpha\in \Lambda$ and so, $x\le J^{-1}(y)$. That is, $J(x)\le y$.
This together with $\sup_{\alpha\in \Lambda}J(x_\alpha)\le J(x)$ implies that $J(x)=\sup_{\alpha\in \Lambda}J\left(x_\alpha\right)$.
Moreover, a Jordan $*$-isomorphism from $\cM_1$ onto $\cM_2$ must be normal \cite[Appendix A]{RR}.

For more details on Jordan $*$-homomorphisms on von Neumann algebras, the reader is referred to \cite[Theorem 3.2.3]{BR}.

\chapter{Extreme points of the unit ball of a  Marcinkiewicz space} \label{unit ball}
Throughout this chapter, unless stated otherwise,
we assume that $\mathcal{M}$ is a semifinite von Neumann algebra equipped with a semifinite faithful normal trace $\tau$.
\section{The case when the unit ball of a quasi-Banach Marcinkiewicz space has extreme points}
In this section, 
we give a necessary and sufficient condition for $x$ to be an extreme point of the unit ball of $M_\Phi\left(\mathcal{M},\tau\right)$
in the case when $\cM$ is either non-atomic or  atomic with all  atoms having the same trace.
 Note that  the unit ball of the  separable part $\ell_{p,\infty}^0$ has no extreme point  when $0<p<\infty$\footnote{It suffices to consider the case when   $ x=\left(x_n\right)_{n=1}^\infty$ is a positive and decreasing element in the unit ball of $\ell_{p,\infty}^0$.  
There exists $n$ such that $x_{n}\ne n^{-1/p}$.
Note that $x_n<  n^{-1/p}$ and  $|2 x_n -n^{-1/p} |\le n^{-1/p} $. 
Letting $y_1=(x_1,x_2,\dots, x_{n-1}, n^{-1/p} ,x_{n+1},x_{n+2},\dots )$
 and
 $y_2= (x_1,x_2,\dots, x_{n-1}, 2  x_{n} -n^{-1/p} ,x_{n+1},x_{n+2},\dots )    $ (in the unit ball of $\ell_{p,\infty}^0$),
 we have 
 $2 x= y_1 + y_2$.

 }.

Recall that a set 
\[W \subseteq \left(L_1+L_\infty\right)\left(\cM,\tau\right) 
 = \{  x\in S(\cM,\tau):  \mu(x)\chi_{_{(0,1)}}\in L_1(0,\infty ) \}\]
 is said to be {\itshape fully symmetric} if $ y \in  \left(L_1+L_\infty\right)\left(\cM,\tau\right)  $ and $x \in W$ with
\[
\int_0^t \mu(s;y)ds\le \int_0^t\mu(s;x)ds
\]
for all $t\ge 0$ (denoted by $y \prec\prec x$)
implies $y \in W$. 
Let $x\in \left(L_1+L_\infty\right)\left(\cM,\tau\right)$. 
The orbit $\Omega\left(x\right)$ of $x$ is defined by
\[\Omega\left(x\right):=\{y\in \left(L_1+L_\infty\right)\left(\cM,\tau\right):y\prec\prec x\}.\]

Below, we consider the case when $\cM$ is non-atomic.

\begin{proof}[Proof of Theorem \ref{atomless extreme points}]
$\left(\Leftarrow\right)$. 
Assume that $\mu\left(x\right) =\frac{1}{\Phi}\chi_{_{\left(0,\tau\left(\mathbf{1}\right)\right)}}$. 
By \cite[Theorem 1.1]{CKS},  $x$ is an extreme point of $\Omega\left(x\right)$. 
Since the unit ball of $M_\Phi\left(\mathcal{M},\tau\right)$ is a subset of $\Omega\left(x\right)$, it follows that $x$ is an extreme point of the unit ball of $M_\Phi\left(\mathcal{M},\tau\right)$.

$\left(\Rightarrow\right)$.
Noting that $x$ is $\tau$-compact,
by \cite[Lemma 1.3]{CKS92}, 
there 
exist a  non-atomic abelian von Neumann subalgebra $\cN$ of $r(x)\cM r(x)$ and a $*$-isomorphism
\[
U:S\left(\mathcal{N},\tau|_\cN\right)\to 
S\left(0,\tau\left(r(x) \right)\right)
\]
such that 
\begin{align}\label{mupreserving}
\mu(z)=\mu\left(U(z)\right) ~ \text{for all} ~z\in S\left(\cN,\tau|_\cN\right).
\end{align}
Assume by contradiction  that \[\mu\left(|x|\right)= \mu\left(x\right)\ne\frac{1}{\Phi}\chi_{_{\left(0,\tau\left(r(x)\right)\right)}}.\]
Let $ 0\le y= U\left(|x|\right)\in S  \left(0,\tau\left(r\left(|x|\right) \right)\right)$.
Note that $\mu(y)\stackrel{\eqref{mupreserving}}{=}\mu(x)$ and $\norm{y}_{M_\Phi\left(\mathcal{M},\tau\right)}\le 1$. 
Since $\mu\left(x\right)$ is right-continuous, it follows that there exist $t_1>t_0>0$ and $\varepsilon>0$ such that for all $t_0\le t\le t_1$,  
\begin{align*} 
0<\mu(t;x)-\varepsilon<\mu(t;x)+\varepsilon<\frac1{\Phi(t)}.
\end{align*}
By \cite[Lemma 3.7.8]{DPS}, there exist two measurable subsets $B_1$ and $B_0$ of $ \left(0,\tau\left(r\left(|x|\right)\right)\right)$ such that $B_0\subset B_1$, $ m\left(B_0\right) = t_0$, $m\left(B_1\right)=t_1$, 
\[ \{\mu\left(t_0;x\right)<  y  \}\subset  B_0 \subset \{\mu\left(t_0;x\right) \le  y  \} \]
and 
\[ \{\mu\left(t_1;x\right) < y  \}\subset  B_1 \subset \{\mu\left(t_1;x\right) \le  y \}. \]
Denote $B:=B_1\setminus B_0$.

Let $x=u|x|$ be the polar decomposition of $x$ and
\[a:=y + \frac{\varepsilon }{2}\chi_{_{B}},  \quad b:=y  - \frac{\varepsilon }{2}  \chi_{_{B}}. \] 
Since \cite[Lemma 3.7.10]{DPS}
\[\mu\left(y \chi_{_{B_0}} \right)=\mu(x)\chi_{_{\left(0,t_0\right)}},~
\mu(y\chi_{_B})=\mu\left(\mu(x)\chi_{_{(t_0,t_1)}}\right),~ 
\mu\left(y\chi_{_{\left(0,\infty \right)\setminus B_1 }}\right)=\mu\left(\mu(x)\chi_{_{\left(t_1,\infty \right)}}\right),\]
it follows that
\[\mu\left(y \chi_{_{B_0}} \right)\le \frac{1}{\Phi} \chi_{_{\left(0,t_0\right)}}, ~\mu\left( y\chi_{_B} \pm \frac{\varepsilon}{2}\chi_{_B} \right)
 \le\mu\left(\frac{1}{\Phi} \chi_{_{\left(t_0,t_1\right)}}\right), ~
  \mu\left(y \chi_{_{\left(0,\infty \right)\setminus B_1 }} \right) \le \mu\left(\frac{1}{\Phi} \chi_{_{\left(t_1,\infty \right)}}\right).\]
This implies that $\mu\left(a\right),\mu\left(b\right)\le \frac{1}{\Phi}$.
Noting that 
$\mu(x)\in S_0\left(0,\tau(r(x))\right)$ and $B\subset {\rm supp} \left(\mu(x)\right)$,
we have $a,b\in S\left(0,\tau(r(x))\right)$.
Hence,
\[
\mu\left(uU^{-1}(a)\right)\le \mu\left(U^{-1}(a)\right)=\mu(a)\le \frac{1}{\Phi},
\]
i.e., $\norm{uU^{-1}(a)}_{M_\Phi(\cM,\tau)}\le 1$. 
Similarly,  $\norm{uU^{-1}(b)}_{M_\Phi(\cM,\tau)}\le 1$.
By the fact that 
  $y=\frac{a+b}{2}$ and $a\ne b$,
we have 
\[
x=u|x|=uU^{-1}\left(y\right)
=\frac{uU^{-1}\left(a\right)+uU^{-1}\left(b\right)}{2}.
\]
    Since $x$ is an extreme point of the unit ball, it follows that  $uU^{-1}\left(a\right)=uU^{-1}\left(b\right)$, i.e.,
   \[U^{-1}\left(a\right)= r(x)U^{-1}\left(a\right)=r(x) U^{-1}\left(b\right) =U^{-1}\left(b\right).\] 
   By the bijectivity of $U$,
   we have 
    $a=b$, which is a contradiction. 
    Therefore, \[\mu\left(x\right)= \frac{1}{\Phi}\chi_{_{\left(0,\tau\left(r(x)\right)\right)}}.\] 
    
    It suffices to consider the case when 
      $\tau\left(r(x)\right)<\tau\left(\mathbf{1}\right)\le\infty$. 
      There exists $y\in M_{\Phi}\left(\mathcal{M},\tau\right)$ such that $x\perp y$
      and $\mu\left(y\right)=\mu\left(\frac1\Phi\chi_{_{\left(\tau\left(r(x)\right),\tau\left(\mathbf{1}\right)\right)}}\right)$. 
      Then,
      \[\mu\left(x+y\right)=\mu\left(x-y\right)=\frac1\Phi\chi_{_{\left(0,\tau\left(\mathbf{1}\right)\right)}},\] 
      i.e., $x+y,x-y$ are in the unit ball of $M_\Phi(\cM,\tau)$.
      Note that $x=\frac{\left(x+y\right)+\left(x-y\right)}2$ and $x+y\ne x-y$, a contradiction. 
      As a consequence, $\mu\left(x\right)= \frac{1}{\Phi}\chi_{_{\left(0,\tau\left(\mathbf{1}\right)\right)}}$.
    The proof is complete. 
\end{proof}

The following corollary is a direct consequence of Theorem \ref{atomless extreme points} when $\Phi(t)=t^{\frac{1}{p}},~ 1<p<\infty $.

\begin{corollary} \label{1<p<infty}
    Let $\cM$ be a non-atomic von Neumann algebra equipped with a semifinite faithful normal trace $\tau$. 
    Assume that $1<p<\infty$ and 
    $x\in L_{p,\infty}\left(\mathcal{M},\tau\right)$ with quasi-norm $1$. 
    Then, $x$ is an extreme point of the unit ball of $L_{p,\infty}\left(\mathcal{M},\tau\right)$ if and only if $\mu\left(t;x\right) =t^{-\frac{1}{p}},~ \forall t\in \left(0,\tau\left({\bf 1}\right)\right)$.
\end{corollary}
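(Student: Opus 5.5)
The corollary follows from Theorem \ref{atomless extreme points} applied with $\Phi(t)=t^{1/p}$. The work is to check that this $\Phi$ satisfies every hypothesis of that theorem when $1<p<\infty$, and that the norms and extremality conditions match.

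First, $\Phi$ is admissible. The map $t\mapsto t^{1/p}$ is continuous, so in particular right-continuous. It is non-decreasing and vanishes exactly at $t=0$. It satisfies the $\Delta_2$-condition with constant $C_\Phi=2^{1/p}\ge 1$, since $\Phi(2t)=2^{1/p}\Phi(t)$. We also have $\Phi(\infty)=\lim_{t\to\infty}t^{1/p}=\infty$.

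The only hypothesis that uses $p>1$ is $\frac1\Phi\in (L_1+L_\infty)(0,\infty)$. The function $t\mapsto t^{-1/p}$ is positive and decreasing, so its decreasing rearrangement is itself. Hence it suffices to check that $\int_0^1 t^{-1/p}\,dt<\infty$. This integral equals $\frac{p}{p-1}<\infty$ because $1/p<1$.

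With these hypotheses in place, $M_\Phi(\mathcal M,\tau)=L_{p,\infty}(\mathcal M,\tau)$ with identical quasi-norms. Theorem \ref{atomless extreme points} then states that a norm-one element $x$ is extreme in the unit ball if and only if $\mu(t;x)=1/\Phi(t)=t^{-1/p}$ for all $t\in(0,\tau(\mathbf 1))$, which is exactly the claim.

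None of these steps is a real obstacle; the only substantive point is the integrability near $0$, which is where $p>1$ enters.
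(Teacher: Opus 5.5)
Your proposal is correct and is exactly the paper's route: the corollary is stated there as a direct consequence of Theorem \ref{atomless extreme points} with $\Phi(t)=t^{1/p}$. Your checks are the ones needed: admissibility, $\Phi(\infty)=\infty$, and $t^{-1/p}\in(L_1+L_\infty)(0,\infty)$ via $\int_0^1 t^{-1/p}\,dt=\frac{p}{p-1}<\infty$.
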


For $0<p\le1$, the unit ball of  $L_{p,\infty}\left(\mathcal{M},\tau\right)$ has no extreme point (see Corollary~\ref{0<p le1} below).

Next, we turn to the case where $\mathcal{M}$ is atomic.

\begin{proof}[Proof of Theorem \ref{atomic}]
$\left(\Rightarrow\right)$.
By an  argument similar to that used in the proof of Theorem \ref{atomless extreme points}, we obtain that $\mu\left(n;x\right)=\frac1{ l_\Phi(n)},~ \forall n\in \mathbb{N},n<\tau\left(\mathbf{1}\right)$.

$\left(\Leftarrow\right)$. 
Assume that 
$\mu\left(n;x\right) =\frac{1}{l_\Phi(n)},~n<\tau(\mathbf{1})$. 
Let $L_{\infty}(0,1)$ be a commutative von Neumann algebra equipped with the trace $\iota $ defined by
    \[\iota\left(f\right)=\int_{0}^{1} fdm,\]
where $m$ is the Lebesgue measure.
We denote by $\sigma:=\iota \otimes \tau$ a tensor product of the traces $\iota$ and $\tau$ satisfying
    \[\sigma\left(\lambda \mathbf{1} \otimes x\right)=\lambda \tau\left(x\right).\]
Let $\tilde{\pi}:x \mapsto \mathbf{1}\overline\otimes x$ be a $*$-monomorphism from $S\left(\mathcal{M},\tau\right)$ into $S\left(L_\infty \left(0,1\right)\otimes \mathcal{M},\sigma\right)$. 
The set $\Omega\left(\mathbf{1}\overline\otimes x\right)$ is convex and fully symmetric 
in $L_1\left(L_\infty(0,1)\otimes \mathcal{M},\sigma\right)+L_\infty(0,1)\otimes \mathcal{M}$.
By Theorem \ref{atomless extreme points}, we obtain that $\mathbf{1}\overline\otimes x$ is an extreme point of $\Omega\left(\mathbf{1}\overline \otimes x\right)$. 
Hence,  $x$ is an extreme point of $\Omega\left(x\right)$. 
By the proof of Theorem \ref{atomless extreme points}, $x$ is an extreme point of the unit ball of $M_\Phi\left(\mathcal{M},\tau\right)$. \end{proof}

An immediate application of Theorem \ref{atomic} gives a complete description of extreme points of the unit ball in $\ell_{p,\infty},~0<p<\infty$, as follows.

 \begin{corollary}\label{atom extremes}
        Let $0<p<\infty$. 
        Let $x\in \ell_{p,\infty}$ (respectively, $C_{p,\infty }$) with quasi-norm $1$. 
        Then,  $x$ is an extreme point of the unit ball of $\ell_{p,\infty }$ (respectively, $C_{p,\infty }$) if and only if  $\mu\left(n;x\right) =\left(n+1\right)^{-\frac{1}{p}}$, $\forall n\ge 0$. 
 \end{corollary}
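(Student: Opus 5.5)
The corollary is a specialization of Theorem \ref{atomic} to $\Phi(t)=t^{1/p}$. I would verify the hypotheses, compute $l_\Phi$, and check that $\ell_{p,\infty}$ and $\mathcal{C}_{p,\infty}$ are the Marcinkiewicz spaces $M_\Phi(\mathcal{M},\tau)$ over suitable atomic algebras with unit-trace atoms.

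First, $\Phi(t)=t^{1/p}$ is admissible. It is non-decreasing, continuous (so right-continuous), vanishes only at $0$, and satisfies $\Phi(2t)=2^{1/p}\Phi(t)$, which is the $\Delta_2$-condition with $C_\Phi=2^{1/p}$. Moreover $\Phi(\infty)=\infty$. Theorem \ref{atomic} does not require $\frac1\Phi\in(L_1+L_\infty)(0,\infty)$, so no restriction on $p$ arises and every $0<p<\infty$ is allowed. By continuity,
\[
l_\Phi(n)=\lim_{t\uparrow n+1}t^{1/p}=(n+1)^{1/p},\qquad n\ge 0.
\]

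Next comes the identification of the spaces.

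\textbf{The sequence case.} Take $\mathcal{M}=\ell_\infty$ with the counting measure. It is atomic, each atom (a minimal projection $e_n$) has trace $1$, and $\tau(\mathbf{1})=\infty$. The singular value function of $x$ is the step function $\mu(t;x)=\mu(n;x)$ for $t\in[n,n+1)$. Since $t^{1/p}$ is increasing,
\[
\sup_{t>0}t^{1/p}\mu(t;x)=\sup_{n\ge 0}(n+1)^{1/p}\mu(n;x).
\]
Hence $M_\Phi(\ell_\infty)=\ell_{p,\infty}=m_\Phi$ with the same quasi-norm.

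\textbf{The operator case.} Take $\mathcal{M}=B(H)$ with the standard trace. It is atomic, and its atoms are the rank-one projections, each of trace $1$. The same step-function computation shows that $M_\Phi(B(H),\tau)$ consists of the compact operators with $\sup_n (n+1)^{1/p}\mu(n;x)<\infty$. Elements of finite quasi-norm have $\mu(n;x)\to 0$ and are therefore compact, so this space is exactly $\mathcal{C}_{p,\infty}$ with its quasi-norm.

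Applying Theorem \ref{atomic} with $n<\tau(\mathbf{1})=\infty$, i.e. for all $n\ge0$, gives the claim: $x$ is an extreme point of the unit ball if and only if $\mu(n;x)=(n+1)^{-1/p}$ for every $n\ge 0$.

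The only point needing care is the identification $\sup_{t>0}\Phi(t)\mu(t;x)=\sup_n l_\Phi(n)\mu(n;x)$ for atomic algebras with unit-trace atoms. On $[n,n+1)$ the function $\mu(t;x)$ is constant, so the supremum over that interval equals $\mu(n;x)\lim_{t\uparrow n+1}\Phi(t)=\mu(n;x)\,l_\Phi(n)$. This is routine once it is stated.
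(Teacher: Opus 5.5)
Your proposal is correct and is the paper's own argument: the corollary is Theorem~\ref{atomic} specialized to $\Phi(t)=t^{1/p}$, where $l_\Phi(n)=(n+1)^{1/p}$ and $\tau(\mathbf{1})=\infty$ for both $\ell_\infty$ and $B(H)$. Your checks of admissibility and of the identity $\sup_{t>0}t^{1/p}\mu(t;x)=\sup_{n\ge0}(n+1)^{1/p}\mu(n;x)$ only make explicit what the paper leaves implicit.
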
  

\section{The case when the unit ball of a quasi-Banach Marcinkiewicz space has no extreme point}

For $1<p<\infty$, letting $x\in L_{p,\infty}\left(0,\infty\right)$ with quasi-norm $1$,
we have proved that $x$ is an extreme point of the unit ball of $L_{p,\infty}\left(0,\infty\right)$ if and only if $\mu(t;x)=t^{-1/p}$ for all $t>0$.
For $0<p\le1$,
note that  the function $t\mapsto t^{1/p}$ does not satisfy assumptions of Theorem \ref{atomless extreme points} ($t\mapsto t^{-1/p}$ does not belong to $\left(L_1+L_\infty\right)\left(0,\infty\right)$).
In this section,
we surprisingly observe that the unit ball of $L_{p,\infty}\left(0,\infty\right), ~0<p\le1$, has no extreme point (see Corollary~\ref{0<p le1}).

The main result of this section, Proposition \ref{extend}, gives
a sufficient condition on $\Phi$ under which
the unit ball of $M_\Phi(0,\infty)$ has no extreme point.

Let $\Phi:[0,\infty)\to[0,\infty)$ be strictly increasing, admissible and continuous on $[0,\infty)$ satisfying $\Phi(\infty)=\infty$.
Choose
 \[h(t):=\frac1{\Phi(t)\left(\Phi(t)+1\right)}, ~t>0.\]
Let 
\begin{align} \label{u,}
u(t)=\frac1{\Phi(t)}-h(t)=\frac{1}{\Phi(t)+1},  ~t>0
\end{align}
and
\begin{align} \label{,v}
v(t)=\frac1{\Phi(t)}+h(t)=\frac{\Phi(t)+2}{\Phi(t)\left(\Phi(t)+1\right)}, ~t>0.
\end{align}
Note that for all $t>0$, $u(t)\le \frac1{\Phi(t)} \le v(t)$ and
\begin{align}\label{u v 2}
\frac{u(t)+v(t)}{2}=\frac1{\Phi(t)}.
\end{align}
Observe that $\Phi$ is bijective.
Set
\begin{align} \label{a(t)}
a(t):=\frac1 {v(t)}=\frac{\Phi(t)\left(\Phi(t)+1\right)}{\Phi(t)+2}, ~t>0.
\end{align}
Since the maps $t\mapsto \frac{t(t+1)}{t+2}, ~t>0$ and $\Phi$ are both strictly increasing, bijective and each of them has range $\left(0,\infty\right)$, it follows that the inverse of $a$ exists, denoted by $a^{-1}$. 
Define $t_+:=\max\{t,0\}$ for all $t\in \mathbb{R}$.
The following lemma is the main tool in the proof of Proposition \ref{extend}. 
It shows that, under the condition given in Proposition \ref{extend}, $\frac{1}{\Phi}$ admits a representation as the midpoint of two distinct elements in $B_{M_\Phi(0,\infty)}$.

It is noteworthy that the condition \eqref{b(t)>0} of Lemma \ref{plus} is sufficient but not necessary  for the decomposition \eqref{decomposition}.

\begin{lemma} \label{plus}
    Let $\Phi:[0,\infty)\to[0,\infty)$ be strictly increasing, admissible and continuous on $[0,\infty)$ satisfying $\Phi(\infty)=\infty$. 
    If for all $t>0$,
    \begin{align}\label{b(t)>0}
    a^{-1}(\Phi(t))+\Phi^{-1}\left(\left(\Phi(t)-1\right)_+\right)<2t,
    \end{align}
    then there exist two non-negative functions $f,g\in B_{M_\Phi(0,\infty)}$ 
    with $f(t)\ne g(t)$ for all $t\in\left(0,\infty\right)$ 
    satisfying
    \begin{align} \label{decomposition}
    \frac{f(t)+g(t)}{2}=\frac1{\Phi(t)}, ~\forall t>0.
    \end{align}
\end{lemma}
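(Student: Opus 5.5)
The plan is to take $f$ and $g$ as ``switching'' perturbations of $\frac1\Phi$ built from the functions $u$ and $v$ in \eqref{u,} and \eqref{,v}. Fix a measurable set $E\subset(0,\infty)$, to be chosen later, and put
\[
f:=v\chi_{_E}+u\chi_{_{(0,\infty)\setminus E}},\qquad g:=u\chi_{_E}+v\chi_{_{(0,\infty)\setminus E}}.
\]
By \eqref{u v 2}, $\frac{f+g}{2}=\frac1\Phi$ holds pointwise. Moreover $f(t)\neq g(t)$ for every $t>0$, because $v(t)-u(t)=2h(t)>0$. Both functions are non-negative. It therefore only remains to choose $E$ so that $\mu(f),\mu(g)\le\frac1\Phi$.

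First I translate the quasi-norm condition into a distributional one. Since $\frac1\Phi$ is continuous, strictly decreasing and maps $(0,\infty)$ onto $(0,\infty)$, the inequality $\mu(f)\le \frac1\Phi$ is equivalent to
\[
d\Big(\tfrac{1}{\Phi(t)};f\Big)=m\Big\{r: f(r)>\tfrac{1}{\Phi(t)}\Big\}\le t\quad\text{for all } t>0 .
\]
This follows from the definition of $\mu$ as the right-continuous inverse of $d$.

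Next I identify the relevant sets, writing $A(t):=a^{-1}(\Phi(t))$ and $B(t):=\Phi^{-1}((\Phi(t)-1)_+)$. By \eqref{a(t)} and monotonicity,
\[
v(r)>\tfrac1{\Phi(t)}\iff a(r)<\Phi(t)\iff r<A(t),\qquad u(r)>\tfrac1{\Phi(t)}\iff \Phi(r)<\Phi(t)-1\iff r<B(t),
\]
and $B(t)\le t\le A(t)$. Hence
\[
d\Big(\tfrac1{\Phi(t)};f\Big)=B(t)+m\big(E\cap(B(t),A(t))\big),
\]
and the same formula holds for $g$ with $E$ replaced by its complement $E^c$. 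Set $\delta(t):=t-\frac{A(t)+B(t)}{2}$. Hypothesis \eqref{b(t)>0} says exactly that $\delta(t)>0$, and $\delta$ is continuous because $\Phi$, $\Phi^{-1}$ and $a^{-1}$ are. It therefore suffices to construct $E$ with
\[
m\big(E\cap(B(t),A(t))\big)\le \tfrac{A(t)-B(t)}{2}+\delta(t)\quad\text{and}\quad m\big(E^c\cap(B(t),A(t))\big)\le \tfrac{A(t)-B(t)}{2}+\delta(t)\quad\text{for every } t>0.
\]

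The construction of $E$ is the step I expect to be the main obstacle, because $\delta(t)\to0$ as $t\to0$ (and possibly as $t\to\infty$), so no uniform mesh works. I would take a partition of $(0,\infty)$ into consecutive intervals $I_k$, $k\in\mathbb{Z}$, accumulating only at $0$ and $\infty$, and split each $I_k$ into two halves of equal length. Let $E$ be the union of the left halves. Then for any interval $(\alpha,\beta)$, the quantities $m(E\cap(\alpha,\beta))$ and $m(E^c\cap(\alpha,\beta))$ differ from $\frac{\beta-\alpha}{2}$ by at most the length of the intervals $I_k$ meeting the endpoints, so the error is at most $2\max\{|I_k|: I_k\ni\alpha\text{ or }\beta\}$. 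The mesh is then chosen to be locally adapted. For $r>0$, let $\eta(r)$ be the infimum of $\delta(t)$ over those $t$ with $r\in[B(t),A(t)]$. Since $A$ and $B$ are continuous, increasing and tend to $0$ as $t\to0$, this set of $t$ is a compact subset of $(0,\infty)$, so $\eta(r)>0$, and $\eta$ is bounded below on compacts. Choosing $|I_k|\le\frac14\inf_{r\in I_k}\eta(r)$, which is possible by a locally finite construction on each $[2^{j},2^{j+1}]$, gives an endpoint error at most $\delta(t)$. This yields both inequalities, hence $\norm{f}_{M_\Phi},\norm{g}_{M_\Phi}\le1$, which completes the proof.
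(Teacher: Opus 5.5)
Your proposal is correct and follows essentially the same route as the paper: the same switching functions $u\chi_{_E}+v\chi_{_{E^c}}$ and $v\chi_{_E}+u\chi_{_{E^c}}$, the same reduction to $\bigl|m(E\cap(B(t),A(t)))-\frac{A(t)-B(t)}{2}\bigr|\le t-\frac{A(t)+B(t)}{2}$, and the same choice of $E$ as the left halves of a locally refined partition (the paper's Lemma~\ref{E}), with your $\eta$ playing the role of the paper's $w$. One point to tighten: compactness of $\{t: B(t)\le r\le A(t)\}=[\Phi^{-1}(a(r)),\Phi^{-1}(\Phi(r)+1)]$ needs $B(t)\to\infty$ as $t\to\infty$ (from $\Phi(\infty)=\infty$) for the upper bound, in addition to $A(t)\to0$ as $t\to0$ for the lower bound.
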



\begin{remark}
    If $\Phi$ satisfies the assumptions of Lemma \ref{plus}, then $\frac 1\Phi\notin \left(L_1+L_\infty\right)\left(0,\infty\right)$.
    Indeed, assume by contradiction that $\frac 1\Phi\in \left(L_1+L_\infty\right)\left(0,\infty\right)$.
    Fix a scalar $r>0$ such that $\Phi(2r)<1$.
    In this case, for all $0<t\le r$, we have that
    $$\Phi^{-1}\left(\left(\Phi(t)-1\right)_+\right)\le \Phi^{-1}\left(\left(\Phi(2r)-1\right)_+\right)=0.$$
    By \eqref{b(t)>0}, we have that
    $$a^{-1}\left(\Phi(t)\right)<2t.$$
    Then,
    \begin{align} \label{>2-1}
        \frac 1 {\Phi(t)}>\frac 1{a(2t)} \stackrel{\eqref{a(t)}}{=}\frac{\Phi(2t)+2}{\Phi(2t)\left(\Phi(2t)+1\right)}&=\frac 2{\Phi(2t)}-\frac1{\Phi(2t)+1}\nonumber\\
        &>\frac 2{\Phi(2t)}-1.
    \end{align}
    Integrating both sides of \eqref{>2-1} over $(0,r)$, by $\frac 1\Phi\in \left(L_1+L_\infty\right)\left(0,\infty\right)$, we obtain that
    $$\int^r_0\frac 1{\Phi(t)}dt>\int_0^r\frac{2}{\Phi(2t)}-1dt=\int_0^{2r}\frac1 {\Phi(t)}dt-r,$$
    which implies that
    $$r\stackrel{\Phi(2r)<1}{<}\int_r^{2r}\frac1{\Phi(t)}dt<r,$$
    a contradiction.
    Therefore, $\frac 1\Phi\notin \left(L_1+L_\infty\right)\left(0,\infty\right)$.
\end{remark}

 To prove Lemma \ref{plus}, we need the following two auxiliary lemmas.

\begin{lemma} \label{E}
    Let $f:\left(0,\infty\right)\to \left(0,\infty\right)$ be a continuous function.
    Then, there exists a measurable set $E\subset \left(0,\infty\right)$ with infinite measure such that
    \[\left|m\left(E\cap \left(0,t\right)\right)-\frac t 2\right|<f(t), ~\forall t>0.\]
\end{lemma}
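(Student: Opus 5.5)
We need to construct a measurable set $E\subset(0,\infty)$ of infinite measure with $|m(E\cap(0,t))-t/2|<f(t)$ for all $t>0$. The plan is to build $E$ as a union of pieces on a partition of $(0,\infty)$ into intervals short enough relative to $f$. Inside each piece, $E$ takes the left half of every interval. Then the counting function $F(t):=m(E\cap(0,t))$ agrees with $t/2$ at every partition point and deviates from it by at most half an interval length in between.

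Concretely, we first handle the behaviour near $0$, where $f$ may tend to $0$ and so the intervals must shrink. For each $k\in\mathbb{Z}$ set $I_k:=[2^{k},2^{k+1}]$. Since $f$ is continuous and strictly positive, it attains a positive minimum $\delta_k:=\min_{I_k} f>0$ on the compact interval $I_k$. Choose $N_k\in\mathbb{N}$ with $2^{k}/N_k<\delta_k$, and split $I_k$ into $N_k$ equal subintervals $J_{k,j}=[s_{k,j},s_{k,j+1}]$ of length $\ell_k=2^k/N_k$. Put
\[E:=\bigcup_{k\in\mathbb{Z}}\bigcup_{j=0}^{N_k-1}\Big[s_{k,j},\,s_{k,j}+\tfrac{\ell_k}{2}\Big).\]
Then $E$ is measurable, and $m(E\cap I_k)=2^{k-1}$, so $m(E)=\infty$.

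Next we verify the estimate. For any partition point $s=s_{k,j}$, additivity over the intervals below $s$ gives $F(s)=s/2$; here one notes that the intervals accumulate at $0$ and uses countable additivity together with $\sum_{k<k_0}2^{k}=2^{k_0}$. For $t\in J_{k,j}$, write $t=s+r$ with $s=s_{k,j}$ and $0\le r\le \ell_k$. Then $F(t)-t/2=\min(r,\ell_k/2)-r/2$, which lies in $[0,\ell_k/4]$. Hence
\[|F(t)-t/2|\le \ell_k/4<\delta_k\le f(t),\]
since $t\in I_k$.

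The only genuinely delicate point is the strictness and uniformity near $t\to0$ and $t\to\infty$. This is exactly why the decomposition is dyadic, with a separate minimum $\delta_k$ on each compact dyadic block rather than a single global bound. Once that is in place, the rest is the routine bookkeeping above.
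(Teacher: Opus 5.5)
Your proposal is correct and follows essentially the same construction as the paper: you split each dyadic block $[2^k,2^{k+1}]$ into equal subintervals that are short relative to $\min_{[2^k,2^{k+1}]}f$, let $E$ take the left half of each subinterval, and note that $m(E\cap(0,t))-t/2$ vanishes at the partition points and is at most a quarter of the subinterval length in between. The only cosmetic difference is that you require $\ell_k<\delta_k$, whereas the paper uses the weaker condition $m(I_i^k)<4c_k$; both suffice to give the strict bound $|m(E\cap(0,t))-t/2|<f(t)$.
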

\begin{proof}
    For all $k\in \mathbb{Z}$, let 
    \[c_k:=\min_{t\in \left[2^k,2^{k+1}\right]}f(t)>0.\]
    and we may decompose $\left[2^k,2^{k+1}\right]$ into finitely many  disjoint consecutive intervals $\left\{I_i^k\right\}_{i=0}^{N_k}$ with the same measure satisfying
    \begin{align} \label{ck}
    m\left(I_i^k\right)<4c_k, ~\forall i\in\{0,1,\dots,N_k\} .
    \end{align}
    For all such intervals $I_i^k$, let $E$ be a measurable set with infinite measure such that $E\cap I_i^k$ is the left half of $I_i^k$.
    Define
    \[g(t):=m\left(E\cap \left(0,t\right)\right)-\frac t 2, ~t>0.\]
     Fix a scalar $t>0$.
    There exists $k$ such that $t\in \left[2^k,2^{k+1}\right]$.
    Then, $t\in I_j^{k}$ for some $0\le j \le N_k$.
    Without loss of generality, we may assume that $I_j^k=[\alpha,\beta)$.
    In this case,  $m\left(E\cap(0,\alpha)\right)-\frac \alpha2=0$
    and so,
    $g(t)=m\left(E\cap(\alpha,t)\right)-\frac{t-\alpha}{2}$,
    that is,
    \[
    g(t)=
    \begin{cases}
    t-\alpha-\frac{t-\alpha}{2}, & t<\frac{\alpha+\beta}{2},\\
    \frac{\beta-\alpha}{2}-\frac{t-\alpha}{2}, & t\ge\frac{\alpha+\beta}{2}.
    \end{cases}
    \]
    This implies that 
    \begin{align*}
    |g(t)|\le \frac{\beta-\alpha}{4}=\frac{m\left(I_j^k\right)}{4}\stackrel{\eqref{ck}}{<}c_k\le f(t).
    \end{align*}
    This completes the proof.
\end{proof}

Although the following result is trivial, we present a  proof here for the sake of completeness.
\begin{lemma} \label{equivalence}
    Let $\Phi$ be admissible and continuous.
    Then,
    \[\sup_{s>0}s\Phi\left(d\left(s;|f|\right)\right)=\sup_{s>0}\Phi(s)\mu\left(s;f\right), ~\forall f\in M_\Phi\left(0,\infty\right).\]
\end{lemma}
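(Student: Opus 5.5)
We prove the two inequalities separately, using only the duality between the distribution function $d(\cdot;|f|)$ and its right-continuous inverse $\mu(f)$, together with monotonicity and continuity of $\Phi$. Write $A:=\sup_{s>0}s\Phi(d(s;|f|))$ and $B:=\sup_{s>0}\Phi(s)\mu(s;f)$. Throughout we use the convention $\Phi(\infty)=\lim_{t\to\infty}\Phi(t)$ whenever $d(s;|f|)=\infty$. Since $f\in M_\Phi(0,\infty)$ and $\Phi$ is unbounded in the relevant cases, we expect $d(s;|f|)<\infty$ for all $s>0$; in any event the argument below also covers $d(s;|f|)=\infty$.

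\emph{Step 1: $B\le A$.} Fix $t>0$ with $\mu(t;f)>0$, and take any $0<s<\mu(t;f)$. By the definition of $\mu$ as an infimum, $d(s;|f|)>t$. Monotonicity of $\Phi$ then gives
\[
s\,\Phi(t)\le s\,\Phi(d(s;|f|))\le A .
\]
Letting $s\uparrow\mu(t;f)$ yields $\Phi(t)\mu(t;f)\le A$. When $\mu(t;f)=0$ the inequality is trivial, so taking the supremum over $t$ gives $B\le A$.

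\emph{Step 2: $A\le B$.} Fix $s>0$ with $d(s;|f|)>0$, and take any $0<t<d(s;|f|)$. Since $d(s;|f|)=m\{\mu(f)>s\}$ and $\mu(f)$ is decreasing, we have $\mu(t;f)>s$. Hence
\[
s\,\Phi(t)\le \mu(t;f)\,\Phi(t)\le B .
\]
Letting $t\uparrow d(s;|f|)$ and using continuity of $\Phi$ (left continuity suffices, or the limit $\Phi(\infty)$ when $d(s;|f|)=\infty$) gives $s\Phi(d(s;|f|))\le B$. If $d(s;|f|)=0$, then $s\Phi(0)=0$ because $\Phi(0)=0$ by admissibility. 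Taking the supremum over $s$ gives $A\le B$.

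The only delicate point is the limit in Step 2. There one must use left-continuity of $\Phi$ at $d(s;|f|)$, which is exactly why continuity of $\Phi$ is assumed: admissibility alone only gives right-continuity. Everything else is routine manipulation of the inverse relation between $d$ and $\mu$.
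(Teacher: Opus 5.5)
Your proof is correct and follows essentially the same route as the paper: both inequalities come from the inverse relation between $d(\cdot;|f|)$ and $\mu(f)$, and left-continuity of $\Phi$ is used exactly where you use it, in the limit $t\uparrow d(s;|f|)$. Your explicit treatment of the edge cases $d(s;|f|)=0$ and $d(s;|f|)=\infty$ is a harmless refinement that the paper leaves implicit.
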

\begin{proof}
    Denote the left hand supremum and the right hand supremum by $\alpha$ and $\beta$, respectively.
    Let $t>0$ be fixed  such that $\mu(t;f)>0$.
    For any $0<s<\mu(t;f)$, by \cite[Remark 3.1.2]{DPS}, we have that 
    $d(s;|f|)>t$
    and so,
    \[\Phi\left(d(s;|f|)\right)\ge \Phi(t).\]
    This implies that for all $0<s<\mu(t;f)$,
    \[\alpha\ge s\Phi\left(d(s;|f|)\right)\ge s\Phi(t).\]
    Taking $s\uparrow \mu(t;f)$, we obtain that
    \[\alpha\ge  \mu(t;f)\Phi(t),\]
    which shows that $\alpha\ge \beta$.

    For the proof of the converse inequality, fix a scalar $s>0$ such that $d(s;f)>0$.
    For any $0<t<d(s;|f|)$, by \cite[Proposition 3.1.4]{DPS}, we get that $\mu(t;f)>s$.
    Then, for all $0<t<d(s;|f|)$,
    \[\beta\ge \mu(t;f)\Phi(t)>s\Phi(t).\]
    Taking $t\uparrow d(s;|f|)$, since $\Phi$ is left-continuous, it follows that
    \[\beta\ge s\Phi\left(d(s;|f|)\right).\]
    This shows that $\beta\ge \alpha$.
    The proof is complete.
\end{proof}

\begin{proof}[Proof of Lemma \ref{plus}]
Let $u,v$ be defined as in \eqref{u,} and \eqref{,v}. 
For any measurable subset $B\subset \left(0,\infty\right)$ with positive measure,
define
\[f_B:=u\chi_{_B}+v\chi_{_{B^c}}, ~g_B:=v\chi_{_B}+u\chi_{_{B^c}}.\]
Note that $f_B(t)\ne g_B(t)$ for all $t\in \left(0,\infty\right)$.

By \eqref{u v 2}, it suffices to show the existence of a measurable set
$E\subset \left(0,\infty\right)$ with positive infinite measure
such that $f_E,g_E\in B_{M_\Phi\left(0,\infty\right)},$
i.e., 
\[d\left(\frac1{\Phi(s)};f_E\right),d\left(\frac1{\Phi(s)};g_E\right)\le s, ~\forall s>0\] 
(see Lemma \ref{equivalence}).

For any $B\subset \left(0,\infty\right)$ with positive measure and $s>0$,
we have

\[\left\{u>\frac1{\Phi(s)}\right\}=\left(0,\Phi^{-1}\left(\left(\Phi(s)-1\right)_+\right)\right), ~\left\{v>\frac1{\Phi(s)}\right\}=\left(0,a^{-1}(\Phi(s))\right).\]
Then, we have 
\begin{align*}
\left\{t>0:f_B(t)>\frac1{\Phi(s)}\right\}=&\left(B\cap\left\{u>\frac1{\Phi(s)}\right\}\right)\cup \left(B^c\cap \left\{v>\frac1{\Phi(s)}\right\}\right)\\
=&\left(B\cap\left(0,\Phi^{-1}\left(\left(\Phi(s)-1\right)_+\right)\right)\right)\cup \left(B^c\cap\left(0,a^{-1}(\Phi(s))\right)\right)
\end{align*}
and 
\[\left\{t>0:g_B(t)>\frac1{\Phi(s)}\right\}=\left(B^c\cap\left(0,\Phi^{-1}\left(\left(\Phi(s)-1\right)_+\right)\right)\right)\cup \left(B\cap\left(0,a^{-1}(\Phi(s))\right)\right).\]
Letting 
\[\gamma_B(s):=m\left(B\cap\left(\Phi^{-1}\left(\left(\Phi(s)-1\right)_+\right),a^{-1}(\Phi(s))\right)\right),\]
 we have 
\begin{align*} 
&\quad d\left(\frac1{\Phi(s)};f_B\right)\\
&=m\left(\left(B\cap\left(0,\Phi^{-1}\left(\left(\Phi(s)-1\right)_+\right)\right)\right)\cup \left(B^c\cap\left(0,a^{-1}(\Phi(s))\right)\right)\right)\\
&=m\left(\left(B\cap\left(0,a^{-1}(\Phi(s))\right)\right)\cup \left(B^c\cap\left(0,a^{-1}(\Phi(s))\right)\right)\right)\\
&\quad-m\left(B\cap\left(\Phi^{-1}\left(\left(\Phi(s)-1\right)_+\right),a^{-1}(\Phi(s))\right)\right)\\
&=a^{-1}(\Phi(s))-\gamma_B(s)
\end{align*}
and
\begin{align*}
&\quad d\left(\frac1{\Phi(s)};g_B\right)\\
&=m\left(\left(B^c\cap\left(0,\Phi^{-1}\left(\left(\Phi(s)-1\right)_+\right)\right)\right)\cup \left(B\cap\left(0,a^{-1}(\Phi(s))\right)\right)\right)\\
&=m\left(\left(B^c\cap\left(0,\Phi^{-1}\left(\left(\Phi(s)-1\right)_+\right)\right)\right)\cup \left(B\cap\left(0,\Phi^{-1}\left(\left(\Phi(s)-1\right)_+\right)\right)\right)\right)\\
&\quad+m\left(B\cap\left(\Phi^{-1}\left(\left(\Phi(s)-1\right)_+\right),a^{-1}(\Phi(s))\right)\right)\\
&=\Phi^{-1}\left(\left(\Phi(s)-1\right)_+\right)+\gamma_B(s).
\end{align*}
These imply that   $d\left(\frac1{\Phi(s)};f_B\right),d\left(\frac1{\Phi(s)};g_B\right)\le s$ is equivalent to
\begin{align}\label{le ge}
a^{-1}(\Phi(s))-s\le \gamma_B(s)\le s-\Phi^{-1}\left(\left(\Phi(s)-1\right)_+\right).
\end{align}
Let\footnote{In the special case when  $B$ occupies half of $\left(\Phi^{-1}\left(\left(\Phi(s)-1\right)_+\right),a^{-1}(\Phi(s))\right)$, that is, 
\[m\left(B\cap\left(\Phi^{-1}\left(\left(\Phi(s)-1\right)_+\right),a^{-1}(\Phi(s))\right)\right)=\frac12 m\left(\left(\Phi^{-1}\left(\left(\Phi(s)-1\right)_+\right),a^{-1}(\Phi(s))\right)\right),\]
  by the definition of $\gamma$,  we have 
\[\gamma_B(s)=\frac{a^{-1}(\Phi(s))-\Phi^{-1}\left(\left(\Phi(s)-1\right)_+\right)}{2}\]
and 
\[\begin{aligned}
\gamma_B(s)-\left(a^{-1}(\Phi(s))-s\right)
&=s -\Phi^{-1}\left(\left(\Phi(s)-1\right)_+\right)-\gamma_B(s)\\
&=s-\frac{a^{-1}(\Phi(s))+\Phi^{-1}\left(\left(\Phi(s)-1\right)_+\right)}{2}.\end{aligned}\]}
\[b(t):=t-\frac{a^{-1}(\Phi(t))+\Phi^{-1}\left(\left(\Phi(t)-1\right)_+\right)}{2}
, ~\forall t>0\]
and 
\[F_B(t):=m\left(B\cap \left(0,t\right)\right)-\frac t2, ~\forall t>0.\]
By \eqref{b(t)>0}, we have that $b(t)>0$ for all $t>0$.
Observe that for any measurable subset $B\subset \left(0,\infty\right)$ with positive measure, we have 
\begin{align*}
&F_B\left(a^{-1}(\Phi(t))\right)-F_B\left(\Phi^{-1}\left(\left(\Phi(t)-1\right)_+\right)\right)\\=&m\left(B\cap \left(0,a^{-1}(\Phi(t))\right)\right)-\frac {a^{-1}(\Phi(t))}2\\
&-\left(m\left(B\cap \left(0,\Phi^{-1}\left(\left(\Phi(t)-1\right)_+\right)\right)\right)-\frac {\Phi^{-1}\left(\left(\Phi(t)-1\right)_+\right)}2\right)\\
=&\gamma_B(t)-\frac{a^{-1}(\Phi(t))-\Phi^{-1}\left(\left(\Phi(t)-1\right)_+\right)}{2}.
\end{align*}
Consequently, by \eqref{le ge}, $d\left(\frac1{\Phi(s)};f_B\right),d\left(\frac1{\Phi(s)};g_B\right)\le s$ is equivalent to
\begin{equation}\label{equivalent}
\begin{aligned}
&\quad\left|\gamma_B(s)-\frac{a^{-1}(\Phi(s))-\Phi^{-1}\left(\left(\Phi(s)-1\right)_+\right)}{2}\right|\\
&=\left|F_B\left(a^{-1}(\Phi(s))\right)-F_B\left(\Phi^{-1}\left(\left(\Phi(s)-1\right)_+\right)\right)\right|\\
&\le b(s)=s-\frac{a^{-1}(\Phi(s))+\Phi^{-1}\left(\left(\Phi(s)-1\right)_+\right)}{2}.
\end{aligned}
\end{equation}
Note that 
\begin{align}
\label{two F}
&\quad\left|F_B\left(a^{-1}(\Phi(s))\right)-F_B\left(\Phi^{-1}\left(\left(\Phi(s)-1\right)_+\right)\right)\right|\nonumber\\
&\le \left|F_B\left(a^{-1}(\Phi(s))\right)\right|+\left|F_B\left(\Phi^{-1}\left(\left(\Phi(s)-1\right)_+\right)\right)\right|
\end{align}
and

\begin{equation} \label{ab}
\begin{aligned}
b\left(\Phi^{-1}(a(t))\right)=b(s) &\text{ when } t=a^{-1}(\Phi(s)),\\
b\left(\Phi^{-1}\left(\Phi(t)+1\right)\right)=b(s) 
&\text{ when }t=\Phi^{-1}\left(\left(\Phi(s)-1\right)_+\right)>0.
\end{aligned}
\end{equation}
Define
\[w(t):=\frac18\min\left\{b\left(\Phi^{-1}(a(t))\right),b\left(\Phi^{-1}(\Phi(t)+1)\right)\right\}, ~t>0.\]
    Since $b(t)>0$ for all $t>0$, it follows that $w(t)>0$ for all $t>0$.
    Observe that $w(t)$ is a continuous function.
    Applying Lemma \ref{E} to $w$, we obtain that there exists a measurable set $E\subset \left(0,\infty\right)$ with infinite measure such that
    \[\left|F_E(t)\right|<w(t), ~\forall t>0.\]
    Then, by \eqref{ab},
    \[\left|F_E\left(a^{-1}(\Phi(s))\right)\right|<w\left(a^{-1}(\Phi(s))\right)\le \frac18 b\left(\Phi^{-1}\left(a\left(a^{-1}\left(\Phi(s)\right)\right)\right)\right) =\frac18b(s)\]
    and for $\Phi(s)-1>0$,
    \begin{align*}
\left|F_E\left(\Phi^{-1}\left(\left(\Phi(s)-1\right)_+\right)\right)\right|&<w\left(\Phi^{-1}\left(\left(\Phi(s)-1\right)_+\right)\right)\\
&\le \frac18 b\left(\Phi^{-1}\left(\Phi\left(\Phi^{-1}\left(\left(\Phi(s)-1\right)_+\right)\right)+1\right)\right) =\frac18b(s)
    \end{align*}
    (note that, for $-1< \Phi(s)-1\le 0$, $F_E\left(\Phi^{-1}\left(\left(\Phi(s)-1\right)_+\right)\right)=0$).
    This together with \eqref{two F} proves \eqref{equivalent}, i.e., $d\left(\frac1{\Phi(s)};f_E\right),d\left(\frac1{\Phi(s)};g_E\right)\le s$. 
    In particular, $f_E,g_E\in B_{M_\Phi\left(0,\infty\right)}$.
    By replacing $f$  and $ g$ with $f_E$ and $g_E$, respectively, 
    the proof is complete.
\end{proof}

The following result is a special case of \cite[Corollary 7.6]{BS}.
\begin{lemma} \label{map}
   Let $\Phi:[0,\infty)\to [0,\infty)$ be admissible satisfying $\Phi(\infty)=\infty$.
    Let $f\in M_\Phi\left(0,\infty\right)$ and $S:=\{t>0:|f(t)|>0\}$.
    Then, there exists a measure-preserving map $\phi:S\to \left(0,m(S)\right)$ such that
    \[|f|=\mu(f)\circ \phi,~ a.e. ~\text{on}~ S.\]
\end{lemma}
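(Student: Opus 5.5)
The plan is to reduce to the case where $S$ has finite measure. On sets of finite measure we can use the standard measure-preserving map for functions that vanish at infinity. Since $\Phi$ is non-decreasing with $\Phi(\infty)=\infty$, for $f\in M_\Phi(0,\infty)$ we have
\[
\mu(t;f)\le \frac{\|f\|_{M_\Phi}}{\Phi(t)}\to 0 \quad (t\to\infty).
\]
Hence $\mu(\infty;f)=0$, i.e. $f$ is in $S_0(0,\infty)$, so $d(s;|f|)<\infty$ for every $s>0$. This is exactly the hypothesis of \cite[Corollary 7.6]{BS} (equivalently \cite[Chapter 2, Corollary 7.6]{BS}): a measurable function $|f|$ on a $\sigma$-finite non-atomic measure space with $\mu(\infty;f)=0$ is the composition of $\mu(f)$ with a measure-preserving map from its support $S$ onto $(0,m(S))$. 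So the lemma should follow by checking this hypothesis and quoting the result.

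To make the argument self-contained, I would construct $\phi$ explicitly.

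First, handle the level sets. Define
\[
\phi(t):=m\{r\in S:|f(r)|>|f(t)|\}+\psi(t),
\]
where $\psi$ is needed only on the level sets $L_c=\{|f|=c\}$, $c>0$, of positive measure. Each such $L_c$ has finite measure because $m(L_c)\le d(c/2;|f|)<\infty$. There are at most countably many such $c$. On each $L_c$, take $\psi(t)=m(L_c\cap(0,t))$, which maps $L_c$ measure-preservingly onto $(0,m(L_c))$. Off these level sets, set $\psi=0$.

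Next, verify that $\phi$ is measure-preserving from $S$ into $(0,m(S))$. Let $\lambda(c)=d(c;|f|)$, the measure of $\{|f|>c\}$. For $t$ with $|f(t)|=c$, the value $\phi(t)$ lies in $[\lambda(c),\lambda(c)+m(L_c)]=[\lambda(c),\lambda(c-)]$, which is precisely the interval on which $\mu(f)=c$. This gives $\mu(\phi(t);f)=c=|f(t)|$ a.e. on $S$; the endpoints form a null set.

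Then check that intervals pull back to sets of the right measure. For $x\in(0,m(S))$, $\phi^{-1}((0,x))$ is the union of $\{|f|>c\}$ and an initial piece of $L_c$ with measure $x-\lambda(c)$, where $c=\mu(x;f)$. So its measure is $x$. A $\pi$-$\lambda$ argument extends this to all Borel sets.

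Finally, the restriction to $S$ is harmless. Each $\lambda(c)$ with $c>0$ is finite, and the union of $\{|f|>c\}$ over $c>0$ is $S$, so the values of $\phi$ on $S$ fill $(0,m(S))$ up to a null set. This holds whether $m(S)$ is finite or infinite.

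The main obstacle is the level-set bookkeeping: showing the pieces $\psi$ on the flat parts glue with the jump structure of $d(\cdot;|f|)$ to give a measure-preserving map. That step depends only on finiteness of $d(s;|f|)$ for $s>0$, which is exactly where $\Phi(\infty)=\infty$ is used.
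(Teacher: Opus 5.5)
Your proposal is correct and follows the paper's route: the paper simply invokes \cite[Corollary 7.6]{BS}, and your estimate $\mu(t;f)\le \|f\|_{M_\Phi(0,\infty)}/\Phi(t)\to 0$ supplies exactly the hypothesis $\mu(\infty;f)=0$ that the paper leaves implicit. Your explicit level-set construction is a sound unpacking of that corollary up to routine null-set bookkeeping; your opening phrase about reducing to finite $m(S)$ is not what you actually do, and $m(S)=\infty$ can occur (e.g.\ for $f=1/\Phi$), but nothing in the argument depends on it.
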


Now, we apply Lemma \ref{plus} and Lemma \ref{map} to prove Proposition \ref{extend}.

\begin{prop} \label{extend}
    Let $\Phi:[0,\infty)\to[0,\infty)$ be strictly increasing, admissible and continuous on $[0,\infty)$ satisfying $\Phi(\infty)=\infty$.
    Let $a$ be given by \eqref{a(t)}.
    If for all $t>0$,
    \begin{align*}
    a^{-1}(\Phi(t))+\Phi^{-1}\left(\left(\Phi(t)-1\right)_+\right)<2t,
    \end{align*}
    then the unit ball of  weak Marcinkiewicz function space $M_\Phi\left(0,\infty\right)$ has no extreme point.
\end{prop}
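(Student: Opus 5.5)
Let $x$ be any element of the unit ball $B_{M_\Phi(0,\infty)}$. We aim to write $x=\frac{y+z}{2}$ with $y\ne z$ both in the unit ball. There are two cases. If $\mu(x)\ne\frac1\Phi$, we use the perturbation argument from the proof of Theorem \ref{atomless extreme points}: there is an interval $[t_0,t_1]$ and $\varepsilon>0$ with $\mu(t;x)+\varepsilon<\frac1{\Phi(t)}$ on it. We then add and subtract $\frac\varepsilon2\chi_B$ on the corresponding level set $B$ of $|x|$ (chosen via \cite[Lemma 3.7.8]{DPS}), multiplied by the sign of $x$. That argument only used non-atomicity and did not need $\frac1\Phi\in L_1+L_\infty$; on $(0,\infty)$ it works directly without the isomorphism $U$. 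One small point: if $\mu(x)\le\frac1\Phi$ with $\mu(x)\ne\frac1\Phi$ only because $x$ vanishes on a set of positive measure, the perturbation still applies on a suitable piece. So the essential case is $\mu(x)=\frac1\Phi$ everywhere on $(0,\infty)$, and by continuity of $\Phi$ this is equivalent to $\mu(x)\neq\frac1\Phi$ failing.

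In the remaining case $\mu(x)=\frac1\Phi$, we transfer the decomposition of Lemma \ref{plus} to $x$. Since $\Phi(\infty)=\infty$ we have $\mu(\infty;x)=0$. Let $S=\{|x|>0\}$. Because $\frac1\Phi>0$ everywhere, $m(S)=\infty$. Lemma \ref{map} gives a measure-preserving $\phi:S\to(0,\infty)$ with $|x|=\mu(x)\circ\phi=\frac1{\Phi\circ\phi}$ a.e. on $S$. Take the nonnegative $f,g\in B_{M_\Phi(0,\infty)}$ from Lemma \ref{plus}, with $f\ne g$ everywhere and $\frac{f+g}{2}=\frac1\Phi$. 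Define $y:=\operatorname{sgn}(x)\,(f\circ\phi)\chi_S$ and $z:=\operatorname{sgn}(x)\,(g\circ\phi)\chi_S$. Then $\frac{y+z}{2}=\operatorname{sgn}(x)\frac1{\Phi\circ\phi}\chi_S=x$ a.e., and $y\ne z$ on $S$, which has positive measure.

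It remains to check that $y$ and $z$ lie in the unit ball. Since $\phi$ is measure-preserving, $d(s;|y|)=m\{t\in S:f(\phi(t))>s\}=m\{f>s\}=d(s;f)$, so $\mu(y)=\mu(f)$ and $\|y\|_{M_\Phi}=\|f\|_{M_\Phi}\le1$; the same holds for $z$. Hence $x$ is not extreme, and the unit ball has no extreme points.

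The main obstacle is the first case, namely justifying the perturbation cleanly when $\mu(x)\ne\frac1\Phi$. Strict inequality $\mu(t;x)<\frac1{\Phi(t)}$ at some point is needed, and it extends to an interval by the right-continuity of $\mu(x)$ and the continuity of $\Phi$. The perturbation must also stay within the support of $x$ or use an extra disjoint set; this follows the Theorem \ref{atomless extreme points} proof. In the second case, the measure-preserving pullback has to preserve distribution functions, which holds precisely because $m(S)=\infty$ and $\phi$ maps onto $(0,\infty)$ up to a null set.
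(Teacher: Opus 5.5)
Your proof is correct. In the decisive case $\mu(x)=\frac1\Phi$ it is exactly the paper's argument: you pull back the splitting $\frac{f+g}{2}=\frac1\Phi$ from Lemma \ref{plus} through the measure-preserving map $\phi$ of Lemma \ref{map} and restore the sign.

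Where you differ is the non-saturated case.

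\textbf{Your route.} You reuse the level-set perturbation $\pm\frac{\varepsilon}{2}\chi_B$ from the proof of Theorem \ref{atomless extreme points}. You add a perturbation off the support when $\mu(x)=\frac1\Phi\chi_{(0,s)}$ with $s<\infty$. It is this situation, and not merely ``$x$ vanishes on a set of positive measure'', that needs separate treatment: $x$ can vanish on a set of positive measure and still have $\mu(x)=\frac1\Phi$. In the commutative setting you may simply place $B$ inside $\{x=0\}$, or add a disjoint $h$ with $\mu(h)=\mu(\frac1\Phi\chi_{(s,\infty)})$.

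\textbf{The paper's route.} It avoids all interval and level-set bookkeeping. It sets $g:=\frac{1}{\Phi\circ\phi}\chi_S$ with $\phi:S\to(0,m(S))$, and observes that $g\in B_{M_\Phi(0,\infty)}$ and $|f|\le g$.
\begin{enumerate}
\item If $|f|<g$ on a set of positive measure, it writes $f=\frac{\theta g+\theta(2|f|-g)}{2}$. Both summands are dominated in modulus by $g$.
\item Otherwise $|f|=g$ a.e. on $S$, and Lemma \ref{plus} is pulled back through $\phi$. This works even when $m(S)<\infty$, because the distribution functions of the pulled-back functions are then only dominated by, rather than equal to, those of the functions from Lemma \ref{plus}.
\end{enumerate}

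\textbf{Comparison.} The paper's route is shorter and needs only two cases. Yours reuses existing machinery and isolates more transparently that the only situation in which the hypothesis $a^{-1}(\Phi(t))+\Phi^{-1}((\Phi(t)-1)_+)<2t$ is actually needed is $\mu(x)=\frac1\Phi$ on all of $(0,\infty)$.
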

\begin{proof}
    Note that $0$ is not an extreme point of the unit ball of $M_{\Phi}\left(\mathcal{M},\tau\right)$.
    Fix  $0\ne f\in M_\Phi(0,\infty )$ such that $\norm{f}_{M_\Phi\left(0,\infty\right)}\le1$.
    Let $S:=\{t>0:|f(t)|>0\}$.
    By Lemma~\ref{map}, there exists a measure-preserving map $\phi:S\to \left(0,m(S)\right)$ such that
    \[|f|=\mu(f)\circ \phi,~ a.e. ~\text{on}~ S.\]
    Extend $\phi$ to $(0,\infty)$ by setting $\phi(t)=0,~t\not\in S$.
    Define
    \begin{align*}
    g(t)=
    \begin{cases}
    \frac1{\Phi\circ \phi}\chi_{_S}(t),& t\in S, \\
    0, & t\notin S.
    \end{cases}
    \end{align*}
    Since $\phi$ is measure-preserving, it follows that 
    \[d(s;g)=m\{t>0:g(t)>s\}=m\left\{t\in S:0<\phi(t)<\Phi^{-1}\left(\frac1s\right)\right\}\le \Phi^{-1}\left(\frac1s\right).\]
    By Lemma \ref{equivalence}, we have that $g\in B_{M_\Phi\left(0,\infty\right)}$.
    Note that 
    \begin{align} \label{strict?}
    |f(t)|=\mu(\phi(t);f)\le \frac1{\Phi(\phi(t))}=g(t) 
    \end{align}
    almost everywhere on $S$. 
    Let 
    \[
    \theta(t)=
    \begin{cases}
        \frac {f(t)}{|f(t)|}, &t\in S,\\
        0, & t\notin S.
    \end{cases}
    \]

    Below, we will prove that $f$ is not an extreme point of the unit ball.
    The proof is decomposed into two parts.
    \begin{enumerate} [label=(\roman*)]
    \item Assume that the inequality in \eqref{strict?} is strict on some subset $S_1$ of $S$ with positive measure.
    Let $a_1:=\theta g$ and $b_1:=\theta \left(2|f|-g\right)$.
    Note that $a_1,b_1 \in B_{M_\Phi\left(0,\infty\right)}$ and $f=\frac{a_1+b_1}{2}$.
    However, $a_1\ne b_1$.
    Hence, $f$ is not an extreme point of the unit ball.
    \item Assume that $|f|=g$ a.e. on $S$. By Lemma \ref{plus}, there
    exist two non-negative functions $a_1,b_1$ on $\left(0,\infty\right)$ in the unit ball of $M_\Phi\left(0,\infty\right)$  with $a_1(t)\ne b_1(t)$ for all $t\in \left(0,\infty\right)$ satisfying
    \[\frac{a_1(t)+b_1(t)}{2}=\frac1{\Phi(t)}, ~\forall t>0.\]
    Let $c_1(t):=\theta a_1(\phi(t)), ~d_1(t):=\theta b_1(\phi(t)), ~t>0$.
    Then, 
    \[\frac{c_1+d_1}{2}=\frac{\theta}{\Phi\circ\phi}=\theta g=f ~\text{a.e. on}~ \left(0,\infty\right).\] 
    Note that $c_1\ne d_1$ and $c_1,d_1$ are in the unit ball of $M_\Phi\left(0,\infty\right)$.
    Therefore, $f$ is not an extreme point of the unit ball.
    \end{enumerate}
    The proof is complete.
\end{proof}



\begin{corollary} \label{1,infty}
    Let $\mathcal{M}$ be a semifinite non-atomic von Neumann algebra equipped with a semifinite faithful normal trace $\tau$.
    Let $\Phi:[0,\infty)\to[0,\infty)$ be strictly increasing, admissible and continuous on $[0,\infty)$ satisfying $\Phi(\infty)=\infty$.
    Let $a(t)$ be as defined in \eqref{a(t)}.
    If for all $t>0$,
    \begin{align*}
    a^{-1}(\Phi(t))+\Phi^{-1}\left(\left(\Phi(t)-1\right)_+\right)<2t,
    \end{align*}
    then the unit ball of $M_\Phi\left(\mathcal{M},\tau\right)$ has no extreme point.
\end{corollary}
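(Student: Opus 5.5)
The plan is to transfer the commutative construction of Proposition \ref{extend} into $\cM$, in the same way the proof of Theorem \ref{atomless extreme points} transferred perturbations. Since $0$ is clearly not extreme (write $0=\frac{y+(-y)}{2}$ for any $y\ne 0$ in the unit ball), we fix $0\ne x$ with $\norm{x}_{M_\Phi(\cM,\tau)}\le 1$ and polar decomposition $x=u|x|$. We will produce two distinct elements of the unit ball whose midpoint is $x$.

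\textbf{Step 1: reduce to a commutative model.} First we need $x$ to be $\tau$-compact, i.e.\ $\mu(\infty;x)=0$. This holds because $\mu(t;x)\le 1/\Phi(t)\to 0$, using $\Phi(\infty)=\infty$. We then apply \cite[Lemma 1.3]{CKS92} exactly as in the proof of Theorem \ref{atomless extreme points}. This gives a non-atomic abelian von Neumann subalgebra $\cN\subset r(x)\cM r(x)$ containing $|x|$, together with a $*$-isomorphism $U:S(\cN,\tau|_\cN)\to S(0,\tau(r(x)))$ satisfying $\mu(U(z))=\mu(z)$. Set $f:=U(|x|)\ge 0$. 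Then $f$ is a function on $(0,\tau(r(x)))$, which we extend by zero to $(0,\infty)$, and $\norm{f}_{M_\Phi(0,\infty)}=\norm{x}\le 1$.

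\textbf{Step 2: apply the commutative argument.} We run the proof of Proposition \ref{extend} for $f$. That proof yields $c_1\ne d_1$ in $B_{M_\Phi(0,\infty)}$ with $f=\frac{c_1+d_1}{2}$. Two features of the construction matter here.
\begin{enumerate}
\item Both functions are supported in $S=\{f>0\}\subset(0,\tau(r(x)))$: in case (i) they are $\theta g$ and $\theta(2|f|-g)$, and in case (ii) they are $\theta\, a_1\circ\phi$ and $\theta\, b_1\circ\phi$. Since $\theta=0$ off $S$, this is immediate.
\item Both can be taken real-valued, since here $\theta=1$ on $S$ because $f\ge 0$.
\end{enumerate}
So $c_1,d_1\in S(0,\tau(r(x)))$. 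Let me check the one non-obvious point, that $a_1\circ\phi$ lies in the unit ball. Because $\phi$ is measure-preserving from $S$ onto $(0,m(S))$, the distribution function of $a_1\circ\phi$ on $S$ is bounded by that of $a_1$. Hence $\mu(a_1\circ\phi)\le\mu(a_1)$, and Lemma \ref{equivalence} gives the norm bound.

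\textbf{Step 3: pull back to $\cM$.} Define $y:=uU^{-1}(c_1)$ and $z:=uU^{-1}(d_1)$. As in the proof of Theorem \ref{atomless extreme points},
\[
\mu(y)\le\mu(U^{-1}(c_1))=\mu(c_1)\le\frac1\Phi ,
\]
so $y$ lies in the unit ball, and likewise $z$. By linearity of $U^{-1}$ we get $\frac{y+z}{2}=uU^{-1}(f)=u|x|=x$. For distinctness: if $y=z$, then multiplying on the left by $u^*$ and using $u^*u=r(x)$ together with $U^{-1}(c_1),U^{-1}(d_1)\in r(x)\cM r(x)$ gives $U^{-1}(c_1)=U^{-1}(d_1)$. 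Bijectivity of $U$ then forces $c_1=d_1$, a contradiction. Hence $x$ is not extreme.

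\textbf{Main obstacle.} The delicate part is Step 2 in the case $\tau(r(x))<\infty$. There Lemma \ref{plus} supplies $a_1,b_1$ on all of $(0,\infty)$, and we need to know the composition with $\phi$ only samples them on $(0,m(S))$ and stays supported in $S$. This is exactly what is needed for the construction to live in $S(0,\tau(r(x)))$ and hence in the range of $U$. I expect this to follow directly from $\phi(S)\subset(0,m(S))$, but it is the point to verify with care.
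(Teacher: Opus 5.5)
Your proof is correct and is essentially the argument the paper intends: the corollary is stated without proof as a consequence of Proposition~\ref{extend}, transferred to $\cM$ through the commutative model of \cite[Lemma 1.3]{CKS92} and the polar decomposition, exactly as in the proof of Theorem~\ref{atomless extreme points}. The point you flag as the main obstacle is harmless: $\theta$ vanishes off $S$ and $\phi(S)\subset(0,m(S))\subset(0,\tau(r(x)))$, so both halves of the decomposition of $f$ lie in the range of $U$.
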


\begin{remark} \label{convexity}
Let $\Phi:[0,\infty)\to[0,\infty)$ be strictly increasing, admissible and continuous on $[0,\infty)$ satisfying $\Phi(\infty)=\infty$.
If, in addition, $\Phi$ is convex, then \eqref{b(t)>0} holds.
Indeed, fix a scalar $t>0$ and let $s=\Phi(t)$.
Let $a(t)$ be as defined in \eqref{a(t)}.
\begin{enumerate} [label=(\roman*)]
    \item Assume that $0<s\le 1$.
    In this case, 
    \begin{align} \label{s-1 = 0}
    \Phi^{-1}\left(\left(\Phi(t)-1\right)_+\right)=\Phi^{-1}\left(\left(s-1\right)_+\right)=0.
    \end{align}
    Note that
    \[a\left(\Phi^{-1}(2s)\right)-s=\frac{s^2}{s+1}>0,\]
    which together with the convexity of $\Phi$ implies that 
    \begin{align} \label{Phi(t) < 2t}
    a^{-1}\left(\Phi(t)\right)=a^{-1}(s)<\Phi^{-1}(2s)\le2\Phi^{-1}(s)=2t.
    \end{align}
    By \eqref{s-1 = 0} and \eqref{Phi(t) < 2t}, we have that 
    \[a^{-1}(\Phi(t))+\Phi^{-1}\left(\left(\Phi(t)-1\right)_+\right)<2t.\]
    This shows that \eqref{b(t)>0} holds.
    \item Assume that $s>1$.
    In this case, $\left(s-1\right)_+=s-1$.
    Observe that 
    \[a\left(\Phi^{-1}\left(s+1\right)\right)-s=\frac{2}{s+3}>0.\]
    Then, $a^{-1}(s)<\Phi^{-1}\left(s+1\right)$.
    By the convexity of $\Phi$, we have that
    \[\frac{a^{-1}(s)+\Phi^{-1}\left(s-1\right)}{2}<\frac{\Phi^{-1}\left(s+1\right)+\Phi^{-1}\left(s-1\right)}{2}\le \Phi^{-1}(s)=t.\]
    This implies that \eqref{b(t)>0} holds.
\end{enumerate}
 
\end{remark}

A combination of Corollary \ref{1,infty} and Remark \ref{convexity} yields the following result. 

\begin{corollary} \label{0<p le1}
    Let $0<p\le1$. Let $\mathcal{M}$ be a semifinite non-atomic von Neumann algebra equipped with a semifinite faithful normal trace $\tau$.
    If $\Phi(t)=t^{1/p}, ~t>0$, then $\Phi$ is convex and therefore 
    the unit ball of $L_{p,\infty}\left(\mathcal{M},\tau\right)$ has no extreme point.
    In particular,  the unit ball of $L_{p,\infty}\left(0,\infty \right)$ has no extreme point.
\end{corollary}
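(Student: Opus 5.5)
The plan is to derive Corollary~\ref{0<p le1} by combining Corollary~\ref{1,infty} with Remark~\ref{convexity}. All that needs checking is that $\Phi(t)=t^{1/p}$ meets their hypotheses when $0<p\le 1$.

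First I would verify the standing assumptions on $\Phi$.
\begin{itemize}
\item $\Phi$ is strictly increasing and continuous on $[0,\infty)$.
\item $\Phi(t)=0$ if and only if $t=0$.
\item $\Phi$ is right-continuous.
\item $\Phi$ satisfies the $\Delta_2$-condition with $C_\Phi=2^{1/p}$, since $\Phi(2t)=2^{1/p}\Phi(t)$.
\item $\Phi(\infty)=\infty$.
\end{itemize}
Together these show that $\Phi$ is admissible. Next, since $1/p\ge 1$, we have $\Phi''(t)=\frac1p\left(\frac1p-1\right)t^{1/p-2}\ge 0$ for $t>0$, with $\Phi$ linear when $p=1$. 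Hence $\Phi$ is convex on $[0,\infty)$.

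With convexity in hand, Remark~\ref{convexity} gives inequality \eqref{b(t)>0} for every $t>0$. That is,
\[
a^{-1}(\Phi(t))+\Phi^{-1}\bigl((\Phi(t)-1)_+\bigr)<2t,
\]
where $a$ is given by \eqref{a(t)}. Corollary~\ref{1,infty} then applies directly to the non-atomic algebra $(\mathcal{M},\tau)$. It shows that the unit ball of $M_\Phi(\mathcal{M},\tau)=L_{p,\infty}(\mathcal{M},\tau)$ has no extreme point. The special case $\mathcal{M}=L_\infty(0,\infty)$ with Lebesgue trace gives the statement for $L_{p,\infty}(0,\infty)$, and this is also exactly Proposition~\ref{extend}.

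The one step I would justify carefully is that the convexity used in Remark~\ref{convexity} gives the two inequalities $\Phi^{-1}(2s)\le 2\Phi^{-1}(s)$ and $\frac{\Phi^{-1}(s+1)+\Phi^{-1}(s-1)}{2}\le\Phi^{-1}(s)$. For the second, $\Phi^{-1}(s)=s^{p}$ is concave because $p\le 1$, which gives midpoint concavity. For the first, concavity of $\Phi^{-1}$ together with $\Phi^{-1}(0)=0$ gives subadditivity. Both checks are routine for power functions, so I do not expect a real obstacle.
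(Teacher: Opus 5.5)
Your proposal is correct and matches the paper's proof. The paper also checks that $t^{1/p}$ is admissible, strictly increasing, continuous and convex, then uses Remark~\ref{convexity} to get \eqref{b(t)>0} and concludes via Corollary~\ref{1,infty}; your check that $\Phi^{-1}(s)=s^{p}$ is concave just makes explicit the two inequalities that Remark~\ref{convexity} takes from convexity.
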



In particular, for $0<p\le1$, $L_{p,\infty}\left(\mathcal{M},\tau\right)$ is not isometric to $\ell_{p,\infty}$ and $\mathcal{C}_{p,\infty}$ when $\mathcal{M}$ is non-atomic.

\chapter{Extreme points of the positive unit ball of a  Marcinkiewicz space} \label{positive unit ball}



In this chapter, we discuss extreme points of the positive unit balls of quasi-Banach Marcinkiewicz spaces $M_\Phi\left(\mathcal{M},\tau\right)$  in the setting  where $\cM$ is either non-atomic or  atomic with all  atoms having the same trace,  and $\Phi:[0,\infty)\to [0,\infty)$ is an admissible function. 
 The results in this chapter are new, even in the commutative setting. 

We start this chapter with the following lemma (see  \cite[Lemma 5.4]{CKS} for a complete proof).

\begin{lemma} \label{Lemma 5.4}
    Let $\mathcal{M}$  be  non-atomic and  $0\le x,y,z \in \left(L_1+L_\infty\right)\left(\mathcal{M},\tau\right)$. 
    If $x=\frac{y+z}{2}$,  $\mu\left(x\right)=\mu\left(y\right)=\mu\left(z\right)$ and $\mu\left(\infty;x\right)=0
    $, then $x=y=z$.
\end{lemma}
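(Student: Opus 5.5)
The plan is to reduce the noncommutative statement to a commutative one on the abelian subalgebra generated by $x$, and then to split the difference of $y$ and $z$ into a diagonal part and an off-diagonal part. Since $x\ge0$ and $\mu(\infty;x)=0$, $x$ is $\tau$-compact. As in the proof of Theorem \ref{atomless extreme points}, \cite[Lemma 1.3]{CKS92} provides a non-atomic abelian von Neumann subalgebra $\cN\subset r(x)\cM r(x)$ containing the spectral projections of $x$, together with a $\mu$-preserving $*$-isomorphism $U:S(\cN,\tau|_\cN)\to S(0,\tau(r(x)))$. We may also use a trace-preserving conditional expectation $\mathcal{E}$ onto $\cN$, or onto a maximal abelian subalgebra containing $x$, which we extend to $(L_1+L_\infty)(\cM,\tau)$.

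\emph{Step 1 (support).} From $0\le y,z$ and $y+z=2x$ we get $y,z\le 2x$. Hence $y=r(x)yr(x)$ and $z=r(x)zr(x)$, so we may work inside $r(x)\cM r(x)$.

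\emph{Step 2 (diagonal part).} Put $y_1=\mathcal{E}(y)$ and $z_1=\mathcal{E}(z)$. Conditional expectations are contractions in the Hardy--Littlewood sense, so $y_1\prec\prec y$ and $z_1\prec\prec z$, and therefore $\mu(y_1),\mu(z_1)\prec\prec\mu(x)$. Since $\mathcal{E}(x)=x$, we also have $x=\frac{y_1+z_1}{2}$. In the commutative picture under $U$, the function $\mu(x)$, rearranged as $x$, is an extreme point of its orbit $\Omega(x)$. I expect this to follow from \cite[Theorem 1.1]{CKS}, or directly from the extremality of $x$ in $\Omega(x)$ when $\mu(\infty;x)=0$, which is the same fact invoked in the proof of Theorem \ref{atomless extreme points}. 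This gives $y_1=z_1=x$.

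\emph{Step 3 (off-diagonal part).} We now have $\mathcal{E}(y)=x$ and $\mu(y)=\mu(x)$, and we must conclude $y=x$. For this I would use the equality case of majorization for conditional expectations: if $\mu(\mathcal{E}(y))=\mu(y)$ with $\mu(\infty;y)=0$, then $y$ commutes with the algebra and $y=\mathcal{E}(y)$. Concretely, take the spectral projections $e_s=e^{x}(s,\infty)$, which are $\tau$-finite because $\mu(\infty;x)=0$. Then $\tau(e_sy)=\tau(e_sx)=\int_0^{\tau(e_s)}\mu(x)=\int_0^{\tau(e_s)}\mu(y)$. Equality in the Ky Fan type maximum $\sup\{\tau(py):\tau(p)=t\}$ forces $e_s$ to be a spectral projection of $y$, up to the usual issue with level sets. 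It follows that $y$ commutes with every $e_s$, so $y$ lies in $\cN$ and $y=\mathcal{E}(y)=x$. The same argument gives $z=x$.

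The main obstacle is Step 3, in particular at flat pieces of $\mu(x)$, where $e_s$ jumps and the spectral projections of $y$ need not coincide with those of $x$. On such a level set $x=\lambda q$ with $q$ a projection, and equality in the Ky Fan maximum forces $qyq\ge\lambda q$ on that block with trace equal to $\lambda\tau(q)$, hence $qyq=\lambda q$. The off-diagonal blocks then vanish by positivity of $y$ combined with the trace equality. The finiteness of $\tau(e_s)$, which comes from $\mu(\infty;x)=0$, is exactly what makes these trace comparisons legitimate. Alternatively, I would simply cite \cite[Lemma 5.4]{CKS}, since the paper indicates this lemma is proved there.
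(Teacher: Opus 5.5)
The paper gives no argument of its own for this lemma; it only refers to \cite[Lemma 5.4]{CKS}, which is your fallback. Your self-contained route has a genuine gap in the last sentence of Step 3. From the Ky Fan equality you obtain that $y$ commutes with every $e_s=e^x(s,\infty)$, and you then conclude that $y\in\cN$. That inference is false in general. Commuting with all spectral projections of $x$ only places $y$ in the relative commutant of $x$ in $r(x)\cM r(x)$, which is usually non-abelian and much larger than $\cN$. The obstruction is spectral multiplicity, not flat pieces of $\mu(x)$, so your level-set patch does not repair it. For instance, take $\cM=L_\infty(0,1)\overline{\otimes}M_2(\mathbb{C})$ with $\tau=\int\otimes\operatorname{tr}$, and $x=f\otimes\mathbf{1}$ with $f(t)=2-t$. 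Then $\mu(t;x)=2-\frac t2$ on $(0,2)$ is strictly decreasing, and every $e_s=\chi_{\{f>s\}}\otimes\mathbf{1}$ is central. So ``$y$ commutes with every $e_s$'' carries no information, while $\cN$ is abelian and hence a proper subalgebra of $\cM$.

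The information you need is already contained in the equality case, provided you state it precisely and compare traces instead of passing to $\cN$. Fix $s>0$ and put $t_s=\tau(e_s)=d(s;x)<\infty$ and $\lambda_s=\mu(t_s;x)\le s$. For every $0\le w\in(L_1+L_\infty)(\cM,\tau)$ and $\tau$-finite $p$ one has $\tau(pwp)\le\int_0^{\tau(p)}\mu(w)$. Apply this to $w=y,z$ with $p=e_s$ and average. The left side is $\tau(e_sxe_s)=\int_0^{t_s}\mu(x)$, and $\mu(y)=\mu(z)=\mu(x)$, so you get $\tau(e_sye_s)=\int_0^{t_s}\mu(y)$ without Step 2 or $\mathcal{E}$. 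Next,
\[
\tau(e_sye_s)-\lambda_st_s=\tau(e_s(y-\lambda_s)e_s)\le\tau(e_s(y-\lambda_s)_+e_s)\le\tau((y-\lambda_s)_+)=\int_0^{t_s}\mu(y)-\lambda_st_s .
\]
Equality throughout yields $e^y(\lambda_s,\infty)\le e_s\le e^y[\lambda_s,\infty)$. Now $\tau(e^y(\lambda_s,\infty))=d(\lambda_s;y)=d(\lambda_s;x)=t_s=\tau(e_s)<\infty$, so $e^y(\lambda_s,\infty)=e_s$. Also $e^y(s,\infty)\le e^y(\lambda_s,\infty)$ and $\tau(e^y(s,\infty))=d(s;x)=t_s$, so $e^y(s,\infty)=e^x(s,\infty)$. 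Since this holds for all $s>0$, $y=x$, and likewise $z=x$. This argument uses neither \cite[Lemma 1.3]{CKS92} nor non-atomicity.

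Finally, the fact you invoke in Step 2, in its noncommutative form ($x\in\operatorname{extr}\Omega(x)$, as used in the proof of Theorem \ref{atomless extreme points}), already gives the lemma in one line. Indeed, $\mu(y)=\mu(z)=\mu(x)$ puts $y,z\in\Omega(x)$.
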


Now, we can apply the above lemma and the proof of Theorem \ref{atomless extreme points} to get the following important characterization of extreme points of the positive unit ball.

\begin{proof}[Proof of Theorem \ref{posi non-atomic}]
$\left(\Leftarrow\right)$.
    By \cite[Remark 3.2.6(ii)]{DPS}, we obtain that for all $x\in S(\cM,\tau)$,
    \begin{align}\label{sptrace}
    \tau\left(r(x)\right)=\operatorname{inf} \{t\ge0:\mu\left(t;x\right)=0\}.
    \end{align}
    Assume that $x \in M_\Phi(\cM,\tau), y\in B^+_{M_{\Phi}\left(\mathcal{M},\tau\right)}$
    such that  
    \[
    \mu\left(x\right)=\frac{1}{\Phi}\chi_{_{\left(0,\tau\left(\mathbf{1}\right)\right)}},\quad
    \mu\left(y\right)=\frac{1}{\Phi}\chi_{_{\left(0,\tau\left(r(y)\right)\right)}}.
    \]
    Let $v,w\in B^+_{M_{\Phi}\left(\mathcal{M},\tau\right)}$ be arbitrary such that $y=\frac{v+w}2$.
    Since  $\mu\left(v\right),\mu\left(w\right)\le \mu\left(x\right)$, it follows that for all $t\in\left(0,\tau\left(r(y)\right)\right)$,
\begin{align*}
\int_0^t \mu\left(s;y\right)ds\stackrel{\tiny \mbox{\cite[Thm. 3.9.9]{DPS}}}{\le} \int_0^t\frac{1}{2}\left(\mu\left(s;v\right)+\mu\left(s;w\right)\right)ds&\le \int_0^t\frac{\mu\left(s;x\right)+\mu\left(s;x\right)}{2}ds\\
&=\int_0^t \mu\left(s;y\right)ds.
\end{align*}
This implies that 
\begin{align}\label{intmueq}
\int_0^t \mu\left(s;v\right)ds=\int_0^t \mu\left(s;w\right)ds=\int_0^t\mu\left(s;y\right)ds,
~t\in\left(0,\tau\left(r(y)\right)\right),
\end{align}  
 which together with \eqref{sptrace} yields that
$\tau(r(v)),\tau(r(w))\ge \tau(r(y))$.
    On the other hand, 
     by 
     $y=\frac{v+w}{2}$ and $y, v,w\ge 0$, we have 
 $r(y)\ge r(v),r(w)$.
Hence, \[\tau\left(r(y)\right)=\tau\left(r(v)\right)=\tau\left(r(w)\right) .\] 
In particular, 
$\mu\left(y\right)=\mu\left(v\right)=\mu\left(w\right)$ (see \eqref{intmueq}). 
Using Lemma \ref{Lemma 5.4}, 
we have $y=v=w$. 
Consequently, $y\in\operatorname{extr}\left(B^+_{M_{\Phi}\left(\mathcal{M},\tau\right)}\right).$

    
 $\left(\Rightarrow\right)$.   
    Now, let $x\in \operatorname{extr}\left(B^+_{M_{\Phi}\left(\mathcal{M},\tau\right)}\right)$.
    It follows immediately from the same argument used in the proof of Theorem \ref{atomless extreme points} that $\mu\left(x\right)=\frac1\Phi\chi_{_{\left(0,\tau\left(r(x)\right)\right)}}$.
\end{proof}

\begin{corollary} \label{posi non-atomic1}
    Assume that $p>1$.
    Let $\mathcal{M}$ be a semifinite non-atomic von Neumann algebra equipped with a semifinite faithful normal trace $\tau$. 
    Then, \[\operatorname{extr}\left(B^+_{L_{p,\infty}\left(\mathcal{M},\tau\right)}\right)=
    \left\{x\in B^+_{L_{p,\infty}\left(\mathcal{M},\tau\right)}:\mu\left(t;x\right)=t^{-\frac{1}{p}}\chi_{_{\left(0,\tau\left(r(x)\right)\right)}}(t),~\forall t>0
    \right\}.\]
\end{corollary}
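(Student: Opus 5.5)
This is Theorem \ref{posi non-atomic} specialised to $\Phi(t)=t^{1/p}$, so the plan is to check its hypotheses for this $\Phi$ and then translate the conclusion. The function $\Phi(t)=t^{1/p}$ is continuous, strictly increasing and vanishes only at $0$. It satisfies the $\Delta_2$-condition with $C_\Phi=2^{1/p}$, so it is admissible. Clearly $\Phi(\infty)=\infty$.

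The one condition that actually uses $p>1$ is $\frac1\Phi\in(L_1+L_\infty)(0,\infty)$. Since $\frac1\Phi$ is decreasing and positive, membership reduces to $\int_0^1 t^{-1/p}\,dt<\infty$. This integral equals $\frac{p}{p-1}$ and is finite exactly because $1/p<1$. On $(1,\infty)$ the function is bounded by $1$, so that part is harmless.

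With the hypotheses verified, Theorem \ref{posi non-atomic} gives
\[\operatorname{extr}\left(B^+_{L_{p,\infty}\left(\mathcal{M},\tau\right)}\right)=\left\{x\in B^+_{L_{p,\infty}\left(\mathcal{M},\tau\right)}:\mu\left(x\right)=\tfrac{1}{\Phi}\chi_{_{\left(0,\tau\left(r(x)\right)\right)}}\right\},\]
using $M_\Phi(\cM,\tau)=L_{p,\infty}(\cM,\tau)$ with identical quasi-norms. Substituting $\frac1{\Phi(t)}=t^{-1/p}$ turns this into the stated pointwise identity for all $t>0$. For $t\ge\tau(r(x))$ both sides vanish, by \eqref{sptrace} together with right-continuity of $\mu(x)$. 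The only step with any content is the integrability check above; the rest is substitution.
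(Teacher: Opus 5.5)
Your proposal is correct and matches the paper, which obtains this corollary directly from Theorem~\ref{posi non-atomic} with $\Phi(t)=t^{1/p}$. As you say, the only substantive check is $\frac1\Phi\in(L_1+L_\infty)(0,\infty)$, and this holds exactly because $p>1$.
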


Note that the positive unit ball of a weak $L_p$-space $L_{p,\infty}\left(\mathcal{M},\tau\right),~0<p\le1$, has a unique  extreme point $0$ when $\mathcal{M}$ is non-atomic.    

\begin{remark} \label{positive no extr}
Let $\mathcal{M}$ be a non-atomic von Neumann algebra equipped with a semifinite faithful normal trace $\tau$.
Let $\Phi:[0,\infty)\to[0,\infty)$ be strictly increasing, admissible and continuous on $[0,\infty)$ satisfying $\Phi(\infty)=\infty$.
Let $a$ be as defined in \eqref{a(t)}.
Assume that  for all $t>0$,
    \begin{align*}
    a^{-1}(\Phi(t))+\Phi^{-1}\left(\left(\Phi(t)-1\right)_+\right)<2t.
    \end{align*}
For a fixed $f \in B_{M_\Phi\left(0,\infty\right)}^+$,
letting
\begin{align*}
h(t):=
\begin{cases}
    \min\{f(t),g(t)-f(t)\}, &t\in S_1,\\
    0, & t\notin S_1.
\end{cases}
\end{align*}
and replacing $a_1$ and $a_2$ with $f-h$ and $f+h$ in part (i), respectively,
the same argument as used in the proof of Proposition~\ref{extend} 
shows that $f$ is an extreme point of the positive unit ball of $M_\Phi\left(0,\infty\right)$ if and only if $f=0$.
Therefore, 
the positive unit ball of a quasi-Banach Marcinkiewicz space $M_\Phi\left(\mathcal{M},\tau\right)$ has a unique extreme point $0$.
In particular, by Remark \ref{convexity}, the positive unit ball of a weak $L_p$-space $L_{p,\infty}\left(\mathcal{M},\tau\right), ~0<p\le1$, has a unique extreme point $0$.
\end{remark}

\begin{proof}[Proof of Theorem \ref{extr point atomic}]
The statement follows from  an argument similar to the proof of 
  Theorem~\ref{atomic} by replacing  Theorem \ref{atomless extreme points}
 with   Theorem \ref{posi non-atomic}.
\end{proof}

\chapter{Positive isometries on noncommutative quasi-Banach Marcinkiewicz spaces \texorpdfstring{$M_{\Phi}(\mathcal{M},\tau)$}{M(Phi)(M,tau)}} \label{positive isometries} 


The objective of this chapter is to provide a complete description of positive surjective isometries on quasi-Banach Marcinkiewicz spaces $M_{\Phi}\left(\mathcal{M},\tau\right)$.

\section{The order isomorphism property of positive surjective isometries}

Let $\cM$ be a semifinite von Neumann algebra equipped with a  semifinite faithful normal trace $\tau$.
{Let
$\left(E(0,\infty),\norm{\cdot}_{E\left(0,\infty\right)}\right)$
be a  quasi-Banach symmetric space.
Define the corresponding quasi-Banach symmetric space \cite{S14} by setting
\begin{align}\label{construction}
E\left(\cM,\tau\right):=\{x\in S\left(\cM,\tau\right):\mu(x)=\mu(f) ~\text{for some}~f\in E\left(0,\infty\right)\},\end{align}
equipped with the quasi-norm
\[\norm{x}_{E\left(\cM,\tau\right)}=\norm{f}_{E\left(0,\infty\right)}.\]
In the special case when   $\cM$ is atomic with all atoms having the same trace,
any quasi-Banach symmetric space $E(\cM,\tau)$ can be obtained by construction \eqref{construction} above. 
Indeed, $E(\cM,\tau)$ corresponds to a quasi-Banach symmetric sequence space  $\ell_E$~\cite{S14}, and 
one  may define 
 a  quasi-Banach symmetric function space $E\left(0,\infty\right)$  associated with $\ell_E$ (and associated with $E(\cM,\tau)$ in the sense of \eqref{construction}), see  \cite[Section 3.6]{LSZ} and \cite[Corollary~14]{S14}.}

In this section, we always assume that  $\cM$ is  a semifinite von Neumann algebra equipped with a  semifinite faithful normal trace $\tau$.
The main purpose of this section is to prove that every positive surjective isometry on $E(\cM,\tau)$ is an order isomorphism, i.e., for any $x\in E\left(\cM,\tau\right)$, $T(x)\ge 0$ if and only if $x\ge0$.


The following result is an immediate consequence of  \cite[Lemma 3.4.1]{Z11}.

\begin{lemma} \label{arise}
    There exists $0<p<1$ such that for any nonzero $\tau$-finite projection $e$, the linear mapping $f_e:E\left(\mathcal{M},\tau\right)\to L_p\left(\mathcal{M},\tau\right)$ defined by
    \[f_e(x):=xe, ~\forall x\in E\left(\mathcal{M},\tau\right),\]
    is bounded.
\end{lemma}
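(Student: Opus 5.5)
The plan is to factor $f_e$ through a weak-type space and then use the standard embedding of weak $L_q$ into $L_p$ on a finite piece. The first step is to obtain a lower estimate on the quasi-norm of $E(0,\infty)$. By the Aoki--Rolewicz theorem, $\norm{\cdot}_{E(0,\infty)}$ is equivalent to an $r$-norm for some $0<r\le 1$. Combining this with symmetry, I would show that $E(0,\infty)$ embeds continuously into a weak-type space $L_{q,\infty}$ near zero for some $q>0$. This is the content of \cite[Lemma 3.4.1]{Z11}, which we invoke. Concretely, for $x\in E(\cM,\tau)$ and $t\le 1$ we aim for
\[
\mu(t;x)\le C\,t^{-1/q}\norm{x}_{E(\cM,\tau)}.
\]
The heuristic behind this bound runs as follows. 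Put $\varphi(s):=\norm{\chi_{(0,s)}}_E$. For $\mu(x)\ge \mu(t;x)\chi_{(0,t)}$ we get $\mu(t;x)\varphi(t)\le\norm{x}_E$. By $r$-convexity, $\varphi(2s)^r\le 2C\varphi(s)^r$, because $\chi_{(0,2s)}$ is a sum of two equimeasurable copies of $\chi_{(0,s)}$. Iterating this dyadically gives the power-type lower bound $\varphi(t)\ge c\,t^{1/q}\varphi(1)$ for $t\le1$, with $1/q=\log_2(2C)/r$.

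The second step transfers this bound to $xe$. Since $\mu(xe)\le \mu(x)\chi_{(0,\tau(e))}$ (by $\mu(t;xe)\le \mu(t;x)\norm{e}$ together with $\mu(t;xe)=0$ for $t\ge\tau(e)$), it suffices to estimate $\int_0^{\tau(e)}\mu(t;x)^p\,dt$. Pick $0<p<\min\{q,1\}$. On $(0,\min\{1,\tau(e)\})$ we use $\mu(t;x)\le Ct^{-1/q}\norm{x}$, and $t^{-p/q}$ is integrable there. On $[1,\tau(e))$, if this interval is nonempty, monotonicity gives $\mu(t;x)\le\mu(1;x)\le C\norm{x}$, so that piece contributes at most $C^p\tau(e)\norm{x}^p$. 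Hence
\[
\norm{xe}_p^p\le C'\bigl(1+\tau(e)\bigr)\norm{x}_{E}^p.
\]
This shows that $f_e$ is bounded and linear, with $p$ independent of $e$. Only the constant depends on $\tau(e)$, which the statement allows, and linearity is trivial.

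The main obstacle is the first step: extracting the uniform power-type lower bound on the fundamental function $\varphi$ from nothing but the quasi-triangle constant and symmetry. I would handle it with Aoki--Rolewicz plus the dyadic doubling estimate above, citing \cite[Lemma 3.4.1]{Z11} for the precise form. The case $\varphi(1)=\infty$ cannot occur, because $E$ is a quasi-Banach symmetric space that contains the indicators of finite-measure sets. Alternatively, if $E$ were trivial on such sets, then $xe=0$ and the claim is vacuous.
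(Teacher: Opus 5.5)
Your proposal is correct and follows the paper's route. The paper simply declares the lemma an immediate consequence of \cite[Lemma 3.4.1]{Z11}. Your argument (a doubling bound on the fundamental function via Aoki--Rolewicz, the resulting weak-type bound on $\mu(x)$ near $0$, and the estimate $\mu(xe)\le\mu(x)\chi_{(0,\tau(e))}$) makes that embedding explicit and yields the $L_p$-bound with $p$ independent of $e$.
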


Let $0<p<1$ be obtained  as in Lemma \ref{arise}.
Let $\mathcal{L}$\label{def:L} denote the set of all continuous linear mappings
\[L:E\left(\cM,\tau\right)\to L_p\left(\cM,\tau\right)\]
satisfying $\norm{L}_{op}\le1$, that is,
\[\norm{Lx}_p\le \norm{x}_{E\left(\cM,\tau\right)}, ~\forall x\in E\left(\cM,\tau\right).\]
Note that $\left\{\frac{f_e}{\norm{f_e}_{op}}\right\}_{0<\tau(e)<\infty}$, {defined} in Lemma \ref{arise}, is a subset of $\cL$.
Define the functional $\kappa:E\left(\mathcal{M},\tau\right)\to [0,\infty)$ by setting
\[\kappa(x):=\sup_{L\in \mathcal{L}}\norm{Lx}_p, ~\forall x\in E\left(\mathcal{M},\tau\right).\]
Observe that $\kappa(\lambda x)=|\lambda|\kappa(x)$ for all $\lambda\in \mathbb{C}$ and $x\in E\left(\mathcal{M},\tau\right)$.

In the next lemma, we collect some basic properties of the functional $\kappa$.

\begin{lemma} \label{contraction}
Let $T$ be a  (not necessarily positive) surjective isometry on $E\left(\mathcal{M},\tau\right)$.
Then, the following statements hold:
\begin{enumerate} [label=(\roman*)]
    \item $\kappa(x)=0$ if and only if $x=0$.
    \item $\kappa(Tx)=\kappa(x)$ for all $x\in E\left(\mathcal{M},\tau\right)$.
    \item $\kappa(|x|)=\kappa(x)$ for all $x\in E\left(\mathcal{M},\tau\right)$.
    \item $|\kappa(x)^p-\kappa(y)^p|\le\left(\kappa(x-y)\right)^p\le \norm{x-y}_{E\left(\cM,\tau\right)}^p$.
\end{enumerate}
\end{lemma}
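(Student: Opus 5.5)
The four items follow almost directly from the definition of $\kappa$ as a supremum over the class $\cL$ and from the $p$-triangle inequality in $L_p(\cM,\tau)$. The organising idea is that $\cL$ is stable under composition with isometries of $E(\cM,\tau)$. I will treat (ii), (iv), (i) and (iii) in that order.

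For (ii), fix $L\in\cL$. Since $T$ is a surjective isometry, $T^{-1}$ is also an isometry, so $L\circ T^{-1}$ is linear with $\norm{L T^{-1}x}_p\le \norm{T^{-1}x}_{E}=\norm{x}_E$. Hence $L\circ T^{-1}\in\cL$, and likewise $L\circ T\in\cL$. It follows that $\norm{Lx}_p=\norm{(LT^{-1})(Tx)}_p\le\kappa(Tx)$ for every $L$, so $\kappa(x)\le\kappa(Tx)$. Applying the same argument with $T^{-1}$ in place of $T$ gives the reverse inequality.

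For (iv), recall that in $L_p$ with $0<p<1$ we have $\norm{a+b}_p^p\le\norm{a}_p^p+\norm{b}_p^p$. So for each $L$,
\[
\norm{Lx}_p^p\le \norm{Ly}_p^p+\norm{L(x-y)}_p^p\le \kappa(y)^p+\kappa(x-y)^p .
\]
Taking the supremum over $L$ and then swapping $x$ and $y$ yields the first inequality. The second inequality holds because $\norm{L(x-y)}_p\le\norm{x-y}_E$ for every $L\in\cL$.

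For (i), the implication $\kappa(0)=0$ is clear. Conversely, suppose $x\ne0$. Because $\tau$ is semifinite, there is a $\tau$-finite projection $e$ with $xe\ne0$: take $e_\alpha\uparrow \mathbf 1$ $\tau$-finite; if $xe_\alpha=0$ for all $\alpha$ then $r(x)\perp e_\alpha$ for all $\alpha$, forcing $x=0$. Then $f_e/\norm{f_e}_{op}\in\cL$ by Lemma \ref{arise}, and $\norm{xe}_p>0$, so $\kappa(x)>0$.

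Item (iii) is the step needing a genuine idea: how to transport an $L$ acting on $x$ to one acting on $|x|$ and back. Let $x=u|x|$ be the polar decomposition. The maps $z\mapsto u^*z$ and $z\mapsto uz$ are candidates, but they are not isometries of $E$, only contractions, since $\mu(uz)\le\mu(z)$ and $\mu(u^*z)\le\mu(z)$. Contractions suffice, however: for $L\in\cL$, the map $L_1:z\mapsto L(u^*z)$ satisfies $\norm{L_1 z}_p\le\norm{u^*z}_E\le\norm{z}_E$, so $L_1\in\cL$. Since $L_1(x)=L(u^*u|x|)=L(|x|)$, this gives $\kappa(|x|)\le\kappa(x)$. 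Similarly $L_2:z\mapsto L(uz)$ lies in $\cL$ and $L_2(|x|)=L(x)$, which gives $\kappa(x)\le\kappa(|x|)$. The only point to check here is monotonicity of the symmetric quasi-norm under left multiplication by a partial isometry, that is $\mu(az)\le\norm{a}_\infty\mu(z)$, which is standard.
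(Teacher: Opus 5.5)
Your proof is correct and follows the paper's argument: $\cL$ is stable under composition with $T$ and $T^{-1}$ for (ii), the $p$-triangle inequality in $L_p(\cM,\tau)$ gives (iv), and the maps $f_e$ from Lemma~\ref{arise} give (i). For (iii) the paper only says the proof is ``similar to (ii)''; your use of the contractions $z\mapsto u^*z$ and $z\mapsto uz$ from the polar decomposition is exactly the intended filling-in, and your semifiniteness argument in (i) supplies a detail the paper leaves implicit.
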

\begin{proof}
~
    \begin{enumerate} [label=(\roman*)]
        \item Assume that $\kappa(x)=0$. 
        It follows that $ \| x e\|_{p} =0$ for any   $\tau$-finite projection $e$, i.e., $xe =0$. Hence, $x=0$.


        \item Note that if $L\in \cL$, then 
        $L\circ T\in \cL$.
        This implies that
        \begin{align} \label{isomeT}
        \kappa(Tx)=\sup_{L\in \cL}\norm{(L\circ T)x}_p\le\sup_{L\in \cL}\norm{Lx}_p=\kappa(x).
        \end{align}
        Arguing 
        similarly, we have 
        %
        \[\kappa(x)\le \kappa(Tx).\]
        This together with \eqref{isomeT} yields that
        $\kappa(T(x))=\kappa(x)$ for all $ x\in E\left(\cM,\tau\right)$.
         \item The proof is  similar to that of (ii).
        \item Since
        $\kappa(x-y)=\sup_{L\in\cL}\norm{L(x-y)}_p\le \norm{x-y}_{E\left(\cM,\tau\right)},$
        it follows that
        \[\left(\kappa(x-y)\right)^p\le \norm{x-y}_{E\left(\cM,\tau\right)}^p.\]

        For any $L\in\cL$, 
        we obtain that
        \begin{eqnarray*}
        \norm{Lx}_p^p&=&\norm{Ly+L(x-y)}_p^p\\
&\stackrel{\mbox{\tiny \cite[Prop. 4.6 (i)]{FK86}}}{\le}& \norm{Ly}_p^p+\norm{L(x-y)}_p^p\\
&\le &\kappa(y)^p+\left(\kappa(x-y)\right)^p.
\end{eqnarray*}
        Taking the supremum over $L\in \cL$ shows that
        \[\kappa(x)^p-\kappa(y)^p\le\left(\kappa(x-y)\right)^p.\]
        Similarly, 
        \[\kappa(y)^p-\kappa(x)^p\le\left(\kappa(y-x)\right)^p=\left(\kappa(x-y)\right)^p.\]
        In particular,
        $|\kappa(x)^p-\kappa(y)^p|\le \left(\kappa(x-y)\right)^p.$
        This completes the proof.
    \end{enumerate} 
\end{proof}

Let $\Omega$ be an open subset of $\mathbb{C}$.
Recall that a function $f:\Omega\to E\left(\cM,\tau\right) $ is called {\itshape analytic} if for each $\lambda_0\in \Omega$, there exist a number $R>0$ and a sequence $\{a_j\}_{j=0}^\infty\subset E\left(\cM,\tau\right)$ and a power series expansion
\[f(\lambda)=\sum_{j=0}^\infty (\lambda-\lambda_0)^ja_j, ~\forall |\lambda-\lambda_0|<R,\]
where the series converges in $\norm{\cdot}_{E\left(\cM,\tau\right)}$ (see, e.g., \cite[p. 371]{BC90}, \cite[p. 538]{K88}).
A function $g:\Omega\to [-\infty,\infty)$ is called {\itshape subharmonic}  if it satisfies the following two conditions \cite[Definition 2.2.1]{R95}:
\begin{enumerate} [label=(\roman*)]
    \item $g$ is upper semicontinuous {\footnote{Let $X$ be a metric space.
    A function $u:X\to [-\infty,\infty)$ is called {\itshape upper semicontinuous} if for any $c\in \mathbb{R}$, the set $\{x\in X:u(x)<c\}$ is open \cite[p. 36]{K91}.}};
    \item given $w\in \Omega$, there exists $R>0$ such that
    \[g(w)\le \frac1{2\pi}\int_0^{2\pi}g(w+re^{it})dt, ~\forall 0\le r<R.\]
\end{enumerate}

For analytic functions with values in a complex Banach space, every
positive power of the norm is subharmonic; see
\cite[p.~882]{HaagerupPisier1989}.
For noncommutative $L_p$-spaces with $0<p<1$, the corresponding
submean inequality follows from the uniform Hardy convexity
established by Xu \cite{Xu1990}; see also
\cite[Theorem~5.7]{PisierXu2003}.
We record the resulting subharmonicity property and its consequence
for bounded analytic functions on a strip.
Here, analyticity for quasi-Banach-space-valued functions is understood
in the sense of locally convergent power series.

\begin{lemma} \label{analytic}
    Assume that $0<p<1$.
    Let $f$ be an analytic $L_p\left(\cM,\tau\right)$-valued function defined on $\mathbb{C}$.
    Then,
    \[\lambda\mapsto \norm{f(\lambda)}_p^p\]
    is subharmonic.
    Moreover, if $f$ is bounded on the strip $\{0<{\rm Re}\lambda<1\}$ and continuous on the closed strip $\{0\le{\rm Re}\lambda\le1\}$ and 
    \[\norm{f(it)}_p\le K_0, ~\norm{f(1+it)}_p\le K_0\]
    for some constant $K_0>0$,
    then
    \[\norm{f(\lambda)}_p\le K_0, ~\forall \lambda\in \{0<{\rm Re}\lambda<1\}.\]
\end{lemma}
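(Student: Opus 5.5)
The plan has two parts: first the submean inequality for $\lambda\mapsto\norm{f(\lambda)}_p^p$, then a Phragmén--Lindelöf type maximum principle on the strip.

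\textbf{Subharmonicity.} Upper semicontinuity comes directly from the topology. Analyticity in the sense of locally convergent power series gives continuity of $f$ in the $L_p$ quasi-norm. The $p$-triangle inequality $\norm{x+y}_p^p\le\norm{x}_p^p+\norm{y}_p^p$ (\cite[Prop. 4.6 (i)]{FK86}) then shows that $\lambda\mapsto\norm{f(\lambda)}_p^p$ is actually continuous.

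For the submean inequality, fix $w$ and $r$ smaller than the radius of convergence at $w$. The function $\zeta\mapsto f(w+r\zeta)$ is then an $L_p(\cM,\tau)$-valued polynomial limit, analytic on a neighbourhood of the closed disc. I will invoke the uniform Hardy convexity of noncommutative $L_p$ for $0<p<1$ due to Xu \cite{Xu1990} (see \cite[Theorem~5.7]{PisierXu2003}). In its weakest form this states that for every analytic $L_p$-valued polynomial $P$,
\[\norm{P(0)}_p^p\le\frac1{2\pi}\int_0^{2\pi}\norm{P(e^{it})}_p^p\,dt.\]
Approximating $\zeta\mapsto f(w+r\zeta)$ uniformly on the closed unit disc by its Taylor partial sums, which converge uniformly in the quasi-norm on a slightly larger disc, and passing to the limit using continuity of $x\mapsto\norm{x}_p^p$, gives
\[\norm{f(w)}_p^p\le\frac1{2\pi}\int_0^{2\pi}\norm{f(w+re^{it})}_p^p\,dt,\]
which is condition (ii).

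\textbf{Three-lines bound.} Set $g(\lambda)=\norm{f(\lambda)}_p^p$. By the first part $g$ is subharmonic on the open strip, continuous on the closed strip, and bounded there, say by $M$. Its boundary values are at most $K_0^p$. Consider, for $\varepsilon>0$,
\[g_\varepsilon(\lambda)=g(\lambda)+\varepsilon\,{\rm Re}\cos\!\big(\tfrac{\pi}{2}(2\lambda-1)\cdot\tfrac12\big)^{-1}\ \text{-type damping},\]
or more simply $g_\varepsilon(\lambda)=g(\lambda)-\varepsilon\,{\rm Re}\,e^{\delta\lambda^2}$-style corrections. The cleanest concrete choice is $h_\varepsilon(\lambda)=g(\lambda)+\varepsilon\log|e^{\lambda^2}|=g(\lambda)+\varepsilon\,{\rm Re}(\lambda^2)$. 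This is subharmonic, since ${\rm Re}(\lambda^2)$ is harmonic. On the strip, ${\rm Re}(\lambda^2)=x^2-y^2\le 1-y^2\to-\infty$ as $|y|\to\infty$. Hence on a large rectangle $[0,1]\times[-Y,Y]$ the horizontal sides give $h_\varepsilon\le M+\varepsilon(1-Y^2)<K_0^p$, while the vertical sides give $h_\varepsilon\le K_0^p+\varepsilon$.

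The classical maximum principle for continuous subharmonic functions on the rectangle then yields $h_\varepsilon\le K_0^p+\varepsilon$ inside. Letting $Y\to\infty$ and then $\varepsilon\to0$ gives $g\le K_0^p$, that is, $\norm{f(\lambda)}_p\le K_0$ on the strip.

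The main obstacle is the first step: correctly quoting Xu's Hardy convexity in the form of a submean inequality for analytic polynomials with values in noncommutative $L_p$, $0<p<1$, and justifying the passage from polynomials to the power-series notion of analyticity. Everything after that is the standard Phragmén--Lindelöf argument.
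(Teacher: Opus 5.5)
Your proposal is correct and follows the paper's route for the first part: Xu's Hardy convexity of $L_p(\cM,\tau)$ applied to the Taylor partial sums, followed by passage to the limit via the $p$-triangle inequality, which is exactly what the paper does. For the strip estimate the paper only cites a Phragm\'en--Lindel\"of corollary from Ransford's book, whereas you prove it directly with the damping term $\varepsilon\,{\rm Re}(\lambda^2)$; that argument works as written, but you should delete the garbled alternative damping expressions that precede it.
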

\begin{proof}

Fix $\lambda_0\in\mathbb C$. 
Since $f$ is analytic, it follows that there exist a number $R>0$ and $\{a_j\}_{j=0}^\infty\subset L_p\left(\cM,\tau\right)$ such that
\[
f(\lambda_0+w)=\sum_{j=0}^{\infty}w^j a_j,
~|w|<R.
\]
Fix $0\le r<R$. By \cite[p. 371]{BC90}, we have that
\[
P_N(z):=\sum_{j=0}^{N}r^jz^j a_j,~ |z|\le1.
\]
converges to $f(\lambda_0+rz)$ uniformly on 
$\{z:|z|\le 1\}$
whenever $N\to \infty$, 
i.e.,  
\begin{align} \label{sup c}
    c_N:=\sup_{|z|\le 1}\norm{P_N(z)-f(\lambda_0+rz)}_p\to 0 ~\text{as}~ N\to \infty.
\end{align}
Moreover, $P_N(0)=a_0=f(\lambda_0)$.
Since the case when $P_N(0)=0$ is trivial, without loss of generality, we
assume that $P_N(0)\ne 0$. 
By \cite[Theorem~5.7]{PisierXu2003} and the definition in \cite[p. 1483]{PisierXu2003}, we have that
$
    \frac1{2\pi}\int_0^{2\pi} \norm{\frac{P_N(e^{it})}{\norm{P_N(0)}_p}}_p^pdt\ge1,
$
that is,
\begin{align} \label{f(lambda 0)}
\norm{f(\lambda_0)}_p^p=\norm{P_N(0)}_p^p\le \frac1{2\pi}\int_0^{2\pi} \norm{P_N(e^{it})}_p^pdt.
\end{align}
On the other hand, we have that
\begin{align*}
    \left|\norm{P_N(e^{it})}_p^p-\norm{f\left(\lambda_0+re^{it}\right)}_p^p\right|\stackrel{\mbox{\tiny  \cite[Prop. 4.6(i)]{FK86}}}{\le} \norm{P_N(e^{it})-f\left(\lambda_0+re^{it}\right)}_p^p.
\end{align*}
This implies that
\begin{align*}
&\quad \left|\frac1{2\pi}\int_0^{2\pi}\norm{P_N(e^{it})}_p^pdt-\frac1{2\pi}\int_0^{2\pi}\norm{f\left(\lambda_0+re^{it}\right)}_p^pdt\right|\\
&\le\frac1{2\pi}\int_0^{2\pi}\norm{P_N(e^{it})-f\left(\lambda_0+re^{it}\right)}_p^pdt\stackrel{\eqref{sup c}}{\le}c_N^p\to 0 ~\text{as} ~ N\to \infty.
\end{align*}
This together with \eqref{f(lambda 0)} yields that
\[\norm{f(\lambda_0)}_p^p\le \frac1{2\pi}\int_0^{2\pi}\norm{f\left(\lambda_0+re^{it}\right)}_p^pdt.\]
Noting that \(\lambda\mapsto\|f(\lambda)\|_p^p\) is upper semicontinuous, the first statement is proved.

The second statement follows immediately from \cite[Corollary 2.3.6]{R95}.
\end{proof}

The following result plays an important role in proving the order isomorphism property of $T$, which is analogous to  \cite[Lemma 2.1]{CMS} and \cite[Lemma 4.2]{HSZ}.


\begin{lemma} \label{longlong}
    Let $0\le a\in E\left(\mathcal{M},\tau\right)$ and $h\in E\left(\mathcal{M},\tau\right)_h$.
    If $-a\le h\le a$, then $\kappa(h)\le \kappa(a)$.
    In particular, if $0\le h\le a$,
    then $\kappa(h)\le \kappa(a)$.
\end{lemma}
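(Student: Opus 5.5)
The plan is the standard analytic-interpolation trick from \cite[Lemma 2.1]{CMS} and \cite[Lemma 4.2]{HSZ}, adapted to the quasi-Banach setting via Lemma \ref{analytic}. We first reduce to the case where $a$ is invertible on its support. Since $-a\le h\le a$, we have $r(h)\le r(a)$ and $h=a^{1/2}ka^{1/2}$ for some self-adjoint contraction $k$ in $r(a)\cM r(a)$. More precisely, replacing $a$ by $a+\varepsilon e$ for a $\tau$-finite projection $e$ and letting $\varepsilon\downarrow0$, Lemma \ref{contraction}(iv) lets us pass to the limit: $\kappa(a+\varepsilon e)\to\kappa(a)$ because $\norm{\varepsilon e}_{E}\to0$. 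This is the continuity we will use. Writing $k=\frac{w+w^*}{2}$ with $w=k+i(\mathbf 1-k^2)^{1/2}$ unitary (on $r(a)H$) shows that it suffices to treat $h=a^{1/2}wa^{1/2}$ with $w$ unitary. Indeed, $\kappa(x)^p$ is $p$-subadditive only, so instead of averaging I would apply the interpolation directly with $k$ in place of $w$, as below.

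Fix $L\in\cL$. Write $k$ via functional calculus as $k=\cos\theta$ for a self-adjoint $\theta$ with $0\le\theta\le\pi$, commuting with nothing in particular. Define the entire function
\[
F(\lambda)=L\bigl(a^{\lambda/2}\,u^{\lambda}\,\dots\bigr).
\]
A cleaner choice is the following. Set $u=e^{i\theta}$, so $k=\frac{u+u^*}2$, and consider
\[
F(\lambda)=L\Bigl(a^{1/2}\,\tfrac{e^{i\lambda\theta}+e^{-i\lambda\theta}}{2}\,a^{1/2}\Bigr)=L\bigl(a^{1/2}\cos(\lambda\theta)\,a^{1/2}\bigr),
\]
which is analytic, being a norm-convergent power series in $\lambda$ with coefficients $a^{1/2}\theta^{2j}a^{1/2}\in E$ (bounded middle factor, using $\theta$ bounded). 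However, $\cos(\lambda\theta)$ is unbounded on vertical lines, so this fails the boundedness hypothesis of Lemma \ref{analytic}.

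The correct device (following HSZ) is to put the oscillating factor in the exponent of $a$. Assume $a$ is bounded, invertible on $e=r(a)$, and $\tau$-finite supported; the general case follows by truncation $a\mapsto ae^{a}[\delta,n]$ together with the continuity from Lemma \ref{contraction}(iv). Then define
\[
F(\lambda)=L\bigl(a^{\lambda}\,a^{-1/2}ha^{-1/2}\,a^{1-\lambda}\bigr),\qquad 0\le{\rm Re}\lambda\le1 .
\]
This is entire, bounded on the strip, and satisfies $F(1/2)=L(h)$. On the boundary we have $F(it)=L(a^{it}ka\,a^{-it})$ and $F(1+it)=L(a^{it}a\,k\,a^{-it})$. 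Since $L\circ$ (conjugation by the unitary $a^{it}$) lies in $\cL$ (isometry of $E$), it remains to bound $\kappa(ka)$ and $\kappa(ak)$. The point of the $\cos\theta$ representation is to reduce these to $\kappa(a)$: $ka$ is the average of $ua$ and $u^*a$, each of which has $\mu(ua)=\mu(a)$. Here $\kappa(ua)=\kappa(a)$ follows as in Lemma \ref{contraction}(ii), since left multiplication by a unitary is a surjective isometry of $E$. Rather than averaging, which costs a constant in the quasi-norm, we absorb the average into the analytic family by using $u^{\pm}$ in separate functions, or by replacing $k$ with $u$ directly, that is, applying the argument to $a^{1/2}ua^{1/2}$ and recovering $h$ through the strip with $u^{\lambda}$-type factors.

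The main obstacle is exactly this: getting rid of the contraction $k$ without losing the constant $2^{1/p-1}$ coming from $p$-convexity. My first attempt is to use the $2\times2$ matrix trick, applying the construction in $M_2(\cM)$ to $\begin{pmatrix}a&h\\h&a\end{pmatrix}\ge0$. Its compression structure gives a unitary dilation, so the boundary values are unitary conjugates of $a\oplus a$. This turns the boundary estimate into Lemma \ref{contraction}(ii)/(iii) and leaves only the three-lines bound of Lemma \ref{analytic}. That bound gives $\norm{L h}_p\le\kappa(a)$, and taking the supremum over $L$ finishes the proof.
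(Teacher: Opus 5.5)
\textbf{Gap: the reduction to $\delta\, r(a)\le a\le n\, r(a)$ does not work.} Your interpolation needs $a$ bounded and bounded below on its support. You propose to reach the general case by truncating with $e^a[\delta,n]$ and using Lemma \ref{contraction}(iv). That lemma only gives continuity of $\kappa$ in the quasi-norm of $E$. The spaces this lemma is used for ($M_\Phi(\cM,\tau)$, in particular $L_{p,\infty}$) are not order continuous.

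Concretely, take $\cM=L_\infty(0,\infty)$, $E=L_{p,\infty}$ and $h=a$ with $a(t)=t^{-1/p}$. For any bounded $b$ we have $t^{1/p}\mu(t;a-b)\ge 1-t^{1/p}\norm{b}_\infty\to 1$ as $t\to0$. So $a$ is at quasi-norm distance at least $1$ from every bounded operator, and no truncated pair converges to $(a,h)$. Moreover, $\kappa$ is a supremum over arbitrary contractions $L:E\to L_p$, and nothing in its definition gives lower semicontinuity under order or measure convergence. The perturbation $a+\varepsilon e$ in your first paragraph does not help either: it makes $a$ invertible on its support only if $e\ge r(a)$, which is impossible when $\tau(r(a))=\infty$.

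Without the reduction, the expansion of $a^\lambda k a^{1-\lambda}$ around $\lambda_0$ involves unbounded powers of $\log a$. So you have no analytic $E$-valued function to which Lemma \ref{analytic} applies. Your proof is therefore complete only in the special case.

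This is exactly what the paper's Steps 2--5 are built to get around. The paper writes $a=xy$ with $x=\sum_k 2^k e^a[2^k,2^{k+1})$ and $\mathbf{1}\le y\le 2\cdot\mathbf{1}$. It splits $c$ (your $k$) into dyadic diagonals $c_j=\sum_k e_{k+j}ce_k$, which satisfy $xc_j=2^jc_jx$. This gives $\norm{a^{1/2}c_ja^{1/2}}_E\le \sqrt2\,2^{-|j|/2}\norm{a}_E$, so the Fej\'er means $h^{(N)}$ converge to $h$ in quasi-norm. At the same time $F_N(\lambda)=\sum_{|j|\le N}w_{j,N}2^{j\lambda}y^\lambda c_j y^{-\lambda}a$ is entire, bounded on the strip, and satisfies $F_N(1/2)=h^{(N)}$. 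Fej\'er weights are used instead of plain partial sums so that $c^{(N)}=\sum_{|j|\le N}w_{j,N}c_j$ is a contraction, being an average of block-diagonal compressions of $c$.

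\textbf{The obstacle in your last two paragraphs is not one.} For $L\in\cL$, a contraction $k\in\cM$ and a unitary $u\in\cM$, the maps $z\mapsto L(ukzu^*)$ and $z\mapsto L(uzku^*)$ lie in $\cL$. This is because $\mu(ukzu^*)\le\norm{k}_\infty\mu(z)$ and $E$ is symmetric. Hence $\norm{L(ukau^*)}_p\le\kappa(a)$ and $\norm{L(uaku^*)}_p\le\kappa(a)$ hold directly, with no loss from $p$-convexity. This is precisely how the paper bounds $F_N(it)$ and $F_N(1+it)$.

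So in your special case, taking $u=a^{it}r(a)+(\mathbf{1}-r(a))$, the function $F(\lambda)=L(a^\lambda k a^{1-\lambda})$ already finishes the proof. You should drop the $\cos\theta$ representation, the unitary dilation and the $2\times 2$ matrix trick. The matrix trick would in any case involve the $\kappa$-functional of a space over $M_2(\cM)$, and you do not relate that to $\kappa$ on $E(\cM,\tau)$. What is actually missing is a way to handle $a$ that is unbounded or not bounded below on its support. That requires an idea like the paper's dyadic decomposition, not truncation.
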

\begin{proof}
The case $a=0$ is immediate. Assume that $a\ne 0$.
We divide the proof into several steps.

{\bf Step $1$}.
We first construct $c\in \cM_h$ with $\norm{c}_\cM\le 1$, $r(c)\le r(a)$ and represent $h$ as $a^{1/2}ca^{1/2}$.
    Note that $0\le a+h\le 2a$.
    Applying \cite[Proposition 2.2.24]{DPS} to $x=(2a)^{1/2}$ and $y=(a+h)^{1/2}$, we obtain that there exists $b\in \cM$ such that
    \begin{align} \label{bx=y}
    \norm{b}_\cM\le1, ~b(2a)^{1/2}=(a+h)^{1/2}, 
    ~r(b)\le r\left((2a)^{1/2}\right)=r(a).
    \end{align}
    This implies that 
    $b^*b\le \mathbf{1}, ~b=br(a)$
    and so,
    \begin{align} \label{b*b}
    0\le b^*b=r(a)b^*br(a)\le r(a).
    \end{align}
    Define
    \begin{align} \label{def of c}
    c:=2b^*b-r(a)\in \cM_h.
    \end{align}
    Then, by \eqref{b*b}, we have that
    $-r(a)\le c\le r(a).$
    In particular,
    \begin{align} \label{c in M}
    \norm{c}_\cM\le1.
    \end{align}
    Since $b=br(a)$, it follows that $c=r(a)cr(a)$.
    Moreover,
\begin{eqnarray} \label{=h}
        a^{1/2}ca^{1/2}&\stackrel{\eqref{def of c}}{=}&2a^{1/2}b^*ba^{1/2}-a^{1/2}r(a)a^{1/2}\nonumber\\
        &=&(2a)^{1/2}b^*b(2a)^{1/2}-a\nonumber\\
        &\stackrel{\eqref{bx=y}}{=}&(a+h)^{1/2}(a+h)^{1/2}-a\nonumber\\
        &=& h.
    \end{eqnarray}
{\bf Step $2$}.
    We next  construct operators $c_j$ from $c$ using the spectral projections of $a$.
    Let $e^a$ denote the spectral measure of $a$.
    For any $k\in \mathbb{Z}$, let
    \[e_k:=e^a\left[2^k,2^{k+1}\right).\]
    Let
    \begin{align} \label{the def of x}
    x:= \sum_{k\in \mathbb{Z}}2^{k}e_k, ~y:= \sum_{k\in\mathbb{Z}}2^{-k}ae_k+\left(1-r(a)\right).
    \end{align}
    Note that 
    \[0\le x\in S\left(\cM,\tau\right), ~a=xy=yx.\]
   Since  $\mathbf{1}\le y\le 2\mathbf{1}, ~ \frac12\mathbf{1}\le  y^{-1}\le \mathbf{1},$
it follows that   $x=ay^{-1}\in E\left(\cM,\tau\right)_+$.
    For any $j\in\mathbb{Z}$, let
    \begin{align} \label{the def of cj}
    c_j:=\sum_{k\in\mathbb{Z}} e_{k+j}ce_k,
    \end{align}
    where the series converges in the so-topology.
    Since $\norm{c}_\cM \le 1$  and $\{e_{k+j}\}_{k\in \mathbb{Z}}$ is a pairwise orthogonal sequence, 
    it follows 
    that 
    \begin{align} \label{cj bounded}
    \norm{c_j}_\cM\le 1.
    \end{align}
    Observe that
    \[x^{1/2}c_j=\sum_{k\in \mathbb{Z}}2^{\frac{k+j}2}e_{k+j}ce_k=2^{\frac j2}\sum_{k\in \mathbb{Z}}2^{\frac k 2}e_{k+j}ce_k\]
    and 
    \[c_jx^{1/2}=\sum_{k\in\mathbb{Z}}2^{\frac k 2}e_{k+j}ce_k.\]
    In particular,
    \begin{align} \label{xcj}
    x^{1/2}c_j=2^{\frac j 2}c_jx^{1/2}.
    \end{align}
    Arguing similarly, we have 
    \begin{align} \label{xcj2}
        xc_j=2^{j}c_jx.
    \end{align}

    {\bf Step $3$}.
    Define
    \begin{align} \label{the def of hj}
    h_j:=a^{1/2}c_ja^{1/2}.
    \end{align}
    Below, we will prove that $\sum_{j\in \mathbb{Z}}h_j$ exists in $E\left(\cM,\tau\right)$ and $\sum_{j\in \mathbb{Z}}h_j=h$.
    Since $a^{1/2}=x^{1/2}y^{1/2}=y^{1/2}x^{1/2}$, it follows  that if $j\ge0$, then we have
    \begin{eqnarray*}
    h_j=a^{1/2}c_ja^{1/2}&=&x^{1/2}y^{1/2}c_jx^{1/2}y^{1/2}\nonumber\\
    &\stackrel{\eqref{xcj}}{=}&2^{-\frac j2}x^{1/2}y^{1/2}x^{1/2}c_jy^{1/2}\\
    &=&2^{-\frac j2}xy^{1/2}c_jy^{1/2}\\
    &\stackrel{xy^{1/2}=ay^{-1/2}}{=}&2^{-\frac j2}ay^{-1/2}c_jy^{1/2};
    \end{eqnarray*}
    and  if $j<0$, then we have 
    \begin{eqnarray*}
    h_j=a^{1/2}c_ja^{1/2}&=&y^{1/2}x^{1/2}c_jy^{1/2}x^{1/2}\nonumber\\
    &\stackrel{\eqref{xcj}}{=}&2^{\frac j2}y^{1/2}c_jx^{1/2}y^{1/2}x^{1/2}\\
    &=&2^{\frac j2}y^{1/2}c_jxy^{1/2}\\
    &\stackrel{xy^{1/2}=ay^{-1/2}}{=}&2^{\frac j2}y^{1/2}c_jay^{-1/2}.
    \end{eqnarray*}
    That is,
    \begin{align} \label{hj}
    h_j=
    \begin{cases}
    2^{-\frac j2}ay^{-1/2}c_jy^{1/2}, & j\ge0, \\
    2^{\frac j2}y^{1/2}c_jay^{-1/2}, & j<0. \\
    \end{cases}
    \end{align}
    By \eqref{cj bounded} and \eqref{hj}, we have $h_j\in E\left(\cM,\tau\right)$ and 
    \begin{align} \label{hj estimate}
    \norm{h_j}_{E\left(\cM,\tau\right)}\le \sqrt2\cdot2^{-\frac{|j|}2}\norm{a}_{E\left(\cM,\tau\right)}.
    \end{align}
    Hence, $\left\{\sum_{|j|\le N}h_j\right\}_{N\in \mathbb{N}}$ is a Cauchy sequence in $E\left(\cM,\tau\right)$.
    Since $E\left(\cM,\tau\right)$ is complete, it follows that
    $\sum_{j\in \mathbb{Z}}h_j$ exists in $E\left(\cM,\tau\right)$.
    By \eqref{the def of cj}, we have that 
    \begin{align} \label{ejckel}
    e_jc_ke_l=
        \begin{cases}
           e_jc e_l, & k=j-l, \\
           0, & k\ne j-l.
        \end{cases}
    \end{align}
    Since $a$  commutes with $e_j,e_l$, it follows from  \eqref{the def of hj} and \eqref{=h}, in combination with \eqref{ejckel} that
    \begin{align*}
    e_jh_ke_l=
        \begin{cases}
            e_jhe_l, & k=j-l, \\
            0, & k\ne j-l.
        \end{cases}
    \end{align*}
    This implies that
    $e_j\sum_{k\in \mathbb{Z}}h_ke_l=e_jhe_l,$
    and so, 
    \[e_j\left(\sum_{k\in \mathbb{Z}}h_k-h\right)e_l=0, ~\forall j,l\in \mathbb{Z}.\]
    {
    Note that $p_n:=\sum_{|k|\le n}e_k\uparrow r(a)$ as $n\to \infty$
    and $\sum_{k\in \mathbb{Z}}h_k-h=r(a)\left(\sum_{k\in \mathbb{Z}}h_k-h\right)r(a)$.
    Applying \cite[Lemma 16]{BCSZ18} to $p_n$ in $r(a)\cM r(a)$, we have that
    \begin{align} \label{sum hj=h}
         \sum_{j\in \mathbb{Z}}h_j=h.
    \end{align}}

    {\bf Step $4$}.
    For any $N\ge0$, define
    \begin{align} \label{wj,N}
    w_{j,N}:=
        \begin{cases}
            1-\frac{|j|}{N+1}, & |j|\le N, \\
            0, & |j|>N. \\
        \end{cases}
    \end{align}
and
    \begin{align} \label{h(N)}
    h^{(N)}:=\sum_{|j|\le N}w_{j,N}h_j.
    \end{align}
Using \eqref{hj estimate} {and \eqref{sum hj=h}, by standard argument}, we have   $\norm{h^{(N)}-h}_{E\left(\cM,\tau\right)}\to 0$ as $N\to \infty $,     that is, $h=\lim_{N\to \infty}h^{(N)}$.
     Indeed, note that 
    \begin{align} \label{wj,N-1}
    |w_{j,N}-1|=\min\left\{\frac{|j|}{N+1},1\right\}\le \frac{|j|}{N+1}.
    \end{align}
    Since $E\left(\cM,
    \tau\right)$ is complete, it follows from \eqref{hj estimate} and \eqref{wj,N-1} that
 $ \lim_{N\to\infty}h^{(N)}$ exists in $E\left(\mathcal{M},\tau\right)$.
    By
    \eqref{wj,N} and \eqref{h(N)}, we have that for any $N\le M\in \mathbb{Z}$,
    \begin{align} \label{h(N)-h}
    h^{(N)}-\sum_{|j|\le M}h_j=\sum_{|j|\le M} \left(w_{j,N}-1\right)h_j.
    \end{align}
    By  \eqref{sum hj=h}, we have that
    \begin{align} \label{limit to}
    &\quad\norm{\left(h^{(N)}-\sum_{|j|\le M}h_j\right)-\left(h^{(N)}-h\right)}_{E\left(\cM,\tau\right)}\nonumber\\
    &\le\norm{\sum_{|j|\le M}h_j-h}_{E\left(\cM,\tau\right)}\to0 ~\text{as} ~ M \to \infty.
    \end{align}
 By the Aoki--Rolewicz theorem \cite[Theorem 1.3]{KPR}, there exist $p_E>0$ and a $p_E$-subadditive quasi-norm $\norm{\cdot}_{(p_E)}$ 
such that $ C_{1}\norm{\cdot}_{E\left(\cM,\tau\right)}\le \norm{\cdot}_{(p_E)} \le \norm{\cdot}_{E\left(\cM,\tau\right)} $ for some positive constant $C_1$. Therefore, 
    \begin{eqnarray*}
        \norm{h^{(N)}-h}_{E\left(\cM,\tau\right)}^{p_E}&\le &\frac1{C_{1}^{p_E}}\norm{h^{(N)}-h}_{(p_E)}^{p_E}\\
        &\stackrel{\eqref{limit to}}{=}&\frac{1}{C_{1}^{p_E}}\lim_{M\to\infty}\norm{h^{(N)}-\sum_{|j|\le M}h_j}_{(p_E)}^{p_E}\\
        &\stackrel{\eqref{h(N)-h}}{=}&\frac{1}{C_1^{p_E}}\lim_{N\le M\to\infty}\norm{\sum_{|j|\le M} \left(w_{j,N}-1\right)h_j}_{(p_E)}^{p_E}\\
        & \le&\frac1{C_{1}^{p_E}}\lim_{N\le M\to\infty}\sum_{|j|\le M} \left|w_{j,N}-1\right|^{p_E}\norm{h_j}_{E\left(\cM,\tau\right)}^{p_E}\\
        &=&\frac1{C_{1}^{p_E}}\sum_{j\in \mathbb{Z}} \left|w_{j,N}-1\right|^{p_E}\norm{h_j}_{E\left(\cM,\tau\right)}^{p_E}\\
        &\stackrel{\substack{\eqref{wj,N-1}\\\eqref{hj estimate}}}{\le}&\frac1{C_{1}^{p_E}}\sum_{j\in \mathbb{Z}}\left(\frac{|j|}{N+1}\sqrt2\cdot2^{-\frac{|j|}2}\norm{a}_{E\left(\cM,\tau\right)}\right)^{p_E}\\
        &=&\frac1{C_{1}^{p_E}}\left(\frac{1}{N+1}\sqrt2\cdot\norm{a}_{E\left(\cM,\tau\right)}\right)^{p_E}\sum_{j\in \mathbb{Z}}\left(|j|2^{-\frac{|j|}{2}}\right)^{p_E}<\infty.
    \end{eqnarray*}
Taking $n\to\infty$, we have that $\norm{h^{(N)}-h}_{E\left(\cM,\tau\right)}\to 0$ as $N\to \infty $.

   {\bf Step $5$}.
    For any $N\ge0$, define
    \begin{align} \label{c(N)}
    c^{(N)}:=\sum_{|j|\le N} w_{j,N}c_j.
    \end{align}
    Below, we will prove that $\norm{c^{(N)}}_\cM\le1$.
    For any $0\le r\le N$,
    let
    \begin{align} \label{the def of Pr(c)}
    q_{r,k}:=\sum_{l=0}^Ne_{r+(N+1)k+l}, ~P_r(c)=\sum_{k\in \mathbb{Z}}q_{r,k}cq_{r,k}.
    \end{align}
    Note that $\{q_{r,k}\}_{k\in\mathbb{Z}}$ is a mutually orthogonal sequence.
    This implies that $\{q_{r,k}cq_{r,k}\}_{k\in\mathbb{Z}}$ is a mutually orthogonal sequence.
    By \cite[Proposition 5.5.4]{C12}, we have that
    \begin{align} \label{the norm of Pr(c)}
\norm{P_r(c)}_\cM=\sup_{k\in\mathbb{Z}}\norm{q_{r,k}cq_{r,k}}_\mathcal{M}\le\norm{c}_\cM\stackrel{\eqref{c in M}}{\le}1.
\end{align}
    Observe that
    \begin{align} \label{mathbb Z}
    \mathbb Z=
\bigcup_{r=0}^{N}\left(r+(N+1)\mathbb Z\right).
    \end{align}
    This implies that
    \begin{eqnarray} \label{multiply}
        \sum_{r=0}^NP_r(c)&\stackrel{\eqref{the def of Pr(c)}}{=}&\sum_{r=0}^N\sum_{k\in \mathbb{Z}}\left(\sum_{l=0}^Ne_{r+(N+1)k+l}\right)c\left(\sum_{j=0}^Ne_{r+(N+1)k+j}\right)\nonumber\\
        &=&\sum_{l,j=0}^N\sum_{r=0}^N\sum_{k\in \mathbb{Z}}e_{r+(N+1)k+l}ce_{r+(N+1)k+j}\nonumber\\
        &\stackrel{\substack{\eqref{mathbb Z}\\i=r+(N+1)k}}{=}&\sum_{l,j=0}^N\sum_{i\in \mathbb{Z}}e_{i+l}ce_{i+j}\nonumber\\
        &\stackrel{s=i+j}{=}&\sum_{l,j=0}^N\sum_{s\in \mathbb{Z}}e_{s-j+l}ce_{s}\nonumber\\
        &\stackrel{\eqref{the def of cj}}{=}&\sum_{l,j=0}^Nc_{-j+l}\nonumber\\
        &=&\sum_{|j|\le N}(N+1-|j|)c_j.
    \end{eqnarray}
    Multiplying both sides of \eqref{multiply} by $\frac1{N+1}$, we obtain that
    $c^{(N)}\stackrel{\eqref{c(N)}}{=}\frac1{N+1}\sum_{r=0}^NP_r(c).$
    Then,
    \[\norm{c^{(N)}}_\cM=\norm{\frac1{N+1}\sum_{r=0}^NP_r(c)}_\cM \stackrel{\eqref{the norm of Pr(c)}}{\le}1.\]
    
    {\bf Step $6$}.
    Finally, we apply Lemma \ref{analytic} with  
   suitable analytic functions   to obtain $\kappa\left(h^{(N)}\right)\le \kappa(a)$, which implies $\kappa(h)\le \kappa(a)$ by the convergence established in Step 4.
    
    For any $\lambda\in \mathbb{C}$, define
    \begin{align} \label{FN}
    {F_N(\lambda):=\sum_{|j|\le N}w_{j,N}2^{j\lambda}y^\lambda c_j y^{-\lambda}a, ~B_N(\lambda):=\sum_{|j|\le N}w_{j,N}2^{j\lambda}y^\lambda c_j y^{-\lambda}.}
    \end{align}
    By $\mathbf{1}\le y\le 2 \mathbf{1}$, we have that
    $\log y$ is bounded and $y^\lambda=e^{\lambda \log y}, ~y^{-\lambda}=e^{-\lambda \log y}$ are analytic on $\mathbb{C}$.
    This implies that $B_N$ is analytic on $\mathbb{C}$ and so,
    $F_N=B_N a$ is analytic on $\mathbb{C}$.
    {Note that
    \begin{align} \label{1/2}
    F_N\left(\frac 12\right)\stackrel{\substack{\eqref{hj}\\\eqref{FN}}}{=}\sum_{|j|\le N}w_{j,N}h_j\stackrel{\eqref{h(N)}}{=}h^{(N)}.
    \end{align}}
    For any $t\in \mathbb{R}$, let
    \[u_t:=a^{it}r(a)+\left(\mathbf{1}-r(a)\right).\]
    Observe that $u_t$ is a unitary operator in $\mathcal{M}$.
    Since
    \[
        x^{it}c_j x^{-it}\stackrel{\substack{\eqref{the def of x}\\\eqref{the def of cj}}}{=}\sum_{k\in \mathbb{Z}} 2^{i(k+j)t} e_{k+j}ce_k 2^{-ikt}=2^{ijt}c_j,
    \]
    it follows from $u_tr(a)=a^{it}r(a)=a^{it}=x^{it}y^{it}$ that
    \begin{align} \label{utcj}
    u_tc_ju_t^*\stackrel{c_j=r(a)c_jr(a)}{=}x^{it}y^{it}c_j y^{-it}x^{-it}=2^{ijt}y^{it}c_j y^{-it}.
    \end{align}
    This implies that
    \begin{eqnarray} \label{it}
        F_N(it)&\stackrel{\eqref{FN}}{=}&\sum_{|j|\le N}w_{j,N}2^{ijt}y^{it} c_j y^{-it}a\nonumber\\
        &=&\sum_{|j|\le N}w_{j,N} u_t c_j u_t^*a\nonumber\\
        &\stackrel{\eqref{c(N)}}{=}&u_tc^{(N)}u_t^*a.
    \end{eqnarray}
    Note that
\begin{eqnarray} \label{1+it}
        F_N\left(1+it\right)&\stackrel{\eqref{FN}}{=}&\sum_{|j|\le N} w_{j,N}2^{j(1+it)}y^{1+it} c_j y^{-\left(1+it\right)}a\nonumber\\
        &=&\sum_{|j|\le N}2^{ijt} w_{j,N}y^{it}\left(2^jyc_jy^{-1}a\right)y^{-it}\nonumber\\
        &\stackrel{a=yx}{=}&\sum_{|j|\le N} 2^{ijt} w_{j,N}y^{it}\left(2^jyc_jx\right)y^{-it}\nonumber\\
        &\stackrel{\eqref{xcj2}}{=}&\sum_{|j|\le N}2^{ijt} w_{j,N}y^{it} 2^jy2^{-j}(xc_j)y^{-it}\nonumber\\
        &\stackrel{a=yx}{=}&\sum_{|j|\le N} 2^{ijt} w_{j,N}y^{it} ac_jy^{-it}\nonumber\\
        &\stackrel{\eqref{utcj}}{=}&\sum_{|j|\le N} w_{j,N} au_tc_ju_t^*\nonumber\\
        &\stackrel{\eqref{c(N)}}{=}& u_tac^{(N)}u_t^*.
    \end{eqnarray}
    Note that 
    \begin{eqnarray*}
    \norm{L F_N(\lambda)}_p&\le& \norm{F_N(\lambda)}_{E\left(\mathcal{M},\tau\right)}\\
    &\stackrel{\eqref{FN}}{\le}& \norm{B_N(\lambda)}_{\cM} 
    \norm{a}_{E\left(\cM,\tau\right)}\\
    &\stackrel{\eqref{wj,N}}{\le}& \norm{a}_{E\left(\cM,\tau\right)}\left(\sum_{|j|\le N}\left|2^{j\lambda}\right|\norm{y^\lambda}_\cM\norm{c_j}_\cM\norm{y^{-\lambda}}_\cM\right). 
    \end{eqnarray*}
    Then,
    $L\circ F_N$ is bounded on the strip $\{0<{\rm Re}\lambda<1\}$.
    Since $F_N$ is analytic on $\mathbb{C}$, 
    it follows that $L\circ F_N$ is an  analytic $L_p\left(\cM,\tau\right)$-valued function on the strip $\{0<{\rm Re}\lambda<1\}$ and continuous on the closed strip $\{0\le {\rm Re}\lambda \le 1\}$.
    Since $u_t$ is unitary, $\norm{c^{(N)}}_\mathcal{M}\le1$, $x\mapsto Lu_tc^{(N)}xu_t^*$ and $x\mapsto Lu_txc^{(N)}u_t^*\in \mathcal{L}$ {($\mathbb{\cL}$ is defined in p.~\pageref{def:L})}, it follows that
    \[\norm{L F_N(it)}_p\stackrel{\eqref{it}}{=}\norm{Lu_tc^{(N)}u_t^*a}_p\le \sup_{L\in \cL}\norm{La}_p=\kappa(a), \]
    and
    \[\norm{L F_N(1+it)}_p\stackrel{\eqref{1+it}}{=}\norm{Lu_tac^{(N)}u_t^*}_p\le \sup_{L\in \cL}\norm{La}_p=\kappa(a).\]
    By Lemma \ref{analytic}, we have that
    \[\norm{Lh^{(N)}}_p\stackrel{\eqref{1/2}}{=}\norm{LF_N\left(\frac12\right)}_p\le \kappa(a).\]
    Taking the supremum over $L\in \cL$ yields that
    \begin{align} \label{xixi1}
    \kappa\left(h^{(N)}\right)\le \kappa(a).
    \end{align}
    Taking $N\to \infty$,  we have that
    \begin{align*} 
    |\kappa(h^{(N)})^p-\kappa(h)^p|\stackrel{\mbox{\tiny Lem.~\ref{contraction}}}{\le} \norm{h^{(N)}-h}_{E\left(\cM,\tau\right)}^p\stackrel{\rm \tiny Step~4}{\longrightarrow} 0.
    \end{align*}
    This together with \eqref{xixi1} yields  that
    $\kappa(h)\le \kappa(a)$.
\end{proof}

Now we apply Lemmas \ref{contraction} and \ref{longlong} to prove that every positive surjective isometry on $E\left(\cM,\tau\right)$ is an order isomorphism, which extends the main results of \cite{Abr88} as well as key technical results in \cite{CMS} (see Lemma 3.2 there) and in \cite{HSZ} (see Lemma 5.1 there).

\begin{prop} \label{T(x)ge0}
Let $T:E\left(\mathcal{M},\tau\right)\to E\left(\mathcal{M},\tau\right)$ be a positive surjective isometry.
Then, $T^{-1}$ is positive.
In particular, $T\left(E\left(\mathcal{M},\tau\right)_+\right)=E\left(\mathcal{M},\tau\right)_+$.
\end{prop}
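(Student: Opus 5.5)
We follow the Abramovich-type strategy of \cite{CMS,HSZ}, with the functional $\kappa$ playing the role of the norm. It suffices to show that if $x\in E(\cM,\tau)_h$ and $T(x)\ge 0$, then $x\ge0$. Indeed, $T$ is positive and therefore preserves adjoints: for self-adjoint $x$, write $x=x_+-x_-$, so that $T(x)=T(x_+)-T(x_-)$ is self-adjoint, and this extends by linearity. Hence $T^{-1}$ maps self-adjoint elements to self-adjoint elements, and it suffices to test positivity of $T^{-1}$ on $E(\cM,\tau)_+$. Set $y=T^{-1}(z)$ with $z\ge0$. Then $y$ is self-adjoint and we write $y=y_+-y_-$ with $y_+\perp y_-$. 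The goal is to show $y_-=0$.

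The first step uses positivity of $T$: $T(y_+)\ge0$ and $T(y_-)\ge0$, and $z=T(y_+)-T(y_-)\ge 0$. Consequently
\[
-T(y_+)\le -T(y_-)\le z-T(y_-)\ldots
\]
More usefully, $0\le T(y_-)\le T(y_+)$, since $T(y_+)=z+T(y_-)\ge T(y_-)$. We also have $T(|y|)=T(y_+)+T(y_-)$, and therefore
\[
-T(|y|)\le T(y)=z\le T(|y|),
\qquad
-T(|y|)\le T(y_+)-T(y_-)\ \text{and}\ T(y_+)+T(y_-)\le T(|y|).
\]
The second step applies Lemma \ref{contraction}(ii),(iii) together with Lemma \ref{longlong}. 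Since $0\le T(y_+)+T(y_-)=T(|y|)$, we get $\kappa(|y|)=\kappa(T|y|)$. On the other hand, $0\le T(y_+)-T(y_-)+2T(y_-)$, and because $0\le T(y_-)\le T(y_+)$ we have $T(|y|)=T(y_+)+T(y_-)\le 2T(y_+)$. Lemma \ref{longlong} then gives
\[
\kappa(|y|)=\kappa(T|y|)\le 2\kappa(T y_+)=2\kappa(y_+).
\]
This factor $2$ is too crude, so the cleaner route is to compare $y_+-y_-$ with $y_++y_-$. We have $-T(y_+)-T(y_-)\le T(y_+)-T(y_-)\le T(y_+)+T(y_-)$, which is trivially true and gives nothing. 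The useful comparison is the reverse one: $-(T y_+ - T y_-)\le T y_+ + T y_-\le T y_+-Ty_- + 2Ty_-$. Since $0\le Ty_-\le Ty_+$, we obtain
\[
-(Ty_+-Ty_-)\le Ty_+-Ty_-\quad\text{and}\quad Ty_++Ty_-\le ?
\]
The intended inequality is $-a\le h\le a$ with $a=T(y)=z\ge0$ and $h=T(y_+-2y_-)$... The main obstacle is exactly choosing the right pair $(h,a)$ so that Lemma \ref{longlong} yields a contradiction with $y_-\neq0$.

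The choice I would try first, mirroring \cite[Lemma 3.2]{CMS}, is the following. For $0\le\lambda$ consider $y_+-\lambda y_-$ versus $y_++y_-$. Positivity of $T$ and $T(y_-)\le T(y_+)$ give, for $0\le\lambda\le1$,
\[
0\le T(y_+)-T(y_-)\le T(y_+)-\lambda T(y_-)\le T(y_+)+T(y_-),
\]
and, for $\lambda\ge1$, the estimate $|T(y_+)-\lambda T(y_-)|\le T(y_+)+\lambda T(y_-)$. Because $y_+\perp y_-$, the element $y_++y_-$ is a unitary conjugate of $y_+-y_-$ in the sense that $|y_+\pm y_-|=|y|$, so by Lemma \ref{contraction}(ii),(iii) all the elements $y_+\pm\lambda y_-$ with the same modulus have equal $\kappa$. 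Combining this with Lemma \ref{longlong}, we would like to derive $\kappa(y_+ + y_-)=\kappa(T(y_+)+T(y_-))\le\kappa(T(y_+)-T(y_-))=\kappa(y_+-y_-)$ with equality everywhere. Applying the same argument to $2y_-$ in place of $y_-$ (using that $T(y_+)-T(y_-)\ge0$ implies $T(y_+)+T(y_-)\ge 2T(y_-)$, and iterating) forces $\kappa(y_++\lambda y_-)$ to be bounded independently of $\lambda$. This is impossible if $y_-\neq0$, since $\kappa(y_++\lambda y_-)\ge\kappa(\lambda y_-)\cdot$const by Lemma \ref{longlong} (because $0\le\lambda y_-\le y_++\lambda y_-$), and $\kappa(\lambda y_-)=\lambda\kappa(y_-)\to\infty$ by Lemma \ref{contraction}(i). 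Making the iteration rigorous, namely showing that $T(y_+)-\lambda T(y_-)$ stays dominated in modulus by a fixed positive element for all $\lambda$, is the key obstacle. I expect to resolve it by replacing $y$ with $T^{-1}$ of suitable positive combinations, using surjectivity. Once $y_-=0$ is established, $T^{-1}\ge0$ follows, and $T(E_+)=E_+$ is immediate.
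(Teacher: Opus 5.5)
Your reduction (show $y_-=0$ whenever $y=y_+-y_-$ and $T(y)\ge0$) and your endgame (keep $\kappa(y_++\lambda y_-)$ bounded while $\lambda\kappa(y_-)\le\kappa(y_++\lambda y_-)$ by Lemma \ref{longlong}, then apply Lemma \ref{contraction}(i)) are the right ones. However, the step connecting them is missing, as you acknowledge yourself.

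None of the comparisons you write down supplies that step:
\begin{itemize}
\item The bound $|T(y_+)-\lambda T(y_-)|\le T(y_+)+\lambda T(y_-)$ only gives $\kappa(y_+-\lambda y_-)\le\kappa(y_++\lambda y_-)$. This is already an equality by Lemma \ref{contraction}(iii).
\item Likewise, $\kappa(T(y_+)+T(y_-))=\kappa(T(y_+)-T(y_-))$ holds trivially by Lemma \ref{contraction}(ii),(iii) and carries no information.
\item The proposed doubling iteration does not go through, since $T(y_+)-2T(y_-)$ need not be positive.
\item Your proposed resolution is to dominate $T(y_+)-\lambda T(y_-)$ in modulus by one fixed positive element $c$ for all $\lambda$. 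But this is the conclusion in disguise: such a bound gives $\lambda T(y_-)\le c+T(y_+)$ for all $\lambda>0$, hence $T(y_-)=0$ directly. So it cannot serve as an intermediate step.
\end{itemize}

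The missing idea is a domination whose dominating element grows with $n$, but whose $\kappa$ equals the previous term of the sequence. Set $a:=T(y_+)\ge0$ and $b:=T(y_-)\ge0$, so that $a-b=T(y)\ge0$. Then for every $n\ge0$
\[
-(a+nb)\le a-(n+1)b\le a+nb .
\]
The right inequality holds because $b\ge0$, and the left one because $2a-b=a+(a-b)\ge0$. Since $y_+\perp y_-$ gives $|y_+-(n+1)y_-|=y_++(n+1)y_-$, Lemma \ref{contraction}(ii),(iii) and Lemma \ref{longlong} yield
\[
\kappa(y_++(n+1)y_-)=\kappa(a-(n+1)b)\le\kappa(a+nb)=\kappa(y_++ny_-).
\]
Hence $n\mapsto\kappa(y_++ny_-)$ is non-increasing, and therefore bounded by $\kappa(y_+)$. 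By Lemma \ref{longlong}, since $0\le ny_-\le y_++ny_-$, we also have $n\kappa(y_-)\le\kappa(y_++ny_-)$. Together these give $\kappa(y_-)=0$, so $y_-=0$ by Lemma \ref{contraction}(i). This one-step shift from $n$ to $n+1$, rather than a uniform bound in $\lambda$, is precisely how the paper's proof proceeds.
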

\begin{proof}
    Let $x\in E\left(\cM,\tau\right)$ be such that $T(x)\ge0$.
    Since $T$ is positive, it follows that $x$ is self-adjoint.
    Let $x=x_+-x_-$ and $a:=T(x_+),~b:=T(x_-)$.
    Then, $a-b=T(x)\ge0$.
    
    Below, we will prove that $x_-=0$.
    For any $n\ge0$, we have that
    \[-\left(a+nb\right)\le a-\left(n+1\right)b\le a+nb.\]
    Since $x_+x_-=0$, it follows that
    \begin{align*} 
        |x_+-(n+1)x_-|=x_++(n+1)x_-.
    \end{align*}
    This implies that for all $n\ge0$,
    \begin{eqnarray*}
        \kappa\left(x_++(n+1)x_-\right)&\stackrel{\tiny \mbox{Lem. \ref{contraction}(iii)}}{=}&\kappa\left(x_+-(n+1)x_-\right)\\ 
        &\stackrel{\tiny \mbox{Lem. \ref{contraction}(ii)}}{=}&\kappa\left(a-(n+1)b\right)\\
        &\stackrel{\mbox{\tiny Lem.~\ref{longlong}}}{\le}&\kappa(a+nb)\\
        &\stackrel{\tiny \mbox{Lem. \ref{contraction}(ii)}}{=}&\kappa(x_++nx_-).
    \end{eqnarray*}
    In particular, 
    \begin{align} \label{combin1}
    \kappa(x_++nx_-)\le \kappa(x_+), ~\forall n\ge0.
    \end{align}
    Since $0\le nx_-\le x_++nx_-$, it follows from Lemma \ref{longlong} that
    \begin{align} \label{combin2}
    n\kappa(x_-)=\kappa(nx_-)\le \kappa(x_++nx_-).
    \end{align}
    A combination of \eqref{combin1} and \eqref{combin2} yields that
    \[n\kappa(x_-)\le\kappa(x_+)\le \norm{x}_{E\left(\cM,\tau\right)} .\]
  Since $n$ is arbitrarily taken, it follows that  $\kappa(x_-)=0$, 
  which together with Lemma~\ref{contraction}(i) yields that $x_-=0$,
 i.e.,
    $x=x_+\ge0.$
In other words, $T^{-1}$ is positive.
\end{proof}

The following lemma is often used to prove the normality of $T$, i.e., for any $0\le x_i \uparrow x\in M_\Phi(\cM,\tau)$, we have $T(x) = \sup_i T(x_i)$. 

\begin{prop} \label{order isom}
    Let $\cM$ be a semifinite von Neumann algebra equipped with a semifinite faithful normal trace $\tau$.
   Let $T:E\left(\mathcal{M},\tau\right)\to E\left(\mathcal{M},\tau\right)$ be a positive surjective isometry.
    Then, 
    $T$ is normal. 
\end{prop}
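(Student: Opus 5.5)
The plan is to reduce normality to the fact, given by Proposition \ref{T(x)ge0}, that $T$ is an order isomorphism of $E\left(\mathcal{M},\tau\right)_h$ onto itself. The one subtlety is that the supremum in the definition of normality is taken in $S\left(\mathcal{M},\tau\right)$, not in $E\left(\mathcal{M},\tau\right)$. We therefore first show that the relevant supremum in $S\left(\mathcal{M},\tau\right)$ lies in $E\left(\mathcal{M},\tau\right)$, and then transport order relations back and forth with $T$ and $T^{-1}$.

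Three preliminary facts are needed.
\begin{enumerate}
\item $T$ maps self-adjoint elements to self-adjoint elements. Indeed, every $h\in E\left(\mathcal{M},\tau\right)_h$ can be written as $h=h_+-h_-$ with $h_\pm\in E\left(\mathcal{M},\tau\right)_+$, because $\mu(h_\pm)\le\mu(h)$. Hence $T(h)=T(h_+)-T(h_-)$ is self-adjoint.
\item By Proposition \ref{T(x)ge0}, $T^{-1}$ is also positive. Therefore, for self-adjoint $u,v\in E\left(\mathcal{M},\tau\right)$, we have $u\le v$ if and only if $T(u)\le T(v)$.
\item If $0\le z\le w$ with $z\in S\left(\mathcal{M},\tau\right)$ and $w\in E\left(\mathcal{M},\tau\right)$, then $\mu(z)\le\mu(w)$. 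Since $E\left(\mathcal{M},\tau\right)$ is symmetric, this gives $z\in E\left(\mathcal{M},\tau\right)$.
\end{enumerate}

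Now let $0\le x_i\uparrow x$ with $x_i,x\in E\left(\mathcal{M},\tau\right)$. By positivity, $\{T(x_i)\}$ is an increasing net in $S\left(\mathcal{M},\tau\right)_+$ bounded above by $T(x)$. An increasing net of positive $\tau$-measurable operators that is bounded above has a supremum in $S\left(\mathcal{M},\tau\right)$; this is a standard fact, see \cite{DPS}. Let $y:=\sup_i T(x_i)$. Then $0\le y\le T(x)$, so fact 3 gives $y\in E\left(\mathcal{M},\tau\right)$.

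For each $i$ we have $T(x_i)\le y$ with both sides in $E\left(\mathcal{M},\tau\right)_h$. Fact 2 then gives $x_i\le T^{-1}(y)$. Thus $T^{-1}(y)$ is an upper bound for $\{x_i\}$ in $S\left(\mathcal{M},\tau\right)$, and since $x=\sup_i x_i$ there, $x\le T^{-1}(y)$. Applying $T$ yields $T(x)\le y$. Combined with $y\le T(x)$, this gives $T(x)=y=\sup_i T(x_i)$, which is the normality of $T$.

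The only step requiring care is the existence of $\sup_i T(x_i)$ in $S\left(\mathcal{M},\tau\right)$ for a net that is bounded above, which is where the appeal to \cite{DPS} enters. If that citation needs sharpening, the fallback is to work in the Dedekind complete ordered space $S\left(\mathcal{M},\tau\right)_h$. Alternatively, one can use that $\{T(x_i)\}$ is bounded above by a $\tau$-measurable operator and take the strong-resolvent limit. Everything else is a formal consequence of Proposition \ref{T(x)ge0} and the symmetry of $E\left(\mathcal{M},\tau\right)$.
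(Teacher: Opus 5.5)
Your proposal is correct and takes essentially the same route as the paper. The paper gets positivity of $T^{-1}$ from Proposition~\ref{T(x)ge0} and then defers to the argument of \cite[Proposition 5.2]{HSZ}, which is the order-transport argument you write out: take $y=\sup_i T(x_i)$ in $S(\mathcal{M},\tau)$, use $0\le y\le T(x)$ and the symmetry of $E(\mathcal{M},\tau)$ to place $y$ in $E(\mathcal{M},\tau)$, then pull back via $T^{-1}$, with existence of suprema of bounded increasing nets in $S(\mathcal{M},\tau)_h$ as the only standard external input.
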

\begin{proof}
    
    By Proposition \ref{T(x)ge0}, we have that $T^{-1}$ is positive.
    The normality follows from the argument used in the proof of \cite[Proposition 5.2]{HSZ}. 
\end{proof}

\section{The  disjointness-preserving property  of positive isometries} 
Assume that $\Phi:[0,\infty) \to[0,\infty)$ is admissible. 
Let $T$ be a positive surjective isometry on quasi-Banach Marcinkiewicz spaces $M_{\Phi}\left(\mathcal{M},\tau\right)$.
We begin this section with  studying  the disjointness-preserving property of $T$, which allows us to characterize $T$ in the next section. 



\begin{lemma} \label{Marcin1}
    Assume that $\mathcal{M}$ is non-atomic.
Let $\Phi:[0,\infty) \to[0,\infty)$ be admissible
    satisfying $\frac1\Phi\in \left(L_1+L_\infty\right)\left(0,\infty\right)$ and $\Phi\left(\infty\right):=\displaystyle\lim_{t \to \infty} \Phi\left(t\right)=\infty$. 
    Let $0\le x,y\in M_{\Phi}\left(\mathcal{M},\tau\right)$ be such that $\mu\left(t;x\right)=\frac{1}{\Phi}\chi_{_{\left(0,s\right)}}\left(t\right)$ and $\mu\left(t;x+y\right)=\frac{1}{\Phi}\chi_{_{\left(0,r\right)}}\left(t\right)$ for some constants $r\ge s>0$. 
   For any   positive surjective isometry $T:M_{\Phi}\left(\mathcal{M},\tau\right)\to M_{\Phi}\left(\mathcal{M},\tau\right)$, 
we have   $T\left(x\right)\perp T\left(y\right)$. In particular, $x\perp y$.

   As a consequence, if  $ 0\le x,y \in M_\Phi\left(\mathcal{M},\tau\right)$
   have $\tau$-finite supports and are such that $x\perp y$  and $y$ is uniformly bounded (that is, $y\in \mathcal{M}$), then we have $T\left(x\right)\perp T\left(y\right)$. 
\end{lemma}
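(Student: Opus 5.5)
The plan is to combine the description of extreme points of the positive unit ball (Theorem \ref{posi non-atomic}) with the order isomorphism property of $T$ (Proposition \ref{T(x)ge0}), and then reduce disjointness to an equality of traces.

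\textbf{Step 1: $T$ preserves the relevant extreme points.} By Proposition \ref{T(x)ge0}, $T$ and $T^{-1}$ are both positive. Since $T$ is also a surjective isometry, $T$ maps $B^+_{M_\Phi(\cM,\tau)}$ bijectively and affinely onto itself, and hence preserves its extreme points. By Theorem \ref{posi non-atomic}, $x$ and $x+y$ are such extreme points. Therefore $u:=T(x)$ and $w:=T(x+y)$ satisfy
\[
0\le u\le w,\qquad \mu(u)=\tfrac1\Phi\chi_{(0,s')},\qquad \mu(w)=\tfrac1\Phi\chi_{(0,r')},
\]
where $s'=\tau(r(u))$ and $r'=\tau(r(w))$. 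Here $u\le w$ holds because $T(y)\ge0$. Since $\mu(u)\le\mu(w)$, we get $s'\le r'$.

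\textbf{Step 2: the key claim.} The claim is that whenever $0\le u\le w$ have these singular value functions, $v:=w-u$ satisfies $v\perp u$. Once this holds, $T(y)=v\perp T(x)$. Taking $T=\mathrm{id}$ also gives $x\perp y$.

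To prove the claim, put $e:=r(u)$, so $\tau(e)=s'<\infty$. The operator $ewe\ge eue=u$ is supported under $e$. Hence, for all $t>0$,
\[
\int_0^t\mu(ewe)\le\int_0^{\min(t,s')}\mu(w)=\int_0^{t}\mu(u).
\]
The first inequality uses \cite[Thm.~3.9.9]{DPS}-type submajorization for compressions. On the other hand, $ewe\ge u$ gives $\mu(ewe)\ge\mu(u)$ pointwise. So $\mu(ewe)=\mu(u)$, and both operators lie in $L_1$ because $\frac1\Phi\in (L_1+L_\infty)(0,\infty)$ and $s'<\infty$. It follows that $\tau(ewe-u)=0$ with $ewe-u\ge0$, so $eve=0$. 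This forces $v^{1/2}e=0$, hence $ve=0$, i.e.\ $v\perp u$. I expect this step to be the heart of the proof. The delicate point is justifying the equality of traces; this is where the local integrability of $\frac1\Phi$, together with finiteness of $s'$, is used.

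\textbf{Step 3: the consequence.} Let $0\le x,y$ with $x\perp y$, $\tau$-finite supports, and $y\in\cM$. I will find $x_0,y_0\ge0$ satisfying the hypotheses of the first part, together with a constant $c>0$ such that $x\le cx_0$ and $y\le cy_0$. Positivity then gives $T(x)\le cT(x_0)$ and $T(y)\le cT(y_0)$, so $r(T(x))\le r(T(x_0))$ and $r(T(y))\le r(T(y_0))$, and disjointness follows from $T(x_0)\perp T(y_0)$.

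For $x_0$, I will use \cite[Lemma 1.3]{CKS92} to place $x$ in a non-atomic abelian subalgebra of $r(x)\cM r(x)$ identified with $S(0,\tau(r(x)))$. I then mimic Lemma \ref{map} and the proof of Proposition \ref{extend}: set $x_0:=U^{-1}\big(\tfrac1{\Phi\circ\phi}\big)$. Then $\mu(x_0)=\frac1\Phi\chi_{(0,s)}$ with $s=\tau(r(x))$, and $x\le\norm{x}_{M_\Phi}\,x_0$.

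For $y_0$, I work in $r(x)^\perp\cM r(x)^\perp$, which is non-atomic, and choose a projection $f\ge r(y)$ there with $\tau(f)=\tau(r(y))$. I then pick $y_0$ supported on $f$ with $\mu(y_0)=\mu\big(\frac1\Phi\chi_{(s,s+\tau(f))}\big)$, arranged so that $\mu(x_0+y_0)=\frac1\Phi\chi_{(0,s+\tau(f))}$. This is again done through a commutative realization, placing the values $\frac1\Phi$ on $(s,s+\tau(f))$ under $f$. Because $y_0\ge\frac1{\Phi(s+\tau(f))}f$ and $y$ is bounded, we get $y\le c\,y_0$ for $c$ large enough.
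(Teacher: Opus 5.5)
Your Steps 1 and 3 are sound, and Step 3 is essentially the paper's reduction: dominate $x$ and $y$ by multiples of suitable extreme points and use positivity of $T$. The gap is in Step 2, at ``put $e:=r(u)$, so $\tau(e)=s'<\infty$''.

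Nothing established at this stage gives $\tau(r(T(x)))<\infty$. Theorem~\ref{posi non-atomic} only yields $\mu(T(x))=\frac1\Phi\chi_{(0,s')}$ with $s'=\tau(r(T(x)))\le\tau(\mathbf 1)$. When $\tau(\mathbf 1)=\infty$, the a priori possibility $s'=r'=\infty$ is not excluded. In the paper, finiteness of $\tau(J(p))$ for $\tau$-finite $p$ only appears later, in Theorem~\ref{non-atomic posi2}, which itself relies on this lemma. If $s'=\infty$, your argument gives $\mu(ewe)=\mu(u)=\frac1\Phi$ on all of $(0,\infty)$. That function is in general not integrable at infinity (e.g.\ $\Phi(t)=t^{1/p}$ with $1<p<\infty$), so $\tau(ewe)=\tau(u)=\infty$ and you cannot conclude $\tau(ewe-u)=0$. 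The paper sidesteps this: it argues by contradiction via \cite[Prop.~2.2]{CKS}. From $eT(y)e\neq0$ it gets a strict inequality of $\int_0^a\mu(\cdot)$ for some finite $a\le\tau(r(T(x)))$, and that step never needs $\tau(r(T(x)))$ to be finite.

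You can repair this within your own method by localizing to finite-trace spectral projections of $u$.
\begin{itemize}
\item Since $\Phi(\infty)=\infty$, we have $\mu(\infty;u)=0$, so for each $\lambda>0$ the projection $q_\lambda:=e^{u}(\lambda,\infty)$ satisfies $\tau(q_\lambda)=d(\lambda;u)<\infty$.
\item Moreover $\tau(q_\lambda)\le s'\le r'$ and $\mu(uq_\lambda)=\frac1\Phi\chi_{(0,\tau(q_\lambda))}$.
\item Running your argument with $q_\lambda$ in place of $e$: $\tau(q_\lambda wq_\lambda)\le\int_0^{\tau(q_\lambda)}\frac1\Phi=\tau(uq_\lambda)<\infty$, while $q_\lambda wq_\lambda\ge uq_\lambda$.
\item Hence $q_\lambda vq_\lambda=0$, and so $vq_\lambda=0$.
\item Letting $\lambda\downarrow0$, $q_\lambda\uparrow r(u)$, which gives $vr(u)=0$, i.e.\ $v\perp u$.
\end{itemize}
With this modification your proof is complete. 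It replaces the strict-monotonicity result \cite[Prop.~2.2]{CKS} by faithfulness of the trace, and it uses local integrability of $\frac1\Phi$ at exactly the point where the paper does.
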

\begin{proof}
By Theorem \ref{posi non-atomic}, we obtain $x,x+y$ are extreme points of the positive unit ball.
Applying Proposition \ref{T(x)ge0} to $T$, we have that $T^{-1}$ is positive.
This implies that $T(x),T(x+y)$ are extreme points. 
    Assume that $T\left(x\right)\not\perp T\left(y\right)$. 
    We have $r\left(T\left(x\right)\right)T\left(y\right)r\left(T\left(x\right)\right)\ne 0$.
    {Indeed, if $r\left(T\left(x\right)\right)T\left(y\right)r\left(T\left(x\right)\right)= 0$, then $T(y)^{1/2}r\left(T\left(x\right)\right)=0$ and so, $r\left(T\left(x\right)\right)\le n\left(T(y)^{1/2}\right)=n\left(T(y)\right)$,
    which contradicts with the assumption that $T\left(x\right)\not\perp T\left(y\right)$.}
    Consequently,  there exists 
    $0<a\le\tau(r(T(x)))$
    such that 
    \begin{eqnarray}
        \int_0^a\frac{1}{\Phi\left(t\right)}dt
        &\stackrel{\tiny \mbox{Thm. \ref{posi non-atomic}}}{=}&
        \int_0^a \mu\left(t;T\left(x+y\right)\right)dt \notag\\
        &\ge& \int_0^a \mu\left(t;r\left(T\left(x\right)\right)T\left(x+y\right)r\left(T\left(x\right)\right)\right)dt \notag\\
        &\stackrel{\tiny \mbox{\cite[Prop. 2.2]{CKS}}}{>}&\int_0^a \mu\left(t;r\left(T\left(x\right)\right)T\left(x\right)r\left(T\left(x\right)\right)\right)dt \label{in L1+Linfty} \notag \\
        &\stackrel{\tiny \mbox{Thm. \ref{posi non-atomic}}}{=}&
        \int_0^a\frac{1}{\Phi\left(t\right)}dt, 
\end{eqnarray}
which yields a contradiction.
    Hence, $T\left(x\right)\perp T\left(y\right)$.

    To prove the final assertion, assume that
     $ 0\le x,y \in M_\Phi\left(\cM,\tau\right)$ have $\tau$-finite supports such that $x\perp y$  and $y$ is uniformly bounded, there exists $z_1\ge x$ with $z_1\perp y$ such that
     \begin{align} \label{z1}
     \mu(z_1)=c_1\frac{1}{\Phi}\chi_{_{\left(0,\tau\left(r(x)\right)\right)}}
     \end{align}
     for some $c_1>0$.
     Indeed,    
     by \cite[Lemma 1.3]{CKS92}, there exist  
a  weakly closed commutative $*$-subalgebra $\cN$ of $r(x)\cM r(x)$ containing all spectral projections of $x$ and a
$*$-isomorphism \[J:S\left(0,\tau\left(r(x)\right)\right)\to S(\cN,\tau),\]
which preserves singular value functions.
     By Lemma~\ref{map}, $J^{-1}(x) = \mu( J^{-1}(x)) \circ \phi $ for some measure-preserving map $\phi$. By definition, $\mu( J^{-1}(x)) \le c_1 \frac{1}{\Phi} \chi_{_{\left(0,\tau\left(r(x)\right)\right)}} $ for some $c_1>0$. 
     Then, 
     \[z':=  c_1 \frac{1}{\Phi} \chi_{_{\left(0,\tau\left(r(x)\right)\right)}} \circ \phi \ge 
     \mu( J^{-1}(x)) \circ \phi
     =J^{-1}(x).\]
     Setting $z_1 = J(z')$, we obtain that $z_1\ge x$ and \eqref{z1} is satisfied.

     Since $y$ is uniformly bounded and has $\tau$-finite support, it follows that there exists $z_2\ge y$ with $z_2\perp z_1$ such that
     \[\mu(z_2)=\mu\left(c_2\frac1\Phi\chi_{_{\left(\tau(r(x)),\tau\left(r(x)+r(y)\right)\right)}}\right)\]
     for some $c_1\le c_2<\infty$.
     This together with \eqref{z1} implies that
     \[\mu\left(\frac{c_2}{c_1}z_1+z_2\right)=c_2\frac1\Phi\chi_{_{\left(0,\tau\left(r(x)+r(y)\right)\right)}}.\]
     

    By the first statement, we have $T\left(\frac{c_2}{c_1}z_1\right)T\left(z_2\right)=0$. 
    Since $T$ is positive, it follows that $T\left(x\right)T\left(y\right)=0$.
\end{proof}


\begin{remark}
The  assumption that $\frac1\Phi\in \left(L_1+L_\infty\right)\left(0,\infty\right)$ can be omitted when $\mathcal{M}$ is atomic with all atoms having the same trace 
by 
$\int_0^a \mu(t;x)dt\le \int_0^a\frac1 {l_\Phi(0)}dt<\infty$ for any $0<a<\infty$ and $x\in \operatorname{extr}\left(B_{M_\Phi\left(\cM,\tau\right)}\right)$.


However,  the condition $\frac1\Phi\in \left(L_1+L_\infty\right)\left(0,\infty\right)$ 
  is essential in the proof for the non-atomic case (see \eqref{in L1+Linfty} and the characterization of extreme points of the positive unit ball,  Remark~\ref{positive no extr}). 

   In Proposition \ref{0<p<2} below, we obtain a criterion for two operators in $L_{p,\infty}\left(\mathcal{M},\tau\right)$, $0<p\le 1$, to be orthogonal.
\end{remark}


The following result follows from a similar argument used in the proof of Lemma~\ref{Marcin1}.

\begin{lemma} 
Assume that $\mathcal{M}$ is 
  atomic 
  with all atoms  having trace equal to one. 
Let $\Phi:[0,\infty) \to[0,\infty)$ be admissible satisfying $\Phi\left(\infty\right):=\displaystyle\lim_{t \to \infty} \Phi\left(t\right)=\infty$. 
  Let $0\le x,y\in M_{\Phi}\left(\mathcal{M},\tau\right)$ be such that 
 $\mu\left(n;x\right)=\frac{1}{ l_\Phi(n)},~\forall n\in\mathbb{N},n<\tau\left(r(x)\right)$ and $\mu\left(n;x+y\right)=\frac{1}{ l_\Phi(n)},~\forall n\in\mathbb{N},n<\tau(r\left(x+y\right))$ with $0<\tau(r(x))\le\tau(r\left(x+y\right))$.
 For any positive surjective isometry
  $T:M_{\Phi}\left(\mathcal{M},\tau\right)\to M_{\Phi}\left(\mathcal{M},\tau\right)$, we have  $T\left(x\right)\perp T\left(y\right)$.
In particular, $x\perp y$. 

   As a consequence, if  $ 0\le x,y \in M_\Phi\left(\mathcal{M},\tau\right)$
   have $\tau$-finite supports and are such that $x\perp y$  and $y$ is uniformly bounded (that is, $y\in \mathcal{M}$), then
   we have $T\left(x\right)\perp T\left(y\right)$. 
\end{lemma}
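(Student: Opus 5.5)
The plan follows the proof of Lemma~\ref{Marcin1}, with Theorem~\ref{posi non-atomic} replaced by Theorem~\ref{extr point atomic}. Write $k:=\tau(r(x))$ and $K:=\tau(r(x+y))$. By Theorem~\ref{extr point atomic}, both $x$ and $x+y$ are extreme points of $B^+_{M_\Phi(\cM,\tau)}$. Proposition~\ref{T(x)ge0} gives $T(E_+)=E_+$, and $T$ preserves the quasi-norm, so $T$ maps the positive unit ball bijectively onto itself. Hence $T(x)$ and $T(x+y)=T(x)+T(y)$ are again extreme points, with $T(y)\ge 0$. A second application of Theorem~\ref{extr point atomic} gives $\mu(n;T(x))=1/l_\Phi(n)$ for $n<\tau(r(T(x)))$, and the analogous formula for $T(x+y)$. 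Moreover $T(x)\le T(x+y)$, so $r(T(x))\le r(T(x+y))$.

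Suppose, for contradiction, that $T(x)\not\perp T(y)$, and put $e:=r(T(x))$. Exactly as in Lemma~\ref{Marcin1}, we get $eT(y)e\ne 0$; otherwise $e\le n(T(y))$. Since $T(x)$ is a nonzero extreme point, $\mu(0;T(x))=1/l_\Phi(0)<\infty$, so $T(x)\in\cM$. Choose a positive integer $a\le\tau(e)$, taking $a$ finite even if $\tau(e)=\infty$. Then
\[
\int_0^a\mu\bigl(t;T(x+y)\bigr)\,dt\ \ge\ \int_0^a \mu\bigl(t;eT(x+y)e\bigr)\,dt\ >\ \int_0^a\mu\bigl(t;eT(x)e\bigr)\,dt.
\]
The first inequality is the standard contraction bound $\mu(eze)\le\mu(z)$. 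The strict inequality is \cite[Prop. 2.2]{CKS}, applied to $eT(x)e\le eT(x)e+eT(y)e$ with $eT(y)e\ne0$; all integrals are finite, since $\int_0^a\mu\le a/l_\Phi(0)$. This finiteness is exactly why $\frac1\Phi\in L_1+L_\infty$ is not needed here, as the preceding Remark notes. Both outer integrals equal $\sum_{n<a}1/l_\Phi(n)$, because $a\le\tau(e)\le\tau(r(T(x+y)))$ and both singular value functions are step functions with values $1/l_\Phi(n)$ on $[n,n+1)$. This is a contradiction, so $T(x)\perp T(y)$. Applying the same argument to $T=\mathrm{id}$ gives $x\perp y$.

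For the consequence, take $0\le x,y$ with finite supports, $x\perp y$ and $y\in\cM$. In the atomic case these supports have integer traces $k$ and $m$. Since $\mu(x)$ is supported on $[0,k)$, there is $c_1>0$ with $\mu(n;x)\le c_1/l_\Phi(n)$ for $n<k$. We construct $z_1\ge x$ with $r(z_1)=r(x)$ and $\mu(n;z_1)=c_1/l_\Phi(n)$ for $n<k$. Diagonalize $x=\sum_{n<k}\mu(n;x)p_n$, with the $p_n$ mutually orthogonal rank-one atoms of the masa generated by $x$, ordered decreasingly; this is where all atoms having trace one is used. Then set $z_1:=\sum_{n<k} c_1 l_\Phi(n)^{-1}p_n$. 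Similarly, choose $c_2\ge c_1$ large enough that $c_2/l_\Phi(k+m-1)\ge\|y\|_\cM$. Diagonalize $y=\sum_{j<m}\lambda_jq_j$ with rank-one $q_j\le r(y)$, and set $z_2:=\sum_{j<m}c_2l_\Phi(k+j)^{-1}q_j\ge y$, so that $z_2\perp z_1$.

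Then $\frac{c_2}{c_1}z_1+z_2$ has singular values $c_2/l_\Phi(n)$ for $n<k+m$, because $l_\Phi$ is non-decreasing. Its normalization $w$, together with $\frac1{c_1}z_1$, satisfies the hypotheses of the first part. The first part gives $T(z_1)\perp T(z_2)$. Positivity then yields $0\le T(x)\le T(z_1)$ and $0\le T(y)\le T(z_2)$, hence $T(x)T(y)=0$, since orthogonality of the supports passes to smaller positive elements.

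The main obstacle is the strict inequality from \cite[Prop. 2.2]{CKS} in the atomic setting. Its application needs $a$ to be finite and $\int_0^a\mu(eT(x+y)e)<\infty$, which the bound $\mu\le 1/l_\Phi(0)$ supplies. It also requires that strictness persist on an integer interval $[0,a]$; this follows by taking $a=\tau(e)$ when that is finite, and otherwise any integer $a$ works.
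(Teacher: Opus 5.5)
Your proposal is correct and is exactly the adaptation of the proof of Lemma~\ref{Marcin1} that the paper intends. It replaces Theorem~\ref{posi non-atomic} with Theorem~\ref{extr point atomic}, gets finiteness of the integrals from $\mu\le 1/l_\Phi(0)$, and uses an atomic diagonalization instead of \cite[Lemma 1.3]{CKS92} for the consequence.

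One slip: when $\tau(e)=\infty$, it is not true that any integer $a$ works. You must take $a$ beyond the point where \cite[Prop.~2.2]{CKS} supplies a strict inequality. Alternatively, drop the integrals altogether: $\mu(eT(x)e)=\mu(eT(x+y)e)$ holds pointwise, and that already contradicts \cite[Prop.~2.2]{CKS}.
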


By standard approximation argument, we can extend Lemma \ref{Marcin1} to the setting when $x,y$ are general positive
elements in $M_\Phi(\cM,\tau)$. 
\begin{prop} \label{non-atomic posi1}
 Let $\Phi:[0,\infty) \to[0,\infty)$ be admissible
   satisfying $\frac1\Phi\in \left(L_1+L_\infty\right)\left(0,\infty\right)$ and  $\Phi\left(\infty\right):=\displaystyle\lim_{t \to \infty} \Phi\left(t\right)=\infty$.
   Let $\mathcal{M}$ be non-atomic and $T$ be a  positive surjective isometry  from $M_{\Phi}\left(\mathcal{M},\tau\right)$ onto $M_{\Phi}\left(\mathcal{M},\tau\right)$. 
If $0\le x,y\in M_\Phi\left(\mathcal{M},\tau\right)$ such that $x\perp y$, then $T\left(x\right)\perp T\left(y\right)$.
\end{prop}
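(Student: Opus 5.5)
The plan is to reduce the general case to the last assertion of Lemma \ref{Marcin1}, which covers $x\perp y$ with $\tau$-finite supports and $y\in\cM$. To do this, approximate $x$ and $y$ by such elements in a way that increases to them, and then use the normality of $T$ (Proposition \ref{order isom}) to pass to the limit. Since $x,y\ge0$ are $\tau$-measurable, spectral truncations $e^x(1/n,n]$ and $e^y(1/n,n]$ are $\tau$-finite projections. Hence
\[
x_n:=x\,e^x(1/n,\infty),\qquad y_n:=y\,e^y(1/n,n]
\]
are positive and have $\tau$-finite supports. Moreover, $y_n\in\cM$, and $r(x_n)\le r(x)$ and $r(y_n)\le r(y)$, so $x_n\perp y_n$. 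The element $x_n$ need not be bounded, but Lemma \ref{Marcin1} only requires $y$ to be bounded, so $x_n$ is admissible as it stands. Therefore $T(x_n)\perp T(y_n)$, that is, $T(x_n)T(y_n)=0$, for every $n$.

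Next I pass to the limit. We have $0\le x_n\uparrow x$ and $0\le y_n\uparrow y$ in $S(\cM,\tau)$, since the spectral projections increase to $r(x)$ and $r(y)$. By Proposition \ref{order isom}, $T$ is normal, so $T(x_n)\uparrow T(x)$ and $T(y_n)\uparrow T(y)$. The remaining task is to show that disjointness survives increasing limits. The key observation is
\[
r\bigl(T(x)\bigr)=\sup_n r\bigl(T(x_n)\bigr)
\]
for an increasing net of positive operators with supremum $T(x)$. To prove it, set $q=\sup_n r(T(x_n))$. Then $(\mathbf 1-q)T(x_n)(\mathbf 1-q)=0$ for all $n$. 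Passing to the supremum gives $(\mathbf 1-q)T(x)(\mathbf 1-q)=0$, using that $z\mapsto(\mathbf 1-q)z(\mathbf 1-q)$ preserves suprema of increasing nets in $S(\cM,\tau)$. Hence $r(T(x))\le q$; the reverse inequality is clear. The same argument applies to $y$. Because $r(T(x_n))\perp r(T(y_m))$ for all $m,n$ (monotonicity in $m,n$ and the diagonal case suffice), the suprema $r(T(x))$ and $r(T(y))$ are orthogonal. This gives $T(x)\perp T(y)$.

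I expect the main obstacle to be the limit step, namely justifying that supports of suprema are suprema of supports in $S(\cM,\tau)$. If this causes trouble, I would instead use convergence in measure. Increasing nets that are bounded above converge in measure when they arise from $\tau$-finite truncations of the same kind, or one can argue directly that $T(y_m)^{1/2}T(x_n)T(y_m)^{1/2}=0$ and let $n$ increase first, using normality of the compression $z\mapsto T(y_m)^{1/2}zT(y_m)^{1/2}$ on positive elements, and then let $m$ increase. A minor point is to check that $x_n,y_n\in M_\Phi(\cM,\tau)$, which follows because $\mu(x_n)\le\mu(x)$ and $\mu(y_n)\le\mu(y)$.
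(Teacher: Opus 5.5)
Your proof is correct, and it takes a genuinely different and much shorter route than the paper.

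\textbf{How the paper argues.} The paper never invokes the normality of $T$ in this proof. It reduces to $x,y$ dominated by extreme points of $B^+_{M_\Phi(\cM,\tau)}$. It then splits $y$ into a piece with small support carrying the large values and bounded pieces, and handles the bounded pieces by Lemma~\ref{Marcin1}. Finally, it shows by contradiction arguments based on Theorem~\ref{posi non-atomic} and the $\Delta_2$-condition that $T$ of the remaining piece tends to $0$ in measure. In Case~1 this means $\tau(r(T(y_1(n))))\to0$; in Case~2, where both supports are $\tau$-infinite, it means $\|T(y_2(n))\|_\infty\to0$.

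\textbf{How your route differs.} You truncate $x$ and $y$ simultaneously so that the final assertion of Lemma~\ref{Marcin1} applies directly to $x_n\perp y_n$. You then recover the general case from the order continuity of $T$ in Proposition~\ref{order isom}. That proposition rests only on Proposition~\ref{T(x)ge0} and is proved before this one, so there is no circularity. Your approach removes the two-case split and all the convergence-in-measure estimates.

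\textbf{What each route buys.} The paper's argument yields quantitative information: $T$ of small top pieces of extreme points has support of small trace. The paper reuses this in the proof of Theorem~\ref{Marcin6} (``by the same argument \dots\ $t(n)\downarrow 0$''). Your argument yields disjointness only.

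\textbf{Two small points to tighten.}
\begin{enumerate}
\item $\tau(e^x(1/n,\infty))<\infty$ for every $n$ does not follow from $\tau$-measurability alone. It holds because $\mu(t;x)\le\|x\|_{M_\Phi}/\Phi(t)\to0$ (as $\Phi(\infty)=\infty$), so $x,y\in S_0(\cM,\tau)$.
\item For the support of the supremum you need no statement about compressions preserving suprema in $S(\cM,\tau)$. You also need not settle whether the supremum in Proposition~\ref{order isom} is taken in $M_\Phi$ or in $S$. With $q=\sup_n r(T(x_n))$ one has $T(x_n)=qT(x_n)q\le qT(x)q\in M_\Phi(\cM,\tau)$, hence $T(x)\le qT(x)q$. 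This gives $(\mathbf 1-q)T(x)(\mathbf 1-q)\le0$, so $r(T(x))\le q$.
\end{enumerate}
Your fallback remark about increasing bounded nets converging in measure fails in general, for example $\chi_{(0,n)}\uparrow\mathbf 1$ in $L_\infty(0,\infty)$. It is not needed, however.
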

\begin{proof} 
{\bf Case $1$}. 
Assume that $x$ has  $\tau$-finite support.
The case when $x=0$ is trivial.
If $\tau(\mathbf{1})<\infty$ and $\tau(r(y))=\tau(\mathbf{1})$, then $r(y)=\mathbf{1}$ and in this case, $xy=0$ implies that $x=0$.
For the remaining cases,
we may assume that $\tau(r(x))\le\tau(r(y))$,
$\mu\left(x\right) = \frac{1}{\Phi}\chi_{_{\left(0,s_1\right)}}$ and 
$\mu\left(y\right) = \frac{1}{\Phi}\chi_{_{\left(0,\tau\left(r(y)\right)\right)}} $
\footnote{Otherwise, 
there exist $z_1,z_2\in {\rm extr}\left(B^+_{M_{\Phi}(\cM,\tau)}\right)$ 
such that $z_1\ge x$ and $z_2\ge y$. 
Then,
$z_1,z_2$ are of the form in Theorem \ref{posi non-atomic} and
$T(z_1)T(z_2)=0$ implies $T(x)T(y)=0$. }.
By \cite[Lemma 3.7.7]{DPS}, for any positive integer $n$ satisfying $s_1>\frac 1n$,  there exists a decomposition $ r(y) = e_1\left(n\right)+e_2\left(n\right)+e_3$ such that 
$e_1\left(n\right),e_2\left(n\right),e_3$ are mutually orthogonal and commuting with $y$, satisfying  
\begin{enumerate} [label=(\roman*)]
    \item $y_1(n):=ye_1\left(n\right)$, $y_2(n):=ye_2\left(n\right)$, $y_3:=ye_3$.
    \item 
    $ e^{y} \left( \frac{1}{\Phi\left(\frac1n\right)},\infty \right)   \le e_1\left(n\right)\le e^{y} \left[\frac{1}{\Phi\left(\frac1n\right)},\infty \right)$.   
    \item $\tau\left(e_1\left(n\right)\right)=\frac1n$, $\tau\left(e_2\left(n\right)\right) = s_1-\frac1n$, 
    $\tau\left(e_3\right)=\tau\left(r(y)\right)-s_1$.
    \item $e^{y} \left( \frac{1}{\Phi\left(s_1\right)},\infty \right)   \le e_1\left(n\right)+e_2\left(n\right)\le e^{y} \left[\frac{1}{\Phi\left(s_1\right)},\infty \right)$.
    \item $\mu\left(y_1\left(n\right)+y_2\left(n\right)\right)=\mu\left(x\right)$, $\mu\left(y_1\left(n\right)\right)= \frac{1}{\Phi}\chi_{_{\left(0, \frac1n \right)}}$,   $\mu\left(y_2\left(n\right)\right)= \mu\left(\frac{1}{\Phi}\chi_{_{\left(\frac1n, s_1\right)}}\right) $ and $\mu\left(y_3\right) =  \mu\left( \frac{1}{\Phi}\chi_{_{\left(s_1,\tau\left(r(y)\right) \right)}}\right) $.
\end{enumerate} 
Note that $e_1\left(n\right)$ is decreasing as $n\to \infty$. 
By Lemma \ref{Marcin1}, we obtain 
\begin{align} \label{y1, y2}
T\left(x\right)\perp T\left(y_3\right), ~ T\left(x\right)\perp T\left(y_2\left(n\right)\right). 
\end{align}

Now, it suffices to prove that $T\left(y_1\left(n\right)\right) \to 0$ in  measure  as $n\to \infty $. 
Recall that $\mu\left(y_1\left(n\right)\right)= \frac{1}{\Phi}\chi_{_{\left(0, \frac1n \right)}}$,
which together with Theorem \ref{posi non-atomic} yields that 
$y_1(n)\in \operatorname{extr}\left(B^+_{M_{\Phi}\left(\mathcal{M},\tau\right)}\right)$.
Then, we have
$T\left(y_1(n)\right)\in \operatorname{extr}\left(B^+_{M_{\Phi}\left(\mathcal{M},\tau\right)}\right)$. 
It follows from Theorem \ref{posi non-atomic} that 
\begin{align}\label{muTy1n}
\mu\left(T\left(y_1\left(n\right)\right)\right)=\frac{1}{\Phi}\chi_{_{\left(0,t\left(n\right)\right)}},
\end{align}
where $t(n):=\tau(r(T(y_1(n))))$.
Now, we claim that $t\left(n\right)\to 0$ as $n\to \infty $. 
Since $y_1\left(n\right)\downarrow$ as $n\uparrow$ 
and $T$ is positive, it follows that $t\left(n\right)\downarrow $  as $n\uparrow$. 
Let $t_0:= \inf_n t\left(n\right)$. 
\begin{itemize}
    \item Assume that   $t_0 =\tau\left(\mathbf{1}\right)$. 
Then,  by \cite[Proposition 2.2]{CKS}, we have 
\[ \frac{1}{\Phi\left(s\right)}= \mu\left(s;T\left(y\right)\right) > \mu\left(s;T\left(y_1\left(n\right)\right)\right) =\frac{1}{\Phi\left(s\right)} \]
for some $s>0$,
which is a contradiction. 
\item Assume that $t_0\in \left(0,\tau\left(\mathbf{1}\right) \right)$. 
Then, for  any $m\ge n$,  we have 
\[\mu\Big(T\left(y_1\left(m\right)\right) + T\left( y_1\left(n \right)\right)-T\left(y_1\left(m\right)\right)\Big)
=\mu\left(T\left(y_1\left(n\right) \right)\right)
\stackrel{\eqref{muTy1n}}{=}\frac{1}{\Phi}\chi_{_{\left(0,t\left(n\right)\right)}}.  \]
By Lemma \ref{Marcin1}, we obtain 
$ T\left(y_1\left(m\right)\right) \perp T\left(y_1\left(n \right)-y_1\left(m\right)\right)$.
This together with \eqref{muTy1n} yields that
\[\mu\left(T\left(y_1\left(n \right)-y_1\left(m\right)\right)\right)= \mu \left(\frac{1}{\Phi} \chi_{_{\left(t\left(m\right),t\left(n\right)\right)}}\right). \]
Since $t\left(n\right)\to t_0$ as $n\to \infty $, it follows that 
\[ \mu\left(T\left(y_1\left(n \right)-y_1\left(m\right)\right)\right)
= \mu \left(\frac{1}{\Phi} \chi_{_{\left(t\left(m\right),t\left(n\right)\right)}}\right)   
\le{\frac1{\Phi(t_0) }} \chi_{_{\left(0, t\left(n\right)-t_0 \right)}}
\xrightarrow{\norm{\cdot}_{M_\Phi{ \left(0,\infty\right)}}}  
0  \]
for all $m\ge n$ whenever $n\to \infty$. 
However, for any fixed $n$, 
there exists $m$ large enough such that
\begin{align*}
    \norm{y_1(n)-y_1(m)}_{M_\Phi(\mathcal{M},\tau)}&=\sup_{s>0}\Phi(s)\mu\left(s;\frac1\Phi\chi_{\left(\frac1m,\frac1n\right)}\right)\\
    &=\sup_{s>0}\frac{\Phi(s)}{\Phi(s+\frac1m)}\chi_{\left(0,\frac1n-\frac1m\right)}(s)\\
    &\ge \sup_{s>0}\frac{\Phi(s)}{\Phi(\frac1n)}\chi_{\left(0,\frac1n-\frac1m\right)}(s)\\
    &\ge \frac{\Phi\left(\frac 1{2n}\right)}{\Phi\left(\frac1n\right)}\ge \frac1{C_\Phi}>0.
\end{align*}
This is a contradiction. 
\end{itemize}

Hence, $t_0=0$ and our claim is proved.
In other words, $T\left(y_1\left(n\right)\right)\to 0$ in measure as $n \to \infty $,
which together with \eqref{y1, y2} yields $T(x)\perp T(y)$.

{\bf Case $2$}. Assume that both $x$  and   $y$ have  $\tau$-infinite supports. 
Without loss of generality, we may assume that $x,y$ are extreme points. 
For each $n$,
let $e(n)\in P(\cM)$ be such that
$e^{y} \left(\frac{1}{\Phi\left(n\right)},\infty \right)\le e\left(n\right)\le e^{y} \left[\frac{1}{\Phi\left(n\right)},\infty \right)$ and
$$\mu\left(ye(n)\right)=\frac1 \Phi\chi_{_{\left(0,n\right)}},
~\mu\left(y-ye(n)\right)=\mu\left(\frac1 \Phi\chi_{_{\left(n,\tau\left(\mathbf{1}\right)\right)}}\right).$$
Denote $y_1\left(n\right):=ye\left(n\right)$ and $y_2\left(n\right):=y-y_1\left(n\right)$.


By Case $1$, we have $T\left(x\right)\perp T\left(y_1\left(n\right)\right)$. 
Now, it suffices to show that $T\left(y_2\left(n\right)\right)\to 0$ in measure as $n\to \infty $. 
Since $y_1\left(n\right),y$ are  extreme points, it follows that 
$\mu\left(T\left(y_1(n)\right)\right) =\frac{1}{\Phi}\chi_{_{\left(0,t\left(n\right)\right)}}$ and $\mu\left(T\left(y\right)\right)= \frac{1}{\Phi}\chi_{_{\left(0,s_0\right)}}$ for some constant $0<s_0 \le \tau\left(\mathbf{1}\right)$. 
By Lemma \ref{Marcin1}, we obtain that $T\left(y_1\left(n\right)\right)\perp T\left(y_2\left(n\right)\right)$. 
Hence, 
\begin{align*}
    \mu(T(y))=\mu\left(\mu\left( T\left(y_1(n)\right)\right) \oplus \mu\left(T\left(y_2\left(n\right)\right)\right)\right) 
\end{align*}
and so,
\begin{align}\label{Ty2}
\mu\left(T\left(y_2\left(n\right)\right)\right)
=
\mu\left( \frac{1}{\Phi} \chi_{_{\left(t\left(n\right),s_0 \right)}}\right) .
\end{align}

Note that  $\tau\left(\mathbf{1}\right)=\infty$. 
We claim that $t\left(n\right) \to s_0$ and $s_0=\tau\left(\mathbf{1}\right)$ as $n\to \infty  $.
Indeed, otherwise, we have  $t_0:=\lim_{n\to \infty }t\left(n\right)  <  \infty $.
Note that \[\mu\left(T\left(y_1\left(n\right) \right)\right) = \mu\left(T\left(y_1\left(m\right)\right) + T\left( y_1\left(n \right)\right)-T\left(y_1\left(m\right)\right)\right)=\frac{1}{\Phi}\chi_{_{\left(0,t\left(n\right)\right)}}  \]
for any $m\le n$. 
By Lemma \ref{Marcin1}, we have $T\left(y_1\left(m\right)\right) \perp  T\left( y_1\left(n \right)\right)-T\left(y_1\left(m\right)\right)$.
Since \[\mu\left(T\left(y_1\left(n\right)\right)-T\left(y_1\left(m\right)\right)\right)\le \frac 1{\Phi\left(t\left(m\right)\right)}\chi_{_{\left(0,t_0-t\left(m\right)\right)}},\] 
it follows that \[\norm{\mu\left(T\left(y_1\left(n \right)-y_1\left(m\right)\right)\right)}_{M_\Phi\left(\mathcal{M},\tau\right)}
\to 0,\quad m\to \infty.\]
Moreover, since \[\mu\left(y_1\left(n\right)-y_1\left(m\right)\right)=\frac1 {\Phi\left(\cdot+m\right)}\chi_{_{\left(0,n-m\right)}},\]
it follows that 
\begin{align*}
\norm{\mu\left(y_1\left(n \right)-y_1\left(m\right)\right)}_{M_\Phi(0,\infty)}%
&=\sup_{t>0}\frac{\Phi(t)}{\Phi\left(\cdot+m\right)}\chi_{\left(0,n-m\right)}(t)\\
&\ge \sup_{t>0}\frac{\Phi\left(\left(n-m\right)_-\right)}{\Phi\left(2\left(n-m\right)_-\right)}\chi_{\left(0,n-m\right)}(t)\\
&\ge\frac1 {C_\Phi},
\quad n\to \infty
\end{align*}
for any fixed $m$,
a contradiction. 
 Hence, $t\left(n\right) \to s_0=\tau\left(\mathbf{1}\right)= \infty  $ as $n\to \infty  $.
Note that $\frac1 {\Phi\left(t\left(n\right)\right)}\to0$ as $n\to \infty $. 
By \eqref{Ty2}, we have  $T\left(y_2\left(n\right)\right)\in \cM$ and  $\norm{T\left(y_2\left(n\right)\right)}_\infty \to 0$ as $n\to \infty $. 
Hence, 
\[T\left(y\right)-T\left(y_1\left(n\right)\right)=T(y_2(n)) \to 0\]
in measure as $n\to \infty$ \cite[Proposition 4.4.4]{DPS}.
Since 
$T\left(x \right)T\left(y_1\left(n\right)\right)=0 $, it follows from \cite[Proposition 2.6.11]{DPS} that $T\left(x\right)T\left(y\right)=0$. 
This completes the proof. 
\end{proof}

The case for atomic von Neumann algebras follows from a similar argument.

\begin{prop} \label{atomic posi}
   Assume that $\mathcal{M}$ is atomic with all atoms having trace equal to one.
Let $\Phi:[0,\infty) \to[0,\infty)$ be admissible
   satisfying $\Phi\left(\infty \right):=\displaystyle\lim_{t \to \infty} \Phi\left(t\right)=\infty $.
   Let $T$ be a  positive surjective isometry  from $M_{\Phi}\left(\mathcal{M},\tau\right)$ onto $M_{\Phi}\left(\mathcal{M},\tau\right)$. 
   Assume that $0\le x,y\in M_\Phi\left(\mathcal{M},\tau\right)$. 
   If $x\perp y$, then $T\left(x\right)\perp T\left(y\right)$.
\end{prop}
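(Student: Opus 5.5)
The plan is to mimic the proof of Proposition~\ref{non-atomic posi1}, replacing Theorem~\ref{posi non-atomic} by Theorem~\ref{extr point atomic} and Lemma~\ref{Marcin1} by its atomic analogue stated just above. The continuous parameter $1/n$ becomes a discrete count of atoms. First, since $T^{-1}$ is positive (Proposition~\ref{T(x)ge0}), $T$ and $T^{-1}$ both map $\operatorname{extr}(B^+_{M_\Phi(\cM,\tau)})$ onto itself. Thus, by Theorem~\ref{extr point atomic}, $T$ sends elements with $\mu(n;\cdot)=1/l_\Phi(n)$ for $n<\tau(r(\cdot))$ to elements of the same type.

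\textbf{Reduction.} Given $0\le x,y$ with $x\perp y$, I enlarge them to disjoint extreme points times scalars.

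\emph{Finite-support step.} Suppose $x$ and $y$ have $\tau$-finite supports. Since $\cM$ is atomic with trace-one atoms, $r(x)$ is a finite sum of minimal projections. Diagonalize $x=\sum_k \lambda_k p_k$ with $\lambda_k$ decreasing and $p_k$ minimal. Then $z_1:=c\sum_k l_\Phi(k)^{-1}p_k\ge x$ for suitable $c>0$, because $\lambda_k=\mu(k;x)\le\|x\|/l_\Phi(k)$.

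\emph{Extension step.} Similarly, since $y$ is bounded with finite support and $l_\Phi$ is nondecreasing, I take $z_2\ge y$ supported on $r(y)$ with $\mu(z_2)$ equal to the tail $c'/l_\Phi(n)$, $n\in[\tau(r(x)),\tau(r(x)+r(y)))$, and $c'\ge c$. Rescaling $z_1$ to $c'$, the sum $\frac{c'}{c}z_1+z_2$ has exactly the extreme-point profile $c'/l_\Phi(n)$ on its support. The atomic lemma then gives $T(z_1)\perp T(z_2)$, and positivity of $T$ yields $T(x)T(y)=0$. (When $y$ is merely in $M_\Phi$ with finite support, it is automatically bounded, because finite rank in an atomic algebra means $y\in\cM$.) This settles the finite-support case. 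In the atomic setting this is easier than Case~1 of Proposition~\ref{non-atomic posi1}: every finite-support element is bounded, so no $y_1(n)$ shrinking argument near $t=0$ is needed.

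\textbf{Infinite supports.} Now let $x,y$ be general. The plan is to write $x=\lim x_n$ and $y=\lim y_n$, where $x_n=x\,e^{x}$-truncations onto finitely many atoms (spectral cut-offs $e^x(1/\Phi(n),\infty)$ refined to finite rank). Then $x_n\perp y_m$ for all $n,m$, so $T(x_n)T(y_m)=0$ by the finite case. It remains to show $T(x_n)\to T(x)$ in measure. As in Case~2 of Proposition~\ref{non-atomic posi1}, it suffices to do this when $x$ is (a multiple of) an extreme point with infinite support, after dominating $x$ by such an element built as above. Then $T(x_n)$ and $T(x)$ are extreme and $T(x_n)\perp T(x-x_n)$ by the atomic lemma. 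This gives $\mu(T(x-x_n))=\mu(l_\Phi^{-1}\chi_{[t(n),s_0)})$, where $t(n)=\tau(r(T(x_n)))$.

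\textbf{Main obstacle.} The key step is showing $t(n)\to\infty$. I would argue by contradiction as in the paper. If $t(n)\uparrow t_0<\infty$, then, $t(n)$ being integers, it is eventually constant. Hence $T(x_n-x_m)=0$ for large $m<n$, contradicting injectivity since $x_n\ne x_m$. So $t(n)\to\infty=s_0$. Then $\|T(x-x_n)\|_\infty\le 1/l_\Phi(t(n))\to0$ because $\Phi(\infty)=\infty$. Hence $T(x_n)\to T(x)$ in measure, and \cite[Proposition 2.6.11]{DPS} passes $T(x_n)T(y_m)=0$ to the limit, first in $n$ and then in $m$, giving $T(x)\perp T(y)$.
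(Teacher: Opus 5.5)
Your proposal is correct and takes essentially the paper's route: the paper proves this proposition only by saying it follows from the argument of Proposition~\ref{non-atomic posi1}, with Theorem~\ref{extr point atomic} and the atomic analogue of Lemma~\ref{Marcin1} in place of their non-atomic counterparts, and that is what you carry out. Your only deviations are that you truncate both $x$ and $y$ to finite supports, and that you get $t(n)\to\infty$ from integrality of traces plus injectivity of $T$ instead of the paper's $C_\Phi$ quasi-norm estimate. This is a cleaner discrete substitute, but you should state why $t(n)=t(m)$ forces $T(x_n-x_m)=0$: either because the atomic lemma gives $T(x_m)\perp T(x_n-x_m)$, or because $T(x_n)\ge T(x_m)$ have equal finite traces.
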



\section{Descriptions of positive isometries}
Let $\Phi:[0,\infty) \to[0,\infty)$ be admissible.
Having the disjointness-preserving property at hand, we provide a complete description of positive surjective isometries on 
$M_{\Phi}\left(\mathcal{M},\tau\right)$ in this section.
Define $\Phi(s_-):=\lim_{t\uparrow s}\Phi(t)$ for all $s>0$.
We start with the following simple observation. 
  \begin{lemma} \label{alpha}
Let $\mathcal{M}$ be non-atomic.
Let $\Phi:[0,\infty) \to[0,\infty)$ be admissible.
 Fix  two nonzero projections
    $p,q\in P(\cM)\cap \cF(\cM,\tau)$ satisfying $pq=0$.
Then, 
\[\norm{p+\alpha q}_{M_{\Phi}\left(\mathcal{M},\tau\right)}
=\Phi\left(\left(\tau\left(p\right)\right)_-\right),
\quad\norm{\alpha p+ q}_{M_{\Phi}\left(\mathcal{M},\tau\right)}
=\Phi\left(\left(\tau\left(q\right)\right)_-\right)\]
whenever $\alpha$ is sufficiently small.
\end{lemma}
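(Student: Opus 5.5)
Since $p$ and $q$ are orthogonal projections and commute, $p+\alpha q$ is a positive operator whose spectral decomposition is explicit. We may take $0<\alpha<1$, so its singular value function is
\[
\mu(p+\alpha q)=\chi_{_{[0,\tau(p))}}+\alpha\,\chi_{_{[\tau(p),\tau(p)+\tau(q))}}.
\]
To see this, note that $d(s;p+\alpha q)$ equals $\tau(p)+\tau(q)$ for $0\le s<\alpha$, equals $\tau(p)$ for $\alpha\le s<1$, and equals $0$ for $s\ge1$; then take the right-continuous inverse. Hence
\[
\norm{p+\alpha q}_{M_\Phi(\cM,\tau)}=\max\Big\{\sup_{0<t<\tau(p)}\Phi(t),\ \alpha\sup_{\tau(p)\le t<\tau(p)+\tau(q)}\Phi(t)\Big\}.
\]

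For the first term, $\Phi$ is non-decreasing, so $\sup_{0<t<\tau(p)}\Phi(t)=\Phi((\tau(p))_-)$. This value is strictly positive because $\tau(p)>0$ (the projection $p$ is nonzero and $\tau$ is faithful) and $\Phi(t)>0$ for $t>0$ by admissibility.

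For the second term, monotonicity gives $\alpha\sup_{\tau(p)\le t<\tau(p)+\tau(q)}\Phi(t)\le \alpha\,\Phi(\tau(p)+\tau(q))$, and this bound is finite because both projections are $\tau$-finite. We therefore choose $\alpha$ with
\[
0<\alpha\le \min\Big\{\tfrac12,\ \frac{\Phi((\tau(p))_-)}{\Phi(\tau(p)+\tau(q))}\Big\}.
\]
For such $\alpha$ the second term is at most the first, and the maximum equals $\Phi((\tau(p))_-)$. The second identity follows by exchanging the roles of $p$ and $q$, which replaces the threshold by $\Phi((\tau(q))_-)/\Phi(\tau(p)+\tau(q))$. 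Any $\alpha$ below both thresholds then works simultaneously.

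The only point needing care is the computation of $\mu$ at the jump points, in particular whether the interval endpoints are open or closed. Since the supremum is unaffected by single points and $\Phi$ is monotone, this does not change the result. Non-atomicity is not really needed here; it only ensures that such $p,q$ exist with arbitrary traces.
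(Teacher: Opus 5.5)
Your argument is correct and is the same as the paper's: both compute $\mu(p+\alpha q)=\chi_{[0,\tau(p))}+\alpha\chi_{[\tau(p),\tau(p+q))}$ and read off $\norm{p+\alpha q}_{M_\Phi(\cM,\tau)}=\max\{\Phi((\tau(p))_-),\alpha\Phi((\tau(p+q))_-)\}$, which equals the first term once $\alpha$ is small. One correction to your closing remark: the endpoint does matter, because an admissible $\Phi$ is only right-continuous and may jump at $\tau(p)$; it is exactly the right-continuity $\mu(\tau(p);p+\alpha q)=\alpha$, which you computed correctly, that produces $\Phi((\tau(p))_-)$ rather than $\Phi(\tau(p))$.
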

\begin{proof}
Since $pq=0, ~\tau(p),\tau(q)<\infty$, it follows that when $\alpha$ is sufficiently small, we have 
    \[\norm{p+\alpha q}_{M_{\Phi}\left(\mathcal{M},\tau\right)}=\max\{\Phi\left(\left(\tau(p)\right)_-\right),\alpha\Phi\left(\left(\tau(p+q)\right)_-\right)\}=
    \Phi\left(\left(\tau\left(p\right)\right)_-\right)\]
    and
    \[\norm{\alpha p+ q}_{M_{\Phi}\left(\mathcal{M},\tau\right)}=\max\{\Phi\left(\left(\tau(q)\right)_-\right),\alpha\Phi\left(\left(\tau(p+q)\right)_-\right)\}=\Phi\left(\left(\tau\left(q\right)\right)_-\right).\]
\end{proof}



 The following result should be compared with 
 the corresponding $L_p$-results of Yeadon \cite{Y81} 
 and 
 with the cases of (noncommutative) symmetric spaces treated in
 \cite{KR,z77,Z76,zaidenberg,R97,FGHS}.

\begin{theorem}  \label{non-atomic posi2}
Let $\Phi:[0,\infty) \to[0,\infty)$ be strictly increasing and  admissible
     satisfying $\Phi\left(\infty \right):=\displaystyle\lim_{t \to \infty} \Phi\left(t\right)=\infty $ and $\frac1\Phi\in \left(L_1+L_\infty\right)\left(0,\infty\right)$.
     Let $\mathcal{M}$ be 
     non-atomic.
     If    $T:M_{\Phi}\left(\mathcal{M},\tau\right)\to M_{\Phi}\left(\mathcal{M},\tau\right)$ is a  positive surjective isometry,  
     then 
     there  exist a (normal)  Jordan $*$-isomorphism $J:\cM\to \cM$ 
     and a positive scalar $b$ such that
    \[T\left(x\right)=bJ\left(x\right),~ \forall x \in M_{\Phi}\left(\mathcal{M},\tau\right)\cap \mathcal{M}, \]
where $J$ is such that    $\tau\left(J\left(p\right)\right)= \tau\left(J\left(q\right)\right)$ for all $\tau$-finite projections $p,q$   satisfying $\tau\left(p\right)=\tau\left(q\right)$. 
\end{theorem}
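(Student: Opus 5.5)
The plan is to show first that $T$ maps $\tau$-finite projections to scalar multiples of projections, with one scalar $b$ for all of them. Then I will define $J$ on projections, extend it to a normal Jordan $*$-isomorphism of $\cM$, and use linearity and normality to identify $T$ with $bJ$ on $M_\Phi(\cM,\tau)\cap\cM$.

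\textbf{Step 1: images of disjoint finite projections.} Let $p,q$ be nonzero $\tau$-finite projections with $pq=0$. By Proposition \ref{non-atomic posi1}, $T(p)\perp T(q)$. Proposition \ref{T(x)ge0} shows that $T$ is an order isomorphism. Together with the disjointness-preserving property, it follows that $T$ maps the band generated by $p$ onto the band generated by $r(T(p))$. Now split a $\tau$-finite $p$ into many mutually orthogonal subprojections of equal trace; $\cM$ is non-atomic, so this is possible. Their images are pairwise disjoint and positive, and they sum to $T(p)$. I will then compare the quasi-norms of the combinations $\sum_i\alpha_i T(p_i)=T(\sum\alpha_i p_i)$ with those of $\sum_i\alpha_i p_i$. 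Since $T$ is an isometry, these must agree. The small-$\alpha$ computation of Lemma \ref{alpha}, combined with the strict monotonicity of $\Phi$, will show that $T(p)$ has a flat singular value function. To see this, suppose $\mu(T(p))$ were not constant on its support. Then one can choose a spectral piece $e$ of $T(p)$ for which the norm of $T(p)+\alpha\,(\text{something disjoint})$ behaves differently from that of $p+\alpha q$. More concretely, I will pull back spectral projections of $T(p)$ via $T^{-1}$; this is legitimate because $T^{-1}$ is also a positive surjective isometry, so the statements are symmetric. Iterating Lemma \ref{alpha} in both directions should force $T(p)=b_p\,r(T(p))$ with $\Phi(\tau(r(T(p)))_-)\,b_p=\Phi(\tau(p)_-)$.

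\textbf{Step 2: the scalar is independent of $p$.} Take disjoint $p,q$ and apply Step 1 to $p$, to $q$, and to $p+q$. Since $T(p+q)=b_pr(T(p))+b_qr(T(q))$ must itself be a multiple of a projection, we get $b_p=b_q$. If $p$ and $q$ are not disjoint, I will connect them through a third finite projection disjoint from both, or through subprojections; non-atomicity makes such a projection available. This gives a single $b>0$.

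\textbf{Step 3: constructing $J$ and checking its properties.} Define $J(p):=b^{-1}T(p)$, which is a projection for every $\tau$-finite $p$. The map $J$ is orthoadditive on finite projections. Using normality (Proposition \ref{order isom}) and semifiniteness, it extends to a normal orthoadditive map on all of $P(\cM)$ that preserves $\mathbf{1}$ and $\perp$. Surjectivity comes from applying the same construction to $T^{-1}$. By Dye's theorem, or the standard argument via spectral approximation, $J$ extends to a normal Jordan $*$-isomorphism of $\cM$. Then $T=bJ$ on finite linear combinations of finite projections, and these are dense in the relevant sense. Normality of $T$ and of $J$ gives $T=bJ$ on $M_\Phi(\cM,\tau)\cap\cM$. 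Finally, Step 1 shows that $\Phi(\tau(J(p))_-)=b\,\Phi(\tau(p)_-)$. Because $\Phi$ is strictly increasing, $\tau(J(p))$ depends only on $\tau(p)$, which is the trace property required of $J$.

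\textbf{Main obstacle.} I expect the hardest part to be Step 1, that is, showing that $T(p)$ is a scalar multiple of a projection. Lemma \ref{alpha} only gives the norm of two-level elements. I will therefore combine it with extreme points: $p$ is dominated by an extreme element $z$ as in Theorem \ref{posi non-atomic}, and $T(z)$ is again extreme, so it has the form $\frac1\Phi\chi$. Tracking how the spectral pieces of $z$ are mapped under $T$ should then pin down the shape of $T(p)$.
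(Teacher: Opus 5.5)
\textbf{Verdict: genuine gap in Step 1.} Your overall architecture is viable, and it differs from the paper's route. You send finite projections to a fixed multiple $b$ of projections and then set $J:=b^{-1}T$; the paper instead uses the argument of \cite[Corollary 5.3]{HSZ} to write $T=bJ$ with $b$ central and only then shows $b$ is scalar. But Step 1 carries essentially all the content of the theorem, it is not proved, and the mechanism you sketch for it does not work.

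\textbf{Why your sketch fails.} Lemma \ref{alpha} only gives $\norm{p+\alpha q}=\Phi(\tau(p)_-)$ for small $\alpha$. The isometry then yields $\norm{T(p)+\alpha T(q)}=\norm{T(p)}$, and many $T(p)$ with non-flat $\mu(T(p))$ satisfy this, so no iteration of the lemma detects the shape of $\mu(T(p))$. In the paper this lemma gives a contradiction only in Case 1, where one already knows $T(p_i)\approx h_iJ(p_i)$ and $\tau(J(p_1))=\tau(J(p_2))$. Without control of the traces of the image supports, a different device is needed. Also, pulling back a spectral projection $e$ of $T(p)$ gives $T^{-1}(e)$, about which nothing is known; as written that step is circular.

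\textbf{The missing idea.} It is the $(1+\delta)$-perturbation from the paper's Case 2, applied to the pieces $T(p)e$ rather than to $e$.
\begin{enumerate}
\item Suppose $T(p)$ is not a multiple of $r(T(p))$. Choose $0<s_2<l_1$ with $e_1:=e^{T(p)}[l_1,\infty)\ne0$ and $e_2:=e^{T(p)}(0,s_2]\ne0$.
\item The operators $T(p)e_1$, $T(p)e_2$ and $T(p)e^{T(p)}(s_2,l_1)$ are mutually disjoint, positive, and sum to $T(p)$.
\item $T^{-1}$ is a positive surjective isometry by Proposition \ref{T(x)ge0}. Applying Proposition \ref{non-atomic posi1} to $T^{-1}$, the preimages are mutually disjoint positive operators summing to the projection $p$, hence are themselves projections. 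Call the first two $q_1,q_2$ and set $y_i=T(q_i)$.
\item Since $\Phi$ is strictly increasing, $\Phi(\tau(q_1)_-),\Phi(\tau(q_2)_-)<\Phi(\tau(q_1+q_2)_-)$. So for small $\delta>0$ we get $\norm{(1+\delta)q_1+q_2}=\norm{q_1+(1+\delta)q_2}=\norm{q_1+q_2}$.
\item On the image side, take $(1+\delta)s_2<l_1$ and note $\tau(r(y_1))<\infty$. The pointwise maximum of $\mu((1+\delta)y_1+y_2)$ and $\mu(y_1+(1+\delta)y_2)$ equals $(1+\delta)\mu(y_1+y_2)$.
\item Hence one of the two image norms equals $(1+\delta)\norm{y_1+y_2}=(1+\delta)\norm{q_1+q_2}$, which contradicts step 4.
\end{enumerate}

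\textbf{With this inserted.} Your Step 2 then goes through. Your derivation of the trace condition from $b\,\Phi(\tau(J(p))_-)=\Phi(\tau(p)_-)$ and injectivity of $t\mapsto\Phi(t_-)$ is actually cleaner than the paper's case split.

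\textbf{A smaller point in Step 3.} Do not use Dye's theorem: it excludes type $I_2$ summands, which non-atomic algebras can have (for instance $M_2(L_\infty(0,1))$). Use your second option instead. The map $b^{-1}T$ is linear and sends orthogonal finite projections to orthogonal projections, so it preserves squares of finite-spectrum self-adjoint elements. Then extend by normality.
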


\begin{proof}
    By  Proposition \ref{non-atomic posi1} and  Proposition~\ref{order isom},   $T$ is disjointness-preserving and normal. 
By the same argument used in the proof of  \cite[Corollary 5.3]{HSZ},
    there  exist  a positive self-adjoint (not necessarily measurable) operator $b$ affiliated with $Z\left(\mathcal{M}\right)$ and a normal Jordan $^*$-isomorphism $J:\cM\to \cM$ such that 
\begin{align}\label{eq:T=bJ}
T\left(x\right)=bJ\left(x\right),~x\in M_{\Phi}\left(\mathcal{M},\tau\right)\cap \mathcal{M}.
\end{align}
By the proof of \cite[Proposition 3.8]{HSZ}, we have that
$r(b)=\mathbf{1}$.

 Next we will prove that $b$ is a scalar multiple of $\mathbf{1}$. 
There are two possibilities:
\begin{enumerate}[label=\arabic*.]
    \item for any two nonzero $\tau$-finite projections $p,q$ with $p\perp q$ satisfying $\tau\left(p\right)=\tau\left(q\right)$, we have  $\tau\left(J\left(p\right)\right)= \tau\left(J\left(q\right)\right)$.
    \item there  exist  two nonzero $\tau$-finite projections $p_1,p_2$ with $p_1\perp p_2$ satisfying $\tau\left(p_1\right)=\tau\left(p_2\right)$ and $\tau\left(J\left(p_1\right)\right)\neq \tau\left(J\left(p_2\right)\right)$.
\end{enumerate}
 
\begin{enumerate}
 \item[Case $1$.]
 Assume that there does not exist a scalar $c$ such that $b= c\mathbf{1}$.
We claim that there exist two nonzero $\tau$-finite projections $p_1, p_2$ with $p_1\perp p_2, ~\tau\left(p_1\right)=\tau\left(p_2\right)$  and $\tau\left(J(p_1)\right)=\tau\left(J(p_2)\right)<\infty$ such that
\begin{align} \label{J(p1)}
\left(h_1-\varepsilon_1\right)J\left(p_1\right)\le bJ\left(p_1\right)\le \left(h_1+\varepsilon_1\right)J\left(p_1\right)
\end{align}
 and
 \begin{align} \label{J(p2)}
 \left(h_2-\varepsilon_2\right)J\left(p_2\right)\le bJ\left(p_2\right)\le \left(h_2+\varepsilon_2\right)J\left(p_2\right),
 \end{align}
where $h_1-\varepsilon_1>h_2+\varepsilon_2>0$  for some sufficiently small $\varepsilon_1, \varepsilon_2>0$.
Indeed, 
there exist two spectral projections $r_1,r_2$  of $b$ with $r_1\perp r_2$ 
such that
\begin{align} \label{hepsilon1}
\left(h_1-\varepsilon_1\right)r_1\le br_1\le \left(h_1+\varepsilon_1\right)r_1
\end{align}
 and
 \begin{align} \label{hepsilon2}
 \left(h_2-\varepsilon_2\right)r_2\le br_2\le \left(h_2+\varepsilon_2\right)r_2,
 \end{align}
 where $h_1,h_2>0$ are such that $h_1-\varepsilon_1>h_2+\varepsilon_2>0$ for some sufficiently small constants $\varepsilon_1, \varepsilon_2>0$.
Then, there exist $r_1',r_2'\in P(\mathcal{M})$ such that
\begin{align} \label{new}
0<r_1'\le r_1, ~0<r_2'\le r_2, ~\tau(r_1'),\tau(r_2')<\infty.
\end{align}
Fix a scalar $\theta<\infty$ such that $0< \theta<
\min \{\tau(J^{-1}(r_1'),\tau(J^{-1}(r_2')\}$.
Applying \cite[Lemma~3.7.3]{DPS} to $\theta$, there exist $p_1,p_2\in P(\mathcal{M})$ such that 
\begin{align} \label{theta1}
0<p_i<J^{-1}(r_i')\le J^{-1}(r_i), ~i=1,2 
~\text{ and }~
\tau(p_1)=\tau(p_2)=\theta<\infty.
\end{align}
By \eqref{hepsilon1}-\eqref{theta1} and the Condition $1$, we have that 
\begin{align*} 
\tau(J(p_1))=\tau(J(p_2))\le \tau(r_1')<\infty ~\text{and}~ 0\le br_1,br_2\in \mathcal{M}
\end{align*}
For $i=1,2$,
since $J(p_i)\le r_i'\le r_i$ and $br_i$ is bounded, 
it follows that
\begin{align} \label{bJ(pi)}
bJ(p_i)=br_iJ(p_i)\in \cM.
\end{align}
By \eqref{hepsilon1} and \eqref{hepsilon2},
we have
\begin{eqnarray*}
\left(h_i-\varepsilon_i\right)J(p_i)
&=&J(p_i)\left(h_i-\varepsilon_i\right)r_iJ(p_i)\\
&\le& J(p_i)br_iJ(p_i)\\
&\stackrel{{\eqref{bJ(pi)}}}{=}&
{ J(p_i)bJ(p_i)}\\
&\stackrel{{\substack{\eqref{eq:T=bJ}\\\text{$bJ(p_i)=T(p_i)\ge0$}}}}{=}&bJ(p_i)\\
&\le& J(p_i)\left(\left(h_i+\varepsilon_i\right)r_i\right)J(p_i)\\
&=&\left(h_i+\varepsilon_i\right)J(p_i), ~i=1,2.
\end{eqnarray*}
Furthermore, since $r_1\perp r_2$, we have that $p_1\perp p_2$,
which proves our claim.
Observe that $\norm{\alpha_1 p_1+\alpha_2p_2}_{M_{\Phi}\left(\mathcal{M},\tau\right)}=\norm{\alpha_2 p_1+\alpha_1p_2}_{M_{\Phi}\left(\mathcal{M},\tau\right)}$ for all $\alpha_1>\alpha_2>0$. 
Since $T$ is an isometry, it follows from \eqref{eq:T=bJ} that
\begin{align} \label{contradi}
&\quad\norm{\alpha_1 bJ(p_1)+\alpha_2bJ(p_2)}_{M_{\Phi}\left(\mathcal{M},\tau\right)}\nonumber\\
&=
\norm{\alpha_2 bJ(p_1)+\alpha_1bJ(p_2)}_{M_{\Phi}\left(\mathcal{M},\tau\right)}.
\end{align}

By Lemma \ref{alpha},
we may assume that 
  $\alpha_2$ is small enough such that 
\begin{align*}
&\quad \norm{\alpha_1\left(h_1-\varepsilon_1\right) J\left(p_1\right)+\alpha_2\left(h_2-\varepsilon_2\right)J\left(p_2\right)}_{M_{\Phi}\left(\mathcal{M},\tau\right)}\nonumber\\
&=\alpha_1\left(h_1-\varepsilon_1\right)\Phi\left(\left(\tau\left(J\left(p_1\right)\right)\right)_-\right)\nonumber \\
&>\alpha_1\left(h_2+\varepsilon_2\right)\Phi\left(\left(\tau\left(J\left(p_1\right)\right)\right)_-\right)\nonumber \\
&=\norm{\alpha_2\left(h_1+\varepsilon_1\right)  J\left(p_1\right)+\alpha_1\left(h_2+\varepsilon_2\right)J\left(p_2\right)}_{M_{\Phi}\left(\mathcal{M},\tau\right)}.
\end{align*}
This together with \eqref{J(p1)} and \eqref{J(p2)} implies that
\[\norm{\alpha_1 bJ(p_1)+\alpha_2bJ(p_2)}_{M_{\Phi}\left(\mathcal{M},\tau\right)}>
\norm{\alpha_2 bJ(p_1)+\alpha_1bJ(p_2)}_{M_{\Phi}\left(\mathcal{M},\tau\right)},\]
which contradicts with \eqref{contradi}.
\item[Case $2$.] Firstly,  we claim that for Case $2$, $b$ is not a scalar multiple of $\mathbf{1}$.
Assume by contradiction that there exists a scalar $c$ such that $b=c\mathbf{1}$. 
We first consider the case that one of $\tau\left(J\left(p_1\right)\right),\tau\left(J\left(p_2\right)\right)$ is infinite. Without loss of generality, we may assume that $\tau\left(J\left(p_1\right)\right)=\infty$.
Since $T$ is an isometry, it follows that \[\norm{p_1}_{M_{\Phi\left(\mathcal{M},\tau\right)}}
\stackrel{\eqref{eq:T=bJ}}{=}
\norm{bJ\left(p_1\right)}_{M_{\Phi\left(\mathcal{M},\tau\right)}}.\]
By $b=c\mathbf{1}$, we have that 
$\norm{bJ\left(p_1\right)}_{M_{\Phi\left(\mathcal{M},\tau\right)}}=\infty$, which is impossible 
 and proves our claim in this case.

Next, we consider the case that 
$\tau\left(J\left(p_1\right)\right),\tau\left(J\left(p_2\right)\right)<\infty$.
For all $\alpha_1,\alpha_2>0$, 
we have that
\[\norm{\alpha_1p_1+\alpha_2p_2}_{M_{\Phi}\left(\mathcal{M},\tau\right)}=\norm{\alpha_1p_2+\alpha_2p_1}_{M_{\Phi}\left(\mathcal{M},\tau\right)}.\] 
Since $T$ is an isometry, it follows from \eqref{eq:T=bJ} that \[\norm{\alpha_1J\left(p_1\right)+\alpha_2J\left(p_2\right)}_{M_{\Phi}\left(\mathcal{M},\tau\right)}=\norm{\alpha_1J\left(p_2\right)+\alpha_2J\left(p_1\right)}_{M_{\Phi}\left(\mathcal{M},\tau\right)}.\]
Let $\alpha_1=1$ and $\alpha_2$ be  sufficiently small. 
By Lemma \ref{alpha}, we have \[\norm{\alpha_1J\left(p_1\right)+\alpha_2J\left(p_2\right)}_{M_{\Phi}\left(\mathcal{M},\tau\right)}=\Phi\left(\left(\tau\left(J\left(p_1\right)\right)\right)_-\right),\] 
\[\norm{\alpha_1J\left(p_2\right)+\alpha_2J\left(p_1\right)}_{M_{\Phi}\left(\mathcal{M},\tau\right)}=\Phi\left(\left(\tau\left(J\left(p_2\right)\right)\right)_-\right).\]  
Since $\Phi$ is strictly increasing and $\tau\left(J\left(p_1\right)\right)\neq \tau\left(J\left(p_2\right)\right)$, it follows that \[\norm{\alpha_1J\left(p_1\right)+\alpha_2J\left(p_2\right)}_{M_{\Phi}\left(\mathcal{M},\tau\right)}\ne\norm{\alpha_1J\left(p_2\right)+\alpha_2J\left(p_1\right)}_{M_{\Phi}\left(\mathcal{M},\tau\right)},\]
a contradiction.
Therefore, $b$ is not a scalar 
and our claim is proved.

Below, we will prove that Case 2 is impossible.
Let $e^b$ denote the spectral measure of $b$.
There exist $0< l_2\le s_2<l_1\le s_1<\infty$ such that
\[r_i:=e^b[l_i,s_i]\ne0, ~i=1,2.\]
Arguing similarly as Case $1$, we obtain that there exist two nonzero $\tau$-finite projections $q_1,q_2$ with $q_1\perp q_2$ such that
\begin{align*} 
\tau(q_1)=\tau(q_2)=s, ~\tau(J(q_1)),\tau(J(q_2))<\infty , ~q_i\le J^{-1}(r_i), ~i=1,2
\end{align*}
for some constant $s>0$
and
\[l_iJ(q_i)\le bJ(q_i)\le s_iJ(q_i), ~i=1,2.\]
Let 
\[y_i:=T(q_i)\stackrel{\eqref{eq:T=bJ}}{=}bJ(q_i)\in \cM, ~i=1,2.\]
This implies that
\begin{align*} 
l_iJ(q_i)\le y_i\le s_iJ(q_i), ~i=1,2.
\end{align*}
Then,
\begin{align} \label{support}
r(y_i)=l(y_i)=J(q_i), ~l_i \chi_{_{[0,\tau(J(q_i)))}}\le \mu(y_i)\le s_i\chi_{_{[0,\tau(J(q_i)))}}, ~i=1,2.
\end{align}
Choose $\delta$ such that
\begin{align} \label{0< delta}
0<\delta<\min\left\{\frac{\Phi\left(\left(2s\right)_-\right)}{\Phi\left(s_-\right)},\frac{l_1}{s_2}\right\}-1.
\end{align}
Note that
\begin{eqnarray*}
& &\norm{(1+\delta)q_1+q_2}_{M_\Phi\left(\cM,\tau\right)}\\
&=&\max\left\{(1+\delta)\Phi\left(\left(\tau(q_1)\right)_-\right),\Phi\left(\left(\tau(q_1+q_2)\right)_-\right)\right\}\\
    &=&\max\left\{(1+\delta)\Phi\left(s_-\right),\Phi\left(\left(2s\right)_-\right)\right\}\\
    &\stackrel{\eqref{0< delta}}{=}&\Phi\left(\left(2s\right)_-\right)
\end{eqnarray*}
and 
\begin{align*}
    \norm{(1+\delta)q_2+q_1}_{M_\Phi\left(\cM,\tau\right)}=\Phi\left(\left(2s\right)_-\right), ~ \norm{q_1+q_2}_{M_\Phi\left(\cM,\tau\right)}=\Phi\left(\left(2s\right)_-\right).
\end{align*}
Since $T$ is an isometry, it follows that
\begin{equation} \label{1+delta}
\begin{aligned}
\norm{(1+\delta)y_1+y_2}_{M_\Phi\left(\cM,\tau\right)}
&=\norm{(1+\delta)y_2+y_1}_{M_\Phi\left(\cM,\tau\right)}\\
&=\norm{y_1+y_2}_{M_\Phi\left(\cM,\tau\right)}\\
&=\Phi\left(\left(2s\right)_-\right).
\end{aligned}
\end{equation}
By $q_1\perp q_2$ and Proposition \ref{non-atomic posi1}, we have that
\begin{align*} 
y_1=T(q_1)\perp T(q_2)=y_2.
\end{align*}
Since $0<l_2\le s_2<l_1\le s_1<\infty$, it follows from \cite[Lemma 2.3]{CDS97} that \footnote{$
   \mu\left(x\oplus y\right)$ is the singular value function of $\mu(x)\oplus \mu(y)$ affiliated with the von Neumann algebra $L_\infty (0,\infty )\oplus L_\infty (0,\infty )$ with respect to the measure $m\oplus m$.  }
\begin{eqnarray}\label{y1+y2 1}
\mu(y_1+y_2)&=&\mu(y_1\oplus y_2)\\
&\stackrel{\eqref{support}}{=}&\mu(y_1)\chi_{_{[0,\tau\left(J(q_1)\right)}}\nonumber\\
&\quad+&\mu(\cdot-\tau\left(J(q_1)\right);y_2)\chi_{_{[\tau(J(q_1)),\tau(J(q_1+q_2)))}},\notag
\end{eqnarray}
\begin{align}\label{y1+y2 2}
&\qquad \quad  \mu\left((1+\delta)y_1+y_2\right) \\
&\stackrel{\eqref{support}}{=}(1+\delta)\mu(y_1)\chi_{_{\left[0,\tau(J(q_1))\right)}}+\mu\left(\cdot-\tau(J(q_1));y_2\right)\chi_{_{[\tau(J(q_1)),\tau(J(q_1+q_2)))}}\notag
\end{align}
and
\begin{align} \label{y1+y2 3}
&\qquad \quad  \mu\left((1+\delta)y_2+y_1\right)\\
&  \stackrel{\substack{\tiny \eqref{support} \\\eqref{0< delta}}}{=}\mu(y_1)\chi_{_{\left[0,\tau(J(q_1))\right)}} +(1+\delta)\mu\left(\cdot-\tau(J(q_1));y_2\right)\chi_{_{[\tau(J(q_1)),\tau(J(q_1+q_2)))}}.\notag 
\end{align}
A combination of \eqref{y1+y2 1}, \eqref{y1+y2 2} and \eqref{y1+y2 3} that
\[\max\left\{\mu\left(t;(1+\delta)y_1+y_2\right),\mu\left(t;(1+\delta)y_2+y_1\right)\right\}=(1+\delta)\mu(t;y_1+y_2),~t\ge 0.\]
By the definition of $\norm{\cdot}_{M_\Phi\left(\cM,\tau\right)}$,
we have that
\[\begin{aligned}&\quad \max\left\{\norm{(1+\delta)y_1+y_2}_{M_\Phi\left(\cM,\tau\right)},\norm{(1+\delta)y_2+y_1}_{M_\Phi\left(\cM,\tau\right)}\right\}\\
&=(1+\delta)\norm{y_1+y_2}_{M_\Phi\left(\cM,\tau\right)}.\end{aligned}\]
This together with \eqref{1+delta} yields that
\[\Phi\left(\left(2s\right)_-\right)=(1+\delta)\Phi\left(\left(2s\right)_-\right),\]
a contradiction.

\end{enumerate}

Therefore, $b $ is a scalar multiple of $\mathbf{1}$.
By Case $1$, we have that
$\tau(J(p))=\tau(J(q))$ for all $\tau$-finite projections $p,q$ satisfying $\tau(p)=\tau(q)$.
\end{proof}

Note that, Zaidenberg shows that,   the description of surjective isometries on the uniformly bounded part of a symmetric function space   extends to the whole space
 \cite[Theorem 2]{Z76}. 
 However, the noncommutative version of  \cite[Theorem 2]{Z76} is not established yet. 
Even for noncommutative $L_p$-spaces \cite{Y81}, the description of isometries is given in terms of bounded operators. 
Below, we provide a complete characterization of positive surjective  isometries on Marcinkiewicz spaces of possibly unbounded operators.

\begin{proof}[Proof of Theorem \ref{Marcin6}]
    By Theorem \ref{non-atomic posi2}, there  exist a (normal)  Jordan $*$-isomorphism $J:\cM\to \cM$ and a positive scalar $b$ such that
    \begin{align*}
        T\left(x\right)=bJ\left(x\right),~x\in M_\Phi\left(\mathcal{M},\tau\right)\cap \mathcal{M},
    \end{align*}
    where $J$ is such that $\tau\left(J\left(p\right)\right)= \tau\left(J\left(q\right)\right)$ for all $\tau$-finite projections $p,q$   satisfying $\tau\left(p\right)=\tau\left(q\right)$.
    We claim that $J$ is continuous with respect to the measure topology. 
    Indeed, 
   let $\{p_n\}\subset P(\cM)\cap \cF(\cM,\tau)$ be arbitrary such that $\tau(p_n)\to 0$.
    Since $\mu\left(t;e\right)=\chi_{_{[0,\tau\left(e\right))}}$ for all  $e\in P\left(\cM\right)\cap \cF(\cM,\tau)$,
    it follows that
    \begin{align*}
    b\Phi\left(\tau\left(J\left(p_n\right)\right)_-\right)=\norm{T(p_n)}_{M_\Phi\left(\mathcal{M},\tau\right)}=\norm{p_n}_{M_\Phi\left(\mathcal{M},\tau\right)}=\Phi(\tau(p_n)_-)\to \Phi(0)=0
    \end{align*}
    whenever $n\to \infty$, which implies that $\tau(J(p_n))\to 0$.
    By \cite[Theorem 3.13]{W09} (see also \cite[Proposition 4.7(i)]{L99}), we obtain that $J$ is continuous with respect to the measure topology,
    which proves our claim.
    Therefore, by
    \cite[Proposition 3.4]{BHS24} (or \cite[Proposition 2.9.2(i)]{DPS}, \cite[Proposition 3.3]{PS}),
   $J$ can be uniquely extended to a normal Jordan $*$-isomorphism 
    \[\tilde J:S\left(\mathcal{M,\tau}\right)\to S\left(\mathcal{M},\tau\right),\] 
    which is continuous with respect to the measure topology.
From now on, we identify $J$ with $\tilde{J}$.

    We first consider the case that $x\in \operatorname{extr}\left(B^+_{M_{\Phi}\left(\mathcal{M},\tau\right)}\right)$. 
    Then, $\mu\left(x\right)=\frac1 \Phi\chi_{_{\left(0,s\right)}}$ for some constant $0<s\le\tau(\mathbf{1})$ (see Theorem \ref{posi non-atomic}).
    Decompose $x$ into $x_n+\left(x-x_n\right)$, where $0\le x_n\in M_\Phi\left(\mathcal{M},\tau\right)\cap\cM,~ 0\le x-x_n\in M_\Phi\left(\mathcal{M},\tau\right),~x_n \perp x-x_n$ and 
    \[\mu\left(x_n\right)=\mu\left(\frac1\Phi\chi_{_{\left(\frac1n,s\right)}}\right),~\mu\left(x-x_n\right)=\frac1\Phi\chi_{_{\left(0,\frac1n\right)}} . \]
 Noting that $x,x-x_n\in \operatorname{extr}\left(B^+_{M_{\Phi}\left(\mathcal{M},\tau\right)}\right)$ (see Theorem~\ref{posi non-atomic}) implies that $T(x),T(x-x_n)\in \operatorname{extr}\left(B^+_{M_{\Phi}\left(\mathcal{M},\tau\right)}\right)$,
    let 
    \begin{align}\label{T measure}
    \mu(T(x-x_n))=\frac1\Phi\chi_{_{(0,t(n))}},~ \mu(T(x))=\frac1\Phi\chi_{_{(0,r)}}
    \end{align}
    for some constant $0<t(n),r\le \tau(\mathbf{1})$.
    By the same argument as used in the proof of Proposition~\ref{non-atomic posi1}, we obtain that $t(n)\downarrow 0$ as $n\to \infty$.

    For   $0\le y\in M_\Phi\left(\mathcal{M},\tau\right)$, we decompose $y= \left(y-y_n\right) +y_n $, where $y_n$ is bounded and $\tau(r(y-y_n))<\frac{1}{n}$.
 There exists $0\le z_n\in\operatorname{extr}\left(B^+_{M_{\Phi}\left(\mathcal{M},\tau\right)}\right)$ such that $y-y_n\le cz_n$ and $\mu(cz_n)=c\frac 1\Phi\chi_{_{\left(0,\frac 1n\right)}}$ for some constant $c>0$.
    Since $T$ is positive, it follows from \eqref{T measure} that  
    \[\mu\left(T\left(y-y_n\right)\right) \le \mu\left(T\left(cz_n\right)\right)=c\frac1{\Phi}\chi_{_{(0,t_1(n) )}}.   \]
    
    Arguing similarly as above $t(n)\downarrow 0$ as $n\to \infty$, we have that $t_1(n)\downarrow 0$ as $n\to \infty$ and so,
    $T(y-y_n)\to 0$ in measure as $n\to \infty$.
    On the other hand, since $y-y_n\to 0$ in measure, 
    it follows that
    $T\left(y_n\right)=b J\left(y_n\right)\to bJ\left(y\right)$ in measure as $n\to \infty $,
    which implies that
    $T\left(y\right)=bJ\left(y\right)$. 
    Consequently,
    \[T(x)=bJ(x),~x\in M_{\Phi}(\mathcal{M},\tau).\]
\end{proof}

The following special case of Theorem \ref{Marcin6} is noteworthy.

\begin{corollary} \label{non-atomic posi3}
     Assume that $1<p<\infty $.
     Let $\mathcal{M}$ be 
     non-atomic.
     A positive continuous linear surjection $T:L_{p,\infty }\left(\mathcal{M},\tau\right)\to L_{p,\infty }\left(\mathcal{M},\tau\right)$ 
     is an isometry if and only if 
     there exist a (normal) Jordan $*$-isomorphism $J:S(\mathcal{M},\tau)\to S(\mathcal{M},\tau)$
     and a positive scalar $b$ such that
     \[T\left(x\right)=bJ\left(x\right),~ \forall x \in  L_{p,\infty }\left(\mathcal{M},\tau\right) \]
     and $ \tau\left( J\left(x\right) \right) = b^{-p }\tau\left(x\right) $ for all $x\in S\left(\mathcal{M},\tau\right)_+$. 
\end{corollary}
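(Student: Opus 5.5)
The plan is to deduce Corollary~\ref{non-atomic posi3} from Theorem~\ref{Marcin6} with $\Phi(t)=t^{1/p}$, and then translate the condition on $J$ into a trace-scaling identity. The function $\Phi(t)=t^{1/p}$ is continuous, strictly increasing and admissible, with $\Phi(\infty)=\infty$. Since $p>1$, we have $t^{-1/p}\in(L_1+L_\infty)(0,\infty)$. Hence, if $T$ is a positive surjective isometry, Theorem~\ref{Marcin6} yields a scalar $b>0$ and a normal Jordan $*$-isomorphism $J$ of $S(\cM,\tau)$ with $T=bJ$ on $L_{p,\infty}(\cM,\tau)$. It also gives $\tau(J(e))=\tau(J(f))$ whenever $e,f$ are $\tau$-finite projections with $\tau(e)=\tau(f)$.

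\emph{Identifying the scaling.} Define $\psi(s):=\tau(J(e))$ for any $\tau$-finite projection $e$ with $\tau(e)=s$; this is well defined by the condition above. From the proof of Theorem~\ref{Marcin6} (the equality $b\Phi(\tau(J(p))_-)=\Phi(\tau(p)_-)$), we get
\[
b\,\psi(s)^{1/p}=s^{1/p}.
\]
Here we use continuity of $\Phi$, and finiteness of $\psi(s)$ follows from finiteness of the norm. Therefore $\psi(s)=b^{-p}s$, that is, $\tau(J(e))=b^{-p}\tau(e)$ for every $\tau$-finite projection $e$.

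\emph{Extending to $S(\cM,\tau)_+$.} First, if $e$ is a projection with $\tau(e)=\infty$, write $e$ as an increasing net of $\tau$-finite subprojections, using semifiniteness. Normality of $J$ and normality of $\tau$ then give $\tau(J(e))=\infty=b^{-p}\tau(e)$. By linearity the identity holds for positive simple elements $\sum\lambda_ie_i$; note that $J$ maps orthogonal projections to orthogonal projections. For general $0\le x\in S(\cM,\tau)$, choose simple $0\le x_n\uparrow x$ from the spectral decomposition. Normality of $J$ gives $J(x_n)\uparrow J(x)$, and normality of the trace (extended to $S(\cM,\tau)_+$) gives $\tau(J(x))=\lim\tau(J(x_n))=b^{-p}\tau(x)$.

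\emph{Converse.} Suppose $T=bJ$ with $\tau\circ J=b^{-p}\tau$ on positives. Since $J$ is a Jordan $*$-isomorphism, it commutes with Borel functional calculus on self-adjoint elements. For $x\in S(\cM,\tau)$, the isomorphism $J$ maps $|x|^2$ to a positive element, and spectral projections transform as $e^{|J(x)|}(s,\infty)=J(e^{|x|}(s,\infty))$. This step requires care for non-self-adjoint $x$: use $|J(x)|^2$ and $J(|x|^2)$, which coincide up to a unitarily equivalent form, or $J(x)^*J(x)$ versus $J(x^*x)$, whose distributions agree. It follows that $d(s;|J(x)|)=b^{-p}d(s;|x|)$, so $\mu(t;J(x))=\mu(b^pt;x)$. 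Then
\[
\sup_t t^{1/p}\,b\,\mu(b^pt;x)=\sup_u u^{1/p}\mu(u;x),
\]
so $T$ is an isometry. It is positive and surjective because $J$ is an order isomorphism that, by the same computation, maps $L_{p,\infty}$ onto itself.

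\emph{Main obstacle.} The main obstacle is the non-self-adjoint distribution identity in the converse, namely controlling $\mu(J(x))$ for a Jordan (rather than $*$-)isomorphism. I would handle it by splitting $J$ into a $*$-homomorphism and a $*$-anti-homomorphism via a central projection. On each summand $|J(x)|$ is $J(|x|)$ or $J(|x^*|)$, and $\mu(x)=\mu(x^*)$.
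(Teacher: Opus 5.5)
Your proposal is correct and follows essentially the paper's route: apply Theorem~\ref{Marcin6} with $\Phi(t)=t^{1/p}$, then test the isometry on $\tau$-finite projections to get $\tau(J(q))=b^{-p}\tau(q)$. This works projection by projection, so the well-definedness of your $\psi$ is not actually needed. Next, extend to $S(\mathcal{M},\tau)_+$ by normality of $J$ and $\tau$. For the converse, derive $\mu(t;J(x))=\mu(b^pt;x)$ from the spectral projections, reducing a non-self-adjoint $x$ to the positive element $q|x|+(\mathbf{1}-q)|x^*|$ with $q$ central; this is exactly what the paper does via \cite[Proposition 3.1]{BHS24}.
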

\begin{proof}
   $\left(\Leftarrow\right)$. 
    By \cite[Theorem 3.13]{W09} (see also \cite[Proposition 4.7(i)]{L99}), we obtain that $J$ is continuous with respect to the measure topology. 
  Let $0\le x\in L_{p,\infty}(\mathcal{M},\tau)$ be arbitrary.
    Since $ \tau\left( J\left(x\right) \right) = b^{-p }\tau\left(x\right) $, 
    it follows from \cite[Proposition 2.9.2(iv)]{DPS}  that for any $n>0$,
    \[\tau\left(e^{T(x)}(bn,\infty)\right)=\tau\left(e^{J(x)}(n,\infty)\right)=\tau(J(e^x(n,\infty)))=b^{-p}\tau(e^x(n,\infty)).\]
    This implies that for all $s>0$,
    \begin{align*} 
    \mu(s;T(x))=b\mu(sb^p;x).
    \end{align*}
    In particular,
    \begin{align} \label{isometry on postive part}
    \norm{T(x)}_{L_{p,\infty}(\mathcal{M},\tau)}&=\sup_{t>0} \mu(t;T(x))t^{\frac1p}\nonumber\\
    &=\sup_{t>0}b\mu(tb^p;x)t^{\frac1p}\stackrel{\tiny \mbox{$s=tb^p$}}{\xlongequal{\quad}}\sup_{s>0}\mu(s;x)s^{\frac1p}=\norm{x}_{L_{p,\infty}(\mathcal{M},\tau)}.
    \end{align}

     For a  (not necessarily positive) element $x\in L_{p,\infty}\left(\cM,\tau\right)$, 
    by \cite[Proposition 3.1]{BHS24} (and the same argument as used in the proof of \cite[Proposition 2.19]{HSZ}), there exists a projection $q\in Z(\cM)$ such that 
    \begin{align} \label{J(x)}
    |J(x)|=J\left(q|x|+\left(\mathbf{1}-q\right)|x^*|\right).
    \end{align}
    Let
    $y:=q|x|+\left(\mathbf{1}-q\right)|x^*|\ge0.
    $
    Observe that 
    \begin{align} \label{= mu}
    \mu(x)=\mu(y).
    \end{align}
    Therefore, for all $x\in L_{p,\infty}(\mathcal{M},\tau)$,
    \begin{gather*}
    \norm{T(x)}_{L_{p,\infty}(\mathcal{M},\tau)}
    \stackrel{\eqref{J(x)}}{=}
    \norm{T(y)}_{L_{p,\infty}(\mathcal{M},\tau)}
    \stackrel{\eqref{isometry on postive part}}{=}\norm{y}_{L_{p,\infty}(\mathcal{M},\tau)}
    \stackrel{\eqref{= mu}}{=}
    \norm{x}_{L_{p,\infty}(\mathcal{M},\tau)}.
    \end{gather*}

    Hence,  $T$ is a positive surjective isometry on $L_{p,\infty}(\mathcal{M},\tau)$.
    
 $\left(\Rightarrow\right)$.  By Theorem \ref{Marcin6}, there  exist a normal  Jordan $*$-isomorphism $J:S(\cM,\tau)\to S(\cM,\tau)$ 
    and a positive scalar $b$ such that
    \[T\left(x\right)=bJ\left(x\right),~x\in L_{p,\infty}\left(\mathcal{M},\tau\right).\]
For each $q\in P(\cM)\cap\cF(\cM,\tau)$,    
    it follows from $\mu\left(t;q\right)=\chi_{_{[0,\tau\left(q\right))}}$ 
    that 
    \[\left(\tau\left(q\right)\right)^{\frac{1}{p}}=\norm{q}_{L_{p,\infty}\left(\mathcal{M},\tau\right)}=\norm{T\left(q\right)}_{L_{p,\infty}\left(\mathcal{M},\tau\right)}=b\left(\tau\left(J\left(q\right)\right)\right)^{\frac{1}{p}},\]
    i.e., $b^{-p}\tau(q)=\tau(J(q))$.
    Then, for any 
    $0\le x \in \cF\left(\mathcal{M},\tau\right)$, 
    by uniform approximation with simple operators,
    we have 
    \[\tau\left(J\left(x\right)\right)=b^{-p}\tau\left(x\right).\] 
    Given $0\le x\in S\left(\mathcal{M},\tau\right) $, by \cite[Proposition 2.3.12]{DPS}, there exists an upward directed system $\{x_\beta\}_{\beta\in\Gamma}\subset \mathcal{F}\left(\mathcal{M},\tau\right) $  such that $0\le x_\beta\uparrow x$. 
 By the normality of $J$ and $\tau$, we obtain 
    \[\tau\left(J\left(x\right)\right)=b^{-p}\tau\left(x\right),~  x\in S\left(\mathcal{M},\tau\right)_+ .\]
The proof is complete.
\end{proof}

Let $\left(\Omega_1,\Sigma_1,\nu_1\right),\left(\Omega_2,\Sigma_2,\nu_2\right)$ be two measure spaces.
A invertible function $f:\left(\Omega_1,\Sigma_1,\nu_1\right)\to \left(\Omega_2,\Sigma_2,\nu_2\right) $ is bi-nonsingular if $f,f^{-1}$ are nonsingular, that is, $\nu_2(A)=0$ implies $\nu_1(f^{-1}(A))=0$ and $\nu_1(B)=0$ implies $\nu_2(f(B))=0$.

Let $J$ be a Jordan $*$-isomorphism  from $\mathcal{M}$ onto $\mathcal{M}$.
In the special setting when $\mathcal{M}=L_\infty\left(0,\infty\right)$, there exists an invertible bi-nonsingular measurable transformation
$\phi:\left(0,\infty\right)\to \left(0,\infty\right)$ such that for all $f\in L_\infty \left(0,\infty\right)$, 
\[J(f)(t)=f(\phi(t)) ~\text{a.e.} ~ t\in \left(0,\infty\right)\] \cite[Lemma 8.22]{Tak}.
This together with Corollary \ref{non-atomic posi3} yields the following result.
  
\begin{corollary} \label{measure continu}
    Assume that $1<p<\infty$. 
    A positive continuous linear surjection $T:L_{p,\infty}\left(0,\infty\right)\to L_{p,\infty}\left(0,\infty\right)$ 
    is an isometry if and only if there exist an invertible bi-nonsingular measurable transformation modulo null sets $\phi:\left(0,\infty\right)\to \left(0,\infty\right)$
    and a positive scalar $b$  such that for all $x\in L_{p,\infty}\left(0,\infty\right)$ 
    and measurable sets $B\subset \left(0,\infty\right)$,
    \[T\left(x\right)\left(t\right)=b x\left(\phi\left(t\right)\right), ~ \text{a.e.}~ t\in \left(0,\infty\right)\]
    and
    \[m\left(\phi^{-1}\left(B\right) \right)=b^{-p}m \left(B\right) . \]
\end{corollary}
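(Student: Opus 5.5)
The plan is to deduce Corollary~\ref{measure continu} from Corollary~\ref{non-atomic posi3} with $\cM=L_\infty(0,\infty)$, which is non-atomic and carries $\tau(f)=\int_0^\infty f\,dm$. Combined with the Takesaki representation of Jordan $*$-isomorphisms of $L_\infty(0,\infty)$ by point transformations, this reduces the statement to translating trace conditions into measure conditions.

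For $(\Rightarrow)$, Corollary~\ref{non-atomic posi3} gives a positive scalar $b$ and a normal Jordan $*$-isomorphism $J:S(0,\infty)\to S(0,\infty)$ such that $T=bJ$ on $L_{p,\infty}(0,\infty)$ and $\int J(x)\,dm=b^{-p}\int x\,dm$ for all $x\ge0$. Since $\cM$ is commutative, $J$ is a $*$-isomorphism, and its restriction to $L_\infty(0,\infty)$ maps onto $L_\infty(0,\infty)$: a bounded $x$ satisfies $|x|\le c\mathbf 1$, and $J$ is order preserving with $J(\mathbf 1)=\mathbf 1$, so $J(x)$ is bounded, and the same applies to $J^{-1}$. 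By \cite[Lemma 8.22]{Tak} there is an invertible bi-nonsingular transformation $\phi$ with $J(f)=f\circ\phi$ a.e.\ for $f\in L_\infty$. This formula must then be extended to all of $S(0,\infty)$. For $x\ge0$ measurable, the truncations $x\wedge n\uparrow x$ are bounded, and normality of $J$ gives $J(x)=\sup_n (x\wedge n)\circ\phi=x\circ\phi$. Linearity, applied to the decomposition into real and imaginary, positive and negative parts, then yields $J(x)=x\circ\phi$ for every $x$. Hence $T(x)(t)=b\,x(\phi(t))$. Taking $x=\chi_B$ in the trace identity gives $m(\phi^{-1}(B))=\int\chi_B\circ\phi\,dm=b^{-p}m(B)$. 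This identity only applies directly when $\chi_B$ is in the domain of the trace identity, but that identity holds for all positive measurable $x$, so it covers every measurable $B$, including sets of infinite measure.

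For $(\Leftarrow)$, define $J(x):=x\circ\phi$. Because $\phi$ is bi-nonsingular and invertible modulo null sets, $J$ is a well-defined $*$-automorphism of $S(0,\infty)$ with inverse $x\mapsto x\circ\phi^{-1}$. The condition $m(\phi^{-1}(B))=b^{-p}m(B)$ gives $\int\chi_B\circ\phi\,dm=b^{-p}\int\chi_B\,dm$. Passing to simple functions and then applying monotone convergence yields $\tau(J(x))=b^{-p}\tau(x)$ for all $x\ge0$. The distribution identity in particular shows that $J$ preserves $\tau$-measurability and maps $L_{p,\infty}$ onto itself. Then $T=bJ$, and Corollary~\ref{non-atomic posi3} shows that $T$ is an isometry. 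Positivity and surjectivity are immediate.

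The main obstacle is the measure-theoretic bookkeeping in $(\Rightarrow)$. One point is that Takesaki's lemma is stated on $L_\infty$, while $J$ lives on $S$, so one must confirm that $J$ restricts to an automorphism of $L_\infty$ and then extend the composition formula using normality. The other point is that $\phi$ is defined only modulo null sets, so the identities hold a.e. Both are routine, but they are the only places where care is needed.
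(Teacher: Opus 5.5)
Your proposal is correct and follows the paper's route: the paper obtains this corollary by combining Corollary~\ref{non-atomic posi3} for $\cM=L_\infty(0,\infty)$ with the point-transformation representation of automorphisms of $L_\infty(0,\infty)$ from \cite[Lemma 8.22]{Tak}. You also spell out the steps the paper leaves implicit, namely restricting $J$ to $L_\infty$, extending $J(x)=x\circ\phi$ to $S(0,\infty)$ by normality, and translating the trace identity into $m(\phi^{-1}(B))=b^{-p}m(B)$ and back, and these steps are sound.
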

Now, we are ready to present the proof of Theorem \ref{atomic J}. 
\begin{proof}[Proof of Theorem \ref{atomic J}]
The implication $\Leftarrow$ is trivial.

$(\Rightarrow).$
By Proposition~\ref{atomic posi}, Proposition \ref{T(x)ge0} and Proposition~\ref{order isom}, 
    we have $T$ is disjointness-preserving and normal, $T^{-1}$ is positive.
By the same argument used in the proof of \cite[Corollary 5.3]{HSZ},
    there exist a positive self-adjoint operator $b$ affiliated with $Z\left(\mathcal{M}\right)$ and a normal Jordan $*$-isomorphism $J:\cM\to \cM$ such that \[T\left(x\right)=bJ\left(x\right),~ x \in M_\Phi\left(\mathcal{M},\tau\right).\]



    Next we will prove that $b=\mathbf{1}$. 
    Indeed, 
    let 
    $e_\alpha\in P(\cM)$ be an atom. 
    Since $T$ is a positive isometry, it follows that there exists an atom
    $e_\beta \in P(\cM)$ 
    such that $T\left(e_\alpha  \right) \ge m e_\beta $ for some constant $m>0$. 
   Then, the positivity of $T^{-1}$ yields that $T^{-1}\left(m e_\beta \right)\le e_\alpha $ and so, there exists a constant $k>0$ such that $T^{-1}(e_\beta) = k e_\alpha$, that is,
   \[ T\left(k e_\alpha \right)=e_\beta.\] 
    Since $T$ is an isometry, it follows that $k=1$, i.e., \[T\left(e_\alpha  \right)=e_\beta .\] 
    This implies that $J$ maps  an atom to an atom.

    For an arbitrary atom $e_\gamma\in P(\cM)$, 
    applying above argument to $T^{-1}$, we obtain that there exists an atom $e_\lambda \in P(\mathcal{M})$ such that \[T^{-1}\left(e_\gamma\right)=e_\lambda.\] 
    Then, $T\left(e_\lambda\right)=e_\gamma=bJ\left(e_\lambda\right)=be_\gamma$. 
  It follows from the arbitrariness of $e_\gamma$ that
    $b=\mathbf{1}$. 
    Hence, $T\left(x\right)=J\left(x\right),~ x\in M_\Phi\left(\mathcal{M},\tau\right)$. 
    By \cite[Theorem 3.5]{BHS24}, $J$ preserves  singular value  functions.
\end{proof}

A combination of Theorem \ref{atomic J} and \cite[Theorem A.8]{M07} immediately yields the following corollary.

\begin{corollary} 
  Let $\Phi:[0,\infty) \to[0,\infty)$ be admissible
    satisfying $\Phi\left(\infty \right):=\displaystyle\lim_{t \to \infty} \Phi\left(t\right)=\infty $.
    Let $\mathcal{M}=B(H)$ equipped with the standard trace $\tau$.
    A positive continuous linear mapping $T$ from $M_{\Phi}(B(H),\tau)$ onto $M_{\Phi}(B(H),\tau)$ is an isometry
    if and only if  there exists a unitary operator $U$ such that one of the following two statements holds:
    \begin{enumerate} [label=(\roman*)]
    \item $T(x)=UxU^*, ~\forall x\in M_{\Phi}(B(H),\tau)$.
    \item $T(x)=Ux'U^*, ~\forall x\in M_{\Phi}(B(H),\tau)$, where $x'$ denotes the transpose of $x$.
    \end{enumerate}
\end{corollary}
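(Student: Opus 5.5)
The plan is to combine Theorem \ref{atomic J} with the classical description of Jordan $*$-automorphisms of $B(H)$ (\cite[Theorem A.8]{M07}, or equivalently the fact that every Jordan $*$-automorphism of $B(H)$ is either a $*$-automorphism or a $*$-anti-automorphism, and both kinds are spatial).

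For the forward implication, let $T$ be a positive surjective isometry of $M_{\Phi}(B(H),\tau)$. Since $B(H)$ is atomic and every atom (a rank-one projection) has trace one, Theorem \ref{atomic J} applies. It gives a normal Jordan $*$-isomorphism $J:B(H)\to B(H)$ with $T(x)=J(x)$ for all $x\in M_\Phi(B(H),\tau)$.

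Next I use that $B(H)$ is a factor, so its centre is trivial. The standard decomposition of a Jordan $*$-isomorphism into a sum of a $*$-homomorphism and a $*$-anti-homomorphism along a central projection (\cite[Theorem 3.2.3]{BR}) then forces $J$ to be either a $*$-automorphism or a $*$-anti-automorphism. In the first case every $*$-automorphism of $B(H)$ is inner, so $J(x)=UxU^*$ with $U$ unitary, which is (i). In the second case $x\mapsto J(x')$ is a $*$-automorphism, where $x'$ is the transpose with respect to a fixed orthonormal basis. Hence $J(x')=UxU^*$, and replacing $x$ by $x'$ gives (ii). This is exactly the content of \cite[Theorem A.8]{M07}.

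Conversely, $x\mapsto UxU^*$ and $x\mapsto Ux'U^*$ are positive and bijective. They preserve singular values, since $\mu(x')=\mu(x)$: indeed $|x'|^2=(x^*x)'$ is unitarily equivalent to $x^*x$ up to complex conjugation, which leaves the spectrum unchanged. So both maps are surjective isometries of $M_\Phi(B(H),\tau)$, and this direction also follows from the ``if'' part of Theorem \ref{atomic J}.

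The only step needing care is that $J$ is first defined on $B(H)$ while $T$ acts on the ideal $M_\Phi(B(H),\tau)$. Since $M_\Phi(B(H),\tau)$ consists of compact operators, hence of bounded ones, the identity $T=J$ there already covers the whole space, and the spatial form of $J$ transfers directly. I therefore expect no real obstacle beyond quoting the factor dichotomy for Jordan $*$-isomorphisms.
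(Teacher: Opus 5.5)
Your proposal is correct and is essentially the paper's argument. The paper obtains this corollary directly by combining Theorem~\ref{atomic J} with \cite[Theorem A.8]{M07}; you have spelled out the details that citation covers (the factor dichotomy for Jordan $*$-isomorphisms, innerness of $*$-automorphisms of $B(H)$, the transpose trick for the anti-automorphism case, and singular-value preservation for the converse).
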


\chapter{Isometries on weak \texorpdfstring{$L_p$}{Lp}-spaces, \texorpdfstring{$1<p<\infty $}{1 < p < infinity}} \label{s6}

Let $\left(X,\nu\right)$ be a quasi-Banach space equipped with a quasi-norm $\nu$.
Denote by $\operatorname{co(B_X)}$ the convex hull of the unit ball of $X$.
Let the Minkowski functional $\norm{\cdot}_{\hat{X}}$ of $\operatorname{co(B_X)}$  be defined by
\[\norm{x}_{\hat{X}}=\inf\left\{\lambda>0: \frac x\lambda \in \operatorname{co(B_X)}\right\}, ~\forall x\in X.\]
Note that $\norm{x}_{\hat{X}}$ is a semi-norm \cite[Theorem 1.35]{R91}.
If its dual $X^*$ separates points, then $\norm{\cdot}_{\hat{X}}$ is a norm \cite{CK17,K86}.
Let $N:=\left\{x\in X:\norm{x}_{\hat{X}}=0\right\}$ and $\iota:X\to X^{**}$ is the canonical embedding defined by 
\[\iota(x)(\lambda)=\lambda(x), ~\forall x\in X, ~\lambda\in X^*.\]
Note that $X/N$ is a normed linear space \cite{CK17,P09}.
Moreover, since $X/N\cong \iota(X)$ (see, e.g., \cite[Theorem 1.7.13]{M12}, \cite[p. 28]{KPR} and the proof of \cite[Theorem 1.7.14]{M12}), it follows that
$X/N$ is complete if and only if $\iota(X)$ is a closed subspace of $X^{**}$ (see, e.g.,  \cite[p.~213]{P09}).
The completion $\hat{X}$ of the space $(X/N,\norm{\cdot}_{\hat{X}})$ is called the {\itshape Banach envelope} of $X$
\cite[Section 2.4]{KPR}.
The Banach envelope $\hat{X}$ coincides with the closure of $\iota(X)$ in $X^{**}$
(see \cite[p. 213]{P09} and \cite[p. 27]{KPR}).
It should be noted that there are symmetric quasi-Banach spaces  for which $\operatorname{co}(B_X)=X$ and $\hat{X}=\{0\}$ (for example, $X=L_p(0,1), ~0<p<1$ \cite[p.~3]{CK17}).
For any given nonzero quasi-Banach space $X$, if the condition \eqref{rho is a norm} below  is satisfied, then the Banach envelope $\hat{X}\ne\{0\}$.

Decomposition norm $\rho$ of $\nu$ is defined by
\begin{align} \label{rho}
\rho(x):=\inf\left\{\sum_{i=1}^n\nu(x_i): ~x=\sum_{i=1}^nx_i,x_i\in X,n\in\mathbb{N}\right\},~x\in X.
\end{align}
Note that  $\rho$ is a seminorm on $X$ and $\norm{x}_{\hat{X}}=\rho(x)\le \nu(x), ~\forall x\in X$ \cite{CK17}.

We state the following lemma for later reference, whose proof is straightforward.
For the sake of completeness, we provide a short proof below.
\begin{lemma} \label{isometric norm}
    Let $T$ be a surjective isometry on $\left(X,\nu\right)$  and the seminorm $\rho$ be as defined in \eqref{rho}.
    Then, $\rho(Tx)=\rho(x)$.
    If, in addition, there exists $c\ge1$ such that
    \begin{align} \label{rho is a norm}
        \nu(x)\le c\rho(x), ~\forall x\in X,
    \end{align}
    then $\rho$ is a norm.
\end{lemma}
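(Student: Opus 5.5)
The plan is to prove the two assertions of Lemma \ref{isometric norm} separately, using only the definition \eqref{rho} of $\rho$ together with linearity and surjectivity of $T$.

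\emph{Invariance.} Take any decomposition $x=\sum_{i=1}^n x_i$ with $x_i\in X$. By linearity, $Tx=\sum_{i=1}^n Tx_i$ is an admissible decomposition of $Tx$, and since $\nu(Tx_i)=\nu(x_i)$ we get
\[\rho(Tx)\le \sum_{i=1}^n\nu(Tx_i)=\sum_{i=1}^n\nu(x_i).\]
Taking the infimum over all decompositions of $x$ yields $\rho(Tx)\le\rho(x)$. For the reverse inequality we use surjectivity. The map $T$ is injective, because $\nu(Tx)=\nu(x)$ and $\nu$ is a quasi-norm, so $T$ is a linear bijection. Its inverse $T^{-1}$ is again a surjective isometry: $\nu(T^{-1}y)=\nu(TT^{-1}y)=\nu(y)$. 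Applying the first inequality to $T^{-1}$ and the element $Tx$ gives $\rho(x)=\rho(T^{-1}Tx)\le\rho(Tx)$, hence $\rho(Tx)=\rho(x)$.

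\emph{Norm property.} The text already records that $\rho$ is a seminorm. Subadditivity follows by concatenating decompositions, and homogeneity follows by scaling each summand and using $\nu(\lambda x)=|\lambda|\nu(x)$. It therefore only remains to show definiteness. Suppose $\rho(x)=0$. Condition \eqref{rho is a norm} gives $\nu(x)\le c\rho(x)=0$, so $\nu(x)=0$, and since $\nu$ is a quasi-norm, $x=0$. Thus $\rho$ is a norm.

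None of these steps is a real obstacle; the only point needing care is the reverse inequality in the invariance part, which relies on the invertibility of $T$ obtained from surjectivity together with injectivity of an isometry.
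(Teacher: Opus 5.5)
Your proof is correct and follows the paper's argument essentially verbatim: you bound $\rho(Tx)$ by pushing decompositions of $x$ through $T$, obtain the reverse inequality by applying the same bound to $T^{-1}$ at $Tx$, and get definiteness directly from $\nu\le c\rho$. Your explicit check that $T$ is injective and that $T^{-1}$ is again an isometry is a small detail the paper leaves implicit.
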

\begin{proof}
    Let $x=\sum_{i=0}^n x_i,x_i\in X$ with $n<\infty $.
    Note that 
    \[\rho(T(x))\le \sum_{i=0}^n \nu(T(x_i))=\sum_{i=0}^n \nu(x_i).\]
    This implies that $\rho(T(x))\le \rho(x)$.
    Applying above argument to $T^{-1}$ and $T(x)$, we obtain that $\rho(x)=\rho(T^{-1}(T(x)))\le \rho(T(x))$.
    Hence, $\rho(T(x))=\rho(x)$.

    
    Moreover, if \eqref{rho is a norm} holds, then $\rho(x)=0, ~x\in X$ implies that $x=0$, 
    that is, $\rho$ is a norm.
\end{proof}

Recall that the von Neumann algebra $\mathcal{M}$ is said to be {\itshape $\sigma$-finite} if every system   $\{e_\alpha\}_{\alpha\in \Lambda}$ of nonzero pairwise orthogonal projections in $P(\mathcal{M})$ is at most countable \cite[Definition 1.14.14]{DPS}.

Let $\cM$ be a non-atomic semifinite von Neumann algebra equipped with a semifinite faithful normal trace $\tau$.
Let $\left(E\left(\mathcal{M},\tau\right),\nu\right)$ be a quasi-Banach symmetric space (affiliated with $\cM$) equipped with a quasi-norm $\nu$.
Define the corresponding quasi-Banach symmetric function space by setting 
\[
E(0,\tau(\mathbf{1})):=\{f\in S(0,\tau(\mathbf{1})): \mu(f)=\mu(x){\text{ for some } x\in E(\cM,\tau)}\}
\]
equipped with the quasi-norm \cite{S14}
$\hat{\nu}(f):=\nu(x)$
for some $x\in E(\cM,\tau)$ with $\mu(x)=\mu(f)$.

A combination of Lemma~\ref{isometric norm},
\cite[Lemma~2.1]{KM07} and \cite[Theorem 2.9]{CK17} yields the following result.

\begin{lemma}\label{lem:symmetric}
Let $\mathcal{M}$ be non-atomic $\sigma$-finite.
If there exists $c\ge1$ such that
        \begin{align}\label{condition}
        \nu(x)\le c\rho(x), ~\forall x\in E\left(\mathcal{M},\tau\right),
        \end{align}
    then $\left(E\left(\mathcal{M},\tau\right),\rho\right)$ is a Banach symmetric space. 
\end{lemma}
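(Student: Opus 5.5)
We need to prove Lemma \ref{lem:symmetric}: under \eqref{condition}, $(E(\cM,\tau),\rho)$ is a Banach symmetric space. The argument has three parts: $\rho$ is a complete norm, $\rho$ is equivalent to $\nu$, and $\rho$ is symmetric.

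\emph{Norm and completeness.} By Lemma \ref{isometric norm}, condition \eqref{condition} already makes $\rho$ a norm. Combining \eqref{condition} with the trivial bound $\rho\le\nu$ gives
\[
c^{-1}\nu\le\rho\le\nu .
\]
Hence $\rho$ and $\nu$ define the same topology and the same Cauchy sequences. Since $(E(\cM,\tau),\nu)$ is complete, $(E(\cM,\tau),\rho)$ is a Banach space.

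\emph{Symmetry, commutative case.} We must show that $\mu(x)\le\mu(y)$ with $y\in E(\cM,\tau)$ implies $\rho(x)\le\rho(y)$. I will transfer the question to the function space $E(0,\tau(\mathbf 1))$ with quasi-norm $\hat\nu$ and its decomposition norm $\hat\rho$. Condition \eqref{condition} should pass to $\hat\nu$ and $\hat\rho$, up to the constant, via the reduction below. On $(0,\tau(\mathbf 1))$, \cite[Theorem 2.9]{CK17} says the Banach envelope norm of a symmetric quasi-Banach function space is again symmetric: $\hat\rho(f)\le\hat\rho(g)$ whenever $\mu(f)\le\mu(g)$. The underlying reason is that measure-preserving rearrangements are isometries of $\hat\nu$, hence of $\hat\rho$ by Lemma \ref{isometric norm}, and that $\rho$ is monotone under pointwise domination.

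\emph{Reduction from $\cM$ to the function space.} The step I expect to be the main obstacle is showing that $\rho(x)$ depends only on $\mu(x)$ and equals $\hat\rho(\mu(x))$. The inequality $\rho(x)\ge\hat\rho(\mu(x))$ is harder, because decompositions $x=\sum x_i$ in $\cM$ need not commute. I will handle it with \cite[Lemma~2.1]{KM07}, which I expect to provide the following for non-atomic $\sigma$-finite $\cM$. Norms on symmetric spaces are determined on commutative subalgebras. Concretely, the norm $\rho$ (or the Banach envelope) can be computed via conditional expectations or unitary averages onto a maximal abelian subalgebra containing the spectral projections of $|x|$. On that subalgebra, \cite[Lemma 1.3]{CKS92} gives a $*$-isomorphism onto $S(0,\tau(\mathbf 1))$ preserving $\mu$.

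Two facts then drive the argument. First, $\rho(x)=\rho(|x|)$: the polar decomposition $x=u|x|$ lets us multiply each summand of a decomposition of $|x|$ on the left by $u$, and each summand of a decomposition of $x$ on the left by $u^*$, without increasing $\nu$. Second, $\sigma$-finiteness and non-atomicity allow $|x|$ to be implemented inside an abelian algebra isomorphic to $L_\infty(0,\tau(\mathbf 1))$, with unitarily equivalent representatives. If a direct averaging argument fails, I will instead use the duality description: $\rho(x)=\sup|\varphi(x)|$ over $\nu$-contractive functionals $\varphi$, which makes $\rho$ the Banach envelope norm by \cite{CK17}. Unitary invariance of $\nu$ makes the dual ball unitarily invariant, so $\rho$ is unitarily invariant. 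Finally, via the $\mu$-preserving identification, the statement reduces to the commutative result \cite[Theorem 2.9]{CK17}.
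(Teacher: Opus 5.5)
The first part of your proposal is correct and matches the paper: Lemma~\ref{isometric norm} together with $c^{-1}\nu\le\rho\le\nu$ makes $\rho$ a complete norm equivalent to $\nu$.

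\textbf{The gap.} It lies in the symmetry part, at exactly the step you single out and then never carry out. You need $\hat\rho(\mu(x))\le\rho(x)$, that is, $\hat\rho(\mu(x))\le\sum_i\nu(x_i)$ for every finite decomposition $x=\sum_i x_i$ in $E(\cM,\tau)$. The summands need not commute with each other or with $|x|$. None of your proposed mechanisms gives this inequality.

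\begin{itemize}
\item \emph{Conditional expectations or unitary averages.} Averaging onto an abelian subalgebra containing the spectral projections of $|x|$ only gives $\mu(\mathbb{E}(x_i))\prec\prec\mu(x_i)$. A quasi-norm like $\norm{\cdot}_{L_{p,\infty}}$ is not monotone under $\prec\prec$, even for $1<p<\infty$. For instance, the function equal to $p'=\frac{p}{p-1}$ on $(0,1)$ and to $t^{-1/p}$ on $[1,\infty)$ is submajorized by $t^{-1/p}$, yet has quasi-norm $p'>1$. So $\nu(\mathbb{E}(x_i))\le\nu(x_i)$ need not hold, and using $\rho$ in place of $\nu$ here would be circular.
\item \emph{The duality fallback.} It only yields unitary invariance $\rho(uxv)=\rho(x)$, which is already clear from the definition. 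Operators with equal singular value functions need not be unitarily equivalent: $\chi_{(0,1)}$ and $\chi_{(1,2)}$ in $L_\infty(0,2)$ are not. Also, no single unitary moves all the $x_i$ into one abelian subalgebra.
\item \emph{The embedding of \cite[Lemma 1.3]{CKS92}.} It only transports commutative decompositions of $\mu(x)$ into $\cM$. That proves the easy inequality $\rho(x)\le\hat\rho(\mu(x))$, not the hard one.
\end{itemize}

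\textbf{What is missing.} You need a genuinely noncommutative triangle inequality. Once $\hat\rho$ is a symmetric Banach norm on $E(0,\tau(\mathbf 1))$, you must show that $y\mapsto\hat\rho(\mu(y))$ is subadditive on $E(\cM,\tau)$ for non-commuting $y$'s. This is a nontrivial fact; for general symmetric Banach norms it is the Kalton--Sukochev theorem. With it, $\hat\rho(\mu(x))\le\sum_i\hat\rho(\mu(x_i))\le\sum_i\hat\nu(\mu(x_i))=\sum_i\nu(x_i)$.

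The paper's proof is simply the combination of Lemma~\ref{isometric norm}, \cite[Lemma~2.1]{KM07} and \cite[Theorem 2.9]{CK17}. The identification $\rho(x)=\hat\rho(\mu(x))$ for non-atomic $\sigma$-finite $\cM$ is what \cite[Theorem 2.9]{CK17} supplies. The paper uses the same theorem to pass from the commutative case to $\sigma$-finite $\cM$ in the proof of Lemma~\ref{equi}, and these are exactly the hypotheses of the lemma.

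So you have the right ingredients, but you assigned \cite[Theorem 2.9]{CK17} to the commutative step. The step it actually covers is left resting on a guess about the content of \cite[Lemma 2.1]{KM07} and on a fallback that does not close it.
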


Let
\[
E(0,1):=\left\{f\chi_{_{(0,1)}}:f\in E(0,\tau(\mathbf{1}))\right\}.
\]
If $0<\tau(\mathbf{1})<1$, then we may replace $\tau$ with its normalized trace $\tau_1:=\frac{\tau}{\tau(\mathbf{1})}$.

The following result is an immediate corollary of Lemma \ref{isometric norm}, Lemma \ref{lem:symmetric}, \cite[Theorem~7.0.5]{FGHS} and \cite[Theorem~1.4.1]{FGHS}.

\begin{theorem} \label{general}
Let $\mathcal{M}$ is non-atomic $\sigma$-finite.
    Let 
    $T$ be a surjective isometry on $\left(E\left(\mathcal{M},\tau\right),\nu\right)$.
    Assume that
    \begin{enumerate} [label=(\roman*)]
        \item $E(0,1)\ne L_p(0,1), ~1\le p\le \infty$;
        \item inequality \eqref{condition} holds;
        \item 
    $\left(E\left(\mathcal{M},\tau\right),\rho\right)$
        has the Fatou property or is minimal (see \cite[p. 118]{LT2}).
    \end{enumerate}
    Then, there exist
    a unitary operator $u\in\mathcal{M}$,
    a (normal) Jordan $*$-isomorphism $J:S\left(\mathcal{M},\tau\right)\to S\left(\mathcal{M},\tau\right)$ and positive numbers $\alpha,a$
    such that
    \[T(x)=auJ(x), ~\forall x\in E\left(\mathcal{M},\tau\right)\]
    and 
    \[\alpha\tau(J(x))=\tau(x), ~\forall x\in S\left(\mathcal{M},\tau\right)_+.\]

    If, in addition, $\tau(\mathbf{1})<\infty$, then there exist a trace-preserving (normal) Jordan $*$-isomorphism $J:S\left(\mathcal{M},\tau\right)\to S\left(\mathcal{M},\tau\right)$ and a unitary operator $u\in \mathcal{M}$ such that
    \[T(x)=uJ(x), ~\forall x\in E\left(\mathcal{M},\tau\right).\]

\end{theorem}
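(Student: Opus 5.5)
The plan is to transfer the problem from the quasi-norm $\nu$ to the decomposition norm $\rho$. Then we invoke the known description of surjective isometries on noncommutative Banach symmetric spaces, \cite[Theorem~7.0.5]{FGHS} and \cite[Theorem~1.4.1]{FGHS}.

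\textbf{Step 1: $T$ is a $\rho$-isometry onto a Banach symmetric space.} By Lemma~\ref{isometric norm}, $\rho(Tx)=\rho(x)$ for all $x\in E(\cM,\tau)$. By assumption (ii), inequality \eqref{condition} gives $\rho\le\nu\le c\rho$. Hence $\rho$ is a norm equivalent to $\nu$, and $(E(\cM,\tau),\rho)$ is complete. By Lemma~\ref{lem:symmetric} (which uses that $\cM$ is non-atomic and $\sigma$-finite), $(E(\cM,\tau),\rho)$ is a Banach symmetric space. Thus $T$ is a surjective linear isometry of this Banach symmetric space onto itself. As sets, the underlying space and its function-space counterpart $E(0,\tau(\mathbf 1))$ are unchanged, since the norms are equivalent.

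\textbf{Step 2: verify the hypotheses of the Banach-space theorem.} \cite[Theorem~7.0.5]{FGHS} needs three things: the space is not an $L_p$-space, it has the Fatou property or is minimal, and $\cM$ is non-atomic $\sigma$-finite. The Fatou/minimality requirement is exactly hypothesis (iii). For the non-$L_p$ requirement, I will argue as follows. Because $\rho$ and $\nu$ are equivalent, $E(0,1)$ coincides as a set with the corresponding restriction of the Banach symmetric function space $(E(0,\tau(\mathbf 1)),\hat\rho)$. Hence assumption (i) says that the Banach space is not $L_p$ even up to equivalent renorming. This is the form needed in \cite{FGHS}, where the exclusion concerns the space $E(0,1)$, possibly with a renormed but equal symmetric norm.

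\textbf{Step 3: read off the conclusion.} Applying \cite[Theorem~7.0.5]{FGHS} gives $T(x)=auJ(x)$ with $u\in\cM$ unitary, $J$ a normal Jordan $*$-isomorphism extended to $S(\cM,\tau)$, and $a>0$, together with the trace-scaling property $\alpha\tau(J(x))=\tau(x)$ for $x\ge0$. In the finite-trace case, \cite[Theorem~1.4.1]{FGHS} makes $J$ trace-preserving with $a=1$. Alternatively, this follows directly: $\alpha\tau(J(\mathbf 1))=\tau(\mathbf 1)$ with $J(\mathbf 1)=\mathbf 1$ forces $\alpha=1$, and then comparing $\nu(T p)$ with $\nu(p)$ for $p=\mathbf 1$ gives $a=1$.

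\textbf{Main obstacle.} The delicate point is Step 2. One must make sure that the hypotheses of \cite{FGHS}, which are phrased for the Banach norm and its associated function space, are genuinely implied by (i) and (iii), which are phrased partly for $\nu$. The key observation is that equivalence of norms preserves the underlying sets and the symmetric structure. This handles (i). Hypothesis (iii) is already stated for $\rho$, so it transfers without any change.
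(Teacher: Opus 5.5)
Your proposal is correct and is essentially the paper's argument. The paper records this theorem as an immediate corollary of Lemma~\ref{isometric norm}, Lemma~\ref{lem:symmetric}, \cite[Theorem~7.0.5]{FGHS} and \cite[Theorem~1.4.1]{FGHS}, which is exactly your chain: $\rho$ is $T$-invariant, equivalent to $\nu$, makes $E(\mathcal{M},\tau)$ a Banach symmetric space, and the Banach-space description then applies.

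Your additions are sound supplements. Reading (i) as a set-level condition is what lets it pass unchanged to the equivalent norm $\rho$. In the finite-trace case, $J(\mathbf{1})=\mathbf{1}$ does force $\alpha=1$, and then $\nu(T\mathbf{1})=a\nu(\mathbf{1})$ forces $a=1$. Recall also the paper's normalization of $\tau$ when $\tau(\mathbf{1})<1$, which is needed for $E(0,1)$ to make sense.
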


In the commutative setting, we obtain the following corollary by using   \cite[Theorem 1.1]{Z95}.
\begin{corollary} 
    Let $\left(X,\nu\right)$ be a quasi-Banach symmetric space on $\left(0,\infty\right)$ equipped with a  quasi-norm $\nu$.
    Let $T$ be a surjective isometry on $\left(X,\nu\right)$.
    Assume that $\rho$, as defined in \eqref{rho}, is not proportional to $\norm{\cdot}_2$.
    If there exists $c\ge1$ such that
    \[\nu(f)\le c\rho(f), ~\forall f\in X,\]
    then there exist a measurable function $a$ and an invertible  measurable transformation 
    $\phi:\left(0,\infty\right)\to \left(0,\infty\right)$ such that
    \[T(f)(t)=a(t)f(\phi(t)), ~\forall f\in X, ~\text{a.e.}~t\in \left(0,\infty\right).\]
\end{corollary}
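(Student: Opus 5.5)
The plan is to move from the quasi-norm $\nu$ to its decomposition norm $\rho$ from \eqref{rho}, and then apply the classical Banach-space description of surjective isometries due to Zaidenberg \cite[Theorem 1.1]{Z95}.

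\textbf{Step 1: $T$ is a surjective isometry of the Banach space $(X,\rho)$.} By Lemma \ref{isometric norm}, $T$ preserves $\rho$, i.e. $\rho(Tf)=\rho(f)$ for all $f\in X$. The hypothesis $\nu\le c\rho$ is exactly \eqref{rho is a norm}, so the same lemma shows that $\rho$ is a norm. Together with the general inequality $\rho\le\nu$ we get
\[
\rho(f)\le \nu(f)\le c\rho(f),\qquad f\in X .
\]
Thus $\rho$ and $\nu$ are equivalent, and $(X,\rho)$ is complete because $(X,\nu)$ is. Since $T$ is bijective and $\rho$-preserving, it is a surjective linear isometry of the Banach space $(X,\rho)$.

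\textbf{Step 2: $(X,\rho)$ is a symmetric Banach function space on $(0,\infty)$.} Take $\mathcal{M}=L_\infty(0,\infty)$ with Lebesgue integration as trace; this algebra is non-atomic and $\sigma$-finite. Lemma \ref{lem:symmetric} then applies under the standing inequality $\nu\le c\rho$, which is \eqref{condition}, and gives that $(X,\rho)$ is a Banach symmetric space. Concretely, the symmetry of $\nu$ passes to the infimum in \eqref{rho}. This uses \cite[Lemma~2.1]{KM07} and \cite[Theorem 2.9]{CK17}, which is where non-atomicity is needed.

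\textbf{Step 3: apply Zaidenberg's theorem.} By assumption, $\rho$ is not proportional to $\norm{\cdot}_2$. Zaidenberg's theorem \cite[Theorem 1.1]{Z95} describes surjective isometries of complex symmetric Banach function spaces on $(0,\infty)$ whose norm is not proportional to the $L_2$-norm. It yields a measurable function $a$ and an invertible measurable transformation $\phi$ of $(0,\infty)$ such that
\[
T(f)=a\cdot (f\circ\phi)\quad \text{a.e., for every } f\in X .
\]
Since $X$ and the operator $T$ are the same for $\nu$ and for $\rho$, this is the required formula.

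\textbf{Main obstacle.} The delicate point is checking that $(X,\rho)$ satisfies the precise hypotheses of \cite[Theorem 1.1]{Z95}. One issue is the framework of symmetric space used there (ideal property and rearrangement invariance with respect to $\mu$); this must be extracted from Lemma \ref{lem:symmetric}, and if necessary directly from \eqref{rho} by noting that rearranging each summand $f_i$ of a decomposition preserves $\sum\nu(f_i)$. The other is that the case $\rho\propto\norm{\cdot}_2$ is precisely the one excluded. I expect no further difficulty, since Zaidenberg's result requires neither separability nor the Fatou property.
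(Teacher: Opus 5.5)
Your proposal is correct and follows the paper's route: Lemma~\ref{isometric norm} makes $T$ a $\rho$-isometry and $\rho$ an equivalent, hence complete, norm. Lemma~\ref{lem:symmetric} (with $\mathcal{M}=L_\infty(0,\infty)$) makes $(X,\rho)$ a symmetric Banach space, and \cite[Theorem 1.1]{Z95} then gives the weighted-composition form.

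The only loose point is your fallback remark that symmetry of $\rho$ can be read off \eqref{rho} by rearranging each summand. Equality $\mu(f)=\mu(g)$ does not in general give a measure-preserving bijection of $(0,\infty)$ carrying $f$ to $g$, and the ideal property does not follow this way either. That is exactly why the results \cite[Lemma~2.1]{KM07} and \cite[Theorem~2.9]{CK17}, used through Lemma~\ref{lem:symmetric}, are needed.
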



Recall that
for $1<p<\infty$, the norm $\norm{\cdot}_{\left(p,\infty\right)}$ on $L_{p,\infty}\left(\mathcal{M},\tau\right)$ is defined by
\[\norm{x}_{\left(p,\infty\right)}:=\sup_{t>0}t^{1/p}\int_0^t \mu(s;x)\frac{ds}t, ~\forall x\in L_{p,\infty}\left(\mathcal{M},\tau\right)\]
(see \cite[Section 6.7]{DPS}).

Decomposition norm $\rho_p^\cM$ of $\norm{\cdot}_{L_{p,\infty}\left(\mathcal{M},\tau \right)}$ is defined by
\[\begin{aligned}&\rho_p^{\mathcal{M}}(x)
:=\inf\left\{\sum_{i=1}^n\norm{x_i}_{L_{p,\infty}\left(\mathcal{M},\tau\right)}: ~x=\sum_{i=1}^nx_i,x_i\in L_{p,\infty}\left(\mathcal{M},\tau\right),n\in\mathbb{N}\right\}
\end{aligned}\]
for all $x\in L_{p,\infty}\left(\mathcal{M},\tau\right)$.

The following result implies that if $T$ is a surjective isometry on a weak $L_p$-space $\left(L_{p,\infty}\left(\mathcal{M},\tau\right), \norm{\cdot}_{L_{p,\infty}\left(\mathcal{M},\tau\right)}\right)$, then $T$ is a surjective isometry on $\left(L_{p,\infty}\left(\mathcal{M},\tau\right), \norm{\cdot}_{(p,\infty)}\right)$.


\begin{lemma} \label{equi}
    Assume that $1<p<\infty$ and $\frac1p+\frac1{p'}=1$.
    Let $\mathcal{M}$ be a (not necessarily $\sigma$-finite) semifinite non-atomic von Neumann algebra equipped with a semifinite faithful normal trace $\tau$.
    Then,
    \[\rho_p^\cM(x)=\frac{1}{p'}\norm{x}_{\left(p,\infty\right)}, ~\forall x\in L_{p,\infty}\left(\mathcal{M},\tau\right).\]
    In particular, it follows from \cite[Lemma 6.7.7]{DPS} that the assumption on $\nu(\cdot)=\norm{\cdot}_{L_{p,\infty}\left(\mathcal{M},\tau\right)}$ and $\rho=\rho_p^\mathcal{M}$ in Lemma \ref{isometric norm} and Lemma \ref{lem:symmetric} is satisfied.
\end{lemma}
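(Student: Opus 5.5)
The plan is to prove the two inequalities $\frac1{p'}\norm{x}_{(p,\infty)}\le \rho_p^\cM(x)$ and $\rho_p^\cM(x)\le \frac1{p'}\norm{x}_{(p,\infty)}$ separately. Throughout I write $\nu:=\norm{\cdot}_{L_{p,\infty}(\cM,\tau)}$ and $g(t):=t^{-1/p}$, $t>0$.

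\textbf{The inequality $\ge$.} Since $\int_0^t g(s)\,ds=p'\,t^{1/p'}$, we have $\norm{g}_{(p,\infty)}=p'$. More generally, $\mu(s;x)\le \nu(x)\,g(s)$ for every $s>0$. Integrating over $(0,t)$ and multiplying by $t^{1/p-1}$ gives
\[
\frac1{p'}\norm{x}_{(p,\infty)}\le \nu(x).
\]
The functional $\norm{\cdot}_{(p,\infty)}$ is subadditive, because $\int_0^t\mu(x+y)\le\int_0^t\mu(x)+\int_0^t\mu(y)$ \cite[Thm.~3.9.9]{DPS}. Hence for every finite decomposition $x=\sum_i x_i$,
\[
\frac1{p'}\norm{x}_{(p,\infty)}\le\sum_i\frac1{p'}\norm{x_i}_{(p,\infty)}\le\sum_i\nu(x_i).
\]
Taking the infimum over decompositions gives $\frac1{p'}\norm{x}_{(p,\infty)}\le\rho_p^\cM(x)$.

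\textbf{The inequality $\le$.} By homogeneity it suffices to show: if $\frac1{p'}\norm{x}_{(p,\infty)}\le1$, then $\rho_p^\cM(x)\le1$. The hypothesis says exactly that $\mu(x)\prec\prec g$, i.e.\ $x$ lies in the orbit $\Omega(z)$ of an element $z$ with $\mu(z)=g\chi_{(0,\tau(\mathbf 1))}$. Every such $z$ has $\nu(z)=1$. So the task is to show that $\Omega(z)$ lies in the $\rho_p^\cM$-closed unit ball, i.e.\ that every element of the orbit is approximately a convex combination of elements $y$ with $\nu(y)\le1$.

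I would first reduce to the commutative case. Using \cite[Lemma~1.3]{CKS92}, $|x|$ lies in a non-atomic abelian subalgebra that is $*$-isomorphic, with singular values preserved, to $S(0,\tau(r(x)))$. Polar decomposition gives $\rho_p^\cM(x)\le\rho_p^\cM(|x|)$, since $u$ has norm at most one and multiplication by it does not increase $\nu$. So it suffices to treat a decreasing function $f=\mu(x)$ on $(0,\infty)$ with $\int_0^t f\le\int_0^t g$ for all $t$.

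The core step is to write such $f$ as an average of functions $h$ with $\mu(h)\le g$:
\begin{itemize}
\item Partition $(0,\infty)$ into dyadic-type intervals.
\item On each interval, $f$ is dominated in the Hardy--Littlewood sense by $g$. Use the classical fact that $\{h:h\prec\prec g\}$ is the closed convex hull of rearrangements of $g$ to produce, for each $N$, functions $h_1,\dots,h_N$ with $\mu(h_k)\le(1+\varepsilon)g$ and $\frac1N\sum_k h_k$ equal to $f$ up to an error.
\item Make the error small in $\norm{\cdot}_{(p,\infty)}$.
\end{itemize}
Given this, $\rho_p^\cM(f)\le(1+\varepsilon)+C\cdot(\text{error})$, where $C$ comes from $\rho_p^\cM\le\nu\le C\norm{\cdot}_{(p,\infty)}$ \cite[Lemma~6.7.7]{DPS}. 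The bound $\rho_p^\cM\le\nu$ makes $\rho_p^\cM$ continuous with respect to $\norm{\cdot}_{(p,\infty)}$, so letting the error and $\varepsilon$ tend to $0$ gives $\rho_p^\cM(f)\le1$.

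\textbf{Main obstacle.} The averaging step must be done uniformly on all of $(0,\infty)$ with only finitely many functions $h_k$, and weak $L_p$ is non-separable, so truncation to finitely many dyadic blocks does not approximate in norm. My first attempt would be an averaging-operator construction. On each block $[2^k,2^{k+1})$, cut into $N$ equal sub-blocks and cyclically shift $g$-shaped pieces among them. Here $g$ is self-similar, $g(2s)=2^{-1/p}g(s)$, so one cyclic pattern works simultaneously in every block, and the error is $O(1/N)$ uniformly in the scale. Non-atomicity of $\cM$ is what allows transporting these measure-preserving rearrangements back to $\cM$.

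With equality established, the final assertion of the lemma follows by combining it with \cite[Lemma~6.7.7]{DPS}.
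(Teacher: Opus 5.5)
Your lower bound $\frac1{p'}\norm{x}_{(p,\infty)}\le\rho_p^\cM(x)$ is correct. So is the reduction of the upper bound from $x$ to $|x|$ and then, via \cite[Lemma~1.3]{CKS92}, to a function on an interval. That reduction would even dispose of the non-$\sigma$-finite case, which the paper handles differently: it compresses any decomposition $x=\sum_i x_i$ to $x=\sum_i ex_ie$ with $e=l(x)\vee r(x)$, so $\rho_p^\cM(x)=\rho_p^{e\cM e}(x)$ with $e\cM e$ $\sigma$-finite.

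\textbf{The gap is the core commutative step.} You need that every decreasing $f\prec\prec g$ lies in $(1+\varepsilon)\operatorname{co}\{h:\mu(h)\le g\}$ up to an error that is small in $\norm{\cdot}_{(p,\infty)}$. The paper does not prove this; it imports it from \cite[Theorems 4.1 and 5.2]{BKS} and \cite[Theorem 2.9]{CK17}. Your sketch of it does not work.

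\emph{Blockwise domination fails.} The claim that on each block $f$ is dominated by $g$ in the Hardy--Littlewood sense is false, because submajorization is not local. The function $f=p'a^{-1/p}\chi_{(0,a)}$, the average of $g$ over $(0,a)$, satisfies $f\prec\prec g$. Yet $f>g$ on $(a(p')^{-p},a)\supset[a/2,a)$, since $(p')^p>2$. Put $I=[a/2,a)$. If each $h_k|_I$ is (up to the factor $1+\varepsilon$) a rearrangement of $g|_I$, as in your cyclic-shift scheme, then $\int_I\frac1N\sum_k h_k\le(1+\varepsilon)\int_I g<\int_I f$ for small $\varepsilon$. This is an error independent of $N$. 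Self-similarity of $g$ does not help: an admissible decomposition must carry the large values of $g$ near $0$ to other scales, and shifts inside fixed blocks never do that.

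\emph{What does work, and why it is not enough.} Simultaneous cyclic shifts inside the intervals of an arbitrary countable disjoint family chosen according to $f$ do give an error $O(N^{-1/p'})$ in $\nu$, uniformly over all scales. So they handle averages of $g\chi_{(0,b)}$ over such families, which are the extreme points of the decreasing part of $\Omega(g)$. But a general $f\prec\prec g$ is only a local-$L_1$ limit of convex combinations of these. The ``classical fact'' you quote (Calder\'on--Ryff type) is likewise a statement about weak approximation, in local $L_1$ or weak$^*$.

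\emph{You cannot pass to such limits.} $\rho_p^\cM$ is the supremum of $|\varphi|$ over $\varphi\in(L_{p,\infty})^*$ bounded by $1$ on the $\nu$-unit ball. Hence it is lower semicontinuous for $\sigma(L_{p,\infty},(L_{p,\infty})^*)$. It is not a priori lower semicontinuous for weak$^*$ limits coming from the K\"othe dual, because the dual of this non-separable space contains singular functionals.

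\textbf{What is missing.} Upgrading weak approximation to $\norm{\cdot}_{(p,\infty)}$-approximation, i.e.\ a Fatou-type closedness of the $\rho$-unit ball, is essentially the statement being proved. You must either supply this or cite \cite{BKS,CK17} as the paper does.

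\textbf{A smaller point.} When $\tau(r(x))<\infty$ your model space is a finite interval, and there the relevant orbit is that of $g\chi_{(0,a)}$. Rearrangements of $g$ on $(0,\infty)$ cannot be transported back into $r(x)\cM r(x)$. You must enlarge the corner when $\tau(\mathbf 1)=\infty$, and work on $(0,\tau(\mathbf 1))$ otherwise.
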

\begin{proof}
The case when $\cM$ is $L_\infty (0,\infty )$ follows from \cite[Theorems 4.1 and   5.2]{BKS}. 
The case when   $\mathcal{M}$ is $\sigma$-finite follows from
       \cite[Theorem 2.9]{CK17}.

      Assume that $\mathcal{M}$ is not $\sigma$-finite.
         Fix  $x\in L_{p,\infty}\left(\mathcal{M},\tau\right)$.
        Let $e=l(x)\vee r(x)$ and $x=u|x|$ be the polar decomposition of $x$.
        Define
        \[r_n:=e^{|x|}\left(\frac1n,\infty\right)\uparrow r(x),~l_n:=ur_nu^*\uparrow l(x).\]
        Note that $e_n:=l_n\vee r_n\uparrow e$ and $\tau(e_n)<\infty$.
        This implies that $e\mathcal{M}e$ is $\sigma$-finite.
    It follows from the $\sigma$-finite case  that
\[\rho_p^{e\mathcal{M}e}(x)= \frac1{p'}\norm{x}_{\left(p,\infty\right)}.\]
        It suffices to prove that $\rho_p^\cM(x) = \rho_p^{e\mathcal{M}e}(x)$.
        By the definition of $\rho_p^\cM$, we have that
        \[\rho_p^\cM(x)\le \rho_p^{e\mathcal{M}e}(x).\]
        For any $x=\sum_{i=0}^n x_i,x_i\in L_{p,\infty}\left(\mathcal{M},\tau\right),n<\infty$.
        Note that $x=\sum_{i=0}^n ex_ie$.
        Since
        \[\sum_{i=0}^n \norm{ex_ie}_{L_{p,\infty}\left(\mathcal{M},\tau\right)}\le\sum_{i=0}^n \norm{x_i}_{L_{p,\infty}\left(\mathcal{M},\tau\right)},\]
        it follows that
        \[\rho_p^{e\mathcal{M}e}(x)\le \rho_p^\cM(x).\]
        Hence, $\rho_p^{\cM}(x) = \rho_p^{e\mathcal{M}e}(x)$.
    The proof is complete.
\end{proof}

 Note that $\rho^{\ell_\infty}$ and $\norm{\cdot}_{(p,\infty)}$ are equivalent norms.
For the situation when $\mathcal{M}=B(H)$ or $\ell_{\infty}$, 
$\rho_p^\mathcal{M}$ is no longer proportional to $\norm{\cdot}_{\left(p,\infty\right)}$. 

\begin{lemma} 
    Assume that $1<p<\infty$ and $H$ is a complex separable Hilbert space.
    \[\rho_p^{\ell_\infty}(x)=\sup_{n\ge0}\frac{\sum_{k=0}^n\mu(k;x)}{\sum_{k=0}^n (k+1)^{-1/p}} , ~\forall x\in\ell_{p,\infty}.\]

    Moreover, 
    \[\rho_p^{B(H)}(x)=\sup_{n\ge0}\frac{\sum_{k=0}^n\mu(k;x)}{\sum_{k=0}^n (k+1)^{-1/p}} , ~\forall x\in\cC_{p,\infty}.\]

    In particular, $\rho_p^{\ell_{\infty}},\rho_p^{B(H)}$ both are norms.
\end{lemma}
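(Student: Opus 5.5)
We have to prove that the decomposition norm $\rho_p^{\ell_\infty}$ of $\norm{\cdot}_{\ell_{p,\infty}}$ equals
\[
N(x):=\sup_{n\ge0}\frac{S_n(x)}{W_n},\qquad S_n(x):=\sum_{k=0}^n\mu(k;x),\quad W_n:=\sum_{k=0}^n (k+1)^{-1/p},
\]
and likewise on $\cC_{p,\infty}$. The plan has three steps: the lower bound $N\le\rho$, the upper bound $\rho\le N$, and the transfer to $B(H)$.

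\textbf{Lower bound.} I show $N$ is a seminorm dominated by $\norm{\cdot}_{\ell_{p,\infty}}$; then $N\le\rho_p^{\ell_\infty}$ follows directly from the definition \eqref{rho}.
\begin{itemize}
\item Subadditivity: $S_n$ is the Ky Fan norm $S_n(x)=\sup_{|A|=n+1}\sum_{k\in A}|x_k|$, so $S_n(x+y)\le S_n(x)+S_n(y)$, and hence $N(x+y)\le N(x)+N(y)$.
\item Domination: if $\norm{x}_{\ell_{p,\infty}}\le1$, then $\mu(k;x)\le (k+1)^{-1/p}$ for every $k$. Thus $S_n(x)\le W_n$, i.e. $N(x)\le\norm{x}_{\ell_{p,\infty}}$.
\end{itemize}
Since $\rho$ is the largest seminorm below the quasi-norm, $N\le\rho$.

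\textbf{Upper bound.} I must show $\rho(x)\le N(x)$. By homogeneity I assume $N(x)=1$, which means $\mu(x)\prec\prec w$ with $w:=((k+1)^{-1/p})_{k\ge0}$.
\begin{itemize}
\item Reduction to rearrangements of $w$: every rearrangement of $w$, multiplied by a unimodular sequence, has $\ell_{p,\infty}$-quasi-norm $1$. If $x$ lies in the norm closure of the convex hull $\operatorname{co}$ of such sequences, then $\rho(x)\le1$. This uses that $\rho$ is a seminorm with $\rho\le\norm{\cdot}_{\ell_{p,\infty}}$, together with a limiting argument.
\item Reduction to decreasing $x$: replace $x$ by $\mu(x)$ via a permutation and phases. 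This is harmless because $\rho$ is invariant under these surjective isometries (Lemma \ref{isometric norm}).
\item Finite-dimensional case: for finite truncations, Hardy--Littlewood--P\'olya (weak majorization in $\mathbb{R}^{n+1}$) writes $\mu(x)\chi_{[0,n]}$ as a convex combination of signed permutations of $w\chi_{[0,n]}$, up to entries of absolute value at most $w$. Subvectors in modulus of points of norm $\le 1$ again have norm $\le 1$, so this gives $\rho(\mu(x)\chi_{[0,n]})\le 1$.
\item Passing to the limit: the tails are the main obstacle. The quasi-norm of the tail does not go to $0$ in $\ell_{p,\infty}$, so truncation alone does not suffice.
\end{itemize}
To handle the tails I would try a block argument, in the spirit of Calder\'on's description of orbits. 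Partition $\mathbb{N}$ into consecutive blocks $[n_j,n_{j+1})$. On each block, represent $\mu(x)$ as an average of permutations of $w$ restricted to the block, with the blocks chosen so that the majorization constraints are compatible. Gluing these block representations yields finitely many terms, up to an error of arbitrarily small decomposition norm. The constraint $S_n(x)\le W_n$ for all $n$ is what makes such a gluing possible.

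If this fails, the fallback is duality. By Hahn--Banach, $\rho(x)=\sup\{|\langle x,y\rangle| : \sup_{\norm{z}\le1}|\langle z,y\rangle|\le1\}$, with $y$ ranging over the dual. For $y$ acting through $c_0$-type pairings, one computes $\sup_{\norm{z}\le1}|\langle z,y\rangle|=\sum_k\mu(k;y)(k+1)^{-1/p}$. This identifies the unit ball of the dual norm as a Lorentz space, and its dual is the Marcinkiewicz-type norm $N$. The singular part of the dual would have to be dealt with using the Fatou property of $N$.

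\textbf{Transfer to $B(H)$.}
\begin{itemize}
\item For $x\in\cC_{p,\infty}$, the lower bound carries over verbatim, using the subadditivity of the Ky Fan norms of singular values.
\item For the upper bound, use the Schmidt decomposition $x=\sum_k\mu(k;x)\,\xi_k\otimes\eta_k$. The map from $\ell_{p,\infty}$ to this diagonal is an isometry into $\cC_{p,\infty}$ and preserves finite decompositions, so $\rho^{B(H)}(x)\le\rho^{\ell_\infty}(\mu(x))$.
\end{itemize}
Finally, $N$ vanishes only at $0$, so both $\rho_p^{\ell_\infty}$ and $\rho_p^{B(H)}$ are norms.
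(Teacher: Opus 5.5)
\textbf{There is a genuine gap in the upper bound $\rho_p^{\ell_\infty}(x)\le N(x)$ for infinitely supported $x$.} This inequality is the real content of the lemma, and your proposal does not establish it.

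\textbf{What is correct.} Your lower bound is fine: $N(x)=\sup_n S_n(x)/W_n$ is a seminorm with $N\le\norm{\cdot}_{\ell_{p,\infty}}$, so $N\le\rho_p^{\ell_\infty}$, and by Ky Fan the same holds on $\cC_{p,\infty}$. This already shows that both decomposition norms are norms. Your transfer $\rho_p^{B(H)}(x)\le\rho_p^{\ell_\infty}(\mu(x))$ through the isometric embedding $a\mapsto\sum_k a_k\,\xi_k\otimes\eta_k$ is also correct. It is a more elementary substitute for the paper's appeal to \cite[Theorem 2.18]{CK17}. In the commutative reduction to $\mu(x)$ you need the same embedding trick rather than a permutation, since no permutation exists when both the support of $x$ and its complement are infinite.

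\textbf{Why the truncation argument does not finish.} Hardy--Littlewood--P\'olya gives $\rho_p^{\ell_\infty}(\mu(x)\chi_{[0,n]})\le N(x)$ for every $n$. To conclude, you would need $\rho_p^{\ell_\infty}$ to have the Fatou property, or to be monotone under $\prec\prec$. Combined with your truncation bound, either property is equivalent to the statement itself. The only regularity of $\rho$ you get for free is continuity in the quasi-norm, and tails do not become small: with $w=((k+1)^{-1/p})_k$ one has $\rho_p^{\ell_\infty}(w\chi_{(n,\infty)})=N(w\chi_{(n,\infty)})=1$ for every $n$.

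\textbf{Why neither suggested route closes it.}
\begin{itemize}
\item Block argument: representing $\mu(x)|_B$ by permutations of $w|_B$ needs $\mu(x)|_B\prec\prec w|_B$. The cumulative condition $S_n(x)\le W_n$ does not give this in general. It does when the block starts at an index with $S_n(x)=W_n$, but there may be no such indices, e.g.\ when $S_n(x)<W_n$ for all $n$ while $S_n(x)/W_n\to1$. Moreover, an error that is small only in decomposition norm, rather than in quasi-norm, is the same problem again.
\item The number of terms is not the obstruction. Since $p>1$ the space is normable, so norm-convergent countable convex combinations of elements of the unit ball also have $\rho\le1$. What is missing is a representation, or quasi-norm approximation, of the whole sequence.
\item Duality route: the order-continuous part of the dual indeed yields exactly $N$. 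But finitely supported sequences are not dense in $\ell_{p,\infty}$, so there are nonzero singular functionals of norm $\le1$. Proving $|f(x)|\le N(x)$ for them is precisely the missing step, and the Fatou property of $N$ gives no information about them.
\end{itemize}

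\textbf{How the paper closes the gap.} The paper imports from \cite[Theorems 4.2 and 5.2]{BMP12} the identity $\rho_p^{\ell_\infty}(x)=\inf\{\norm{y}_{\ell_{p,\infty}}:x\prec\prec y\}$. Its nontrivial half, $\rho_p^{\ell_\infty}(x)\le\norm{y}_{\ell_{p,\infty}}$ whenever $x\prec\prec y$, is exactly your missing step. After that, choosing $y=N(x)\,w$ finishes the computation.
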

\begin{proof}
   By \cite[Theorems 4.2 and 5.2]{BMP12}, it suffices to prove that
   \[\inf_{x\prec\prec y}\norm{y}_{\ell_{p,\infty}}=\sup_{n\ge0}\frac{\sum_{k=0}^n\mu(k;x)}{\sum_{k=0}^n (k+1)^{-1/p}} , ~\forall x\in\ell_{p,\infty}.\]
   For each $x\in \ell_{p,\infty}$,
   fix $y\in \ell_{p,\infty}$ such that $x\prec\prec y$.
   Note that for all $n\ge0$,
   \[\sum_{k=0}^n\mu(k;x)\le\sum_{k=0}^n\mu(k;y)\le \sum_{k=0}^n\norm{y}_{\ell_{p,\infty}}(k+1)^{-1/p},\]
   which implies that
   \[\norm{y}_{p,\infty}\ge \frac{\sum_{k=0}^n\mu(k;x)}{\sum_{k=0}^n (k+1)^{-1/p}}.\]
   Then, 
   \[\inf_{x\prec\prec y}\norm{y}_{\ell_{p,\infty}}\ge\sup_{n\ge0}\frac{\sum_{k=0}^n\mu(k;x)}{\sum_{k=0}^n (k+1)^{-1/p}}.\]
   For the converse implication, let $z=(z_k)_{k=0}^\infty\in \ell_{p,\infty}$ be a decreasing sequence such that
   \[z_k:=(k+1)^{-1/p} \cdot \sup_{n\ge0}\frac{\sum_{i=0}^n\mu(i;x)}{\sum_{i=0}^n (i+1)^{-1/p}}, ~\forall k\ge0.\]
   Note that
   \[\norm{z}_{\ell_{p,\infty}}=\sup_{k\ge0} \mu(k;z)(k+1)^{1/p}=\sup_{n\ge0}\frac{\sum_{i=0}^n\mu(i;x)}{\sum_{i=0}^n (i+1)^{-1/p}}\]
   and
   \[\sum_{k=0}^m \mu(k;z)=\sup_{n\ge0}\frac{\sum_{i=0}^n\mu(i;x)}{\sum_{i=0}^n (i+1)^{-1/p}}\sum_{k=0}^m(k+1)^{-1/p}\ge\sum_{k=0}^m \mu(k;x)\]
   for all $m\ge0$.
   This implies that
   \[\inf_{x\prec\prec y}\norm{y}_{\ell_{p,\infty}}\le \norm{z}_{\ell_{p,\infty}}=\sup_{n\ge0}\frac{\sum_{k=0}^n\mu(k;x)}{\sum_{k=0}^n (k+1)^{-1/p}}.\]
   Therefore,
   \[\inf_{x\prec\prec y}\norm{y}_{\ell_{p,\infty}}=\sup_{n\ge0}\frac{\sum_{k=0}^n\mu(k;x)}{\sum_{k=0}^n (k+1)^{-1/p}} , ~\forall x\in\ell_{p,\infty}.\]

   The second statement follows immediately from \cite[Theorem 2.18]{CK17}.
\end{proof}

Note that $n\mapsto\sum_{k=0}^{n-1}(k+1)^{-1/p}$ generates a Banach Marcinkiewicz sequence space which is equivalent to the classical Banach norm $\left(x\mapsto \sup _{n\ge 0} \frac{\sum_{k=0}^n  \mu(k;x)}{(n+1)^{1-\frac{1}{p}}}\right)$ on $\ell_{p,\infty}$ . 
Consequently, $\left(\ell_{p,\infty},\rho_p^{\ell_\infty}\right)$ and $\left(\mathcal{C}_{p,\infty},\rho_p^{B(H)}\right)$  have the Fatou property \cite{DPS}.

\begin{lemma} \label{Fatou}
    Assume that $1<p<\infty$  and $H$ is a complex separable Hilbert space.
    Then, $\left(\ell_{p,\infty},\rho_p^{\ell_{\infty}}\right)$ and $\left(\mathcal{C}_{p,\infty},\rho_p^{B(H)}\right)$ have the Fatou property.
\end{lemma}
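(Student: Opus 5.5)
The plan is to identify $\rho_p^{\ell_\infty}$ (and $\rho_p^{B(H)}$) with a Banach Marcinkiewicz (Lorentz-type) norm and then use the standard fact that Marcinkiewicz norms have the Fatou property. By the preceding lemma,
\[
\rho_p^{\ell_\infty}(x)=\sup_{n\ge0}\frac{\sum_{k=0}^n\mu(k;x)}{\psi(n+1)},\qquad \psi(n):=\sum_{k=0}^{n-1}(k+1)^{-1/p},
\]
and the same formula gives $\rho_p^{B(H)}$ on $\mathcal{C}_{p,\infty}$. So it suffices to verify the Fatou property for this explicit functional. We use the form: if $0\le x_\alpha\uparrow x$ in $\ell_\infty$ (resp. $B(H)$) with $\sup_\alpha\rho(x_\alpha)<\infty$, then $x\in\ell_{p,\infty}$ (resp. $\mathcal{C}_{p,\infty}$) and $\rho(x)=\sup_\alpha\rho(x_\alpha)$.

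First we note that $0\le x_\alpha\uparrow x$ implies $\mu(k;x_\alpha)\uparrow\mu(k;x)$ for every $k$. This is the normality of singular values under increasing nets of positive operators (\cite[Chapter 3]{DPS}); in the sequence case it is elementary. Consequently, for each fixed $n$, the finite sums converge, $\sum_{k=0}^n\mu(k;x_\alpha)\uparrow\sum_{k=0}^n\mu(k;x)$. Dividing by $\psi(n+1)$ and interchanging the two suprema over $n$ and $\alpha$ gives $\rho(x)=\sup_\alpha\rho(x_\alpha)<\infty$.

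Second, we must check that $x$ actually lies in the space. We use the elementary bound $(n+1)\mu(n;x)\le\sum_{k=0}^n\mu(k;x)\le\rho(x)\psi(n+1)$ together with $\psi(n+1)\asymp(n+1)^{1-1/p}$, valid since $p>1$. These give $\mu(n;x)\lesssim\rho(x)(n+1)^{-1/p}$, so $x\in\ell_{p,\infty}$ (resp. $\mathcal{C}_{p,\infty}$, where $\mu(x)\to0$ forces $x$ to be compact). For general, not necessarily positive, elements the statement reduces to $|x|$, because $\rho$ depends only on $\mu(x)$.

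The main obstacle is the noncommutative monotone convergence $\mu(k;x_\alpha)\uparrow\mu(k;x)$ for nets in $B(H)$. To handle it, we would invoke \cite[Proposition 3.2.x]{DPS} on normality of $\mu$ (equivalently, lower semicontinuity of $x\mapsto\mu(t;x)$ in the strong/measure topology combined with monotonicity). Alternatively, the $B(H)$ case can be deduced from the $\ell_\infty$ case via \cite[Theorem 2.18]{CK17}, as was done in the preceding lemma.
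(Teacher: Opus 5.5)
Your proposal is correct and follows the paper's route. The paper also uses the preceding lemma to recognize $\rho_p$ as the Banach Marcinkiewicz norm generated by $n\mapsto\sum_{k=0}^{n-1}(k+1)^{-1/p}$ (equivalent to the classical norm on $\ell_{p,\infty}$), and then simply cites \cite{DPS} for the Fatou property of such spaces; you unpack that citation directly via normality of $\mu$ under increasing nets and the estimate $\psi(n+1)\asymp(n+1)^{1-1/p}$.
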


A combination of Lemma \ref{isometric norm}, 
Lemma \ref{Fatou}, \cite[Theorem 5.6]{AC17}  and \cite[Theorem 7]{AC} yields the following result.
\begin{theorem}
    Assume that $1<p<\infty$  and $H$ is a complex separable Hilbert space.
    Then,  a linear operator $T$ is a surjective isometry on $\mathcal{C}_{p,\infty}$ (respectively, $\ell_{p,\infty}$) if and only if  there exist unitary operators $U,V\in B(H)$ (respectively, $\ell_\infty$) such that
    \[T(x)=UxV^*, ~\forall x\in \mathcal{C}_{p,\infty}\]
    or 
    \[T(x)=Ux'V^*, ~\forall x\in \mathcal{C}_{p,\infty}\]
(respectively, there exist a sequence $\{u_n\}_{n=0}^\infty$ satisfying $|u_n|=1$ and a permutation $\sigma:\mathbb{N} \to \mathbb{N}$ of the set $\{0,1,2,\dots,\}$ such that
    \[T(x)(n)=u_nx(\sigma(n)), ~\forall n\in\mathbb{N}, x\in \ell_{p,\infty}.\]
\end{theorem}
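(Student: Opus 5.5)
The plan is to pass from the quasi-norm $\norm{\cdot}_{\ell_{p,\infty}}$ (respectively $\norm{\cdot}_{\mathcal{C}_{p,\infty}}$) to its decomposition norm $\rho_p^{\ell_\infty}$ (respectively $\rho_p^{B(H)}$), and then to apply the known descriptions of surjective isometries on Banach symmetric sequence spaces and ideals. The converse direction is immediate. Maps of the form $x\mapsto UxV^*$, $x\mapsto Ux'V^*$ or $x(n)\mapsto u_nx(\sigma(n))$ are surjective and preserve the singular value sequence, because the transpose satisfies $\mu(x')=\mu(x)$ and a permutation together with a unimodular multiplier does not change the decreasing rearrangement. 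Hence these maps preserve $\sup_n (n+1)^{1/p}\mu(n;x)$.

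For the forward direction, let $T$ be a surjective isometry of $(\mathcal{C}_{p,\infty},\norm{\cdot}_{\mathcal{C}_{p,\infty}})$. By Lemma \ref{isometric norm}, $\rho_p^{B(H)}(Tx)=\rho_p^{B(H)}(x)$ for all $x$. So $T$ is also a surjective linear isometry of $(\mathcal{C}_{p,\infty},\rho_p^{B(H)})$, and likewise for $\ell_{p,\infty}$. By the preceding lemma, $\rho_p^{B(H)}(x)=\sup_n \frac{\sum_{k\le n}\mu(k;x)}{\sum_{k\le n}(k+1)^{-1/p}}$. This is a genuine symmetric norm, namely a Banach Marcinkiewicz norm equivalent to the quasi-norm since $p>1$, so $(\mathcal{C}_{p,\infty},\rho_p^{B(H)})$ is a Banach symmetric operator ideal. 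By Lemma \ref{Fatou} it has the Fatou property.

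We then invoke \cite[Theorem 5.6]{AC17} for the ideal and \cite[Theorem 7]{AC} for the sequence space. These describe surjective isometries of symmetric operator ideals (respectively symmetric sequence spaces) with the Fatou property whose norm is not proportional to the Hilbert–Schmidt norm (respectively the $\ell_2$-norm). In that situation they yield the forms $UxV^*$ or $Ux'V^*$, respectively a weighted permutation $u_nx(\sigma(n))$.

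The step needing care is verifying the hypotheses of these theorems, in particular that $\rho_p^{B(H)}$ is not proportional to $\norm{\cdot}_{\mathcal{C}_2}$ (or the $\ell_2$-norm). This follows because $\rho_p$ is finite on $\ell_{p,\infty}\not\subset \ell_2$ when $p\ge 2$. For $1<p<2$ it follows by testing on rank-one projections versus sums of $n$ orthogonal rank-one projections: $\rho_p$ grows like $n/\sum_{k<n}(k+1)^{-1/p}\sim n^{1/p}$, not like $n^{1/2}$.

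One must also confirm that the cited theorems apply to the full (non-separable) space $\mathcal{C}_{p,\infty}$ rather than only to its separable part. This is where the Fatou property is used: either these theorems cover Fatou ideals directly, or the isometry preserves the separable part (the closure of finite-rank operators) and the description then extends by the Fatou property.
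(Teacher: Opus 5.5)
Your proposal is correct and matches the paper's own argument: the paper proves this theorem by combining Lemma~\ref{isometric norm} (so that $T$ is also a surjective isometry for $\rho_p$), Lemma~\ref{Fatou}, \cite[Theorem 5.6]{AC17} and \cite[Theorem 7]{AC}. Your extra checks make explicit hypotheses of the cited theorems that the paper leaves implicit: that $\rho_p$ is a complete symmetric norm equivalent to the quasi-norm, and that it is not proportional to the $\ell_2$ or Hilbert--Schmidt norm.
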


Below, we prove  Theorem \ref{sigma-finite}. 
\begin{theorem} \label{sigma Lp,infty}
    Assume that $1<p<\infty$. 
    Let $\mathcal{M}$ be a $\sigma$-finite non-atomic von Neumann algebra equipped with a semifinite faithful normal trace $\tau$.
    Let $T$ be a surjective linear operator on a weak $L_p$-space $L_{p,\infty}\left(\mathcal{M},\tau\right)$.
    Then, $T$ is an isometry if and only if there exist
    a (normal) Jordan $*$-isomorphism $J:S\left(\mathcal{M},\tau\right)\to S\left(\mathcal{M},\tau\right)$, a unitary operator $u$ in $\mathcal{M}$ and a positive scalar $a$ such that
    \[T(x)=auJ(x), ~\forall x\in L_{p,\infty}\left(\mathcal{M},\tau\right)\]
    and 
    \[\tau(J(x))=a^{-p}\tau(x), ~\forall x\in S\left(\mathcal{M},\tau\right)_+.\]

    If, in addition, $\tau(\mathbf{1})<\infty$, 
    then
    $T$ is an isometry if and only if  there exist
    a trace-preserving (normal) Jordan $*$-isomorphism $J:S\left(\mathcal{M},\tau\right)\to S\left(\mathcal{M},\tau\right)$ and a unitary operator $u\in\mathcal{M}$ such that
    \[T(x)=uJ(x), ~\forall x\in L_{p,\infty}\left(\mathcal{M},\tau\right).\]
\end{theorem}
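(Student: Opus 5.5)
For the forward implication, the plan is to pass from the quasi-norm to the Banach envelope and then invoke Theorem \ref{general}. Let $T$ be a surjective isometry of $\left(L_{p,\infty}\left(\mathcal{M},\tau\right),\norm{\cdot}_{L_{p,\infty}\left(\mathcal{M},\tau\right)}\right)$.

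\textbf{Step 1 (isometry for the decomposition norm).} By Lemma \ref{isometric norm}, $T$ preserves the decomposition norm $\rho_p^{\mathcal{M}}$. By Lemma \ref{equi}, $\rho_p^{\mathcal{M}}=\frac{1}{p'}\norm{\cdot}_{(p,\infty)}$. Hence $T$ is a surjective isometry of the Banach symmetric space $\left(L_{p,\infty}\left(\mathcal{M},\tau\right),\norm{\cdot}_{(p,\infty)}\right)$.

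\textbf{Step 2 (hypotheses of Theorem \ref{general}).} These are checked one at a time.
\begin{enumerate}
\item Condition (ii), inequality \eqref{condition}, holds with $c=p'$, since $\norm{x}_{L_{p,\infty}}\le\norm{x}_{(p,\infty)}$ (\cite[Lemma 6.7.7]{DPS}).
\item Condition (i): $E(0,1)=L_{p,\infty}(0,1)$ is not an $L_q(0,1)$ for any $1\le q\le\infty$. For instance, $t^{-1/p}\chi_{(0,1)}$ lies in it but not in $L_p$, and comparing fundamental functions rules out $q\ne p$.
\item Condition (iii): $\norm{\cdot}_{(p,\infty)}$ has the Fatou property. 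It is a Marcinkiewicz norm $\sup_t \frac{1}{\psi(t)}\int_0^t\mu(s;x)\,ds$, and each functional $\int_0^t\mu(s;\cdot)\,ds$ is lower semicontinuous under increasing nets (\cite{DPS}).
\end{enumerate}
Theorem \ref{general} then gives a unitary $u$, a normal Jordan $*$-isomorphism $J$ and scalars $a,\alpha>0$ such that $T=auJ$ and $\alpha\tau(J(x))=\tau(x)$ for $x\in S\left(\mathcal{M},\tau\right)_+$.

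\textbf{Step 3 (relating $a$ and $\alpha$).} Test the original quasi-norm on a $\tau$-finite projection $q$. We have
\[
\norm{T(q)}_{L_{p,\infty}}=a\norm{J(q)}_{L_{p,\infty}}=a\,\tau(J(q))^{1/p}=a\,\alpha^{-1/p}\tau(q)^{1/p},
\]
and this must equal $\tau(q)^{1/p}$. Hence $\alpha=a^{p}$, that is, $\tau(J(x))=a^{-p}\tau(x)$.

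\textbf{Step 4 (finite trace).} If $\tau(\mathbf{1})<\infty$, apply the last clause of Theorem \ref{general} to get $T=uJ$ with $J$ trace-preserving. Alternatively, apply Step 3 to $q=\mathbf{1}$: then $a^{-p}\tau(\mathbf{1})=\tau(J(\mathbf{1}))=\tau(\mathbf{1})$, so $a=1$.

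\textbf{Converse.} Suppose $T=auJ$ with $\tau\circ J=a^{-p}\tau$ on positives. Left multiplication by the unitary $u$ does not change $\mu$. By the computation in the proof of Corollary \ref{non-atomic posi3}, together with the reduction $|J(x)|=J\left(q|x|+(\mathbf{1}-q)|x^*|\right)$ for a central projection $q$ (as used there), we get $\mu(t;aJ(x))=a\,\mu(ta^{p};x)$. The scaling $s=ta^p$ then shows that the quasi-norm is preserved. Surjectivity follows because $J$ is bijective with inverse of the same type, with constant $a^{-1}$.

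The main obstacle I expect is Step 2, specifically confirming hypothesis (iii) and the precise hypothesis format required by Theorem \ref{general}. If the Fatou property is awkward to verify in the nonseparable $\sigma$-finite setting, I would instead invoke the Fatou property of Banach Marcinkiewicz spaces directly from \cite{DPS}.
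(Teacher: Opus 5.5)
Your proposal is correct and follows essentially the same route as the paper: Lemmas \ref{isometric norm} and \ref{equi} make $T$ an isometry for $\norm{\cdot}_{(p,\infty)}$, Theorem \ref{general} (using the Fatou property of $\norm{\cdot}_{(p,\infty)}$) gives $T=auJ$, and the trace scaling comes from evaluating the quasi-norm on $\tau$-finite projections, which is the computation behind Corollary \ref{non-atomic posi3} that the paper invokes. The one difference is the finite-trace case: you set $q=\mathbf{1}$ and use $J(\mathbf{1})=\mathbf{1}$ to get $a=1$, a self-contained replacement for the paper's citation of \cite[Theorem 1.4.1]{FGHS}.
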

\begin{proof}
   $(\Leftarrow)$. By a similar argument as used in the proof of Corollary \ref{non-atomic posi3}, we obtain that $u^*T$ is a surjective isometry.
     This implies that $T$ is an isometry.
    
     $\left(\Rightarrow\right)$.
    By Lemma \ref{equi} and Lemma \ref{isometric norm}, 
    we obtain that $T$ is a surjective isometry on $\left(L_{p,\infty}\left(\mathcal{M},\tau\right),\norm{\cdot}_{\left(p,\infty\right)}\right)$.
Note that $\left(L_{p,\infty}\left(\mathcal{M},\tau\right),\norm{\cdot}_{\left(p,\infty\right)}\right)$ has the Fatou property \cite[Proposition 6.7.8]{DPS}. 
By 
 Theorem \ref{general}, 
 there exist
 a normal Jordan $*$-isomorphism $J:S\left(\mathcal{M},\tau\right)\to S\left(\mathcal{M},\tau\right)$, a positive scalar $a$ and $u$ is a unitary operator in $\mathcal{M}$
 such that
    \[T(x)=auJ(x), ~\forall x\in L_{p,\infty}\left(\mathcal{M},\tau\right).\]
    Since $u^*T$ is a positive isometry, it follows from   Corollary \ref{non-atomic posi3} that
    \[\tau(J(x))=a^{-p}\tau(x), ~\forall x\in S\left(\mathcal{M},\tau\right)_+.\]

    Moreover, the remaining statements follow from \cite[Theorem 1.4.1]{FGHS} when $\tau(\mathbf{1})<\infty$.
\end{proof}
\begin{remark}
One should compare the results in Theorem \ref{sigma Lp,infty} and 
Corollary \ref{non-atomic posi3}:
the unitary operator in Theorem \ref{sigma Lp,infty} becomes ${\bf 1}$ in the case of   positive surjective isometries.

    We do not know about the description of (positive) surjective isometries on $L_{p,\infty}\left(\mathcal{M},\tau\right)$ for $0<p\le 1$ when $\mathcal{M}$ is non-atomic. 
\end{remark}

As Corollary \ref{measure continu} above, 
an immediate application of Theorem \ref{sigma-finite} and  \cite[Lemma 8.22]{Tak}
yields the following corollary.

\begin{corollary}
    Assume that $1<p<\infty$.
 Let $T$ be a surjective isometry on a weak $L_p$-function space $L_{p,\infty}\left(0,\infty\right)$.
    Then, there exist an invertible 
bi-nonsingular
measurable transformation
$\phi:\left(0,\infty\right)\to \left(0,\infty\right)$, a unimodular function $u$ in $L_\infty\left(0,\infty\right)$ and a positive scalar $a$ such that for all $f\in L_{p,\infty}\left(0,\infty\right)$,
    \[T(f)(t)=au(t)f(\phi(t)) ~\text{a.e.} ~ t\in \left(0,\infty\right),\]
    and \[m\left(\phi^{-1}(B)\right)=a^{-p}m(B), ~\forall ~\text{measurable set}~ B \subset \left(0,\infty\right).\]
\end{corollary}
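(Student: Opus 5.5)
This is the commutative specialization of Theorem \ref{sigma-finite}, so the plan is to apply that theorem with $\mathcal{M}=L_\infty(0,\infty)$ and then translate the Jordan $*$-isomorphism into a point transformation via \cite[Lemma 8.22]{Tak}. First we check the hypotheses. With Lebesgue measure, $L_\infty(0,\infty)$ is non-atomic. It is also $\sigma$-finite: any family of pairwise orthogonal nonzero projections is a family of disjoint sets of positive measure in a $\sigma$-finite measure space, and such a family is at most countable. Theorem \ref{sigma-finite} therefore yields a normal Jordan $*$-isomorphism $J:S(0,\infty)\to S(0,\infty)$, a unitary $u\in L_\infty(0,\infty)$ and a scalar $a>0$ such that $T(f)=auJ(f)$ for all $f\in L_{p,\infty}(0,\infty)$, together with $\int J(g)\,dm=a^{-p}\int g\,dm$ for every $0\le g\in S(0,\infty)$. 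A unitary in a commutative algebra is simply a unimodular function.

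Next we convert $J$ into a composition operator. Since $\mathcal{M}$ is commutative, a Jordan $*$-isomorphism is a $*$-isomorphism, because $xy=yx$ gives $J(xy)=\frac12 J(xy+yx)=J(x)J(y)$. The restriction $J|_{L_\infty}$ is therefore a $*$-automorphism of $L_\infty(0,\infty)$. By \cite[Lemma 8.22]{Tak}, as already used before Corollary \ref{measure continu}, there is an invertible bi-nonsingular measurable transformation $\phi$ with $J(f)=f\circ\phi$ a.e. for every $f\in L_\infty(0,\infty)$. This identity then has to be extended from $L_\infty$ to all of $S(0,\infty)$, and in particular to $L_{p,\infty}$. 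For $0\le f\in S(0,\infty)$, take the truncations $f\wedge n\uparrow f$. Normality of $J$ gives $J(f)=\sup_n J(f\wedge n)=\sup_n (f\wedge n)\circ\phi=f\circ\phi$ a.e. Linearity then extends this to general complex $f$. Alternatively, one can use continuity in measure: $J$ has this property, and so does composition with a bi-nonsingular map, as was shown in the proof of Theorem \ref{Marcin6}.

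The measure identity comes from choosing $g=\chi_B$. Then $J(\chi_B)=\chi_B\circ\phi=\chi_{\phi^{-1}(B)}$, so $m(\phi^{-1}(B))=\int J(\chi_B)\,dm=a^{-p}m(B)$. Combining this with $T(f)=auJ(f)$ gives $T(f)(t)=au(t)f(\phi(t))$ a.e.

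The only delicate point is the extension of $J(f)=f\circ\phi$ from bounded functions to unbounded ones. Everything else is direct bookkeeping. We expect the monotone-approximation argument through normality to be enough here, since Jordan $*$-isomorphisms of $S(\mathcal{M},\tau)$ were shown to be normal in the preliminaries.
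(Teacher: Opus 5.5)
Your proposal is correct and follows the paper's route: specialize Theorem \ref{sigma-finite} to $\mathcal{M}=L_\infty(0,\infty)$, then turn $J$ into a composition operator via \cite[Lemma 8.22]{Tak}. Your normality argument for passing from bounded to unbounded $f$, and the choice $g=\chi_B$ for the measure identity, correctly fill in details the paper leaves implicit. The one thing to drop is your parenthetical ``alternative'': composition with an arbitrary bi-nonsingular map need not be continuous in measure on $(0,\infty)$, and here it is only because $m\circ\phi^{-1}=a^{-p}m$, which you establish afterwards.
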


\chapter{Isometries on  quasi-Banach Marcinkiewicz sequence spaces and weak \texorpdfstring{$\ell_p$}{lp}-ideals, \texorpdfstring{$0< p\le 1$}{0 < p <= 1}}
\chaptermark{Isometries on Marcinkiewicz sequence spaces and weak ideals} \label{s5}
 Let $\Phi:[0,\infty)\to [0,\infty)$ be admissible and $\Phi(\infty):=\lim_{t\to\infty}\Phi(t)=\infty$.
In this chapter, we establish the general form of surjective isometries on  quasi-Banach Marcinkiewicz sequence spaces and weak $\ell_p$ operator ideals, $0<p\le1$. 
We point out that in this chapter, $T$ is not assumed to be positive and hence, approaches used in  Chapter \ref{positive isometries}  are not applicable.

\section{Isometries on quasi-Banach Marcinkiewicz sequence spaces}


The goal of this section is to provide a complete description of a surjective isometry $T$ on a quasi-Banach Marcinkiewicz sequence space $m_\Phi$.
In this section, unless stated otherwise, assume that $T$ is a surjective isometry from $m_\Phi$ onto $m_\Phi$.
We denote by $\operatorname{extr}\left(B_{m_\Phi}\right)$ the set of all extreme points of the unit ball of $m_\Phi$.

\begin{lemma} \label{Sigma}
  Let $\Phi:[0,\infty)\to [0,\infty)$ be admissible and $\Phi(\infty):=\lim_{t\to\infty}\Phi(t)=\infty$.
    Let 
    \[
    \Sigma:=\left\{w\in \operatorname{extr}\left(B_{m_\Phi}\right):w=\left(w_n\right)_{n=0}^\infty,~w_n\ne0 \text{ for all }n\ge0\right\}.\]
    Then, for any nonzero $x\in m_\Phi$, there exist $y,z\in\Sigma$ such that
    $x=\frac{c(y+z)}{2}$ for some $c> 0$.
\end{lemma}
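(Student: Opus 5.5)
The plan is to use the description of extreme points from Theorem~\ref{atomic}, applied to $\mathcal{M}=\ell_\infty$ with the counting measure. A sequence $w\in m_\Phi$ with $\norm{w}_{m_\Phi}=1$ is extreme if and only if $\mu(n;w)=1/l_\Phi(n)$ for all $n\ge0$. Write $v_k:=1/l_\Phi(k)$. Then $v$ is positive and non-increasing, and $v_k\to 0$ because $\Phi(\infty)=\infty$.

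Hence every sequence of the form $w_n=\theta_n v_{\sigma(n)}$, with $\sigma$ a bijection of $\{0,1,2,\dots\}$ and $|\theta_n|=1$, satisfies $\mu(w)=v$. So $w$ is extreme, and all its entries are nonzero, which means $w\in\Sigma$. It therefore suffices to find a constant $c>0$, one bijection $\sigma$ and unimodular phases with
\[
\frac{x_n}{c}=\frac{y_n+z_n}{2},\qquad |y_n|=|z_n|=v_{\sigma(n)} .
\]
Since $m_\Phi$ is a complex space, the midpoint of two complex numbers of equal modulus $v$ can be any complex number of modulus at most $v$. Concretely, write $x_n=|x_n|e^{i\alpha_n}$ (with $\alpha_n=0$ if $x_n=0$), and choose $\beta_n\in[0,\pi/2]$ with $\cos\beta_n=|x_n|/(c\,v_{\sigma(n)})$. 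Then put $y_n=v_{\sigma(n)}e^{i(\alpha_n+\beta_n)}$ and $z_n=v_{\sigma(n)}e^{i(\alpha_n-\beta_n)}$. The whole problem thus reduces to the combinatorial condition
\[
|x_n|\le c\,v_{\sigma(n)}\quad\text{for all } n. \tag{$*$}
\]

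To obtain $(*)$, let $S=\{n:x_n\ne0\}$. Since $\mu(k;x)\le\norm{x}_{m_\Phi}v_k\to0$, the nonzero entries can be enumerated as $n_0,n_1,\dots$ with $|x_{n_k}|=\mu(k;x)$. On $S^c$ the condition $(*)$ is empty, so only $\sigma|_S$ matters, subject to the requirement that $\{0,1,2,\dots\}\setminus\sigma(S)$ has the same cardinality as $S^c$ (so that $\sigma$ can be completed to a bijection). I will split into three cases.

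If $S^c$ is infinite, set $\sigma(n_k)=2k$. The $\Delta_2$-condition gives $l_\Phi(2k)=\Phi((2k+1)_-)\le\Phi(2(k+1)_-)\le C_\Phi\, l_\Phi(k)$. Therefore $\mu(k;x)\le\norm{x}_{m_\Phi}/l_\Phi(k)\le C_\Phi\norm{x}_{m_\Phi}v_{2k}$, so $c=C_\Phi\norm{x}_{m_\Phi}$ works. The odd integers, together with any unused even integers when $S$ is finite, form an infinite set, which is mapped bijectively onto $S^c$.

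If $S^c$ is finite of cardinality $m$ and $S$ is infinite, set $\sigma(n_k)=k+m$ and send $S^c$ onto $\{0,\dots,m-1\}$. For $k\ge m$ we have $l_\Phi(k+m)\le l_\Phi(2k)\le C_\Phi l_\Phi(k)$. For the finitely many indices $k<m$ we only need $c\ge|x_{n_k}|\,l_\Phi(2m)$, so a sufficiently large $c$ handles everything.

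If $S$ and $S^c$ are both finite, the space is finite-dimensional and any bijection works with $c$ large.

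The main obstacle I expect is exactly this bookkeeping: $(*)$ must hold while $\sigma$ remains a genuine bijection, and this is where the $\Delta_2$-condition enters. A secondary point to state explicitly is that $m_\Phi$ is over $\mathbb{C}$, which is what makes the phase-splitting step possible. Over $\mathbb{R}$ the values $(\pm v_i\pm v_j)/2$ form a countable set, and the construction would fail.
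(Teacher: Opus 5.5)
Your proof is correct and follows essentially the same route as the paper: you choose a permutation so that $|x|$ is dominated entrywise by $c/l_\Phi$, using the $\Delta_2$-condition either by interleaving with zeros or by shifting past the finitely many zeros, and then you write each normalized entry as the midpoint of two complex numbers of modulus $1/l_\Phi$ and invoke Theorem~\ref{atomic}. The differences are only cosmetic: your $\sigma$ is the inverse of the paper's; you fold the finite-support case into the interleaving case, whereas the paper places the support first and takes $c=\norm{x}_{m_\Phi}$; and your third case ($S$ and $S^c$ both finite) is vacuous, since the index set is all of $\{0,1,2,\dots\}$.
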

\begin{proof}
 Fix  $0\ne x\in m_\Phi$.
 We claim that there exists a permutation $\sigma$ of the set $\{0,1,2,\dots\}$ such that
    \[|x\left(\sigma(n)\right)|\le \frac c{ l_\Phi(n)}, ~\forall n\ge0\]
    for some $c> 0$, where  $l_{\Phi}(n): =\Phi\left((n+1)_-\right) = \lim_{t\uparrow n+1} \Phi(t), ~\forall n\in \mathbb{N}$.
    \begin{enumerate} [label=(\roman*)]
    \item Assume that the support of $x$ is finite.
 Choose a suitable permutation  $\sigma$ such that    $x(\sigma)$ is of the following form 
   \[  ( a_0,\dots, a_M, 0,0,\dots     ) \]
where $|a_i|=\mu(i;x)$.
In this case, we 
     take $c=\norm{x}_{m_\Phi}$.
    \item Assume that both the support of $x$ and its complement are infinite. Choose a suitable permutation  $\sigma$ such that    $x(\sigma)$ is of the following form 
   \[  ( a_0, 0, a_1,0,\dots, a_i, 0,a_{i+1},0,\dots     ) \]
where $|a_i|=\mu(i;x)$.
  Since $\Phi$ is admissible, it follows that there exists $C_\Phi\ge1$ such that \[\Phi(2t)\le C_\Phi \Phi(t), ~\forall t\ge0.\]
    This implies that
     \[\frac{l_\Phi(2n)}{l_\Phi(n)}
     \le \frac{l_\Phi(2n+1)}{l_\Phi(n)}
     =
    \lim_{t\uparrow n+1}\frac{\Phi(2t)}{\Phi(t)}\le C_\Phi.\]
     Then,
    \[\mu(n;x)=|x(\sigma(2n))|\le \frac{\norm{x}_{m_\Phi}}{l_\Phi(n)}\le \frac{C_\Phi\norm{x}_{m_\Phi}}{l_\Phi(2n)}\]
    and so,
    \[|x(\sigma(2n))|l_\Phi(2n)\le\norm{x}_{m_\Phi}\frac{l_\Phi(2n)}{l_\Phi(n)}\le C_\Phi\norm{x}_{m_\Phi}, ~\forall n\in \mathbb{N}.\]
    In this case, we can take $c=C_\Phi\norm{x}_{m_\Phi}$ and we have 
    \[ | x(\sigma (n))|l_\Phi(n)  \le \sup_{i\ge 0} \mu(i;x)l_\Phi(2i)  \le \sup_{i\ge 0} \frac{\norm{x}_{m_\Phi}}{l_{\Phi}(i)} l_\Phi(2i) \le c.\]
    \item Assume that the complement  of the support of $x$ has $m<\infty$ elements.
   By the $\Delta_2$-condition, there exists $C_\Phi>0$ such that 
   \begin{align} \label{mlen+1}
    \frac{l_\Phi(m+n)}{l_\Phi(n)} \le \frac{l_\Phi(2n+1)}{l_\Phi(n)}=\frac{\lim_{t \uparrow 2n+2}\Phi(t)}{\lim_{t \uparrow n+1}\Phi(t)}=\lim_{t \uparrow n+1}\frac{\Phi(2t)}{\Phi(t)}
    \le C_\Phi
    \end{align}
    when $m\le n+1$.
 Choose a suitable permutation  $\sigma$ such that    $x(\sigma)$ is of the following form 
   \[  ( 0,\dots, 0,a_0,a_1,a_2,\dots     ) ,\]
    where $|a_i|=\mu(i;x)$.
Then, for each $n$ such that $m\le n+1$, we have
\[\mu(n;x)=|x(\sigma(m+n))|\le \frac{\norm{x}_{m_\Phi}}{l_\Phi(n)}\stackrel{\eqref{mlen+1}}{\le}\frac{C_\Phi\norm{x}_{m_\Phi}}{l_\Phi(m+n)}\]
and so,
\[|x(\sigma(m+n))|l_\Phi(m+n)\le\norm{x}_{m_\Phi}\frac{l_\Phi(m+n)}{l_\Phi(n)}\le C_\Phi\norm{x}_{m_\Phi}.\]
This shows that

\begin{align*} 
&~\quad | x(\sigma (n))|l_\Phi(n) \\
&\le \max \left\{   \sup_{  n+1 \ge m} \mu(n ; x) l_\Phi(m+n),   \sup_{  n+1 <  m} \mu(n ; x) l_\Phi(m+n) \right\}  \\
&\le c:=\max\left\{ C_\Phi\norm{x}_{m_\Phi}, \frac{l_{\Phi}(2m)}{l_\Phi(0)}\norm{x}_{m_\Phi} \right\}.
\end{align*}
    \end{enumerate}
    This proves the claim.
    Let $a_n:=\frac{x(\sigma(n))}{cl_\Phi(n)^{-1}}$.
    Observe that $|a_n|\le 1$.
    Then, there exist $b_n,c_n\in \mathbb{T}$ ($|b_n|=|c_n|=1$) such that
    \[a_n=\frac{b_n+c_n}{2}.\]
     In the special case that $|a_n|=1$ for all $n$, we may choose $b_n=c_n=a_n$ for all $n$.
Let
\[y_{\sigma(n)}:=\frac{b_n}{l_\Phi(n)},~ z_{\sigma(n)}:=\frac{c_n}{l_\Phi(n)}.\]
    Note that $x_n=\frac{c(y_n+z_n)}{2}$ and $y:=\left(y_n\right)_{n=0}^\infty, z:=\left(z_n\right)_{n=0}^\infty\in \Sigma$ 
    (see Theorem~\ref{atomic}).
    This implies that $x=\frac{c(y+z)}{2}$.
\end{proof}

The following theorem presents a complete description of surjective isometries of quasi-Banach Marcinkiewicz sequence spaces $m_\Phi$.

\begin{theorem}\label{thm:isom on Marcin seq} 
Let $\Phi:[0,\infty)\to [0,\infty)$ be admissible and $\Phi(\infty):=\lim_{t\to\infty}\Phi(t)=\infty$.
Let $T$ be a  surjective linear mapping on a quasi-Banach Marcinkiewicz sequence space $m_\Phi$.
    Then, $T$ is an isometry if and only if there exist a permutation $\sigma$ of the set $\{0,1,2,\dots\}$ and  a unimodular  sequence $\{u_k\}_{k\ge0}$ such that 
    \[T\left(x\right)\left(n\right)=u_n x\left(\sigma\left(n\right)\right),~\forall x\in m_\Phi.\]
\end{theorem}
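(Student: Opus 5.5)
The ``if'' direction is immediate: a permutation composed with unimodular multiplication preserves $\mu(x)$, so it preserves the quasi-norm $\sup_n l_\Phi(n)\mu(n;x)$. For the ``only if'' direction the plan is to exploit Lemma~\ref{Sigma} and Theorem~\ref{atomic}.

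\textbf{Step 1: $T$ permutes extreme points.} A surjective linear isometry maps the unit ball onto itself. Hence it maps $\operatorname{extr}(B_{m_\Phi})$ bijectively onto itself. By Theorem~\ref{atomic}, these extreme points are exactly the $w$ with $\mu(n;w)=1/l_\Phi(n)$ for all $n$. Such a $w$ has full support, so $\operatorname{extr}(B_{m_\Phi})=\Sigma$.

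\textbf{Step 2: $T$ preserves disjointness.} I would characterize the unit vectors $e_k$ (up to scalars) metrically. The candidate property is: $x\neq 0$ is a multiple of some $e_k$ if and only if $w+\lambda x\in \operatorname{extr}(B_{m_\Phi})$ for some $w\in\operatorname{extr}(B_{m_\Phi})$ and a whole circle of $\lambda$'s. For example, take $w$ with $w_k=1/l_\Phi(n_k)$ at the rank position of $k$. Then $w+\lambda e_k$ with $\lambda=(e^{i\theta}-1)w_k$ stays extreme, because only the phase of one coordinate is changed.

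More robustly, I would define $A(x)=\{\lambda\in\mathbb C: w+\lambda x\in\Sigma \text{ for some } w\in \Sigma\}$ and exploit the rigidity of $\Sigma$. If $w,w+x\in\Sigma$, then $|w_j|=|w_j+x_j|$ forces the same rank ordering, and hence the same moduli coordinatewise when the rearrangement is ``rigid'' (ties only where $l_\Phi$ is constant). This shows that $x$ with at least two nonzero coordinates behaves differently from a singly supported $x$. Concretely, I expect the cleanest invariant to be the following. The set $F(x)=\{w\in\Sigma: w+tx\in\Sigma \ \text{for all small real } t \text{ in some one-sided sense}\}$ degenerates unless $x$ is supported on a set where $|w|$ is constant.

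Because the tie structure of $l_\Phi$ complicates this, I would instead first try the convexity/face approach. The segment $[w,w']$ between two extreme points lies in the sphere $\{\|\cdot\|=1\}$ exactly when $w,w'$ share all moduli except on a tie block. Faces of the unit ball through extreme points are preserved by $T$. The minimal nontrivial ``phase-rotation'' families $\{w+ (e^{i\theta}-1)w_ke_k\}$ then single out the directions $e_k$.

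\textbf{Step 3: conclude.} Once $T(\mathbb C e_k)=\mathbb C e_{\pi(k)}$ for a bijection $\pi$, we get $T e_k=u'_k e_{\pi(k)}$. Since $\|e_k\|=l_\Phi(0)$ for every $k$, the isometry property gives $|u'_k|=1$. To pass from finitely supported vectors to all of $m_\Phi$ (which is nonseparable), I would use Lemma~\ref{Sigma}: every $x$ equals $\frac c2(y+z)$ with $y,z\in\Sigma$. So it suffices to identify $T$ on $\Sigma$. For $w\in\Sigma$, compare $T(w)$ with $w\circ\pi^{-1}$ times the unimodular weights, coordinatewise. Write $w=w_ke_k+(w-w_ke_k)$, and use that $T$ preserves the face structure and coordinate lines, so the coordinate functionals are transported. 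Equivalently, show that $x_k=0$ iff $(Tx)_{\pi(k)}=0$ via $\|x+\lambda e_k\|$ behaviour. This yields $T(x)(n)=u_nx(\sigma(n))$ with $\sigma=\pi^{-1}$.

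The main obstacle is Step~2: giving an isometric-invariant characterization of the coordinate lines in the nonconvex, non-strictly-monotone quasi-norm. The flat pieces of $l_\Phi$ (ties) are the delicate part. I would first handle $\Phi$ with $l_\Phi$ strictly increasing, where the extreme points have pairwise distinct moduli and the phase-rotation argument is clean. I would then treat tie blocks by permutation invariance inside each block.
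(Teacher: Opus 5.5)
Your ``if'' direction and Step~1 are fine apart from one false claim. The condition $\mu(n;w)=1/l_\Phi(n)$ for all $n$ does not force full support: put the values $1/l_\Phi(n)$ on the even coordinates and $0$ on the odd ones. So $\Sigma\subsetneq\operatorname{extr}(B_{m_\Phi})$. What is true is that every coordinate of an extreme point has modulus in the countable set $\{0\}\cup\{1/l_\Phi(n):n\ge0\}$.

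The genuine gap is Steps~2--3. You flag them yourself as the obstacle, and they contain only heuristics, not an argument. The one concrete characterization you propose is false even when $l_\Phi$ has no ties. For any extreme $w$ and any set $S$ of coordinates, $x=w\chi_S$ satisfies $w+(e^{i\theta}-1)x=w\chi_{S^c}+e^{i\theta}w\chi_S\in\operatorname{extr}(B_{m_\Phi})$ for all $\theta$. So a circle of admissible $\lambda$ does not single out the lines $\mathbb{C}e_k$.

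The face heuristic is neither precise nor proved. The ball is not convex: if $w\ge0$ is extreme and $w'$ swaps two coordinates of $w$ with different values, then $\norm{(w+w')/2}_{m_\Phi}>1$. Moreover, ``minimal phase-rotation family'' is never defined as a $T$-invariant notion.

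Finally, even granting $Te_k=u_k'e_{\pi(k)}$, Step~3 does not determine $T$ on this nonseparable space. A general $x$ is not a limit of finitely supported vectors, and ``the coordinate functionals are transported'' is precisely the statement that needs proof.

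The missing idea settles Steps~2 and~3 at once and needs no special treatment of ties. Work directly with $\phi_i=\delta_i\circ T$ and use the countability of the possible moduli.
\begin{itemize}
\item Suppose $\phi_i(e_k)\neq0$. Fix $y\in\Sigma$ and set $y(\lambda)=y+(\lambda-1)y_ke_k\in\Sigma$ for $\lambda\in\mathbb{T}$. Then $\phi_i(y(\lambda))=a+\lambda b$ with $a=\phi_i(y-y_ke_k)$ and $b=y_k\phi_i(e_k)\neq0$.
\item Since $T(y(\lambda))$ is extreme, $\lambda\mapsto|a+\lambda b|$ is a continuous function on the connected set $\mathbb{T}$ with values in a countable set. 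Hence it is constant, which forces $a=0$.
\item Thus $\phi_i=\phi_i(e_k)\delta_k$ on $\Sigma$, and therefore on all of $m_\Phi$ by Lemma~\ref{Sigma}.
\item If instead $\phi_i(e_k)=0$ for every $k$, pick $j$ with $(T^{-1}e_i)_j\neq0$. The same argument applied to $T^{-1}$ gives $\delta_j\circ T^{-1}=(T^{-1}e_i)_j\,\delta_i$. Evaluating at $Te_j$ yields $1=(T^{-1}e_i)_j\,\phi_i(e_j)=0$, a contradiction.
\item The map $i\mapsto k$ is then well defined and bijective because $T$ is bijective. Finally, $|\phi_i(e_k)|=1$ follows from $\norm{e_k}_{m_\Phi}=l_\Phi(0)$.
\end{itemize}
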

\begin{proof}
  The implication $\Leftarrow$ is trivial.

    $(\Rightarrow).$
    Let $\{e_n\}_{n=0}^\infty$ be the standard basis of $c_0$. 
  Fix an arbitrary   $i\in \mathbb{N}$.
    Let the linear functional $\phi_i:m_\Phi\to \mathbb{C}$ be  defined by
    \[\phi_i(x)=\delta_i\circ T(x)=\left(T(x)\right)_i, ~\forall x\in m_\Phi.\]
    \begin{enumerate} [label=(\roman*)]
    \item Assume that $\phi_i(e_k)\ne0$ for some $k\in \mathbb{N}$.
    Fix $y=\left(y_n\right)_{n=0}^\infty\in \Sigma$ and $\lambda\in \mathbb{T}$.
    Let
    \[y(\lambda):=y+(\lambda-1)y_ke_k\in \Sigma, ~a:=\phi_i\left(y-y_ke_k\right), ~b:=y_k\phi_i(e_k)\ne 0.\]
    Note that $\phi_i\left(y(\lambda)\right)=a+\lambda b$.
    Since $T$ is a surjective isometry and $y(\lambda)\in \operatorname{extr}\left(B_{m_\Phi}\right)$, it follows that $T(y(\lambda))\in \operatorname{extr}\left(B_{m_\Phi}\right)$ and so, \[|a+\lambda b|=|\phi_i(y(\lambda))|\in \left\{0,\frac1{l_\Phi(0)},\frac{1}{l_\Phi(1)},\dots\right\}.\]
    Note that $\lambda\mapsto |a+\lambda b|$ is continuous on $\mathbb{T}$ and $\mathbb{T}$ is a connected set.
    This implies that $\{|a+\lambda b|:\lambda\in\mathbb{T}\}$ is a connected set and so, $|a+\lambda b|$ is a constant which is independent of $\lambda$.
    Since $|a+\lambda b|^2=|a|^2+|b|^2+2{\rm Re}\left(a\overline{\lambda b}\right)$ and $b\ne0$, it follows that for the chosen $(i,k)$, we have  
    \[a=\phi_i\left(y-y_ke_k\right)=0.\]
    This shows that 
    \[\phi_i(y)=\phi_i(e_k)y_k=\phi_i(e_k)\delta_k(y), ~\forall y\in\Sigma,\]
    that is, $\phi_i=\phi_i(e_k)\delta_k$ on $\Sigma$.
    Since  any element in $m_\Phi$ is a linear combination of elements in $\Sigma$ (see Lemma \ref{Sigma}),
    we obtain that $\phi_i=\phi_i(e_k)\delta_k$ on $m_\Phi$.
    \item Assume that $\phi_i(e_k)=0$ for any $k\in \mathbb{N}$.
    Let $0\ne y=T^{-1}(e_i)$.
    There exists $j$ such that $y_j\ne0$.
    Let 
    \begin{align} \label{delta}
    \psi_j(x)=\delta_j\circ T^{-1}(x)=\left(T^{-1}(x)\right)_j, ~\forall x\in m_\Phi.
    \end{align}
    Applying (i) to $T^{-1}$ and $j$ (with $\psi_j(e_i)\ne 0$), we obtain that
    \[\psi_j=\psi_j(e_i)\delta_i ~\text{on}~ m_\Phi.\]
    Then,
    \[1=\delta_j\circ T^{-1}\left(T(e_j)\right)\stackrel{\eqref{delta}}{=}\psi_j\left(T(e_j)\right)=\psi_j(e_i)\delta_i\left(T(e_j)\right)=\psi_j(e_i) \phi_i(e_j) =0,\]
    a contradiction.
    \end{enumerate}
    Consequently, for each $i$, there exists $k$ such that 
    \[\phi_i=\phi_i(e_k)\delta_k ~\text{on}~ m_\Phi.\]
    That is,
    \begin{align}\label{Txiek}
    T(x)(i)=\phi_i(e_k)x_k, ~\forall x\in m_\Phi.
    \end{align}
    Let the mapping $\sigma:i\mapsto k$.
    We claim that $\sigma$ is well-defined.
    Indeed, otherwise, there exists $m\ne k$ such that
    \[T(x)(i)=\phi_i(e_k)x_k=\phi_i(e_m)x_m, ~\forall x\in m_\Phi.\]
    Choosing  $x=e_k$,   $0\ne\phi_i(e_k)=\phi_i(e_k)(e_k)_k=\phi_i(e_m)(e_k)_m=0$, a contradiction.

    Next, we will prove that $\sigma$ is a bijection.
    Since $T$ is a bijection, it follows that $T(e_k)\ne0$ and so, there exists $i$ such that $(T(e_k))_i\ne0$.
    This implies that $\sigma$ is a surjection.
    It suffices to prove that $\sigma$ is an injection on $\mathbb{N}$.
    Assume that $\sigma(i)=\sigma(j)=k$.
By \eqref{Txiek}, we have 
    \[T(x)(i)=\phi_i(e_k)x_k, ~T(x)(j)=\phi_j(e_k)x_k, ~\forall x\in m_\Phi.\]
    Choose $x$ such that $T(x)=e_i$.
    In this case, $\phi_i(e_k)x_k=1$ and so, $x_k\ne0$.
    This implies that $e_i(j)=T(x)(j)\ne0$ and hence, $i=j$.
    In particular, $\sigma$ is an injection.
    
    Therefore, $\sigma$ is a permutation of the set $\{0,1,2,\dots\}$.
    Moreover, 
    \[T(x)(n)=\phi_n(e_{\sigma(n)})x(\sigma(n)), ~\forall x\in m_\Phi.\]
    Since $T$ is an isometry, it follows that
\[l_\Phi(0)=\norm{e_{\sigma(n)}}_{m_\Phi}=\norm{T(e_{\sigma(n)})}_{m_\Phi}=|\phi_n(e_{\sigma(n)})|l_\Phi(0)\]
and so, $|\phi_n(e_{\sigma(n)})|=1$.
    Let $u_n:=\phi_n\left(e_{\sigma(n)}\right)$,
    which completes the proof.
\end{proof}

Applying Theorem~\ref{thm:isom on Marcin seq} to $\Phi(t)=t^{1/p}$, we have the following result.
\begin{corollary} 
    Assume that $0<p<\infty$. 
    Let $T$ be a  surjective linear operator on a weak $\ell_p$-sequence space $\ell_{p,\infty}$. 
Then, $T$ is an isometry if and only if there exist a permutation $\sigma$ of the set $\{0,1,2,\dots\}$ and  a  unimodular sequence $\{u_k\}_{k\ge0}$ such that 
    \[T\left(x\right)\left(n\right)=u_n x\left(\sigma\left(n\right)\right),~\forall x\in \ell_{p,\infty}.\]
\end{corollary}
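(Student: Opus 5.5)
This is the specialization of Theorem~\ref{thm:isom on Marcin seq} to $\Phi(t)=t^{1/p}$, so the plan is to check that this $\Phi$ meets the hypotheses of that theorem and then to identify the spaces. For every $0<p<\infty$ the function $\Phi(t)=t^{1/p}$ is non-decreasing and continuous (in particular right-continuous). It satisfies the $\Delta_2$-condition with $C_\Phi=2^{1/p}$, it vanishes only at $0$, and $\Phi(\infty)=\infty$. Hence $\Phi$ is admissible in the sense of Chapter~\ref{intro} and satisfies all assumptions of Theorem~\ref{thm:isom on Marcin seq}.

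Next we identify $m_\Phi$ with $\ell_{p,\infty}$ isometrically. By continuity of $\Phi$,
\[l_\Phi(n)=\lim_{t\uparrow n+1}t^{1/p}=(n+1)^{1/p},\]
so
\[\norm{x}_{m_\Phi}=\sup_{n\ge0}(n+1)^{1/p}\mu(n;x)=\norm{x}_{\ell_{p,\infty}}.\]
The two spaces therefore coincide as sets and carry the same quasi-norm. A surjective linear map $T$ on $\ell_{p,\infty}$ is thus a surjective linear map on $m_\Phi$, and it is an isometry for one quasi-norm if and only if it is an isometry for the other.

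Applying Theorem~\ref{thm:isom on Marcin seq} gives both directions at once. If $T$ is an isometry, there exist a permutation $\sigma$ and a unimodular sequence $\{u_k\}_{k\ge0}$ with $T(x)(n)=u_nx(\sigma(n))$. Conversely, every map of this form preserves $\mu(x)$ and is therefore an isometry.

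No step here is a real obstacle. The only point needing care is the left limit in $l_\Phi$, and it causes no difficulty because $\Phi$ is continuous.
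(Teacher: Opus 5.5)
Your proposal is correct and follows the paper's route exactly: the paper obtains this corollary by applying Theorem~\ref{thm:isom on Marcin seq} to $\Phi(t)=t^{1/p}$. You also verify that this $\Phi$ is admissible with $\Phi(\infty)=\infty$, and that $l_\Phi(n)=(n+1)^{1/p}$, so $m_\Phi=\ell_{p,\infty}$ with identical quasi-norms; these are precisely the routine checks the paper leaves implicit.
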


\section{Isometries on weak \texorpdfstring{$\ell_p$}{lp}-ideals}


The main purpose of this section is to characterize a surjective isometry $T$ on $\mathcal{C}_{p,\infty}$, $0<p\le1$.
Throughout this section, unless stated otherwise, we always assume that $T$ is a surjective isometry from $\mathcal{C}_{p,\infty }$ onto $\cC_{p,\infty}$.

We start with the following useful  observation.

\begin{prop}
       For each $x\in \ell_{p,\infty} $, $0<p\le1 $, there exist  $z,w$   such that 
    \[x=z-w,\]
    where $\mu\left(n;z\right)=\mu\left(n;w\right)=c_1\left(1+n\right)^{-1/p}$,   $c_1>\frac{1}{2}\norm{x}_{\ell_{p,\infty} }$. 
\end{prop}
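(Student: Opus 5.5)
The plan is to mimic the proof of Lemma~\ref{Sigma}, but with a sharper rearrangement so that the constant can be taken arbitrarily close to $\frac12\norm{x}_{\ell_{p,\infty}}$ rather than $C_\Phi\norm{x}_{\ell_{p,\infty}}$. Fix $c_1>\frac12\norm{x}_{\ell_{p,\infty}}$; if $x=0$, take any $z$ with $\mu(n;z)=c_1(n+1)^{-1/p}$ and set $w=z$. Otherwise choose $\delta>0$ with $(1+\delta)^{1/p}\norm{x}_{\ell_{p,\infty}}\le 2c_1$. The key step is to find a permutation $\sigma$ of $\{0,1,2,\dots\}$ such that
\[
|x(\sigma(n))|\le 2c_1(n+1)^{-1/p},\quad \forall n\ge0 .
\]

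Given such a $\sigma$, set $a_n:=x(\sigma(n))\,c_1^{-1}(n+1)^{1/p}$, so that $|a_n|\le 2$. Any complex number of modulus at most $2$ is a sum of two unimodular numbers, so we can write $a_n=b_n+d_n$ with $|b_n|=|d_n|=1$. Define
\[
z(\sigma(n)):=c_1b_n(n+1)^{-1/p},\qquad w(\sigma(n)):=-c_1d_n(n+1)^{-1/p}.
\]
Then $x=z-w$. Since $\sigma$ is a bijection and the moduli $c_1(n+1)^{-1/p}$ are decreasing and each used exactly once, we get $\mu(n;z)=\mu(n;w)=c_1(n+1)^{-1/p}$.

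To build $\sigma$, let $(a_j)_j$ list the nonzero entries of $x$ in decreasing modulus, so that $|a_j|=\mu(j;x)\le\norm{x}_{\ell_{p,\infty}}(j+1)^{-1/p}$. Let $Z$ be the set of indices where $x$ vanishes. We place the $a_j$ in order into the positions of $\mathbb{N}$, and we insert the indices of $Z$ only at a sparse set of positions $N_1<N_2<\cdots$.

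\begin{enumerate}
\item If the support of $x$ is finite, put the $a_j$ first and all of $Z$ afterwards. Then $a_j$ sits at position $j$, and the bound holds trivially.
\item If the support of $x$ is infinite, choose the $N_k$ so sparse that $k\le\delta(N_k-k+1)$ (for instance $N_k$ growing geometrically with a large ratio, or, if $Z$ is finite with $m$ elements, put all of them after position $\lceil m/\delta\rceil$). If $a_j$ lands at position $n$, at most $k$ zero-slots precede it with $N_k<n$. Hence $n+1\le (j+1)+k\le (1+\delta)(j+1)$, and so
\[
|a_j|\le \norm{x}_{\ell_{p,\infty}}(1+\delta)^{1/p}(n+1)^{-1/p}\le 2c_1(n+1)^{-1/p}.
\]
\end{enumerate}

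Zero entries satisfy the required bound trivially, and $\sigma$ is a bijection because each index is used exactly once, with every index of $Z$ eventually placed.

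The main obstacle is exactly this placement of the zero coordinates when both the support and its complement are infinite. The naive interleaving used in Lemma~\ref{Sigma} loses a factor $2^{1/p}$, which is fatal when $c_1$ is close to $\frac12\norm{x}_{\ell_{p,\infty}}$. The sparse-insertion estimate $k\le\delta(j+1)$ is the point to check carefully: it must hold for every $a_j$ lying after the $k$-th zero-slot, and making $N_k$ grow fast enough relative to $k/\delta$ handles this.
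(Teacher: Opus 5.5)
Your proof is correct, but it is organized differently from the paper's.

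\textbf{What is the same.} The pointwise step is the same idea in both arguments. The paper takes $a_n$ on the intersection of the circle $|a|=c_1(n+1)^{-1/p}$ with the circle $|a-c_1(n+1)^{-1/p}|=\mu(n;x)$ and then rotates. That is exactly your splitting of a number of modulus at most $2$ into two unimodular ones.

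\textbf{Where the routes differ.} The difference is the indexing. The paper never builds a permutation of $\{0,1,2,\dots\}$. It writes $x=\sum_n\lambda_ne_n$ along a sequence of pairwise disjoint coordinate vectors with $|\lambda_n|=\mu(n;x)$; when the support is infinite, these are just the support of $x$ listed by decreasing modulus. It then builds $z$ and $w$ on the same coordinates and lets them vanish elsewhere. The decreasing rearrangement of a null sequence with infinite support does not see its zero coordinates, so $\mu(z)=\mu(w)=c_1(n+1)^{-1/p}$ follows at once. The only estimate needed is $\mu(n;x)\le\norm{x}_{\ell_{p,\infty}}(n+1)^{-1/p}\le 2c_1(n+1)^{-1/p}$, with no loss.

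\textbf{Your main obstacle is self-imposed.} The difficulty you single out, placing the zero coordinates of $x$ without losing a factor, only arises because you insist that $\sigma$ be a bijection of all of $\{0,1,2,\dots\}$. In other words, you force $z$ and $w$ to be nonzero at every coordinate. Your sparse-insertion argument does handle this correctly. If $a_j$ lies after exactly $k$ zero-slots, then $j\ge N_k-k+1$, so $k\le\delta(j+1)$ and $n+1\le(1+\delta)(j+1)$.

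\textbf{What each route buys.} Your route gives $c_1^{-1}z$ and $c_1^{-1}w$ full support. Thus both lie in the set $\Sigma$ of Lemma~\ref{Sigma} for $\Phi(t)=t^{1/p}$, and the constant there can be taken to be anything above $\norm{x}_{\ell_{p,\infty}}$ instead of $C_\Phi\norm{x}_{\ell_{p,\infty}}$. That extra property is not needed for this proposition or for its use in Corollary~\ref{linear combination}. The price is that your argument genuinely uses the strict inequality $c_1>\frac12\norm{x}_{\ell_{p,\infty}}$, through $\delta>0$. The paper's construction only needs $\mu(n;x)\le 2c_1(n+1)^{-1/p}$, so it also covers $c_1=\frac12\norm{x}_{\ell_{p,\infty}}$.
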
  
  
\begin{proof}
For any $n\in \mathbb{N}$, let $e_n$ denote the sequence with $1$ in the $n$-th entry and zeros elsewhere.
There exists  a mutually orthogonal sequence  $\{e_n\}_{n=0}^\infty$ of $\ell_\infty$ such that 
    \begin{align}\label{xform}
        x=\sum_{n=0}^\infty \lambda_n e_{n},
    \end{align}
    where $|\lambda_n|=\mu\left(n;x\right)  <2c_1\left(1+n\right)^{-1/p}$ for some $c_1>0$. 
    Let $n\ge0$ be given. 
    Denote by $a_n$ a point of intersection of the circle centered at $0$ with radius $c_1\left(n+1\right)^{-1/p}$ and the circle centered at $c_1\left(n+1\right)^{-1/p}$ with radius $ \mu(n;x)$. 
    Then, 
    \[|a_n|=c_1\left(n+1\right)^{-1/p}, \quad|a_n-c_1\left(n+1\right)^{-1/p}|= \mu(n;x).\]
    Let $\theta_n\in [0,2\pi)$ be such that 
    \begin{align}\label{rotate}
        e^{i\theta_n}\left(a_n-c_1\left(n+1\right)^{-1/p}\right)= \lambda_n.
    \end{align}
    Set 
    \[z=\sum_{n=0}^\infty e^{i\theta_n}a_ne_{n},~ w=\sum_{n=0}^\infty e^{i\theta_n}c_1\left(n+1\right)^{-1/p} e_{n}.\] 
    Then, we have 
    \begin{align*}
    z-w
    =\sum_{n=0}^\infty e^{i\theta_n}\left(a_n-c_1\left(n+1\right)^{-1/p}\right) e_{n}
    \stackrel{\eqref{rotate}}{=}\sum_{n=0}^\infty \lambda_n e_{n}
    \stackrel{\eqref{xform}}{=}x
    \end{align*}
and 
$\mu(n;z)=|e^{i\theta_n}a_n|=c_1\left(n+1\right)^{-1/p}=\mu(n;w),~n\ge0$. 
\end{proof}
 
\begin{corollary} \label{linear combination}  
Let $0<p\le1 $.  
Let $\mathcal{M} $   be a semifinite atomic von Neumann algebra with all atoms having the same trace equal to one. 
     For each $x\in L_{p,\infty}(\cM,\tau) $, there exist  $z,w$   such that 
    \[x=z-w,\]
    where $\mu\left(n;z\right)=\mu\left(n;w\right)=c\left(1+n\right)^{-1/p}, ~\forall 0\le n<\tau(\mathbf{1})$,   $c>\frac{1}{2}\norm{x}_{L_{p,\infty}(\cM,\tau) }$. 
\end{corollary}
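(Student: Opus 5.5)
The plan is to transfer the argument of the preceding Proposition from $\ell_{p,\infty}$ to the atomic algebra $\mathcal{M}$. The tool is a Schmidt-type decomposition of $x$ along a family of pairwise orthogonal rank-one partial isometries built from atoms. Since all atoms have trace one and $x\in L_{p,\infty}(\cM,\tau)$ is $\tau$-compact (because $\mu(t;x)\to 0$), I would take the polar decomposition $x=u|x|$. I would then diagonalize $|x|$ through its spectral projections: each nonzero eigenvalue of $|x|$ has a $\tau$-finite spectral projection, and that projection is a finite sum of orthogonal atoms. This gives mutually orthogonal atoms $\{p_n\}_{n<\tau(\mathbf{1})}$ with
\[
|x|=\sum_n \mu(n;x)p_n \quad\text{(so-convergent)}.
\]
Here I complete the family with atoms of the kernel of $|x|$, which carry the zero singular values, so that the index set matches $\{n:0\le n<\tau(\mathbf{1})\}$. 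Setting $v_n:=up_n$ when $\mu(n;x)>0$ gives partial isometries with $v_n^*v_n=p_n$, pairwise orthogonal ranges $q_n=v_nv_n^*$ (again atoms), and $x=\sum_n \mu(n;x)v_n$.

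On the kernel part I need partial isometries $v_n$ with initial projection $p_n$ and final atoms $q_n$ orthogonal to $l(x)$ and to each other. I expect this to be the main (though mild) obstacle, because it requires a bijection between the atoms in $n(x)$ and the atoms in $\mathbf{1}-l(x)$. In the infinite-dimensional case I would sidestep it: the construction only needs, for each $n$, some element $v_n$ with $\mu(v_n)=\chi_{[0,1)}$, and the $v_n$ must be jointly disjoint in the sense that both their left supports and their right supports are pairwise orthogonal. If $\mathbf{1}-l(x)$ has fewer atoms than $n(x)$, I would simply allow $v_n$ for $n$ in the kernel part to be arbitrary disjoint rank-one partial isometries, or else use $v_n=p_n$. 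Using $v_n=p_n$ risks breaking the orthogonality of the ranges, but that can be repaired by taking $v_n:=w p_n$ for a partial isometry $w$ from $n(x)$ onto $\mathbf{1}-l(x)$. Such a $w$ exists because $\tau(n(x))=\tau(\mathbf{1}-l(x))$ in the finite case (equivalence of finite projections in a type I factor summand). In the infinite case one can fall back on $v_n=p_n$ at the cost of checking disjointness directly. Alternatively, I would follow the paper's convention that the zero coefficients are handled simply by choosing any disjoint family, exactly as in the sequence case, where $e_n$ runs over the full basis.

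Next I would fix $c>\frac12\norm{x}_{L_{p,\infty}(\cM,\tau)}$, so that $\mu(n;x)\le \norm{x}(n+1)^{-1/p}<2c(n+1)^{-1/p}$. I then copy the planar construction verbatim. Let $a_n$ be an intersection point of the circle $|a|=c(n+1)^{-1/p}$ with the circle $|a-c(n+1)^{-1/p}|=\mu(n;x)$; it exists by the triangle inequality, since $0\le\mu(n;x)\le 2c(n+1)^{-1/p}$. Choose $\theta_n$ with $e^{i\theta_n}(a_n-c(n+1)^{-1/p})=\mu(n;x)$, and set
\[
z:=\sum_n e^{i\theta_n}a_n v_n,\qquad w:=\sum_n e^{i\theta_n}c(n+1)^{-1/p}v_n .
\]

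It remains to check three things. First, both series converge in measure, since their coefficients tend to $0$ and the terms are disjoint. Second, $z-w=\sum_n\mu(n;x)v_n=x$. Third, because the $v_n$ have pairwise orthogonal left and right supports, $|z|=\sum_n|a_n|p_n$ and $|w|=\sum_n c(n+1)^{-1/p}p_n$. Both have the decreasing eigenvalue sequence $c(n+1)^{-1/p}$ with multiplicity one on atoms of trace one, so $\mu(n;z)=\mu(n;w)=c(1+n)^{-1/p}$ for all $0\le n<\tau(\mathbf{1})$, and both lie in $L_{p,\infty}(\cM,\tau)$.
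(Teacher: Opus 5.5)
Your route is the one the paper intends. The corollary is stated without a separate proof, as the transfer of the two-circle construction of the preceding Proposition along a decomposition $x=\sum_n\mu(n;x)v_n$ into rank-one partial isometries with pairwise orthogonal left and right supports. Your Schmidt decomposition, the choice of $a_n,\theta_n$, and the computation of $|z|$ and $|w|$ carry this out correctly.

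The one step you leave unsettled is the completion for the zero singular values, and one of your fallbacks is wrong. Taking arbitrary atoms $p_n\le n(x)$ with $v_n=p_n$ can destroy disjointness. In $B(H)$ take $x=e_1\otimes e_0$ and the kernel atom $e_1\otimes e_1$. Then $\alpha\, e_1\otimes e_0+\beta\, e_1\otimes e_1=e_1\otimes(\bar\alpha e_0+\bar\beta e_1)$ has rank one, so $z$ would not have the prescribed singular values.

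The repair is easy.
\begin{itemize}
\item Completion is needed only when $\tau(r(x))=N<\infty$. Indeed, $\tau(r(x))=\infty$ forces $\mu(n;x)>0$ for every $n$, so your worry about $\mathbf{1}-l(x)$ having fewer atoms than $n(x)$ never matters.
\item When $\tau(r(x))<\infty$, $x$ has finite rank in each type I factor summand of $\cM$. Hence $n(x)=\mathbf{1}-r(x)\sim\mathbf{1}-l(x)$ summand by summand, and your partial isometry $w$ (with $v_n:=wp_n$) exists in general, not only when $\tau(\mathbf{1})<\infty$.
\item Alternatively, when $\tau(\mathbf{1})=\infty$, take the $v_n$ to be pairwise orthogonal atoms under $\mathbf{1}-(l(x)\vee r(x))$. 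This projection has infinite trace because $\tau(l(x)\vee r(x))\le 2N$.
\end{itemize}
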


Let the inner product $\langle \cdot,\cdot \rangle_2$ on $\cC_2$ be defined by
\[\langle x,y\rangle_2=\tau(y^*x), ~\forall x,y \in \mathcal{C}_2.\]
We denote by $\operatorname{extr}\left(B_{\mathcal{C}_{p,\infty}}\right)$ the set of all extreme points of the unit ball of $\cC_{p,\infty}$.

Let $\varphi$ and $\psi$ be two given vectors in $H$. 
The linear operator $\varphi \otimes \psi$ is defined by
$$\left(\varphi \otimes \psi\right)h=\left(h,\psi\right)\varphi, ~ \forall h\in H.$$

\begin{lemma} \label{constant}
     Assume that $0<p\le1$.
     Let $e_0,f_0, e_1,f_1\in H$ be such that $\norm{e_0}_H\norm{f_0}_H=\norm{e_1}_H\norm{f_1}_H=1$ and $\left(e_0\otimes f_0\right)\perp \left(e_1\otimes f_1\right)$.
     Then, 
     \[\norm{T(e_0\otimes f_0)}_2=\norm{T(e_1\otimes f_1)}_2<\infty\]
     and
     \[\left\langle T\left(e_0\otimes f_0\right),T\left(e_1\otimes f_1\right)\right\rangle_2=0.\]
     In particular, 
     \[\norm{T(R)}_2=c  ~\text{for any rank-one operator} ~R ~\text{with} ~\norm{R}_{B(H)}=1,\]
     for some constant $c>0$.
\end{lemma}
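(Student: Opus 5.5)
The plan is to exploit the fact that $T$ maps $\operatorname{extr}\left(B_{\mathcal{C}_{p,\infty}}\right)$ onto itself, which holds because $T$ is a surjective linear isometry. By Corollary~\ref{atom extremes}, every extreme point $z$ satisfies $\mu(n;z)=(n+1)^{-1/p}$, and therefore
\[
\norm{z}_2^2=\sum_{n\ge0}(n+1)^{-2/p}=:K<\infty,
\]
since $2/p\ge 2>1$. Thus $\norm{\cdot}_2$ is constant on the set of extreme points. Note also that $\mathcal{C}_{p,\infty}\subset\mathcal{C}_2$ for $p\le1$, so all the $\langle\cdot,\cdot\rangle_2$ below are well defined.

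First we normalize the data. Rescale so that $\norm{e_i}_H=\norm{f_i}_H=1$, and put $R_i:=e_i\otimes f_i$. The hypothesis $R_0\perp R_1$ means $e_0\perp e_1$ and $f_0\perp f_1$. Since $H$ is infinite-dimensional, we can complete these to orthonormal sequences $\{e_n\}_{n\ge0}$ and $\{f_n\}_{n\ge0}$. Define the tail
\[
y:=\sum_{n\ge2}(n+1)^{-1/p}e_n\otimes f_n\in\mathcal{C}_{p,\infty},
\]
and set $A:=T(R_0)$, $B:=T(R_1)$ and $C:=T(y)$, all of which lie in $\mathcal{C}_2$.

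Next we test $T$ on two families of extreme points. For $\lambda_0,\lambda_1\in\mathbb{T}$, let
\[
x_\lambda:=\lambda_0R_0+\lambda_12^{-1/p}R_1+y,\qquad x'_\lambda:=\lambda_02^{-1/p}R_0+\lambda_1R_1+y.
\]
Both have singular values exactly $(n+1)^{-1/p}$, so both are extreme points, and hence so are their images under $T$. Consequently
\[
\norm{\lambda_0A+\lambda_12^{-1/p}B+C}_2^2=K\qquad\text{for all }\lambda_0,\lambda_1\in\mathbb{T}.
\]
We expand the left-hand side. It is a trigonometric polynomial in $(\lambda_0,\lambda_1)$ whose only non-constant characters are $\lambda_0\overline{\lambda_1}$, $\lambda_0$ and $\lambda_1$ (together with their conjugates). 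Their coefficients are $2^{-1/p}\langle A,B\rangle_2$, $\langle A,C\rangle_2$ and $2^{-1/p}\langle B,C\rangle_2$, respectively. Since the polynomial is constant, all of these coefficients vanish; in particular $\langle A,B\rangle_2=0$. What remains is
\[
\norm{A}_2^2+2^{-2/p}\norm{B}_2^2+\norm{C}_2^2=K.
\]
Repeating the argument with $x'_\lambda$, which has the same cross terms, gives
\[
2^{-2/p}\norm{A}_2^2+\norm{B}_2^2+\norm{C}_2^2=K.
\]
Subtracting the two identities yields $(1-2^{-2/p})\left(\norm{A}_2^2-\norm{B}_2^2\right)=0$. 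Since $1-2^{-2/p}\ne0$, we conclude $\norm{A}_2=\norm{B}_2$, and this common value is finite because $A,B\in\mathcal{C}_2$.

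For the final assertion, take two rank-one operators $R=e\otimes f$ and $R'=e'\otimes f'$ of operator norm $1$. Because $H$ is infinite-dimensional, we can choose unit vectors $e''\perp e,e'$ and $f''\perp f,f'$, so that $R'':=e''\otimes f''$ is disjoint from both $R$ and $R'$. Applying the first part twice gives $\norm{T(R)}_2=\norm{T(R'')}_2=\norm{T(R')}_2=:c$. Finally, $c>0$ because $T$ is injective.

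The step I expect to be the main obstacle is the cancellation of cross terms. One must ensure that the tail $y$ is the same element in both families and that it is handled through the single fixed vector $C=T(y)$. This sidesteps the fact that the series $\sum_n\lambda_n(n+1)^{-1/p}R_n$ does not converge in quasi-norm, so the argument never needs linearity of $T$ on an infinite sum. The remaining checks are routine: that the characters $\lambda_0\overline{\lambda_1}$, $\lambda_0$, $\lambda_1$ are linearly independent on $\mathbb{T}^2$, and that a rank-one operator of operator norm $1$ can be written as $e\otimes f$ with unit vectors $e,f$.
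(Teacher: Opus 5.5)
Your proposal is correct and follows essentially the same route as the paper. You fix a tail $\sum_{n\ge2}(n+1)^{-1/p}e_n\otimes f_n$ and test $T$ on the resulting extreme points with unimodular coefficients on $R_0,R_1$. Since $\norm{\cdot}_2$ is constant on $\operatorname{extr}\left(B_{\mathcal{C}_{p,\infty}}\right)$, varying those coefficients forces the cross terms to vanish, and comparing with the swapped extreme point gives $\norm{T(R_0)}_2=\norm{T(R_1)}_2$. Your passage through a third rank-one operator disjoint from both $R$ and $R'$ spells out the final assertion, which the paper leaves implicit.
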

\begin{proof}
    Let 
    \[x=\sum_{n=2}^\infty (n+1)^{-1/p}e_n\otimes f_n,\]
    where $\{e_n\otimes f_n\}_{n=2}^\infty$ is a mutually orthogonal sequence of rank-one operators with uniform norm $1$ satisfying $\left(e_n\otimes f_n \right)\perp \left(e_i\otimes f_i\right)$ for all $n\ge2$ and $i=0,1$.
    For any $\lambda,\eta\in \mathbb{T}$, let
    \[y_{\lambda,\eta}:=x+\lambda e_0\otimes f_0+\eta 2^{-1/p} e_1\otimes f_1 \in \operatorname{extr}\left(B_{\mathcal{C}_{p,\infty}}\right).\]
    Since $T$ is a surjective isometry, it follows that $T(y_{\lambda,\eta})\in \operatorname{extr}\left(B_{\mathcal{C}_{p,\infty}}\right)$.
    Then,
    \[\left\langle T(y_{\lambda,\eta}),T(y_{\lambda,\eta})\right\rangle_2=\norm{T(y_{\lambda,\eta})}_2^2=\sum_{n=0}^\infty (n+1)^{-2/p}.\]
    That is,
    \begin{align*} 
    &\quad \bigl\langle T(x)+\lambda T(e_0\otimes f_0)+\eta 2^{-1/p}T(e_1\otimes f_1),\\
    &\quad \quad T(x)+\lambda T(e_0\otimes f_0)+\eta 2^{-1/p}T(e_1\otimes f_1)\bigr\rangle_2\\
    &=\norm{T(x)}_2^2+\norm{T(e_0\otimes f_0)}_2^2+\norm{2^{-1/p}T(e_1\otimes f_1)}_2^2+2{\rm Re}\left\langle  T(x),\lambda T(e_0\otimes f_0) \right\rangle_2\\
    &~+2{\rm Re}\left\langle  T(x),\eta 2^{-1/p} T(e_1\otimes f_1) \right\rangle_2+2{\rm Re}\left\langle \lambda T(e_0\otimes f_0),\eta2^{-1/p}T(e_1\otimes f_1) \right\rangle_2.
    \end{align*}
    Since $\lambda, \eta\in \mathbb{T}$ are arbitrary,
    it follows that
    \begin{align} \label{orthogonal}
    \left\langle T(x),T\left(e_0\otimes f_0\right)\right\rangle_2&=\left\langle T(x),T\left(e_1\otimes f_1\right)\right\rangle_2\nonumber\\
    &=\left\langle T\left(e_0\otimes f_0\right),T\left(e_1\otimes f_1\right)\right\rangle_2=0.
    \end{align}
    Since \[\begin{aligned}
&\quad\norm{T(x)+T\left(e_0\otimes f_0\right)+T\left(2^{-1/p}e_1\otimes f_1\right)}_2\\
&=\norm{T(x)+T\left(e_1\otimes f_1\right)+T\left(2^{-1/p}e_0\otimes f_0\right)}_2,\end{aligned}\]
it follows from \eqref{orthogonal} that
    \[\norm{T\left(e_0\otimes f_0\right)}_2^2+2^{-2/p}\norm{T\left(e_1\otimes f_1\right)}_2^2=\norm{T\left(e_1\otimes f_1\right)}_2^2+2^{-2/p}\norm{T\left(e_0\otimes f_0\right)}_2^2.\]
    This shows that
    \[\norm{T\left(e_0\otimes f_0\right)}_2=\norm{T\left(e_1\otimes f_1\right)}_2.\]
\end{proof}

\begin{lemma} \label{exten}
    Assume that $0<p\le 1$.
    For any $x,y\in\mathcal{F}\left(B(H),\tau\right)$, 
    \[\left\langle T(x),T(y)\right\rangle_2=c^2\left\langle x,y\right\rangle_2,\]
    where $c$ is as defined in Lemma \ref{constant}.

    In particular,  the operator 
    \begin{align} \label{u0}
    U_0x:=c^{-1}T(x), ~\forall x\in \mathcal{F}\left(B(H),\tau\right),
    \end{align}
   can be extended uniquely to an isometry $U_1:\mathcal{C}_2\to \mathcal{C}_2$ (here, we do not claim that $U_1$ is surjective).
\end{lemma}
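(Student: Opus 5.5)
The plan is to reduce everything to rank-one operators and then use sesquilinearity. Every $x\in\mathcal{F}(B(H),\tau)$ has a Schmidt decomposition $x=\sum_{k=0}^{N}s_k\, e_k\otimes f_k$ with orthonormal $\{e_k\}$, orthonormal $\{f_k\}$ and $s_k>0$; equivalently, $x=\sum_{k}s_k R_k$ with $R_k=e_k\otimes f_k$ pairwise disjoint rank-one operators of uniform norm one. By linearity of $T$ and sesquilinearity of $\langle\cdot,\cdot\rangle_2$, it suffices to show
\[
\langle T(R),T(S)\rangle_2=c^2\langle R,S\rangle_2
\]
for all rank-one operators $R,S$ of norm one. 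Once this is known for arbitrary such $R,S$, not necessarily disjoint, writing $x=\sum s_kR_k$ and $y=\sum t_lS_l$ gives the identity for all finite-rank $x,y$.

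Lemma \ref{constant} already covers two cases. If $R=S$, then $\langle T(R),T(R)\rangle_2=c^2=c^2\langle R,R\rangle_2$. If $R\perp S$, both sides vanish. For general $R=e\otimes f$ and $S=e'\otimes f'$ with unit vectors, decompose $e'=\alpha e+\beta e_1$ and $f'=\gamma f+\delta f_1$ with $e_1\perp e$ and $f_1\perp f$. Then
\[
S=\alpha\bar\gamma\, e\otimes f+\alpha\bar\delta\, e\otimes f_1+\beta\bar\gamma\, e_1\otimes f+\beta\bar\delta\, e_1\otimes f_1 .
\]
Only $e\otimes f$ and $e_1\otimes f_1$ are disjoint from each other; the mixed terms $e\otimes f_1$ and $e_1\otimes f$ share a range or a support with $R$. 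So we need $\langle T(e\otimes f),T(e\otimes f_1)\rangle_2=0$, and the analogous statement on the other side, for such "half-orthogonal" pairs.

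This half-orthogonal case is the main obstacle. For it, use that $e\otimes(f+\lambda f_1)/\sqrt2$ is again a norm-one rank-one operator for every $\lambda\in\mathbb{T}$. Lemma \ref{constant} then gives
\[
\norm{T\bigl(e\otimes f+\bar\lambda\, e\otimes f_1\bigr)}_2^2=2c^2 .
\]
Expanding, and using $\norm{T(e\otimes f)}_2=\norm{T(e\otimes f_1)}_2=c$, yields ${\rm Re}\bigl(\lambda\langle T(e\otimes f),T(e\otimes f_1)\rangle_2\bigr)=0$ for all $\lambda\in\mathbb{T}$, so this inner product vanishes. The same argument with $(e+\lambda e_1)\otimes f$ handles the other side. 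Combining the four terms gives $\langle T(R),T(S)\rangle_2=c^2\alpha\bar\gamma\cdot\overline{1}$ up to conjugation conventions, and this equals $c^2\,\tau(S^*R)=c^2\langle R,S\rangle_2$; this is a routine check.

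Finally, the map $U_0=c^{-1}T$ preserves the $\mathcal{C}_2$ inner product on $\mathcal{F}(B(H),\tau)$, which is $\norm{\cdot}_2$-dense in $\mathcal{C}_2$. Hence $U_0$ is a linear $\norm{\cdot}_2$-isometry from a dense subspace into $\mathcal{C}_2$. Here $T(x)\in\mathcal{C}_{p,\infty}\subset\mathcal{C}_2$ since $p\le1<2$, so the norms are finite, as already ensured in Lemma \ref{constant}. It therefore extends uniquely by continuity to an isometry $U_1:\mathcal{C}_2\to\mathcal{C}_2$.
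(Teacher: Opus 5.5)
Your proof is correct, but it is organized differently from the paper's.

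The paper never computes $\langle T(R),T(S)\rangle_2$ for non-disjoint rank-one operators. It proceeds in two steps:
\begin{enumerate}
\item It takes the Schmidt decomposition $z=\sum_i\alpha_iR_i$ of a single $z\in\mathcal{F}(B(H),\tau)$. Applying Lemma \ref{constant} only to the pairwise disjoint $R_i$ (their images are mutually orthogonal and each has norm $c$) gives $\norm{T(z)}_2=c\norm{z}_2$.
\item The polarization identity, applied to $x+i^ky$, which are again finite rank, then yields $\langle T(x),T(y)\rangle_2=c^2\langle x,y\rangle_2$ directly.
\end{enumerate}

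You instead prove the inner-product identity on pairs of norm-one rank-one operators. This forces you to treat the mixed ``half-orthogonal'' terms $e\otimes f_1$ and $e_1\otimes f$ separately. Your trick with $e\otimes(f+\lambda f_1)/\sqrt2$ is in effect a local polarization argument. It is valid because the last clause of Lemma \ref{constant} gives $\norm{T(R)}_2=c$ for every norm-one rank-one $R$, disjoint or not.

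The bookkeeping you called routine does check out. Both sides equal $c^2\bar\alpha\gamma=c^2\langle e,e'\rangle_H\overline{\langle f,f'\rangle_H}=c^2\langle R,S\rangle_2$.

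Your route gives a more explicit picture of how $T$ acts on pairs of rank-one operators. The paper's norm-then-polarize route is shorter and avoids the case analysis entirely. The density and extension step producing $U_1$ is the same in both.
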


\begin{proof}
For each $z\in \cF(B(H),\tau)$, let
\[
z=\sum_{i=1}^m \alpha_iR_i,
\]
where $\{R_i\}_{i=1}^m$ is a sequence of mutually orthogonal rank-one operators with uniform norm $1$.
By Lemma~\ref{constant}, we have that $\norm{T(R_i)}_2=c$ and 
$\{T(R_i)\}_{i=1}^m$ is mutually orthogonal with respect to $\langle \cdot,\cdot\rangle_2$,
which implies that
\[
\norm{T(z)}_2^2=c^2\sum_{i=1}^m|\alpha_i|^2=c^2\norm{z}_2^2.
\]
By the polarization identity, the proof is complete.
\end{proof}

\begin{lemma} \label{C p, infty}
    Assume that $0<p\le1$.
    Let $U_1$ and $c$ be as defined in Lemma \ref{exten} and Lemma \ref{constant}, respectively.
    Then,
    \[U_1^*T(x)=cx, ~\forall x\in \mathcal{C}_{p,\infty}.\]
\end{lemma}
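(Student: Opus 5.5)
Since $0<p\le 1$, we have $\mathcal{C}_{p,\infty}\subset\mathcal{C}_2$ continuously: $\norm{x}_2^2\le \norm{x}_{\mathcal{C}_{p,\infty}}^2\sum_{n\ge0}(n+1)^{-2/p}$. Hence $T(x)\in\mathcal{C}_2$ and $U_1^*T(x)$ makes sense. By Lemma \ref{exten}, $U_1$ is an isometry on $\mathcal{C}_2$, so $U_1^*U_1=\mathbf{1}$. For $x\in\mathcal{F}(B(H),\tau)$ this gives $U_1^*T(x)=cU_1^*U_1x=cx$ at once. The whole difficulty is to pass to general $x$, because finite-rank operators are not dense in $\mathcal{C}_{p,\infty}$.

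I will argue weakly. It suffices to show that for every $x\in\mathcal{C}_{p,\infty}$ and every rank-one $R$,
\[
\langle T(x),T(R)\rangle_2=c^2\langle x,R\rangle_2 .
\]
Indeed, then $\langle U_1^*T(x),R\rangle_2=\langle T(x),U_1R\rangle_2=c^{-1}\langle T(x),T(R)\rangle_2=c\langle x,R\rangle_2$, and rank-one operators span a dense subspace of $\mathcal{C}_2$. By Corollary \ref{linear combination} (with $\cM=B(H)$), every $x$ is a difference $z-w$ of two scalar multiples of extreme points of $B_{\mathcal{C}_{p,\infty}}$. By linearity it is therefore enough to treat $x=\sum_{n\ge0}(n+1)^{-1/p}\lambda_nR_n$, where $|\lambda_n|=1$ and the $R_n$ are mutually orthogonal rank-one operators of uniform norm one.

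For such $x$ I will first repeat the phase-rotation argument of Lemma \ref{constant}. Replacing $\lambda_k$ by $\eta\lambda_k$ with $\eta\in\mathbb{T}$ keeps $x$ extreme. Then $\norm{T(x)}_2^2=\sum_n(n+1)^{-2/p}$ is independent of $\eta$, because $T$ preserves extreme points, whose singular values are prescribed by Corollary \ref{atom extremes}. This forces
\[
\bigl\langle T\bigl(x-(k+1)^{-1/p}\lambda_kR_k\bigr),T(R_k)\bigr\rangle_2=0,
\]
and consequently, by Lemma \ref{constant}, $\langle T(x),T(R_k)\rangle_2=c^2(k+1)^{-1/p}\lambda_k=c^2\langle x,R_k\rangle_2$. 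The same computation applied to a pair of indices shows more: for any rank-one $S$ lying in the corner $\bigl(\sum_{n\le N}l(R_n)\bigr)B(H)\bigl(\sum_{n\le N}r(R_n)\bigr)$, the tail $x_N:=\sum_{n>N}(n+1)^{-1/p}\lambda_nR_n$ satisfies $\langle T(x_N),T(S)\rangle_2=0$. To see this, I will replace the head of $x$ by an arbitrary finite-rank operator with the same singular values, which can be chosen to contain $S$ in its Schmidt decomposition. The result is again an extreme point, and the head is disjoint from $x_N$, so the argument applies.

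It then remains to handle an arbitrary rank-one $R$. I will split $x=x^{(N)}+x_N$ with $x^{(N)}$ of finite rank. For the head, Lemma \ref{exten} gives $\langle T(x^{(N)}),T(R)\rangle_2=c^2\langle x^{(N)},R\rangle_2\to c^2\langle x,R\rangle_2$ as $N\to\infty$. So the task reduces to showing $\langle T(x_N),T(R)\rangle_2\to0$.

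I expect this last limit to be the main obstacle, since $\norm{x_N}_{\mathcal{C}_{p,\infty}}$ does not tend to $0$. My plan is as follows. First, approximate $R$ in $\mathcal{C}_2$ by finite-rank operators $R'$ supported in the corners described above, so that $\langle T(x_N),T(R')\rangle_2=0$ by the previous paragraph. Second, control the error by $\norm{T(x_N)}_2\,\norm{T(R-R')}_2=c\,\norm{T(x_N)}_2\,\norm{R-R'}_2$, using Lemma \ref{exten} for the finite-rank difference. Third, for the uniform bound on $\norm{T(x_N)}_2$, split $T(x)=T(x^{(N)})+T(x_N)$, whose two parts are orthogonal by the relation above; this gives $\norm{T(x_N)}_2\le\norm{T(x)}_2=\norm{x}_2$. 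Letting $R'\to R$ and then $N\to\infty$ finishes the argument.
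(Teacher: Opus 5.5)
\textbf{The gap is in the final approximation step.} Your reductions are sound: $\mathcal{C}_{p,\infty}\subset\mathcal{C}_2$, Corollary \ref{linear combination}, and the phase rotation on a Schmidt component. So is your head-replacement claim, that $\langle T(x_N),T(S)\rangle_2=0$ for every rank-one $S\in P_NB(H)Q_N$ with $P_N=\sum_{n\le N}l(R_n)$ and $Q_N=\sum_{n\le N}r(R_n)$.

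The trouble is that these corners only increase to $l(x)B(H)r(x)$. Since $P_N\uparrow l(x)$ and $Q_N\uparrow r(x)$ strongly, the best approximation of $R$ from the $N$-th corner satisfies $\norm{R-P_NRQ_N}_2\to\norm{R-l(x)Rr(x)}_2$. This limit is nonzero unless $R=l(x)Rr(x)$. Your reduction asks for the identity at every extreme point, and extreme points of $B_{\mathcal{C}_{p,\infty}}$ need not have full left or right supports. For example, take $x=\sum_n(n+1)^{-1/p}\epsilon_{2n}\otimes\epsilon_{2n}$ for an orthonormal basis $\{\epsilon_m\}$, and $R=\epsilon_1\otimes\epsilon_1$. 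Then every $R'$ in every corner is orthogonal to $R$, so $\norm{R-R'}_2\ge 1$. Your estimate $c\norm{T(x_N)}_2\norm{R-R'}_2$ therefore never shows that $\langle T(x_N),T(R)\rangle_2\to0$, which is exactly what is needed there.

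\textbf{A cheap repair.} Fix $R=e\otimes f$ and put $e''=(\mathbf{1}-l(x))e$ and $f''=(\mathbf{1}-r(x))f$. Enlarge $P_N,Q_N$ to $P_N',Q_N'$ by adding the projections onto $\mathbb{C}e''$ and $\mathbb{C}f''$. Since $l(x_N)\le l(x)-P_N$ and $r(x_N)\le r(x)-Q_N$, these finite-rank corners of rank at least $N+1$ are still disjoint from $x_N$. So any rank-one $S\in P_N'B(H)Q_N'$ still sits in a rank-$(N+1)$ head with singular values $(n+1)^{-1/p}$, $n\le N$, and your claim still holds. Now $P_N'RQ_N'=(P_N'e)\otimes(Q_N'f)\to R$ in $\mathcal{C}_2$. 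Incidentally, the uniform bound on $\norm{T(x_N)}_2$ follows directly from $\norm{T(x_N)}_2\le\bigl(\sum_n(n+1)^{-2/p}\bigr)^{1/2}\norm{x_N}_{\mathcal{C}_{p,\infty}}$; the orthogonal splitting is not needed for it.

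\textbf{How the paper avoids the limit.} It chooses unitaries $U,V$ with $U-\mathbf{1}$ and $V-\mathbf{1}$ of finite rank that carry one Schmidt pair of $x$ onto $(e,f)$. Then $y=UxV$ is again extreme, has $e\otimes f$ as a Schmidt component, and differs from $x$ by a finite-rank operator. A single phase rotation on $y$, plus Lemma \ref{exten} applied to $x-y$, gives $\langle T(x),T(e\otimes f)\rangle_2=c^2\langle x,e\otimes f\rangle_2$ for every rank-one $e\otimes f$ at once. No head/tail splitting or approximation is required.
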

\begin{proof}
    Fix $x\in \operatorname{extr}\left(B_{\mathcal{C}_{p,\infty}}\right)$ and $e\otimes f$ with $\norm{e\otimes f}_{B(H)}=1$. 
    Choose a Schmidt entry $e_1\otimes f_1$ of $x$ with $\norm{e_1\otimes f_1}_{B(H)}=1$.
    There exist unitary operators $U,V$ satisfying $U-\mathbf{1},V-\mathbf{1}$ having finite rank such that
    \[Ue_1=e, ~V^*f_1=f.\]
    Let $y:=UxV$.
    Then, $y-x=(U-\mathbf{1})xV+x(V-\mathbf{1})$ has finite rank and
    \[y\in \operatorname{extr}\left(B_{\mathcal{C}_{p,\infty}}\right), ~ y=e\otimes f+ z,\]
    where $z\perp \left(e\otimes f\right)$.
    For any $\lambda\in \mathbb{T}$, we have that
    \[z+\lambda e\otimes f\in\operatorname{extr}\left(B_{\mathcal{C}_{p,\infty}}\right)\]
    and so,
    \[\norm{T(z)+\lambda T(e\otimes f)}_2^2=\sum_{n=0}^\infty(n+1)^{-2/p}.\]
    Arguing similarly as \eqref{orthogonal},  we obtain
    \[\left\langle T(z),T(e\otimes f)\right\rangle_2=0.\]
    Note that
    \[\left\langle T(y),T(e\otimes f)\right\rangle_2=\left\langle T(e\otimes f),T(e\otimes f)\right\rangle_2\stackrel{\tiny\mbox{Lem. \ref{constant}}}{=}c^2=c^2\left\langle y,e\otimes f\right\rangle_2.\]
    Then,
    \begin{eqnarray*}
    \left\langle T(x),T(e\otimes f)\right\rangle_2
    &=&\left\langle T(y),T(e\otimes f)\right\rangle_2+\left\langle T(x-y),T(e\otimes f)\right\rangle_2\\
    &\stackrel{\tiny\mbox{Lem.~\ref{exten}}}{=}&c^2\left\langle y,e\otimes f\right\rangle_2+c^2\langle x-y, e\otimes f\rangle_2\\
    &=&c^2\left\langle x,e\otimes f\right\rangle_2.
    \end{eqnarray*}
    For any $w\in \mathcal{F}\left(B(H),\tau\right)$,
    we have that
    \[\left\langle T(x),T(w)\right\rangle_2=c^2\left\langle x,w\right\rangle_2.\]
    By Corollary \ref{linear combination}, we obtain that for any $x\in \mathcal{C}_{p,\infty}$,
    \[\left\langle T(x),T(w)\right\rangle_2=c^2\left\langle x,w\right\rangle_2, ~\forall w\in \mathcal{F}\left(B(H),\tau\right).\]
    This implies that
    \[\left\langle U_1^*T(x),w\right\rangle_2\stackrel{\eqref{u0}}{=}\frac1c\left\langle T(x),T(w)\right\rangle_2=c\left\langle x,w\right\rangle_2, ~\forall w\in \mathcal{F}\left(B(H),\tau\right).\]
    By the density of $\mathcal{F}\left(B(H),\tau\right)$ in $\mathcal{C}_2$, we have that
    \[\left\langle U_1^*T(x),y\right\rangle_2=c\left\langle x,y\right\rangle_2, ~\forall x\in \mathcal{C}_{p,\infty},~y\in \mathcal{C}_2.\]
    In particular,
    \[U_1^*T(x)=cx, ~\forall x\in \mathcal{C}_{p,\infty}.\]
\end{proof}

\begin{lemma} \label{the second}
    Assume that $0<p\le1$. Then, 
    \begin{enumerate} [label=(\roman*)]
    \item $U_1$ is a unitary operator on $\mathcal{C}_2$.
    \item $c=1$.
    \item $T(x)=U_1x, ~\forall x\in \mathcal{C}_{p,\infty}$.
    \end{enumerate}
\end{lemma}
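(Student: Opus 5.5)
The idea is to apply the whole construction of Lemmas~\ref{constant}, \ref{exten} and \ref{C p, infty} to $T^{-1}$ as well as to $T$. By hypothesis $T^{-1}$ is also a surjective isometry of $\mathcal{C}_{p,\infty}$, so these lemmas yield a constant $c'>0$ and an isometry $U_1'$ of $\mathcal{C}_2$ with
\[
U_1' x = c'^{-1}T^{-1}(x), \quad x\in\mathcal{F}(B(H),\tau), \qquad U_1'^*T^{-1}(x)=c'x, \quad x\in \mathcal{C}_{p,\infty}.
\]
We then compare $c$ with $c'$, identify $U_1'$ as the inverse of $U_1$, and use Lemma~\ref{C p, infty} to obtain (iii).

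\textbf{Step 1: $c=1$.} Take any rank-one $R$ with $\norm{R}_{B(H)}=1$. Then $\mu(T(R))=\mu(R)$, because an isometry maps extreme points to extreme points, and after completing $R$ one can read off singular values. The cleaner route is through Hilbert--Schmidt norms. By Lemma~\ref{C p, infty} applied to $T^{-1}$ and to $T$,
\[
R = c^{-1}U_1^*T(R) \quad\text{and}\quad T(R)=c'^{-1}\,\cdots,
\]
but this is circular. Instead I would use extreme points directly. For $x\in\operatorname{extr}(B_{\mathcal{C}_{p,\infty}})$, $T(x)$ is again extreme, so $\norm{T(x)}_2=\norm{x}_2=\bigl(\sum_n (n+1)^{-2/p}\bigr)^{1/2}$. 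On the other hand,
\[
\norm{x}_2=\norm{c^{-1}U_1^*T(x)}_2\le c^{-1}\norm{T(x)}_2,
\]
since $U_1^*$ is a contraction. Hence $c\le 1$. Running the same argument with $T^{-1}$ gives $c'\le1$. Composing, for $x\in\mathcal{F}$ we have $T^{-1}(T(x))=x$. Since $T(x)$ is not known to be finite rank, I would instead use $\norm{T(R)}_2=c$ together with $R=T^{-1}(T(R))$, approximating $T(R)$ by finite-rank operators in the $\mathcal{C}_{p,\infty}$ quasi-norm. That approximation need not converge, since the space is non-separable. So I would rather argue via extreme points once more, and this is the main obstacle.

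To resolve it, write an extreme $x=\sum_n (n+1)^{-1/p}R_n$ with mutually orthogonal rank-one $R_n$. By orthogonality (Lemma~\ref{constant}), $\norm{T(x)}_2^2\ge\sum_n (n+1)^{-2/p}\norm{T(R_n)}_2^2=c^2\norm{x}_2^2$. This is only a lower bound unless $T(x)=\sum (n+1)^{-1/p}T(R_n)$ in $\mathcal{C}_2$, which does hold by Lemma~\ref{C p, infty}: $U_1^*T(x)=cx$, and $U_1 U_1^*$ is the projection onto $\operatorname{ran}U_1$. Hence $\norm{T(x)}_2\ge\norm{U_1U_1^*T(x)}_2=c\norm{x}_2$. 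Combined with $\norm{T(x)}_2=\norm{x}_2$ this again gives only $c\le1$. Equality $c=1$ then follows from the symmetric argument and from the fact that $T^{-1}T=\mathrm{id}$ forces $cc'\ge 1$. To get that, apply Lemma~\ref{C p, infty} to $T^{-1}$ at $T(x)$:
\[
c'T(x)=U_1'^*x \quad\Longrightarrow\quad c'\norm{T(x)}_2\le\norm{x}_2 .
\]
So $c'\le1$ as well. Then $U_1^*T(x)=cx$ and $T(x)=c'^{-1}U_1'^*x$ give $U_1^*U_1'^*=cc'\,\mathrm{id}$ on $\mathcal{C}_{p,\infty}$, which is dense in $\mathcal{C}_2$, so $U_1'U_1=cc'\,\mathrm{id}$. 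Since both $U_1,U_1'$ are isometries, taking norms gives $cc'=1$, and with $c,c'\le1$ this yields $c=c'=1$. This proves (ii).

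\textbf{Step 2: unitarity and the formula.} From $U_1'U_1=\mathrm{id}$ and the symmetric identity $U_1U_1'=\mathrm{id}$ (obtained by swapping the roles of $T$ and $T^{-1}$), $U_1$ is invertible; being an isometry, it is unitary, which is (i). Finally, Lemma~\ref{C p, infty} with $c=1$ reads $U_1^*T(x)=x$, so $T(x)=U_1x$ for all $x\in\mathcal{C}_{p,\infty}$, which is (iii). The delicate point throughout is to justify every identity only on $\mathcal{C}_{p,\infty}\subset\mathcal{C}_2$ (valid since $p\le1<2$) and then pass to $\mathcal{C}_2$ by density.
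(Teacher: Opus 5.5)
Your final argument is correct, but remove the false starts from the write-up:
\begin{itemize}
\item The opening claim $\mu(T(R))=\mu(R)$ is neither justified nor used.
\item The display ending in ``$\cdots$'' is abandoned.
\item The aside that $T(x)=\sum_n (n+1)^{-1/p}T(R_n)$ in $\mathcal{C}_2$ ``does hold by Lemma~\ref{C p, infty}'' is not available at that stage. The series equals $cU_1x$, so the claim is equivalent to $T(x)\in\operatorname{ran}U_1$, which is part of what you are proving. You only actually use $\norm{T(x)}_2\ge\norm{U_1U_1^*T(x)}_2$, so no harm is done.
\end{itemize}
What survives is sound:
\begin{itemize}
\item $c\le 1$ and $c'\le 1$ follow from $\norm{T^{\pm1}(x)}_2=\norm{x}_2$ at extreme points.
\item $U_1^*U_1'^*=cc'\,\mathrm{id}$ and $U_1'^*U_1^*=cc'\,\mathrm{id}$ hold on $\mathcal{C}_{p,\infty}$, hence on $\mathcal{C}_2$ by density.
\item $cc'=1$ because $U_1'U_1$ is an isometry.
\item Hence $c=c'=1$, $U_1$ is onto and therefore unitary, and $U_1^*T(x)=x$ gives (iii).
\end{itemize}

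Your mechanism differs somewhat from the paper's. The paper applies only Lemmas~\ref{constant} and~\ref{exten} to $T^{-1}$, obtaining $\norm{T^{-1}(x)}_2=d\norm{x}_2$ on $\mathcal{F}(B(H),\tau)$. It combines this with the single identity $U_1^*x=cT^{-1}(x)$, which is Lemma~\ref{C p, infty} for $T$ evaluated at $T^{-1}(x)$. This gives $\langle U_1U_1^*x,x\rangle_2=c^2d^2\norm{x}_2^2$ on a dense set, so the range projection $U_1U_1^*$ is a nonzero scalar, hence $\mathbf{1}$, and $cd=1$. In the paper, unitarity comes first. Only afterwards is $c=1$ read off from the \emph{equality} $\norm{T(x)}_2=c\norm{x}_2$ at an extreme point.

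Your route is symmetric in $T$ and $T^{-1}$ and yields $c'=1$ and $U_1'=U_1^{*}$ as a by-product. The cost is that it needs the full Lemma~\ref{C p, infty} for $T^{-1}$ and two extreme-point inequalities. Those inequalities are in fact dispensable: $U_1U_1'=cc'\,\mathrm{id}$ with $cc'=1$ already makes $U_1$ unitary. Then $T=cU_1$ on $\mathcal{C}_{p,\infty}$, and the paper's one-line extreme-point argument gives $c=1$.
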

\begin{proof}
    Applying Lemma \ref{constant} and Lemma \ref{exten} to $T^{-1}$, we obtain that
    \begin{align} \label{T-1}
    \norm{T^{-1}(x)}_2=d\norm{x}_2, ~\forall x\in \mathcal{F}\left(B(H),\tau\right)
    \end{align}
    for some constant $d>0$.
    Since $U_1$ is an isometry, it follows that $U_1^*U_1=\mathbf{1}$.
    Let $W:=U_1U_1^*$.
    For any $x\in \mathcal{F}\left(B(H),\tau\right)$, by Lemma \ref{C p, infty}, we have that
    \[U_1^*x=cT^{-1}(x).\]
    This implies that
    \[\left\langle Wx,x\right\rangle_2=\norm{U_1^*x}_2^2=c^2\norm{T^{-1}(x)}_2^2\stackrel{\eqref{T-1}}{=}c^2d^2\norm{x}_2^2.\]
    By the density of $\mathcal{F}\left(B(H),\tau\right)$ in $\mathcal{C}_2$, we obtain that \[W=\mathbf{1}_{\mathcal{C}_2}, ~c^2d^2=1.\]
    In particular, $U_1$ is a unitary operator on $\mathcal{C}_2$.
    By Lemma \ref{C p, infty}, we have that
    \[T(x)=cU_1x, ~\forall x\in \mathcal{C}_{p,\infty}.\]
    For any $x\in \operatorname{extr}\left(B_{\cC_{p,\infty}}\right)$,
    since $T$ is a surjective isometry, it follows from $$\norm{Tx}_2^2=\norm{x}_2^2=\sum_{n=0}^\infty(n+1)^{-2/p}$$
    that $c=1$, 
    that is,
    \[T(x)=U_1x, ~\forall x\in \mathcal{C}_{p,\infty}.\] 
\end{proof}

The K\"{o}the dual $\mathcal{C}_{p,\infty}^\times$ of $\mathcal{C}_{p,\infty}$ is defined by setting \cite[Section 4.3]{DPS}
\begin{align*}
\mathcal{C}_{p,\infty}^\times
&=\{y\in B(H):\sup\{|\tau(xy)|:x\in \mathcal{C}_{p,\infty},\norm{x}_{\mathcal{C}_{p,\infty}}\le1\}<\infty\}\\
&=\{y\in B(H):\sup\{\tau(|xy|):x\in \mathcal{C}_{p,\infty},\norm{x}_{\mathcal{C}_{p,\infty}}\le1\}<\infty\},
\end{align*}
and for $y\in \mathcal{C}_{p,\infty}^\times$, the quantity $\norm{y}_{\mathcal{C}_{p,\infty}^\times}$ is defined by setting
\begin{align*}
\norm{y}_{\mathcal{C}_{p,\infty}^\times}
&=\sup\{|\tau(xy)|:x\in \mathcal{C}_{p,\infty},\norm{x}_{\mathcal{C}_{p,\infty}}\le1\}\\
&=\sup\{\tau(|xy|):x\in \mathcal{C}_{p,\infty},\norm{x}_{\mathcal{C}_{p,\infty}}\le1\}.
\end{align*}
By the triangle inequality,
 $\norm{\cdot}_{\mathcal{C}_{p,\infty}^\times}$ is a norm.



Moreover, for any $y\in \mathcal{C}_{p,\infty}^\times$,  we have (see, e.g., the proof of \cite[Proposition~4.5.1]{DPS})
\begin{equation}
\begin{aligned}
\norm{y}_{\mathcal{C}_{p,\infty}^\times}&= \sup\left\{\sum_{n=0}^\infty \mu(n;x)\mu(n;y):x\in \mathcal{C}_{p,\infty},\norm{x}_{\mathcal{C}_{p,\infty}}\le 1\right\}\\
&=\sum_{n=0}^\infty (n+1)^{-1/p}\mu(n;y).\label{def kothe}
\end{aligned}
\end{equation}
This shows that 
$\norm{\cdot}_{\mathcal{C}_{p,\infty}^\times}$ is a symmetric norm.
By the Cauchy--Schwarz inequality, we have 
\begin{equation} \label{strictly subset}
\begin{aligned}
\sum_{n=0}^\infty (n+1)^{-1/p}\mu(n;y)
&\le \left(\sum_{n=0}^\infty (n+1)^{-2/p}\right)^{1/2 }\left(\sum_{n=0}^\infty \mu(n;y)^2\right)^{1/2 }\\
&=\left(\sum_{n=0}^\infty (n+1)^{-2/p}\right)^{1/2 }\norm{y}_2.
\end{aligned}
\end{equation}
This implies that $\mathcal{C}_2\subset \mathcal{C}_{p,\infty}^\times$.

\begin{lemma} \label{final1}
    Assume that $0<p\le1$. Then,  \[\norm{U_1x}_{\mathcal{C}_{p,\infty}^\times}=\norm{x}_{\mathcal{C}_{p,\infty}^\times}, ~\forall x\in \mathcal{C}_2.\]
\end{lemma}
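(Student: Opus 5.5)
The plan is to dualize. By Lemma~\ref{the second}, $U_1$ is a unitary on $\mathcal{C}_2$ and $T=U_1|_{\mathcal{C}_{p,\infty}}$. The same argument applied to $T^{-1}$ (surjective isometry) gives $T^{-1}=U_1^{-1}|_{\mathcal{C}_{p,\infty}}=U_1^*|_{\mathcal{C}_{p,\infty}}$; indeed $U_1^*x=cT^{-1}(x)$ with $c=1$ was already obtained in the proof of Lemma~\ref{the second} for finite rank $x$, and it extends to $\mathcal{C}_{p,\infty}$ by Corollary~\ref{linear combination} or by applying Lemma~\ref{the second} to $T^{-1}$ directly. Thus $U_1^*$ maps $\mathcal{C}_{p,\infty}$ onto $\mathcal{C}_{p,\infty}$ isometrically, since $U_1^*=T^{-1}$ there.

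Next, for $x\in\mathcal{C}_2$ I would use the first (duality) form of the Köthe norm, but with the pairing $\langle\cdot,\cdot\rangle_2$ rather than $\tau(xy)$. Because the unit ball of $\mathcal{C}_{p,\infty}$ is invariant under $z\mapsto z^*$ and $\mu(z^*)=\mu(z)$, we have
\[
\norm{x}_{\mathcal{C}_{p,\infty}^\times}=\sup\left\{|\langle z,x\rangle_2|: z\in\mathcal{C}_{p,\infty},\ \norm{z}_{\mathcal{C}_{p,\infty}}\le1\right\},
\]
and all these pairings are finite, since $\mathcal{C}_{p,\infty}\subset\mathcal{C}_2$ for $p\le1<2$. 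Then
\[
\langle z,U_1x\rangle_2=\langle U_1^*z,x\rangle_2=\langle T^{-1}z,x\rangle_2 .
\]
As $z$ ranges over the unit ball of $\mathcal{C}_{p,\infty}$, $T^{-1}z$ ranges over the same unit ball, because $T^{-1}$ is a surjective isometry. Taking the supremum gives $\norm{U_1x}_{\mathcal{C}_{p,\infty}^\times}=\norm{x}_{\mathcal{C}_{p,\infty}^\times}$. Both sides are finite by \eqref{strictly subset}, since $U_1x\in\mathcal{C}_2$.

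The main obstacle is checking carefully that $U_1^*$ agrees with $T^{-1}$ on all of $\mathcal{C}_{p,\infty}$, not only on finite-rank operators. I would handle it as follows. Lemma~\ref{C p, infty} gives $U_1^*T(y)=y$ for every $y\in\mathcal{C}_{p,\infty}$, with $c=1$ by Lemma~\ref{the second}. Setting $y=T^{-1}z$ yields $U_1^*z=T^{-1}z$ for every $z\in\mathcal{C}_{p,\infty}$, which is exactly what the duality step needs. A minor point is justifying the adjoint/conjugation bookkeeping in the supremum formula; this follows from \eqref{def kothe} and the fact that $|\tau(z^*x)|$ ranges over the same values as $|\tau(zx)|$ once the unit ball is closed under adjoints.
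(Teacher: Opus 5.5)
Your proposal is correct and follows the paper's proof. Both rewrite the K\"othe norm as a supremum of $|\langle U_1x,z\rangle_2|$ over the unit ball of $\mathcal{C}_{p,\infty}$, move $U_1$ across as $U_1^*$, replace $U_1^*$ by $T^{-1}$ on $\mathcal{C}_{p,\infty}$, and use that $T^{-1}$ maps that unit ball onto itself. Two of your steps spell out points the paper passes over silently: obtaining $U_1^*=T^{-1}$ on all of $\mathcal{C}_{p,\infty}$ from Lemma~\ref{C p, infty} with $c=1$ by substituting $y=T^{-1}z$, and handling the adjoint in the pairing via invariance of the unit ball under $z\mapsto z^*$.
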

\begin{proof}
    Since $U_1$ is a unitary operator on $\mathcal{C}_2$, it follows that for all $x\in \mathcal{C}_2$, we have 
\begin{eqnarray*}
\norm{U_1x}_{\mathcal{C}_{p,\infty}^\times}
    &=&\sup_{\norm{y}_{\mathcal{C}_{p,\infty}\le 1}}\left|\tau(y U_1x)\right|\\
    &=&\sup_{\norm{y}_{\mathcal{C}_{p,\infty}\le 1}}\left|\langle U_1x,y\rangle_2\right|\\
    &=&\sup_{\norm{y}_{\mathcal{C}_{p,\infty}\le 1}}\left|\langle x,U_1^*y\rangle_2\right|\\
    &=&
    \sup_{\norm{y}_{\mathcal{C}_{p,\infty}\le 1}}\left|\tau((U_1^*y)^*x)\right|\\
    &\stackrel{\tiny\mbox{Lem.~\ref{the second}}}{=}&\sup_{\norm{y}_{\mathcal{C}_{p,\infty}}\le1}\left|\tau\left(\left(T^{-1}y\right)^*x\right)\right|\\
    &=&\norm{x}_{\mathcal{C}_{p,\infty}^\times}.
\end{eqnarray*}
\end{proof}

Now, we come to the  main result of this section.

\begin{theorem} \label{final n}
    Assume that $0<p\le1$. 
    Let $H$ be a complex separable infinite-dimensional Hilbert space. 
    Let $T$ be a surjective linear operator from weak $\ell_p$-ideal $\mathcal{C}_{p,\infty}$ onto $\mathcal{C}_{p,\infty}$. 
    Then, $T$ is an isometry if and only if there exist unitary operators $U,V$ such that one of the following two statements holds:
    \begin{enumerate} [label=(\roman*)]
    \item $T(x)=UxV^*, ~\forall x\in \mathcal{C}_{p,\infty}$;
    \item $T(x)=Ux'V^*, ~\forall x\in \mathcal{C}_{p,\infty}$, where $x'$ denotes the transpose of $x$. 
    \end{enumerate}
\end{theorem}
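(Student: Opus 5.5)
The sufficiency direction is immediate. The maps $x\mapsto UxV^*$ and $x\mapsto Ux'V^*$ are linear surjections of $\mathcal{C}_{p,\infty}$ onto itself. Both preserve singular value sequences, since $\mu(UxV^*)=\mu(x)$ and $\mu(x')=\mu(x)$, the transpose being taken with respect to a fixed orthonormal basis. Because $\norm{\cdot}_{\mathcal{C}_{p,\infty}}$ depends only on $\mu(x)$, they are isometries. For necessity, I plan to reduce to the classical description of surjective isometries on a symmetrically normed ideal that differs from $\mathcal{C}_2$.

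By Lemma \ref{the second}, $T$ is the restriction to $\mathcal{C}_{p,\infty}$ of a unitary operator $U_1$ on the Hilbert space $\mathcal{C}_2$. In particular $T$ extends to a linear bijection of $\mathcal{C}_2$, and $U_1(\mathcal{F}(B(H),\tau))=T(\mathcal{F}(B(H),\tau))$. By Lemma \ref{final1}, $U_1$ preserves the norm $\norm{\cdot}_{\mathcal{C}_{p,\infty}^\times}$ on $\mathcal{C}_2$.

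Consider the closure $\mathcal{E}$ of $\mathcal{F}(B(H),\tau)$ (equivalently, of $\mathcal{C}_2$) in the Banach ideal $(\mathcal{C}_{p,\infty}^\times,\norm{\cdot}_{\mathcal{C}_{p,\infty}^\times})$. By \eqref{def kothe}, this norm is the Lorentz norm $\sum_n (n+1)^{-1/p}\mu(n;y)$ with summable weight, since $1/p\ge 1$. Hence $\mathcal{E}$ is a separable symmetrically normed ideal, and it is not $\mathcal{C}_2$. Indeed, its norm is not proportional to $\norm{\cdot}_2$: compare $e_0\otimes f_0$ with $e_0\otimes f_0+e_1\otimes f_1$.

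$U_1$ restricted to $\mathcal{C}_2$ is a $\norm{\cdot}_{\mathcal{C}_{p,\infty}^\times}$-isometry, so it extends by continuity to an isometry $\widetilde U$ of $\mathcal{E}$. To get surjectivity, apply the same construction to $T^{-1}$. By Lemma \ref{the second} applied to $T^{-1}$, its $\mathcal{C}_2$-extension is $U_1^{-1}=U_1^*$, because $T^{-1}=U_1^{-1}$ on $\mathcal{C}_{p,\infty}\supset\mathcal{F}$. The resulting isometry of $\mathcal{E}$ inverts $\widetilde U$.

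I then invoke the Arazy/Sourour-type theorem for surjective isometries of separable symmetric (unitary-invariant) norm ideals different from $\mathcal{C}_2$; the paper already cites \cite[Theorem 7]{AC} and \cite{AC17}. This theorem yields unitaries $U,V$ with $\widetilde U(x)=UxV^*$ or $\widetilde U(x)=Ux'V^*$ on $\mathcal{E}$. Since $T=U_1=\widetilde U$ on $\mathcal{F}(B(H),\tau)$, the same formula holds there.

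Finally, I extend the formula from $\mathcal{F}(B(H),\tau)$ to all of $\mathcal{C}_{p,\infty}$. Both $T$ and $U_1$ agree on $\mathcal{C}_{p,\infty}\subset\mathcal{C}_2$, and $U_1$ is $\norm{\cdot}_2$-continuous. The right-hand side $UxV^*$ (or $Ux'V^*$) is also $\norm{\cdot}_2$-continuous on $\mathcal{C}_2$ and agrees with $U_1$ on the dense set $\mathcal{F}$. Hence $U_1(x)=UxV^*$ (or $Ux'V^*$) for all $x\in\mathcal{C}_2$, in particular on $\mathcal{C}_{p,\infty}$. For $p>1$ the same conclusion comes from the theorem proved in Chapter \ref{s6}, although here only $0<p\le1$ is at stake.

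The main obstacle is checking that the cited isometry theorem applies. I need to confirm that $\mathcal{E}$ is a genuine separable symmetric norm ideal that is not unitarily equivalent in norm to $\mathcal{C}_2$, and that the hypotheses of \cite[Theorem 7]{AC} (separable or minimal symmetric ideal, norm not a multiple of the Hilbert–Schmidt norm) hold. If that theorem requires surjectivity, the inverse argument above supplies it.
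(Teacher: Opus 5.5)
Your proposal follows the paper's proof: Lemmas \ref{the second} and \ref{final1} give $T=U_1$ on $\mathcal{C}_{p,\infty}$, with $U_1$ a $\norm{\cdot}_{\mathcal{C}_{p,\infty}^\times}$-isometric bijection of $\mathcal{C}_2$; you extend it to a surjective isometry of the closure $\mathcal{E}_p$ of $\mathcal{F}(B(H),\tau)$ and apply the separable-ideal theorem, which is Sourour's \cite[Theorem 2]{S81} as the paper cites, not \cite[Theorem 7]{AC}. Two slips need fixing but leave the argument intact: the weight $(n+1)^{-1/p}$ is not summable at $p=1$ (not needed, since $\mathcal{E}_p$ is separable by construction), and your two-term non-proportionality test fails when $2^{-1/p}=\sqrt2-1$, i.e.\ at $p=\log 2/\log(1+\sqrt2)\in(0,1]$, so use instead, as the paper does, that $\sum_k(k+1)^{-1/2}e_k\in\mathcal{E}_p\setminus\mathcal{C}_2$ for mutually orthogonal rank-one projections $e_k$.
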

\begin{proof}
The implication 
    $\Leftarrow$  is trivial. 

$\left(\Rightarrow\right)$.
    By Lemmas  \ref{constant}-\ref{the second}, there exists a unitary operator $U_1$ on $\mathcal{C}_2$ such that
    \begin{align} \label{T U}
    T(x)=U_1x, ~\forall x\in \mathcal{C}_{p,\infty}.
    \end{align}
    Let $\mathcal{E}_p=\overline{\mathcal{F}\left(B(H),\tau\right)}^{\norm{\cdot}_{\mathcal{C}_{p,\infty}^\times}}$, which is an order continuous  symmetric operator ideal (see \cite[Definition 4.4.9, Lemma 4.4.10]{DPS}).
    Since $U_1$ is bijective on $\mathcal{C}_2$, it follows from Lemma \ref{final1} that $U_1$ is a surjective isometry on $\left(\mathcal{C}_2,\norm{\cdot}_{\mathcal{C}_{p,\infty}^\times}\right)$. 
    By \eqref{strictly subset}, we have that $\mathcal{C}_2\subset \mathcal{E}_p$.
    Then,
    \[\mathcal{F}\left(B(H),\tau\right)\subset \mathcal{C}_2\subset \mathcal{E}_p\subset \overline{\mathcal{C}_2}^{\norm{\cdot}_{\mathcal{C}_{p,\infty}^\times}},\]
    which implies that $\mathcal{E}_p= \overline{\mathcal{C}_2}^{\norm{\cdot}_{\mathcal{C}_{p,\infty}^\times}}$.
    Moreover, $\mathcal{C}_2\subsetneq \overline{\mathcal{C}_2}^{\norm{\cdot}_{\mathcal{C}_{p,\infty}^\times}}$.
    Indeed, let $\{e_k\}_{k=0}^\infty$ be 
   a sequence of mutually orthogonal rank-one projections of
    $B(H)$ and
    \[x_n:=\sum_{k=0}^n(k+1)^{-1/2}e_k\in  \mathcal{F}\left(B(H),\tau\right), ~x:=\sum_{k=0}^\infty(k+1)^{-1/2}e_k \stackrel{\eqref{def kothe}}{\in} \mathcal{E}_p.\]
    Note that $\norm{x_n-x}_{\mathcal{C}_{p,\infty}^\times}\to0$ as $n\to \infty$ and $x\notin \mathcal{C}_2$.
    Consequently, $\mathcal{C}_2\subsetneq \mathcal{E}_p$.
    Since $\mathcal{E}_p= \overline{\mathcal{C}_2}^{\norm{\cdot}_{\mathcal{C}_{p,\infty}^\times}}$, it follows that $\mathcal{C}_2$ is dense in $\mathcal{E}_p$.
    Then, $U_1$ can be uniquely extended to a surjective isometry $U_2$ on $\mathcal{E}_p$.
    Applying \cite[Theorem 2]{S81} to $U_2$, we obtain that there exist unitary operators $U,V$ on $H$ such that 
    \[U_2x=UxV^*, ~\forall x\in\mathcal{E}_p\]
    or
    \[U_2x=Ux'V^*, ~\forall x\in\mathcal{E}_p.\]
    Since $U_1=U_2|_{\mathcal{C}_2}$, it follows from
    \eqref{T U} that
    \[T(x)=UxV^*, ~\forall x\in \mathcal{C}_{p,\infty}\]
    or
    \[T(x)=Ux'V^*, ~\forall x\in \mathcal{C}_{p,\infty}.\]
    This completes the proof.
\end{proof}


\begin{remark}
    Let $\Phi:[0,\infty)\to [0,\infty)$ be admissible satisfying $\Phi(\infty):=\lim_{t\to \infty}\Phi(t)=\infty$.
    The result of Theorem \ref{final n} is also valid for quasi-Banach Marcinkiewicz ideals $M_\Phi\left(B(H),\tau\right)\subset \mathcal{C}_2$  by the same argument.
\end{remark}

\chapter[One-parameter groups of surjective isometries] {Some remarks concerning   one-parameter groups of  surjective isometries on weak \texorpdfstring{$\ell_p$}{lp} operator ideals}\label{s8}
Assume that $0<p<\infty$.
Let $X$ denote a complex  Banach (respectively, quasi-Banach) space equipped with the norm (respectively, the quasi-norm) $\norm{\cdot}_X$.
Recall that a family $\{T_t\}_{t\in \mathbb{R}}$ of bounded linear operators on $X$ is called a one-parameter group if
\begin{enumerate} [label=(\roman*)]
    \item $T_{s+t}=T_sT_t, ~\forall s,t\in \mathbb{R}.$
    \item $T_0=\mathbf{1}_{B(X)}$.
\end{enumerate}
If, in addition, $\norm{T_t(x)-x}_X \to0$ as $t\to 0$ for any $x\in X$, then $\{T_t\}_{t\in\mathbb{R}}$ is called a strongly continuous one-parameter group.
Moreover, $\{T_t\}_{t\in\mathbb{R}}$ is called a uniformly continuous one-parameter group if $\norm{T_t-\mathbf{1}_{B(X)}}_{B(X)}\to0$ as $t\to0$.
For more details, the reader is referred to \cite[$\mathrm{VIII}$.1]{DS}.
From now on, unless stated otherwise, we always assume that $\{T_t\}_{t\in\mathbb{R}}$ is a one-parameter group of isometries.

 
Recall that 
\begin{enumerate}
\item every strongly continuous one-parameter group of isometries on $B(H)$ is continuous in the uniform operator topology of $B\left(B(H)\right)$ \cite[Theorem 2.8]{BFJ}.
\item $\{T_t\}_{t\in \mathbb{R}}$ is continuous in the uniform operator topology of $B\left(B(H)\right)$
if and only if
there exist bounded self-adjoint operators $A,B$ such that 
\[T_t(x)=e^{itA}xe^{itB},~ \forall t\in \mathbb{R}, ~x\in B(H)\]
(see \cite[Theorem 2.6]{BFJ}).
\item $\{T_t\}_{t\in \mathbb{R}}$ is a strongly continuous one-parameter group of isometries of Schatten~$p$-class  $\mathcal{C}_p$ for $1\le p\le\infty, ~p\ne2$ 
if and only if
there exist self-adjoint (not necessarily bounded) operators $A,B$ such that 
\begin{equation*}  
T_t(x)=e^{itA}xe^{itB},~ \forall t\in \mathbb{R}, ~x\in \mathcal{C}_p
\end{equation*}
(see \cite[Theorem]{BFGJ}).
\item Let $\left(\cC_E,\nu\right)\ne \cC_2$ be a minimal norm ideal. 
Then, $\{T_t\}_{t\in \mathbb{R}}$ is a strongly continuous one-parameter group of isometries of  $\mathcal{C}_E$ 
if and only if
there exist self-adjoint (not necessarily bounded) operators $A,B$ such that 
\begin{equation*}  
T_t(x)=e^{itA}xe^{itB},~ \forall t\in \mathbb{R}, ~x\in \mathcal{C}_E
\end{equation*}
(see \cite[Proposition 2]{S81}).
\item Let $E\ne \ell_2$ be a complex Banach sequence space such that the standard unit vectors $\{e_n\}_{n=0}^\infty$, defined by $e_n(k)=\delta_{n,k}$, form a $1$-symmetric basis for $E$.
Then, 
$\{T_t\}_{t\in \mathbb{R}}$ is a uniformly continuous one-parameter group of isometries of  $E$ 
if and only if
there exists a real sequence $A=\left(a_n\right)_{n=0}^\infty\in \ell_\infty$ such that 
\begin{equation*}  
T_t(x)(n)=e^{ita_n}x(n),~ \forall t\in \mathbb{R}, ~n\in\mathbb{N}, ~x\in E
\end{equation*}
(see \cite[Theorem 2]{A85}).
\end{enumerate}

 As is well known, $B(H)$ and $\cC_{p,\infty},0<p<\infty$, are non-separable quasi-Banach spaces, which   have the Fatou property (this follows from \cite[Proposition 3.2.14]{DPS}). 
Below, we demonstrate that  the situation of $\cC_{p,\infty}$ is similar to that of $ B(H)$, which is  completely different from  case of separable operator ideals \cite{S81}.

It is noteworthy that self-adjoint operators $A,B$, obtained below, are bounded on $H$ (see Theorem \ref{8.2}).
The following result immediately follows from Theorem \ref{final} in combination with a similar argument used in the proof of \cite[p. 152-153]{BFJ}.
For the sake of completeness, we provide a proof here.
\begin{prop} \label{8.1}
    Assume that $0<p<\infty$.
    Let $H$ be a complex separable infinite-dimensional Hilbert space.
    If $\{T_t\}_{t\in \mathbb{R}}$ is a strongly continuous one-parameter group of isometries on $\cC_{p,\infty}$,
    then there exist two  self-adjoint operators $A,B$ on $H$ such that
    $$T_t(x)=e^{itA}xe^{itB}, ~\forall t\in\mathbb{R}, ~x\in \cC_{p,\infty}.$$
    Here, we do not claim the boundedness of $A$ and $B$. 
\end{prop}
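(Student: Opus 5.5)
By Theorem \ref{final}, each $T_t$ has the form $x\mapsto U_t x V_t^*$ or $x\mapsto U_t x' V_t^*$ for unitaries $U_t,V_t$ on $H$. The plan has three steps: exclude the transpose form, turn $t\mapsto (U_t,V_t)$ into a strongly continuous unitary representation after correcting phases, and apply Stone's theorem.

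\textbf{Step 1: excluding the transpose form.} Since $T_t=T_{t/2}T_{t/2}$, and the composition of two maps of either type is of the first type, every $T_t$ is of type (i). Two transpose-type maps compose to $x\mapsto U_1(U_2x'V_2^*)'V_1^*=U_1\overline{V_2}\,x\,U_2^{T}V_1^*$, which is of type (i). Two type (i) maps also compose to type (i). So $T_t(x)=U_txV_t^*$ for all $t$.

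\textbf{Step 2: uniqueness and a projective representation.} The unitaries are determined up to a common phase. If $UxV^*=x$ for all rank-one $x=\xi\otimes\eta$, then $U\xi\otimes V\eta=\xi\otimes\eta$, which forces $U=\lambda\mathbf 1$ and $V=\lambda\mathbf 1$ with $|\lambda|=1$. Hence the group law gives $U_{s+t}=\omega(s,t)U_sU_t$ and $V_{s+t}=\omega(s,t)V_sV_t$ with a common scalar $\omega(s,t)\in\mathbb T$.

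\textbf{Step 3: continuity and lifting.} Strong continuity on $\mathcal C_{p,\infty}$, applied to rank-one operators, gives
\[
\norm{U_t\xi\otimes V_t\eta-\xi\otimes\eta}_{\mathcal C_{p,\infty}}\to0 ,
\]
and hence convergence in operator norm, since $\norm{\cdot}_{B(H)}\le\norm{\cdot}_{\mathcal C_{p,\infty}}$. Fix unit vectors $\xi_0,\eta_0$ with $\langle U_t\xi_0,\xi_0\rangle\neq0$ for small $t$. Normalize the phase so that $\langle U_t\xi_0,\xi_0\rangle>0$ for $t$ near $0$; this still allows $\overline{\lambda}$-rescaling of $V_t$ consistently. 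From $U_t\xi\otimes V_t\eta\to\xi\otimes\eta$ I would then deduce $U_t\xi\to\xi$ and $V_t\eta\to\eta$ weakly, hence strongly since the operators are unitary. This makes $t\mapsto U_t$ and $t\mapsto V_t$ strongly continuous projective representations of $\mathbb R$ with the same multiplier. A continuous projective representation of $\mathbb R$ lifts to a true representation, because $H^2(\mathbb R,\mathbb T)$ is trivial; the classical Bargmann–Wigner argument (as in \cite[p.~152--153]{BFJ}) applies. Equivalently, one can construct the phase correction explicitly through the multiplier $\omega$, which is continuous near $0$. The same phase function $c(t)$ corrects both families, since they share $\omega$. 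After the correction, $\tilde U_t=c(t)U_t$ and $\tilde V_t=c(t)V_t$ are strongly continuous unitary groups, and $T_t(x)=\tilde U_tx\tilde V_t^*$ is unchanged.

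\textbf{Step 4: Stone's theorem.} Stone's theorem gives self-adjoint $A$ and $B'$ with $\tilde U_t=e^{itA}$ and $\tilde V_t=e^{itB'}$. Set $B=-B'$, so that $\tilde V_t^*=e^{itB}$. This yields $T_t(x)=e^{itA}xe^{itB}$ for all $t\in\mathbb R$ and $x\in\mathcal C_{p,\infty}$.

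The main obstacle is the phase-lifting step. The plan is to reduce it to the standard lifting for $\mathbb R$, choosing locally continuous phases via a fixed rank-one test operator and extending by the group law.
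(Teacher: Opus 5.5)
Your proposal is correct and follows essentially the paper's route: Theorem~\ref{final}, then eliminating the transpose form via $T_t=T_{t/2}^2$, then a projective unitary representation of $\mathbb{R}$ with unimodular multiplier, then Bargmann-type lifting, and finally Stone's theorem. Your variants are slightly leaner. The squaring argument avoids the paper's proof that $x\mapsto x'$ is not of the form $x\mapsto CxD$; the rank-one computation replaces \cite[Lemma~2]{S81}; and one phase function $c(t)$ serves both families because their multipliers coincide, which replaces the paper's separate lifts plus the character argument for $\lambda_t\mu_t$. One small correction: with your convention $T_t(x)=U_txV_t^*$, the compensating rescaling of $V_t$ is by $\lambda$, not $\bar\lambda$, which is exactly what you use at the end of Step~3.
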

\begin{proof}
    For any $t\in \mathbb{R}$, by Theorem \ref{final}, there exist two unitary operators $U_t,V_t\in B(H)$ such that one of the following two statements holds:
    \begin{enumerate} [label=(\roman*)]
    \item $T_t(x)=U_txV_t, ~\forall x\in \mathcal{C}_{p,\infty}$;
    \item $T_t(x)=U_tx'V_t, ~\forall x\in \mathcal{C}_{p,\infty}$, where $x'$ denotes the transpose of $x$ with respect to some orthonormal basis $\{e_n\}_{n\in\mathbb{N}}$ of $H$. 
    \end{enumerate}

    We claim that only Case (i) above holds.
    Indeed, assume by contradiction that  there exists $t_0\in \mathbb{R}$ such that for any $x\in \cC_{p,\infty}$,
    $$T_{t_0}(x)=U_{t_0}x'V_{t_0}.$$
 By Theorem \ref{final},   one of the following holds, 
    \begin{align} \label{1 1}
    T_{\frac{t_0}2}(x)=U_{\frac{t_0}2}xV_{\frac{t_0}2}, ~\forall x\in \cC_{p,\infty}
    \end{align}
    or
    \begin{align} \label{1 2}
    T_{\frac{t_0}2}(x)=U_{\frac{t_0}2}x'V_{\frac{t_0}2}, ~\forall x\in \cC_{p,\infty}.
    \end{align}
    We first assume that \eqref{1 2} holds.
    Since $T_{t_0}=T_{\frac{t_0}2}^2$, it follows that for all $x\in\cC_{p,\infty}$,
    $$U_{t_0}x'V_{t_0}=U_{\frac{t_0}2}\left(U_{\frac{t_0}2}x'V_{\frac{t_0}2}\right)'V_{\frac{t_0}2}=U_{\frac {t_0}2} V_{\frac{t_0}2}'xU_{\frac{t_0}2}'V_{\frac{t_0}2}$$
    and so,
    $$x'=\left(U_{t_0}^*U_{\frac {t_0}2} V_{\frac{t_0}2}'\right)x\left(U_{\frac{t_0}2}'V_{\frac{t_0}2}V_{t_0}^*\right)$$
    Let $C_{t}:=U_{t}^*U_{\frac{t}2}V_{\frac{t}2}'$, $D_{t}:=U_{\frac{t}2}'V_{\frac{t}2}V_{t}^*$, $t\in \mathbb{R}$. Then, we have
    \begin{align} \label{xtx}
    x'=C_{t_0}xD_{t_0}.
    \end{align}
    Take $x=e\otimes f$ with $\norm{e\otimes f}_{B(H)}=1$.
   We define 
    the conjugation $C:H\to H$ by
    $$Ch=\sum_{n=0}^\infty\overline{\langle h,e_n\rangle}_He_n, ~\forall h=\sum_{n=0}^\infty \langle h,e_n\rangle_H e_n \in H.$$
    We have that (see, e.g., \cite[p. 241--242]{E94})
    $$C^2=\mathbf{1}, ~Cx'C=x^*, ~\forall x\in \cC_{p,\infty}.$$
    This together with \eqref{xtx} shows that
    $$\left(C_{t_0}e\right)\otimes \left(D_{t_0}^*f\right)= \left(e\otimes f\right)'=C\left(e\otimes f\right)^*C =  C\left(f\otimes e\right)C=
    (Cf)\otimes (C^*e).$$
    Then, for any  $h\in H$, we have that
    $$(h,C^*e)Cf=\left(h,D_{t_0}^*f\right)C_{t_0}e.$$
    This implies that
    \begin{align} \label{Cauchy}
\left(D_{t_0}^*f,C^*e\right)Cf=\left(D_{t_0}^*f,D_{t_0}^*f\right)C_{t_0}e=\norm{D_{t_0}^*f}_H^2 C_{t_0}e.
    \end{align}
    By Cauchy--Schwarz inequality, we have that
    \begin{eqnarray*}
\norm{f}_H^2\norm{e}_H&=&\norm{D_{t_0}^*f}_H^2\norm{C_{t_0}e}_H\\
&\stackrel{\eqref{Cauchy}}{=}&\left|\left(D_{t_0}^*f,C^*e\right)\right|\norm{Cf}_H\\
&\le&\norm{D_{t_0}^*f}_H\norm{C^*e}_H\norm{Cf}_H\\
&=&\norm{e}_H\norm{f}_H^2.
    \end{eqnarray*}
    This implies that $D_{t_0}^*f$ and $C^*e$ are linearly dependent,
    which contradicts with the fact that $e,f$ are arbitrary such that $\norm{e\otimes f}_{B(H)}=1$.
    Now we assume that \eqref{1 1} holds.
    Arguing similarly, we get the contradiction.
    Consequently, the claim is proved.
    That is, for any $x\in \cC_{p,\infty}$ and $t\in \mathbb{R}$,
    \begin{align} \label{no or}
    T_t(x)=U_txV_t.
    \end{align}

    Since $T_{t+s}=T_tT_s$ for all $t,s\in\mathbb{R}$, it follows from \eqref{no or} that for all $x\in \cC_{p,\infty}$,
    $$U_{t+s}xV_{t+s}=U_tU_sxV_sV_t,$$
    that is,
    $$U_s^*U_t^*U_{t+s}x=xV_sV_tV_{t+s}^*.$$
    Let
    $$u_{t,s}^*:=U_s^*U_t^*U_{t+s}, ~ v_{t,s}:=V_sV_tV_{t+s}^*.$$
    Then, we have that
    \begin{align} \label{utvt}
        u_{t,s}^*x=xv_{t,s}, ~\forall x\in \cC_{p,\infty}.
    \end{align}
    Below, we will prove that there exists a unimodular scalar $w=w(s,t)$ such that
    \begin{align} \label{w1}
    u_{t,s}=w\mathbf{1}, ~v_{t,s}=\overline{w}\mathbf{1}.
    \end{align}
    By \eqref{utvt}, we have that
    $$u_{t,s}^*x\cdot \mathbf{1}=xv_{t,s}, ~\forall x\in \cC_{p,\infty}.$$
    This together with \cite[Lemma 2]{S81} yields that
    $u_{t,s}^*$ is a scalar or $v_{t,s}$ is a scalar.
    Without loss of generality, we may assume that $u_{t,s}=w\mathbf{1}$ for some scalar $w=w(s,t)$.
    This together with \eqref{utvt} yields that
    \begin{align} \label{all finite-rank}
    x(\overline w\mathbf{1}-v_{t,s})=0, ~\forall x\in \cC_{p,\infty}.
    \end{align}
    Fix $y\in B(H)$.
    Since the set of all finite-rank operators in  $H$ is dense in $B(H)$ with respect to the wo-topology, it follows that 
    there exists a net $\{y_\alpha\}_{\alpha\in \Lambda}$ of finite-rank operators in $H$ such that
    $$0\stackrel{\eqref{all finite-rank}}{=}\langle y_\alpha(\overline w\mathbf{1}-v_{t,s})\xi,\eta\rangle_H\to \langle y(\overline w\mathbf{1}-v_{t,s})\xi,\eta\rangle_H, ~\forall \xi,\eta\in H,$$
    which implies that
    \begin{align*} 
    y\left(\overline w\mathbf{1}-v_{t,s}\right)=0, ~\forall y\in B(H).
    \end{align*}
In particular,     $$v_{t,s}=\overline w\mathbf{1}.$$

    Note that for all $x\in \cC_{p,\infty}$,
    $$\norm{x}_{B(H)}\stackrel{\tiny \mbox{\cite[Lem. 3.2.3]{DPS}}}{=}\mu(0;x)\le \norm{x}_{\cC_{p,\infty}}.$$
    By a similar argument as used in the proof of \cite[p. 152-153]{BFJ}, we have that for any unit vectors $f,g,h$ in $H$,
    \begin{align} \label{algebra}
        \lim_{s\to t}\left|\langle V_sh,g\rangle_H\right|=\left|\langle V_th,g\rangle_H\right|, \qquad 
        \lim_{s\to t}\left|\langle U_sf,h\rangle_H\right|=\left|\langle U_tf,h\rangle_H\right|.
    \end{align}
    For each $t$, let
    $$\tilde{U}_t:=\{\lambda U_t: \lambda\in \mathbb{C},|\lambda|=1\}.$$
    By \eqref{w1} and the definitions of $u_{t,s},v_{t,s}$, we have that
    \begin{align} \label{ray repre}
    U_{t+s}=\overline{w}U_tU_s, ~V_{t+s}=wV_sV_t.
    \end{align}
    In particular, $\tilde{U}_s \tilde{U}_t :=  \{\lambda U_sU_t: \lambda\in \mathbb{C},|\lambda|=1\}  =  \{\lambda U_{s+t}: \lambda\in \mathbb{C},|\lambda|=1\}    =\tilde{U}_{t+s}$. 
    By \eqref{algebra} and \eqref{ray repre}, we have that $t\mapsto \tilde U_t$ is a continuous ray representation of $\left(\mathbb{R},+\right)$ (see the definition of ray representation in \cite[p. 6]{B54}).
    Since $\left(\mathbb{R},+\right)$ is a one-dimensional connected and simply connected group,
    it follows from 
    \cite[(6.2)]{B54} that
    $t\mapsto \tilde{U}_t$ satisfies the assumption of \cite[Theorem 3.2]{B54}.
    Applying \cite[Theorem 3.2]{B54} to $t\mapsto \tilde{U}_t$, we obtain that there exists a strongly continuous one-parameter unitary group $\{\hat{U}_t\}_{t\in \mathbb{R}}$ on $H$  and a unimodular complex number $\lambda_t$ (dependent on $t$) such that
    $$U_t=\lambda_t\hat{U}_t.$$
    Arguing similarly, we have that there exists a strongly continuous one-parameter unitary group $\{\hat V_t\}_{t\in \mathbb{R}}$ on $H$  and a unimodular complex number $\mu_t$ (dependent on $t$) such that
    $$V_t=\mu_t\hat V_t.$$
    Then,
    $$T_t(x)=\lambda_t\mu_t \hat U_t x \hat V_t, \quad  \forall t\in \mathbb{R},~  x\in \cC_{p,\infty}.$$
    Since $\{T_t\}_{t\in\mathbb{R}},\left\{\hat U_t\right\}_{t\in\mathbb{R}},\left\{\hat V_t\right\}_{t\in\mathbb{R}}$ are strongly continuous one-parameter groups, 
    it follows that $\{\lambda_t\mu_t \hat U_t\}_{t\in \mathbb{R}}$ is also a strongly continuous one-parameter unitary group on $H$.
    From now on, we identify $\lambda_t\mu_t \hat U_t$ and $\hat V_t$ with $U_t$ and $V_t$, respectively.
    By \cite[Theorem 12.2]{S18}, there exist two self-adjoint operators $A,B$ such that
    $U_t=e^{itA}$ and $V_t=e^{itB}$.
    The proof is complete.
\end{proof}

Some criteria for continuity of one-parameter group $\left\{T_t(\cdot):=e^{itA}\cdot e^{itB}\right\}_{t\in \mathbb{R}}$ will be discussed next.

\begin{theorem} \label{8.2} 
    Assume that $0<p<\infty$.
    Let $A,B$ be two self-adjoint operators on a complex infinite-dimensional separable Hilbert space   $H$. 
    For a one-parameter group $\left\{T_t(\cdot):=e^{itA} \cdot e^{itB}\right\}_{t\in \mathbb{R}}$ of isometries on $\mathcal{C}_{p,\infty}$, 
    the following statements are equivalent:
    \begin{enumerate} [label=(\roman*)]
        \item $\{T_t\}_{t\in \mathbb{R}}$ is strongly continuous.
        \item $\{T_t\}_{t\in \mathbb{R}}$ is continuous in the uniform operator topology of $B\left(\mathcal{C}_{p,\infty}\right)$.
        \item $A,B$ are bounded.
    \end{enumerate}
\end{theorem}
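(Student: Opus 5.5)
I would prove the cycle (iii)$\Rightarrow$(ii)$\Rightarrow$(i)$\Rightarrow$(iii). For (iii)$\Rightarrow$(ii), assume $A,B$ bounded. Then $e^{itA}-\mathbf{1}$ and $e^{itB}-\mathbf{1}$ tend to $0$ in $B(H)$ as $t\to0$, with $\norm{e^{itA}-\mathbf{1}}_{B(H)}\le |t|\norm{A}$. I will write
\[T_t(x)-x=(e^{itA}-\mathbf{1})xe^{itB}+x(e^{itB}-\mathbf{1}).\]
Since $\mu(axb)\le\norm{a}\norm{b}\mu(x)$, we get $\norm{axb}_{\mathcal{C}_{p,\infty}}\le \norm{a}_{B(H)}\norm{b}_{B(H)}\norm{x}_{\mathcal{C}_{p,\infty}}$. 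Combined with the quasi-triangle inequality, this gives $\norm{T_t-\mathbf{1}}_{B(\mathcal{C}_{p,\infty})}\le C_p(\norm{e^{itA}-\mathbf{1}}+\norm{e^{itB}-\mathbf{1}})\to0$. The step (ii)$\Rightarrow$(i) is trivial.

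The main step is (i)$\Rightarrow$(iii), and I will argue by contradiction, assuming $A$ is unbounded (the case of $B$ is symmetric, by taking adjoints). First I reduce to a statement about the individual unitary groups. Testing strong continuity on rank-one operators $e\otimes f$ gives $T_t(e\otimes f)=(e^{itA}e)\otimes(e^{-itB}f)$. Since $\mathcal{C}_{p,\infty}$-convergence dominates operator-norm convergence, $e^{itA}$ and $e^{itB}$ are strongly continuous up to unimodular scalars. I would rather not rely on this, because Proposition~\ref{8.1} already provides strongly continuous unitary groups. The real content of the step is to exhibit an $x\in\mathcal{C}_{p,\infty}$ with $\norm{T_t(x)-x}_{\mathcal{C}_{p,\infty}}\not\to0$, exploiting the non-separability of $\mathcal{C}_{p,\infty}$.

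The construction goes as follows. Since $A$ is unbounded, its spectrum is unbounded, so I can pick points $\lambda_k$ of the spectrum with $|\lambda_k|\to\infty$ rapidly. I then choose spectral projections $e^A(I_k)$ for disjoint intervals $I_k$ of length at most $1$ around $\lambda_k$, and unit vectors $\xi_k$ in their ranges; these vectors are mutually orthogonal and satisfy $\norm{e^{itA}\xi_k-e^{it\lambda_k}\xi_k}\le |t|$. Next I fix a unit vector $\eta$ and orthonormal $\eta_k$, and set
\[x=\sum_k (k+1)^{-1/p}\,\xi_k\otimes\eta_k\in\mathcal{C}_{p,\infty}.\]
I choose the $\eta_k$ so that the $B$-side is harmless. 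One option is to replace $x$ by $xe^{-itB}$-compensated versions; a cleaner option is to use $T_t(x)e^{-itB}$ together with the $B(H)$-invariance of the quasi-norm, since $\norm{T_t(x)-x}\ge c\norm{e^{itA}x-xe^{-itB}}$ only up to constants. To avoid this complication entirely, I will instead pick the $\eta_k$ inside a single spectral subspace $e^{B}([-1,1])$ (if it is infinite-dimensional), or else in spectral windows of $B$ of width $1$ as well. Then $e^{itB}$ acts on each $\eta_k$ approximately as the phase $e^{it\beta_k}$, with $\beta_k$ bounded or controlled.

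With these choices, $T_t(x)-x$ is approximately equal to $\sum_k (k+1)^{-1/p}(e^{it(\lambda_k+\beta_k)}-1)\xi_k\otimes\eta'_k$, where the $\eta'_k$ are orthonormal; the error is of order $|t|\norm{x}$ after using orthogonality. Its singular values are $(k+1)^{-1/p}|e^{it(\lambda_k+\beta_k)}-1|$. Because $\lambda_k+\beta_k\to\infty$ fast (I choose the $\lambda_k$ after the $\beta_k$ if needed), for every small $t$ there are infinitely many $k$, arranged along a positive-density set of indices, with $|e^{it(\lambda_k+\beta_k)}-1|\ge1$. A positive-density lacunary choice, say $\lambda_k$ growing like $2^k$, ensures this. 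By the $\Delta_2$-type comparison of $l_\Phi(n)$ used in Lemma~\ref{Sigma}, the weak-$\ell_p$ quasi-norm of $T_t(x)-x$ then stays bounded below by a constant independent of $t$, which contradicts (i).

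The hardest part is precisely this lower bound: making rigorous that, uniformly for small $t$, a positive proportion of the phases $e^{it\mu_k}$ stays away from $1$, and controlling the approximation errors coming from the spectral windows. I would handle this with a dyadic choice $\mu_k\in[2^{k},2^{k}+1]$. For $|t|\in[2^{-m},2^{-m+1}]$, all $k$ with $t\mu_k$ in a suitable range mod $2\pi$ contribute, and the bound $\sup_n (n+1)^{1/p}\mu(n;\cdot)$ only needs one index $n\approx m$ with a large singular value. Getting a bound independent of $m$ requires reindexing the weights as $(k+1)^{-1/p}$ placed at positions comparable to the rank. If that estimate proves too delicate, the fallback is to compare with the $B(H)$ case of \cite[Theorem 2.8]{BFJ}, using a block-diagonal embedding in which each block contributes a fixed weighted copy.
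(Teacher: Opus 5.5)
Your implications (iii)$\Rightarrow$(ii)$\Rightarrow$(i) are correct. Your direct estimate $\norm{T_t-\mathbf{1}}\le C_p(\norm{e^{itA}-\mathbf{1}}+\norm{e^{itB}-\mathbf{1}})$ is more explicit than the paper's appeal to \cite{BFJ}. The set-up for (i)$\Rightarrow$(iii) is essentially the paper's: a test element built from spectral windows of $A$, with the $B$-side neutralised by a bounded spectral subspace of $B$ or by distinct narrow windows. The gap is the step you flag yourself, the lower bound on $\norm{T_tx-x}_{\mathcal{C}_{p,\infty}}$, and the mechanism you propose for it is false.

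\textbf{Why the proposed lower bound fails.} Lacunarity does not give, for every small $t$, a positive proportion of phases away from $1$. Take $\mu_k=2^k$ and $t=2\pi 2^{-m}$. Then $e^{it\mu_k}=1$ for all $k\ge m$, and $|e^{it\mu_k}-1|\le 2\pi 2^{k-m}$ for $k<m$. So the main term $\sum_k (k+1)^{-1/p}(e^{it\mu_k}-1)\xi_k\otimes\eta_k$ has quasi-norm tending to $0$ as $m\to\infty$.

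Also, ``one index $n\approx m$ with a large singular value'' does not suffice. To get $(n+1)^{1/p}\mu(n;\cdot)\ge c$ you need about $n$ singular values of size at least a constant times $n^{-1/p}$, all at one and the same $t$. That is, the number of bad indices among the first $O(n)$ must be proportional to $n$. Indices with $t\mu_k\ll 1$ contribute nothing, so for $t\approx 2^{-m}$ you need order $m$ bad indices in a window such as $[m,2m)$. Nothing in the proposal produces such a $t$.

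The fallback via \cite[Theorem 2.8]{BFJ} is not an argument either. Strong continuity on $\mathcal{C}_{p,\infty}$ gives no direct information about the action on $B(H)$.

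\textbf{The missing idea: average in $t$.} Do not try to control every $t$. If $\theta t_0\ge 2$, then $\frac{1}{t_0}\int_0^{t_0}|e^{it\theta}-1|^2\,dt=2-\frac{2\sin(\theta t_0)}{\theta t_0}\ge 1$.
\begin{itemize}
\item Take a block of $N$ indices whose frequencies all exceed $2/t_0$. The paper uses dyadic blocks $I_j$ with $N_j=2^j$, $\lambda_k>b_k+a_j$ on $\delta_k$, and $a_j\ge 2/t_j$.
\item Average this identity over the block and over $t\in(0,t_0)$. This yields some $t'\in(0,t_0)$ at which at least $N/5$ block indices satisfy $\norm{e^{it'A}(f_k\otimes g_k)e^{it'B}-f_k\otimes g_k}_2\ge\frac12$.
\item These differences have rank at most $2$, so each has operator norm at least $\frac{\sqrt2}{4}$. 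The paper computes these $\mathcal{C}_2$-norms exactly as double spectral integrals, so no approximation error enters.
\item Put weights comparable to $N^{-1/p}$ on the block; the paper uses $N_j^{-1/p}$ on $I_j$, and your $(k+1)^{-1/p}$ restricted to $[m,2m)$ would also do. With left/right orthogonality of the summands, this gives $\norm{T_{t'}x-x}_{\mathcal{C}_{p,\infty}}\ge c_p>0$, with $c_p$ independent of the block.
\item As the blocks move out, $t'\to 0$, which contradicts strong continuity.
\end{itemize}
With this argument lacunarity plays no role: any frequencies tending to infinity suffice.
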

\begin{proof}
The implication (ii) $\Rightarrow$ (i) is trivial.

(iii) $\Rightarrow$ (ii).
Assume that $A,B$ are bounded. The uniform continuity 
  follows from 
the fact that  bounded operators $A,B$ generate uniformly continuous one-parameter group of isometries on $B(H)$ \cite{BFJ}.

(i) $\Rightarrow$ (iii). 
   
   {\bf Step $1$}.
    Assume by contradiction that  
    at least one of $A,B$ is unbounded. 
    It suffices to find $x=\sum_{n=0}^\infty c_n x_n\in \mathcal{C}_{p,\infty}$ such that $\{T_t x \}_{t\in\mathbb{R}}$ is not  continuous,  where $\{c_n\}_{n=0}^\infty$ is a sequence of scalars in $\mathbb{C}$ and $\{x_n\}_{n=0}^\infty\subset B(H)$ is a mutually orthogonal  sequence of rank-one operators.
    Without loss of generality, we may assume that $A$ is unbounded.
    We claim that there exists a mutually orthogonal sequence $ \{p_n\}_{n=0}^\infty$ of nonzero projections commuting with $B$ such that 
    \[0\le  b_n:=\norm{Bp_n}_{B(H)}<\infty, ~ \]
    Indeed, let $e^B$ denote the spectral measure of $B$.
    We first assume that $\sigma(B)$ contains infinitely many  points.
    Then, there exists a strictly increasing (or decreasing) sequence 
    $\left\{\mu_n\right\}_{n=0}^\infty\subset \sigma(B)$.
    Since 
    $$\sigma(B)\stackrel{\tiny\mbox{\cite[Thm. 1.5.13]{DPS}}}{=}\operatorname{supp}\left(e^B\right)\stackrel{\tiny\mbox{\cite[p. 14]{DPS}}}{=}\mathbb{R}\setminus \bigcup_{\substack{V\subset \mathbb{R} ~\text{open}\\e^B(V)=0}}V,$$
    it follows that
    $\mu_n\in \sigma(B)$ implies that 
    \begin{align} \label{>0}
    e^B\left(\mu_n-\varepsilon,\mu_n+\varepsilon\right)\ne0, ~\forall \varepsilon>0.
    \end{align}
    For any $n\ge1$, let
    $$d_n:=\frac 12\min\{\left|\mu_n-\mu_{n-1}\right|,\left|\mu_{n+1}-\mu_{n}\right|\}>0$$
    and for $n=0$, let
    $$d_0:=\frac  {|\mu_1-\mu_0|}2.$$
    For any $n\in\mathbb{N}$, 
    take $p_n:=e^B(\mu_n-d_n,\mu_n+d_n)\stackrel{\eqref{>0}}{\ne}0$.
    In this case, 
    $$0\le b_n:=\norm{Bp_n}_{B(H)}\le |\mu_n|+d_n<\infty.$$
    Now we assume that $\sigma(B)=\{\mu_1,\dots,\mu_N\}$ is a finite set.
    In this case, $B$ is bounded and
    there exists $1\le m\le N$  such that $e^{B}(\{\mu_m\})H$ is infinite-dimensional.
    Let $\{r_n\}_{n=0}^\infty\subset H$ be an orthonormal basis of $e^{B}(\{\mu_m\})H$ and 
    for $n\in \mathbb{N}$, $p_n$ denote  the projection onto the one-dimensional linear space spanned by $r_n$.
    Note that for all $n\in \mathbb{N}$, 
     \begin{align}\label{eq:pnB=Bpn}
        p_nB
    \stackrel{\tiny \mbox{\cite[p. 64]{DPS}}}{=}Bp_n
     \end{align}
     and $b_n:=\norm{Bp_n}_{B(H)}=\norm{\mu_mp_n}_{B(H)}=|\mu_m|<\infty$. 
    This proves the claim.

    {\bf Step $2$}. 
    Choose $g_n\in H$ with $\norm{g_n}_H=1$ such that
    \[g_n\otimes g_n\le p_n.\]
    Denote the $\sigma$-algebra of all Borel subsets of $\mathbb{R}$ by $\mathcal{B}(\mathbb{R})$. 
    Let $e^B$ denote the spectral measure of $B$.
    Define a  $\sigma$-additive probability measure $e^B_{g_n,g_n}:\mathcal{B}(\mathbb{R})\to \mathbb{C}$ by
    \begin{align} \label{gn,gn}
    e_{g_n,g_n}^B(\delta):=\left\langle e^B(\delta )g_n, g_n \right\rangle_H\stackrel{g_n\otimes g_n \le p_n}{=}\left\langle e^B(\delta )p_n g_n, g_n \right\rangle_H
    \end{align}
    for all $\delta\in \mathcal{B}(\mathbb{R})$ \cite[p. 9]{DPS}.
    Below, we will prove that $\operatorname{supp} e^B_{g_n,g_n}\subset [-b_n,b_n]$ for all $n\in \mathbb{N}$.
By \eqref{eq:pnB=Bpn}, we have $B|_{p_nH}$ is self-adjoint.
The uniqueness in the spectral theorem
implies that 
\[
e^{B|_{p_nH}}(\delta)=e^B(\delta)|_{p_nH}.
\]
Since $g_n\otimes g_n\le p_n$ and $\norm{B|_{p_nH}}_{B(p_nH)}\le b_n$,
it follows from \cite[Theorem 1.5.13]{DPS} that
    \begin{align} \label{supp B}
    \operatorname{supp} e^B_{g_n,g_n}
    \stackrel{\eqref{gn,gn}}{=}\operatorname{supp} e^{B|_{p_nH}}_{g_n,g_n}\subset\sigma\left(B|_{p_nH}\right)
    \subset [-b_n,b_n].
    \end{align}

    {\bf Step $3$}.
    Choose an increasing sequence $\{a_n\}_{n=0}^\infty$ such that $0< a_n\uparrow \infty$ as $n\to \infty$.
    For each $j\in \mathbb{N}$, let $I_j:=\left\{k\in\mathbb{N}:2^j-1\le k< 2^{j+1}-1\right\}$ and $N_j:=\#\left(I_j\right)=2^j$.
    Then, we have
    \begin{align} \label{le22}
    N_0+N_1+\dots+N_j\le 2N_j.
    \end{align}
    Note that $N_j\uparrow \infty$ as $j\to \infty$.
    Let $e^A$ denote the spectral measure of $A$.
Since $A$ is unbounded, it follows that there exists
a  mutually disjoint sequence 
$\{\delta_n\}_{n=0}^\infty$ of nonempty bounded closed intervals of $\mathbb{R}$ such that $e_n:=e^A(\delta_n)\ne 0$ and 
    for all $k\in I_j$,
    \begin{align*} 
    |\lambda_k|> b_k+a_j, ~\forall \lambda_k\in \delta_k.
    \end{align*}
    Without loss of generality, we may assume that $\delta_n\subset \left(0,\infty\right)$ for all $n$ and in this case, for all $k\in I_j$,
    \begin{align} \label{infimum1}
    \lambda_k> b_k+a_j, ~\forall \lambda_k\in \delta_k.
    \end{align}
    For all $j\in \mathbb{N}$, $k\in I_j$, $\lambda_k\in \delta_k$ and $\gamma_k\in \operatorname{supp}\left(e_{g_k,g_k}^B\right)\subset [-b_k,b_k]$,  we have that
    \begin{align} \label{>aj}
    |\lambda_k+\gamma_k|\ge |\lambda_k|-|\gamma_k|\stackrel{\eqref{infimum1}}{>}a_j.
    \end{align}
    For any $n$, choose $f_n\in H$ with $\norm{f_n}_H=1$ such that
    \begin{align}\label{fn en}
    f_n\otimes f_n\le e_n =e^A(\delta_n).
    \end{align}
    Define a $\sigma$-additive measure $e^A_{f_n,f_n}:\mathcal{B}(\mathbb{R})\to \mathbb{C}$ by
    \[e_{f_n,f_n}^A(\delta)=\left\langle e^A(\delta )f_n, f_n \right\rangle_H \]
    for all $\delta\in \mathcal{B}(\mathbb{R})$.
    Observe that 
    \begin{align} \label{supp A}
    \operatorname{supp} e^A_{f_n,f_n}\stackrel{\eqref{fn en}}{\subset} \delta_n.
    \end{align}
    Choose  $\{t_j\}$ such that $t_j\downarrow0$ as $j\to \infty$ and for all $j$,
    \[a_j\ge\frac2{t_j}.\]

    {\bf Step $4$}. We claim that for all $k\in I_j$,
    $$\norm{e^{itA}(f_k\otimes g_k)e^{itB}-f_k\otimes g_k}_{2}^2=\int_{\delta_k}\int_{\operatorname{supp}\left(e^B_{g_k,g_k}\right)}\left|e^{it(\lambda+\gamma)}-1\right|^2de^B_{g_k,g_k}(\gamma)de^A_{f_k,f_k}(\lambda).$$
    Indeed, by \cite[p. 317]{B08}, we have that for any $\xi_1,\xi_2,\eta_1,\eta_2\in H$,
    \begin{align} \label{2 H}
    \langle\xi_1\otimes \xi_2,\eta_1\otimes \eta_2\rangle_2=\langle\xi_1,\eta_1\rangle_H
    \overline{\langle\xi_2,\eta_2\rangle_H}.
    \end{align}
    Then, for all $k\in I_j$,
    \begin{eqnarray} \label{H 2}
        &\quad&\norm{e^{itA}(f_k\otimes g_k)e^{itB}-f_k\otimes g_k}_{2}^2\nonumber\\
        &=&\norm{\left(e^{itA}f_k\right)\otimes \left(e^{-itB} g_k\right)-f_k\otimes g_k}_{2}^2\nonumber\\
        &=&\Bigl\langle\left(e^{itA}f_k\right)\otimes \left(e^{-itB} g_k\right)-f_k\otimes g_k,\nonumber\\
        &&\qquad \left(e^{itA}f_k\right)\otimes \left(e^{-itB} g_k\right)-f_k\otimes g_k\Bigr\rangle_2\nonumber\\
        &=&\left\langle\left(e^{itA}f_k\right)\otimes \left(e^{-itB} g_k\right),\left(e^{itA}f_k\right)\otimes \left(e^{-itB} g_k\right)\right\rangle_2\nonumber\\
        &&+\left\langle f_k\otimes  g_k,f_k\otimes  g_k\right\rangle_2-2{\rm Re}\left\langle\left(e^{itA}f_k\right)\otimes \left(e^{-itB} g_k\right),f_k\otimes  g_k\right\rangle_2\nonumber\\
        &\stackrel{\eqref{2 H}}{=}& \langle e^{itA}f_k,e^{itA}f_k \rangle_H\langle e^{-itB}g_k,e^{-itB}g_k \rangle_H+\langle f_k,f_k\rangle_H\langle g_k,g_k\rangle_H\nonumber\\
        &&-2{\rm Re}\left(\langle e^{itA}f_k,f_k\rangle_H \overline{\langle e^{-itB}g_k,g_k\rangle_H }\right)\nonumber\\
        &\stackrel{\norm{f_k}_H=\norm{g_k}_H=1}{=}& 
        2-2{\rm Re}\left(\langle e^{itA}f_k,f_k\rangle_H 
        \overline{\langle e^{-itB}g_k,g_k\rangle_H }\right)\nonumber\\
        &=&2-2{\rm Re}\left(\langle e^{itA}f_k,f_k\rangle_H 
        \langle e^{itB}g_k,g_k\rangle_H \right).
    \end{eqnarray}
    Since $e^{it(\lambda+\gamma)}=\cos(t(\lambda+\gamma))+i\sin(t(\lambda+\gamma))$ and ${\rm Re}(e^{it(\lambda+\gamma)})=\cos(t(\lambda+\gamma))$,
    it follows that for all $k\in I_j$,
    \begin{align} \label{Re}
        &\quad {\rm Re}\left(\int_\mathbb{R}e^{it\lambda}de^A_{f_k,f_k}(\lambda)\int_\mathbb{R}e^{it\gamma}de^B_{g_k,g_k}(\gamma)\right)\nonumber\\
        &={\rm Re}\int_\mathbb{R}\int_\mathbb{R} e^{it(\lambda+\gamma)} de^B_{g_k,g_k}(\gamma)de^A_{f_k,f_k}(\lambda)\nonumber\\
        &={\rm Re}\int_\mathbb{R}\int_\mathbb{R} \cos(t(\lambda+\gamma))+i\sin(t(\lambda+\gamma)) de^B_{g_k,g_k}(\gamma)de^A_{f_k,f_k}(\lambda)\nonumber\\
        &=\int_\mathbb{R} \int_\mathbb{R}\cos(t(\lambda+\gamma))de^B_{g_k,g_k}(\gamma)de^A_{f_k,f_k}(\lambda)\nonumber\\
        &=\int_\mathbb{R}\int_\mathbb{R}{\rm Re}(e^{it(\lambda+\gamma)})de^B_{g_k,g_k}(\gamma)de^A_{f_k,f_k}(\lambda).
    \end{align}
    Observe that for all $k\in I_j$,
    \begin{eqnarray} \label{lambdagamma}
    &\quad&\norm{e^{itA}(f_k\otimes g_k)e^{itB}-f_k\otimes g_k}_{2}^2\nonumber\\
    &\stackrel{\eqref{H 2}}{=}&2-2{\rm Re}\left(\langle e^{itA}f_k,f_k \rangle_H \langle e^{itB}g_k,g_k \rangle_H\right)\nonumber\\
    &\stackrel{\tiny\mbox{\cite[Thm. 1.5.7(ii)]{DPS}}}{=}&2-2{\rm Re}\left(\int_\mathbb{R}e^{it\lambda}de^A_{f_k,f_k}(\lambda)\int_\mathbb{R}e^{it\gamma}de^B_{g_k,g_k}(\gamma)\right)\nonumber\\
    &\stackrel{\eqref{Re}}{=}&\int_{\mathbb{R}}\int_{\mathbb{R}}2de^B_{g_k,g_k}(\gamma)de^A_{f_k,f_k}(\lambda)\nonumber\\
    &&-\int_{\mathbb{R}}\int_{\mathbb{R}}2{\rm Re} \left(e^{it(\lambda+\gamma)}\right)de^B_{g_k,g_k}(\gamma)de^A_{f_k,f_k}(\lambda)\nonumber\\
    &=&\int_{\mathbb{R}}\int_{\mathbb{R}}2-2{\rm Re} \left(e^{it(\lambda+\gamma)}\right)de^B_{g_k,g_k}(\gamma)de^A_{f_k,f_k}(\lambda)\nonumber\\
    &=&\int_{\mathbb{R}}\int_{\mathbb{R}} \left(e^{it(\lambda+\gamma)}-1\right)\left(e^{-it(\lambda+\gamma)}-1\right)de^B_{g_k,g_k}(\gamma)de^A_{f_k,f_k}(\lambda)\nonumber\\
    &=&\int_{\mathbb{R}}\int_{\mathbb{R}} \left|e^{it(\lambda+\gamma)}-1\right|^2de^B_{g_k,g_k}(\gamma)de^A_{f_k,f_k}(\lambda)\nonumber\\
    &\stackrel{\substack{\eqref{supp A}\\\eqref{supp B}}}{=}&\int_{\delta_k}\int_{\operatorname{supp}\left(e^B_{g_k,g_k}\right)}\left|e^{it(\lambda+\gamma)}-1\right|^2de^B_{g_k,g_k}(\gamma)de^A_{f_k,f_k}(\lambda).
    \end{eqnarray}
    This proves the claim.


{\bf Step $5$}. We will construct a decreasing sequence $\left\{t_j'\right\}_{j=0}^\infty$ tending to $0$ and for all $j\in\mathbb{N}$, a subset $E_j\subset I_j$ with $\#(E_j)\ge \frac{N_j}{5}$ consisting of $k\in I_j$ satisfying 
$$\norm{e^{it_j'A}(f_k\otimes g_k)e^{it_j'B}-f_k\otimes g_k}_2\ge\frac12.$$
    Since $t_j|\lambda_k+\gamma_k|\stackrel{\eqref{>aj}}{>}t_ja_j\ge2$ for all $k\in I_j$, $\lambda_k\in \delta_k$ and $\gamma_k\in \operatorname{supp}\left(e^B_{g_k,g_k}\right)$,
    it follows that 
    \begin{align} \label{ge11}
\frac1{t_j}\int_0^{t_j}\left|e^{it(\lambda_k+\gamma_k)}-1\right|^2dt
    &=\frac1{t_j}\int_0^{t_j}\left|\cos \left(t(\lambda_k+\gamma_k)\right)+i\sin \left(t(\lambda_k+\gamma_k)\right)-1\right|^2dt\nonumber\\
    &=\frac1{t_j}\int_0^{t_j}\left(\cos \left(t(\lambda_k+\gamma_k)\right)+i\sin \left(t(\lambda_k+\gamma_k)\right)-1\right)\nonumber\\
    &\quad\left(\cos \left(t(\lambda_k+\gamma_k)\right)-i\sin \left(t(\lambda_k+\gamma_k)\right)-1\right)dt\nonumber\\
    &=\frac1{t_j}\int_0^{t_j}2-2\cos\left(t(\lambda_k+\gamma_k)\right)dt\nonumber\\
    &=2-\frac{2\sin\left((\lambda_k+\gamma_k)t_j\right)}{\left(\lambda_k+\gamma_k\right)t_j}\nonumber\\
    &\ge1.
    \end{align}
    For any $k\in I_j$, we have that
    \begin{eqnarray} \label{8.2 ge1}
        & &\frac{1}{t_j}\int_0^{t_j}\norm{e^{itA}\left(f_k\otimes g_k\right)e^{itB}-f_k\otimes g_k}_2^2dt\nonumber\\
        &\stackrel{\eqref{lambdagamma}}{=}&\frac{1}{t_j}\int_0^{t_j}\int_{\delta_k}\int_{\operatorname{supp}\left(e^B_{g_k,g_k}\right)}\left|e^{it(\lambda+\gamma)}-1\right|^2de^B_{g_k,g_k}(\gamma)de^A_{f_k,f_k}(\lambda)dt\nonumber\\
        &\stackrel{\tiny\mbox{\cite[Thm. 2.37]{F99}}}{=}&\frac{1}{t_j}\int_{\delta_k}\int_{\operatorname{supp}\left(e^B_{g_k,g_k}\right)}\int_0^{t_j}\left|e^{it(\lambda+\gamma)}-1\right|^2dtde^B_{g_k,g_k}(\gamma)de^A_{f_k,f_k}(\lambda)\nonumber\\
        &\stackrel{\eqref{ge11}}{\ge} &\int_{\delta_k}\int_{\operatorname{supp}\left(e^B_{g_k,g_k}\right)}1de^B_{g_k,g_k}(\gamma)de^A_{f_k,f_k}(\lambda)=1.
    \end{eqnarray}
    Observe that for each $j$ and $k\in I_j$, 
    \begin{align*}
    &\quad \frac1{t_j}\int_0^{t_j}\left(\frac1{N_j}\sum_{k\in I_j}\norm{e^{itA}(f_k\otimes g_k)e^{itB}-f_k\otimes g_k}_2^2\right)dt\\
    &=\frac1{N_j}\sum_{k\in I_j}\frac1{t_j}\int_0^{t_j}\norm{e^{itA}(f_k\otimes g_k)e^{itB}-f_k\otimes g_k}_2^2dt\stackrel{\eqref{8.2 ge1}}{\ge}1.
    \end{align*}
    Then, there exists $t_j'\in\left(0,t_j\right)$ such that
    \begin{align} \label{ge111}
    \frac1{N_j}\sum_{k\in I_j}\norm{e^{it_j'A}(f_k\otimes g_k)e^{it_j'B}-f_k\otimes g_k}_2^2\ge1.
    \end{align}
    For any $j\in \mathbb{N}$,  let \[E_j:=\left\{k\in I_j:\norm{e^{it_j'A}(f_k\otimes g_k)e^{it_j'B}-f_k\otimes g_k}_2\ge\frac12\right\}.\]
    By $\norm{e^{it_j'A}(f_k\otimes g_k)e^{it_j'B}-f_k\otimes g_k}_2\le2$, we have that 
    \[\frac1{N_j}\sum_{k\in I_j}\norm{e^{it_j'A}(f_k\otimes g_k)e^{it_j'B}-f_k\otimes g_k}_2^2\le \frac1 {N_j}\left(4\#\left(E_j\right)+\frac14\left(N_j-\#\left(E_j\right)\right)\right).\]
    This together with \eqref{ge111} yields that
    \begin{align} \label{com33}
    \#\left(E_j\right)\ge\frac{N_j}5.
    \end{align}

    {\bf Step $6$}.
    Let $x=\sum_{n=0}^\infty N_{\{j:n\in I_j\}}^{-1/p} f_n\otimes g_n$.
    Below, we will prove that $x\in \cC_{p,\infty}$ and $t\mapsto e^{itA}xe^{itB}$ is not continuous with respect to $\norm{\cdot}_{\cC_{p,\infty}}$.
    Given any $n\ge0$, we obtain that $n\in I_{j}$ for some $j$ and so,
    \begin{align}\label{ineq11}
    n+1\le N_0+N_1+\dots+N_{j}\stackrel{\eqref{le22}}{\le}2N_{j}.
    \end{align}
    Then, for this $n$, we have  $\mu\left(n;x\right) = N^{-1/p}_{\{i: n\in I_{i}\}} =N^{-1/p}_{j}$, and therefore, 
    \begin{align*}
    \mu\left(n;x\right)\left(n+1\right)^{1/p}=\left(\frac {n+1} {N_{j}}\right)^{1/p}\stackrel{\eqref{ineq11}}{\le}2^{1/p}.
    \end{align*}
    In particular, $x\in \cC_{p,\infty }$.
    Since $e^{it_j'A}(f_n\otimes g_n)e^{it_j'B}-f_n\otimes g_n$ is at most rank $2$ ($f_n,g_n$ are unit vectors in $H$ such that $f_n\otimes f_n\le e_n,g_n\otimes g_n\le p_n$),
    it follows from the definition of $E_j$ that for all $k\in E_j$,
    \[\frac12\le\norm{e^{it_j'A}(f_k\otimes g_k)e^{it_j'B}-f_k\otimes g_k}_2\le\sqrt{2}\norm{e^{it_j'A}(f_k\otimes g_k)e^{it_j'B}-f_k\otimes g_k}_{B(H)},\]
    that is,
    \[\norm{e^{it_j'A}(f_k\otimes g_k)e^{it_j'B}-f_k\otimes g_k}_{B(H)}\ge\frac{\sqrt{2}}4.\]
    This shows that for all $k\in E_j\subset I_j$,
    \begin{align} \label{21}
    \mu\left(0;e^{it_j'A}(f_k\otimes g_k)e^{it_j'B}-f_k\otimes g_k\right)\ge \frac{\sqrt{2}}{4}.
    \end{align}
    Since $\{e_n\}_{n=0}^\infty$ and $\{p_n\}_{n=0}^\infty$ 
    commute with $A$ and $B$, respectively, it follows 
    that for any $n\in \mathbb{N}$,
    \begin{eqnarray*} 
    &\quad & e_n\left(e^{it_j'A}\left(f_n\otimes g_n\right)e^{it_j'B}-f_n\otimes g_n\right)p_n\nonumber\\
    &=&e^{it_j'A}e_n\left(f_n\otimes g_n\right)p_ne^{it_j'B}-e_n\left(f_n\otimes g_n\right)p_n\nonumber\\
    &\stackrel{\substack{f_n\otimes f_n\le e_n\\g_n\otimes g_n\le p_n}}{=}&e^{it_j'A}\left(f_n\otimes g_n\right)e^{it_j'B}-f_n\otimes g_n,
    \end{eqnarray*}
    which together with the fact that $\{e_n\}_{n=0}^\infty$ and $\{p_n\}_{n=0}^\infty$ are two pairwise orthogonal sequences, implies that  
    $$N^{-1/p}_{\{j:n\in I_j\}}\left(e^{it_j'A}\left(f_n\otimes g_n\right)e^{it_j'B}-f_n\otimes g_n\right), ~0\le n <\infty,$$
    are pairwise orthogonal from the left and the right. 
    On the other hand, we have 
      $$e^{it_j'A}xe^{it_j'B}-x =\sum_{n=0}^\infty N^{-1/p}_{\{j:n\in I_j\}}\left(e^{it_j'A}\left(f_n\otimes g_n\right)e^{it_j'B}-f_n\otimes g_n\right).$$
Hence, $\mu\left(e^{it_j'A}xe^{it_j'B}-x\right) \stackrel{\substack{\eqref{com33}\\ \eqref{21}}}{\ge} \frac{\sqrt{2}}{4} (N_j)^{-1/p} \chi_{_{\left(0,  \frac{N_j}{5} \right)}} $
      for every $j$. 
    This   yields that for every $j$, we have 
\[\norm{e^{it_j'A}xe^{it_j'B}-x}_{\mathcal{C}_{p,\infty}}\ge 
\left(\frac{N_j}{5}\right)^{1/p} \left( 
\frac{\sqrt{2}}{4} (N_j)^{-1/p}\right) =\frac{\sqrt{2}}{4}\frac{1}{5^{1/p}}.\]
    Since $t_j'\in\left(0,t_j\right)$ and $t_j\to0$ as $j\to \infty$, it follows that
    $\{T_t\}_{t\in \mathbb{R}}$ is not a strongly continuous one-parameter group of isometries on $\mathcal{C}_{p,\infty}$, a contradiction.
    Therefore, $A,B\in B(H)$.
\end{proof}

\begin{remark}
    A combination of Proposition \ref{8.1} and Theorem \ref{8.2} yields that $\{T_t\}_{t\in \mathbb{R}}$ is a strongly continuous one-parameter group of isometries of $\cC_{p,\infty}, ~0<p<\infty$ if and only if there exist two self-adjoint operators $A,B\in B(H)$ such that
    $$T_t(x)=e^{itA}xe^{itB}, ~\forall t\in \mathbb{R}, ~x\in \cC_{p,\infty}.$$
\end{remark}

The following result provides a necessary and sufficient condition for a  one-parameter group of isometries on $\ell_{p,\infty}$ to be strongly continuous.

\begin{theorem}
    Assume that $0<p<\infty$.
    Then, $\{T_t\}_{t\in \mathbb{R}}$ is a strongly continuous one-parameter group of isometries of $\ell_{p,\infty}$ if and only if there exists a real sequence $A=\left(a_n\right)_{n=0}^\infty\in \ell_\infty$ such that
    $$T_t(x)(n)=e^{ita_n}x(n), ~\forall t\in \mathbb{R},~n\in \mathbb{N},~x\in \ell_{p,\infty}.$$
\end{theorem}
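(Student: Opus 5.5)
The plan is to combine Theorem \ref{111} (the description of surjective isometries of $\ell_{p,\infty}$) with a strong-continuity argument in the spirit of Theorem \ref{8.2}. The sufficiency direction is the easy one. If $A=(a_n)\in\ell_\infty$ is real, then each $T_t$ is multiplication by a unimodular sequence, so it preserves $\mu(x)$ and is therefore a surjective isometry. The group law is immediate. For strong continuity we estimate $|e^{ita_n}-1|\le |t|\,\norm{A}_\infty$, which gives $\mu(T_tx-x)\le |t|\norm{A}_\infty\mu(x)$ and hence $\norm{T_tx-x}_{\ell_{p,\infty}}\le |t|\norm{A}_\infty\norm{x}_{\ell_{p,\infty}}\to 0$.

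For necessity, fix a strongly continuous group $\{T_t\}$ of isometries. Each $T_t$ is surjective, since $T_{-t}$ is its inverse. By Theorem \ref{111} we can write $T_t(x)(n)=u_n(t)x(\sigma_t(n))$ with $\sigma_t$ a permutation and $|u_n(t)|=1$.

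\textbf{Step 1: the permutations are trivial.} Apply strong continuity to $x=e_k$. We have $T_te_k=u_{\sigma_t^{-1}(k)}(t)\,e_{\sigma_t^{-1}(k)}$. If $\sigma_t^{-1}(k)\ne k$, then $\norm{T_te_k-e_k}_{\ell_{p,\infty}}\ge l_\Phi(0)=1$, so $\sigma_t(k)=k$ for all $|t|<\delta_k$. This bound is not uniform in $k$, so instead use the group structure. The set $\{t:\sigma_t(k)=k\}$ is a subgroup of $\mathbb{R}$, because $\sigma_{t+s}=\sigma_s\circ\sigma_t$ up to the order of composition and $T_{-t}=T_t^{-1}$. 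This subgroup contains a neighbourhood of $0$, so it is all of $\mathbb{R}$. Hence $\sigma_t=\mathrm{id}$ for every $t$, and $T_t(x)(n)=u_n(t)x(n)$.

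\textbf{Step 2: each coordinate is a continuous character.} The group law gives $u_n(t+s)=u_n(t)u_n(s)$. Continuity of $t\mapsto u_n(t)$ follows from $|u_n(t)-1|\le\norm{T_te_n-e_n}_{\ell_{p,\infty}}$. So $u_n(t)=e^{ita_n}$ for some $a_n\in\mathbb{R}$.

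\textbf{Step 3: the sequence $(a_n)$ is bounded.} This is the main obstacle, and it mirrors Steps 3--6 of Theorem \ref{8.2} with rank-one operators replaced by unit vectors. Suppose $\sup|a_n|=\infty$. Choose disjoint blocks $I_j$ of indices with $\#I_j=N_j=2^j$ and $|a_k|\ge 2/t_j$ for $k\in I_j$, where $t_j\downarrow0$.

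By the averaging estimate \eqref{ge11},
\[
\frac1{t_j}\int_0^{t_j}|e^{ita_k}-1|^2\,dt\ge1 .
\]
So there is $t_j'\in(0,t_j)$ and a set $E_j\subset I_j$ with $\#E_j\ge N_j/5$ such that $|e^{it_j'a_k}-1|\ge1/2$ for all $k\in E_j$.

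Now take
\[
x=\sum_k N_{j(k)}^{-1/p}e_k ,
\]
where $j(k)$ is the block containing $k$. As in Step 6 of Theorem \ref{8.2}, $x\in\ell_{p,\infty}$. On the other hand, $\mu(T_{t_j'}x-x)\ge\frac12N_j^{-1/p}\chi_{[0,N_j/5)}$, which gives
\[
\norm{T_{t_j'}x-x}_{\ell_{p,\infty}}\ge\tfrac12\,5^{-1/p}
\]
for every $j$. This contradicts strong continuity. Hence $A=(a_n)\in\ell_\infty$.
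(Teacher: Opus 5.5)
Your proposal is correct and follows essentially the same route as the paper. Theorem~\ref{111} gives $T_t(x)(n)=u_n(t)x(\sigma_t(n))$; strong continuity at each $e_n$ together with the subgroup argument forces $\sigma_t=\mathrm{id}$; each coordinate is a continuous character $u_n(t)=e^{ita_n}$, which you get from the description of continuous characters of $\mathbb{R}$ where the paper invokes Stone's theorem; and boundedness of $(a_n)$ comes from the block-averaging construction of Theorem~\ref{8.2}, which you write out explicitly in the sequence setting while the paper only refers to it.
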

\begin{proof}
    The implication 
    $\Leftarrow$  is trivial. 

    $\left(\Rightarrow\right)$.
    For any $t\in \mathbb{R}$, by Theorem \ref{111}, there exists a permutation $\sigma^t$ of the set $\{0,1,2,\dots\}$ and a unimodular sequence $\{u^t_n\}_{n\ge0}$ such that
    $$T_t(x)(n)=u^t_nx(\sigma^t(n)), ~\forall n\in \mathbb{N}, ~x\in \ell_{p,\infty}.$$

    Below, we will prove that
    $$\sigma^t(n)=n, ~\forall t\in \mathbb{R},~n\in \mathbb{N}.$$
    Let $\{e_n\}_{n=0}^\infty$ be the standard basis of $c_0$.
    Fix an integer $n\in \mathbb{N}$.
    Since $\{T_t\}_{t\in\mathbb{R}}$ is a strongly continuous one-parameter group of isometries on $\ell_{p,\infty}$, it follows that
    \begin{align*} 
    \norm{T_t\left(e_n\right)-e_n}_{\ell_{p,\infty}}\to0 ~\text{as}~t\to 0.
    \end{align*}
    This implies that  there exists $\delta_n>0$ such that
    $$\norm{T_t\left(e_n\right)-e_n}_{\ell_{p,\infty}}<1,~\forall |t|<\delta_n,$$
    which implies that
    \begin{align} \label{delta n}
    \sigma^t(n)=n,~\forall |t|<\delta_n.
    \end{align}
    For any $n\in \mathbb{N}$, let $G_n:=\{t\in \mathbb{R}:\sigma^t(n)=n\}$.
    By \eqref{delta n}, we have that
    $\left(-\delta_n,\delta_n\right)\subset G_n$.
    We claim that $\left(G_n,+\right)$ is a group. Indeed, it suffices to prove that $t,s\in G_n$ implies $t+s\in G_n$. 
    Assume that $t,s\in G_n$.
    Since $\{T_t\}_{t\in\mathbb{R}}$ is a one-parameter group of isometries on $\ell_{p,\infty}$, it follows that $T_{t+s}=T_tT_s$ and so, 
    \begin{align} \label{one-parameter}
    T_{t+s}e_n=T_t(T_se_n).
    \end{align}
    Observe that for some $n_1\in \mathbb{N}$,
    $$T_t(T_se_n)=T_t(u^se_n)=u^tu^se_n, ~\sigma^{t+s}(n_1)=n,~T_{t+s}(e_n)=u^{t+s}e_{n_1}.$$ 
    This together with \eqref{one-parameter} yields that $u^tu^se_n=u^{t+s}e_{n_1} $, and therefore, 
    \begin{align} \label{u t+s}
    \sigma^{t+s}(n)=n, ~u^{t+s}(n)=u^t(n)u^s(n). 
    \end{align}
    So, $t+s\in G_n$.
    For any $t\in \mathbb{R}$, we have that $|\frac t m|<\delta_n$ for some integer $m$.
    Then, we obtain that $\frac t m\in G_n$ and so, $t\in G_n$.
    Consequently, $\sigma^t(n)=n$ for all $t\in \mathbb{R}$.
    Since $n\in \mathbb{N}$ is arbitrary, it follows that
    $$\sigma^t(n)=n, ~\forall t\in \mathbb{R},~n\in \mathbb{N}.$$
    Therefore,
    $$T_t(x)(n)=u^t_nx(n), ~\forall n\in \mathbb{N}, ~t\in \mathbb{R}, ~x\in \ell_{p,\infty}.$$

    Since $\{T_t\}_{t\in\mathbb{R}}$ is strongly continuous, it follows that
    \begin{align} \label{ut strongly}
    \norm{T_t\left(e_n\right)-e_n}_{\ell_{p,\infty}}=|u^t_n-1|\to0 ~\text{as}~t\to 0 .
    \end{align}
    Letting $t=0$, we have that $u^0=\mathbf{1}$. 
    This together with \eqref{ut strongly} and \eqref{u t+s} shows that
    $\{u^t\}_{t\in \mathbb{R}}$ is a strongly continuous one-parameter unitary group.
    By \cite[Theorem 12.2]{S18}, there exists a real sequence $A=\left(a_n\right)_{n=0}^\infty$ such that
    $$u^t=e^{itA}, ~\forall t\in \mathbb{R}.$$
    Hence,
    $$T_t(x)(n)=e^{ita_n}x(n),~\forall n\in\mathbb{N}, ~t\in\mathbb{R}, ~x\in \ell_{p,\infty}.$$
    By a similar argument  as used in the proof of Theorem \ref{8.2}, we have that $A\in \ell_\infty$.
    This completes the proof.
\end{proof}

\appendix

\chapter{Historical comments concerning  extreme points and isometries on  Banach  spaces}
The study of   extreme points of the unit ball of a Banach space has been widely carried out. 
In particular, Kadison \cite{K51} showed that the set of all extreme points of the unit ball of a $C^*$-algebra $\mathcal{A}$ with the unit element $\mathbf{1}$ is exactly the set of all partial isometries $u$ such that $\left(\mathbf{1}-l\left(u\right)\right)\mathcal{A}\left(\mathbf{1}-r\left(u\right)\right)=\{0\}$. 
The formulation of extreme points of the unit ball of a Lorentz function space $L_{p,1}(0,1),~1<p<\infty$ (and, more generally,   $L_{w,1}\left(0,\infty\right)$)
is due to  
Semenov (see, e.g., \cite{Semenov_thesis} and \cite[Lemma II.5.2]{KPS}; see also \cite{CT,CDT,Bikt}).
A complete description of extreme points of the unit ball of a noncommutative Lorentz space $\Lambda_\Psi$ over  a finite von Neumann algebra, was given in  \cite{Sukochev_thesis,CMS}. 
Recently,   de Jager and  Conradie \cite{JC22}
characterized extreme points of the unit balls of $\left(L_1+L_\infty\right)\left(\mathcal{M},\tau\right)$, $\left(L_1\cap L_\infty\right)\left(\mathcal{M},\tau\right)$, and 
noncommutative Lorentz space $L_{\omega,1}\left(\mathcal{M},\tau\right)$ affiliated
with a semifinite von Neumann algebra $\mathcal{M}$.
We refer to \cite{CK,CK2} for more information concerning extreme points of the unit ball of a symmetric space.


Note that   Kami\'{n}ska and   Parrish \cite{KP} (see also \cite{Bikt84}) showed that $x$ is an extreme point of the unit ball of a  Marcinkiewicz function space $M_w$ equipped with a fully symmetric norm \cite{Har} given by $\norm{x}_w:=\sup\limits_{t>0}\frac{\int_0^t \mu\left(s;x\right)ds}{\int_0^t w\left(s\right)ds}$ if and only if $\mu\left(t;x\right)=w\left(t\right)$, when $w$ is a strictly decreasing weight function satisfying $\lim_{t\to0_+}w(t)=\infty$, $\lim_{t\to\infty}w(t)=0$, $\int_0^t w(s)ds<\infty$ for all $t>0$ and $\int_0^\infty w(t)dt=\infty$.
We also note that the unit ball of  $M_w$ is the orbit of $w$ \cite[p. 219]{CK2}.

Let the positive orbit $\Omega_+\left(x\right)$ of $0\le x\in \left(L_1+L_\infty\right)\left(\mathcal{M},\tau\right)$ be defined by 
\[\Omega_+\left(x\right)=\{0\le y\in \left(L_1+L_\infty\right)\left(\mathcal{M},\tau\right):y\prec\prec x\}.\]
It was shown in \cite{SZ} that for  $0\le x\in \left(L_1+L_\infty\right)\left(0,\infty\right)$,  if $\mu\left(\infty;x\right):=\lim_{t\to \infty}\mu\left(t;x\right)=0$, then 
\[\operatorname{extr}\left(\Omega_+\left(x\right)\right)=\left\{0\le y \in \left(L_1+L_\infty\right)\left(0,\infty\right):\mu\left(y\right)=\mu\left(x\right)\chi_{_{[0,\beta]}},~0\le \beta \le \infty\right\}\]
and, if $\mu\left(\infty;x\right)>0$, then 
\begin{align*}
&\operatorname{extr}\left(\Omega_+\left(x\right)\right)=\bigl\{0\le y \in \left(L_1+L_\infty\right)\left(0,\infty\right):\mu\left(y\right)=\mu\left(x\right)\chi_{_{[0,\beta]}}  \\
&\qquad ~\text{and}~y\chi_{_{\left(y<\mu(\infty;y)\right)}}=0,~ 0\le \beta \le \infty\bigr\}.
\end{align*}


Employing extreme points of the unit ball of a Banach space to characterize surjective isometries can be traced back to the seminal work of Kadison \cite[Theorem 7]{K51}, who showed that a surjective isometry from one unital $C^*$-algebra onto another is a Jordan $*$-isomorphism followed by left multiplication by a unitary operator.
Chilin, Medzhitov and Sukochev \cite{CMS} provided the description of a surjective isometry on Lorentz spaces with strictly monotone norm via extreme points when von Neumann algebra is equipped with a faithful normal finite trace.
Subsequently,  de Jager and   Conradie \cite{JC22} used the characterization of extreme points of the unit ball of $X$ to derive the general form of surjective isometry $T$ on $X$, where $X=L_{w,1}\left(\mathcal{M},\tau\right),~ L_1\cap L_\infty\left(\mathcal{M},\tau\right), ~\left(L_1+L_\infty\right)\left(\mathcal{M},\tau\right)$ (when $X=L_1\cap L_\infty\left(\mathcal{M},\tau\right)$, $T$ is required to be positive) and $w$ is a strictly decreasing weight function.
For characterizations of isometries by other approaches (e.g., Clarkson inequalities, hermitian operators etc.), we refer to \cite[Theorem 2]{Y81}, \cite{HS} and references therein.

\chapter{Some remarks concerning   the orthogonality of operators in noncommutative weak \texorpdfstring{$L_p$}{Lp}-spaces}
\chaptermark{Orthogonality in noncommutative weak $L_p$-spaces}
Recall that for any positive operators $x,y$, we have 
$\mu(x\oplus y) \prec \prec \mu (x+y)$,
 see, e.g.,  \cite[Lemma 2.3]{CDS97} or \cite[Lemma 3.3.7]{LSZ}. 
The main purpose of this appendix is to show that the above submajorization is strict for some point whenever $xy\ne 0$.

Before proceeding, some definitions are needed. 
A continuous real function $f$ on a real interval $I$ is called {\itshape operator convex} if for all $\lambda\in [0,1]$ and self-adjoint operators $x,y$ on an infinite-dimensional Hilbert space $H$ with spectra  included in $I$, 
\[f\left(\lambda x+\left(1-\lambda\right)y\right)\le \lambda f(x)+\left(1-\lambda\right)f(y)\]
(see \cite[Definition 1.10]{HPP}).

\begin{lemma} \label{convex}
 Let $\mathcal{M}$ be  a semifinite von Neumann algebra equipped with a semifinite faithful normal trace $\tau$.
    Let $f(t)$ be a non-negative continuous convex function on $[0,\infty)$ with $f(0)=0$.
    Assume that $0\le x,y\in S\left(\mathcal {M},\tau\right)$ is such that $f(x),f(y),f\left(x+y\right)\in L_1\left(\mathcal{M},\tau\right)$.
    If there exist a non-affine operator convex function $g(t)$ on $[0,\infty)$ with $g(0)=0$ and a convex strictly increasing function $\phi(t)$ (on the range of $g(t)$) with $\phi(0)=0$ such that $f=\phi \circ g$,
    then 
    \begin{align} \label{equality holds1}
        \tau\left(f\left(x+y\right)\right)\ge\tau\left(f(x)\right)+\tau\left(f(y)\right)
    \end{align}
    and equality holds in \eqref{equality holds1} if and only if $xy=0$. 
\end{lemma}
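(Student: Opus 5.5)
The plan is to reduce both claims to a statement about the single operator convex function $g$, and then pass to $f$ through the convex increasing outer function $\phi$ by a majorization argument.

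\emph{Step 1: strict superadditivity for $g$.} Since $g$ is operator convex on $[0,\infty)$ with $g(0)=0$, the function $t\mapsto g(t)/t$ is operator monotone. Hence $g$ admits an integral representation
\[
g(t)=at+bt^2+\int_{(0,\infty)}\Bigl(\frac{t^2}{t+s}\Bigr)\,d\nu(s),
\]
with $b\ge0$ and $\nu\ge0$. Because $g$ is non-affine, either $b>0$ or $\nu\ne0$.

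For each kernel $k(t)=t^2$ or $k(t)=t^2/(t+s)$, I would check directly the inequality
\[
\tau\bigl(k(x+y)\bigr)\ge\tau\bigl(k(x)\bigr)+\tau\bigl(k(y)\bigr).
\]
For $t^2$ the difference is $2\tau(xy)=2\tau(x^{1/2}yx^{1/2})\ge0$, which vanishes iff $xy=0$. For $t^2/(t+s)=t-s+s^2(t+s)^{-1}$ the difference is
\[
s^2\tau\bigl((x+y+s)^{-1}-(x+s)^{-1}-(y+s)^{-1}+s^{-1}\bigr).
\]
I would show this is $\ge0$, with equality iff $xy=0$, by writing the resolvent expansion or using the superadditivity of $t\mapsto t^2/(t+s)$ via operator-convexity arguments. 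This yields
\[
\tau\bigl(g(x+y)\bigr)\ge\tau\bigl(g(x)\bigr)+\tau\bigl(g(y)\bigr),
\]
with equality iff $xy=0$. Integrability issues are handled by first taking $x,y$ in $\mathcal{F}(\mathcal{M},\tau)$ and approximating, or by working with the differences directly.

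\emph{Step 2: from $g$ to $f$ via singular values.} I would apply the above pointwise to spectral cut-offs, or better use the known submajorization
\[
\mu\bigl(g(x)\oplus g(y)\bigr)\prec\prec\mu\bigl(g(x+y)\bigr),
\]
which holds for convex $g$ with $g(0)=0$. Since $\phi$ is convex and increasing with $\phi(0)=0$, Hardy--Littlewood--P\'olya type arguments give
\[
\tau\bigl(\phi(g(x))\bigr)+\tau\bigl(\phi(g(y))\bigr)=\int\phi\bigl(\mu(g(x)\oplus g(y))\bigr)\le\int\phi\bigl(\mu(g(x+y))\bigr)=\tau\bigl(f(x+y)\bigr),
\]
which is \eqref{equality holds1}.

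\emph{Step 3: the equality case.} If $xy=0$ then $f(x+y)=f(x)+f(y)$ and equality holds trivially. Conversely, suppose equality holds. Since $\phi$ is strictly increasing and convex, equality in the majorization integral must force
\[
\int_0^\infty\mu\bigl(g(x)\oplus g(y)\bigr)=\int_0^\infty\mu\bigl(g(x+y)\bigr)
\]
on the relevant range. This in turn gives $\tau(g(x+y))=\tau(g(x))+\tau(g(y))$, and Step 1 then yields $xy=0$.

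This last deduction is the main obstacle. A convex $\phi$ that is affine on pieces can have equality without full majorization equality. I would handle it by using the strict increase of $\phi$ to show that the total masses coincide, and falling back on truncating to finite trace when $g(x)\notin L_1$.
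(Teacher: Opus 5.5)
Your Step~2 is sound, and it does not even need $g$: $f$ itself is convex, nondecreasing and $f(0)=0$, so $\mu(f(x)\oplus f(y))\prec\prec\mu(f(x+y))$ already gives \eqref{equality holds1}. The paper's own proof just quotes \cite[Theorem 5.3]{DS09} for the inequality and \cite[Remark 11]{HK} for the equality case.

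\textbf{The gap is in the equality case.} Your Step~3 routes equality through the identity $\tau(g(x+y))=\tau(g(x))+\tau(g(y))$. The hypotheses only give $f(x),f(y),f(x+y)\in L_1$, and nothing makes the traces of $g(\cdot)$ finite.
\begin{itemize}
\item Every non-affine operator convex $g\ge 0$ with $g(0)=0$ satisfies $g(t)\ge ct^2$ on $[0,1]$ for some $c>0$. So $g(x+y)\notin L_1$ whenever $\mu(x+y)$ is not square integrable at infinity, while $\phi$ may be chosen so flat at $0$ that $f(x+y)\in L_1$.
\item This is exactly the regime in which the lemma is used. In Theorem~\ref{L1+M}, $g(t)=t^2/(1+t)$, and $\phi$ is built in Lemma~\ref{lemma 1} precisely so that $\phi(g(z))$ is integrable although $g(z)$ need not be.
\item There your identity reads $\infty=\infty$ and Step~1 yields nothing. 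Spectral truncation of $x,y$ does not help, since truncations do not inherit the equality hypothesis.
\end{itemize}
Independently, Step~1 itself is unproved. The claim that $\tau\bigl(s^{-1}+(x+y+s)^{-1}-(x+s)^{-1}-(y+s)^{-1}\bigr)\ge 0$, with equality only if $xy=0$, is the real strictness statement, and you only announce that you would prove it. A minor point: the integral representation you quote fails for general operator convex $g$ with $g(0)=0$ (e.g.\ $t\log t$). It is available here only because $f\ge0$, $\phi(0)=0$ and strict monotonicity of $\phi$ force $g\ge 0$, hence $g'(0^+)\ge0$.

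\textbf{The missing idea} is to transfer equality from $f$ to $g$ at the operator level, before any trace of $g$ is taken. Work in $M_2(\mathcal{M})$ with trace $\tau_2=\tau\otimes\mathrm{Tr}$ and put $Z=\bigl(\begin{smallmatrix} x^{1/2} & 0\\ y^{1/2} & 0\end{smallmatrix}\bigr)$. Then $Z^*Z=(x+y)\oplus 0$, and $W:=ZZ^*$ has diagonal $x,y$ and off-diagonal block $B=x^{1/2}y^{1/2}$. Let $\mathcal{E}$ be the diagonal pinching. Operator convexity gives $g(\mathcal{E}(W))\le \mathcal{E}(g(W))$, and $\mu(\mathcal{E}(V))\prec\prec\mu(V)$, so
\[
\tau(f(x))+\tau(f(y))=\tau_2\bigl(\phi(g(\mathcal{E}(W)))\bigr)\le\tau_2\bigl(\phi(\mathcal{E}(g(W)))\bigr)\le\tau_2\bigl(\phi(g(W))\bigr)=\tau(f(x+y)).
\]
Suppose the outer terms coincide; they are finite by hypothesis.
\begin{itemize}
\item Strict monotonicity of $\phi$ gives $\mu(g(\mathcal{E}(W)))=\mu(\mathcal{E}(g(W)))$.
\item Since $0\le X\le Y$, $\mu(X)=\mu(Y)$ and $\mu(\infty;Y)=0$ imply $X=Y$, one gets $g(\mathcal{E}(W))=\mathcal{E}(g(W))$.
\item For a non-affine operator convex $g$, the defect $\mathcal{E}(g(W))-g(\mathcal{E}(W))$ is a positive combination of two kinds of terms. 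The $t^2$ part contributes $\operatorname{diag}(BB^*,B^*B)$. The resolvent part contributes the Schur-complement defects $s^2\bigl(\mathcal{E}((W+s)^{-1})-(\mathcal{E}(W)+s)^{-1}\bigr)$. Each vanishes only when $B=0$.
\end{itemize}
Hence $x^{1/2}y^{1/2}=0$, i.e.\ $xy=0$. This uses nothing beyond the integrability of $f(\cdot)$, which is exactly what the factorization $f=\phi\circ g$ makes possible.
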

\begin{proof}
    By \cite[Theorem 5.3]{DS09}, we obtain that
\[\tau\left(f\left(x+y\right)\right)\ge\tau\left(f\left(x\right)\right)+\tau\left(f\left(y\right)\right).\]
The equivalent conditions in the above lemma follow immediately from \cite[Remark 11]{HK}.
\end{proof}


\begin{lemma} \label{concave}
Let $\mathcal{M}$ be  a semifinite von Neumann algebra equipped with a semifinite faithful normal trace $\tau$.
    Let $f(t)$ be a non-negative continuous concave function on $[0,\infty)$ with $f(0)=0$.
    Assume that $0\le x,y\in S\left(\mathcal {M},\tau\right)$ is such that $f(x),f(y),f\left(x+y\right)\in L_1\left(\mathcal{M},\tau\right)$.
    If there exist a non-affine operator convex function $g(t)$ on $[0,\infty)$ with $g(0)=0$ and a convex strictly increasing function $\phi(t)$ (on the range of $g(t)$) with $\phi(0)=0$ such that $-f=\phi \circ g$,
    then 
    \begin{align} \label{equality holds}
\tau\left(f\left(x+y\right)\right)\le\tau\left(f(x)\right)+\tau\left(f(y)\right)
    \end{align}
    and equality holds in \eqref{equality holds} if and only if $xy=0$.
\end{lemma}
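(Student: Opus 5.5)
The plan is to reduce Lemma \ref{concave} to Lemma \ref{convex} by a sign change. Write $F:=-f=\phi\circ g$. Then $F$ is continuous and convex on $[0,\infty)$ with $F(0)=0$, and $F(x),F(y),F(x+y)\in L_1(\mathcal{M},\tau)$ because the corresponding values of $f$ are. The one hypothesis of Lemma \ref{convex} that $F$ fails is non-negativity, since $F\le 0$. So the first step is to check where non-negativity is actually used in the proof of Lemma \ref{convex}.

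That proof consists of two citations: \cite[Theorem 5.3]{DS09} for the inequality and \cite[Remark 11]{HK} for the equality case. Both are stated for $h=\phi\circ g$ with $g$ operator convex, $g(0)=0$, and $\phi$ convex and increasing with $\phi(0)=0$. These results should therefore apply to $F$ directly. This would give
\[\tau\left(F(x+y)\right)\ge \tau\left(F(x)\right)+\tau\left(F(y)\right),\]
with equality if and only if $xy=0$.

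Multiplying by $-1$ is legitimate because every trace involved is finite, so linearity of $\tau$ on $L_1$ gives $\tau(-h)=-\tau(h)$. This turns the inequality into \eqref{equality holds}, and the equality characterization carries over unchanged.

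The main obstacle is confirming that \cite[Theorem 5.3]{DS09} and \cite[Remark 11]{HK} do not silently require the function to be non-negative. In particular, the trace of a non-positive function of an unbounded operator has to be interpreted through $L_1$ rather than as a positive normal functional. If those references do require non-negativity, I would rerun their argument for $g$ and $\phi$ directly. The superadditivity $\tau(\phi(g(x+y)))\ge\tau(\phi(g(x)))+\tau(\phi(g(y)))$ comes from operator convexity of $g$ with $g(0)=0$, which gives the Jensen-type decomposition $g(x+y)\ge$ a unitary mixture of $g(x)\oplus g(y)$. One then applies convexity and monotonicity of $\phi$ at the level of singular value functions, using the submajorization $\mu(x\oplus y)\prec\prec\mu(x+y)$ from the start of this appendix. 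Strictness when $xy\ne0$ comes from $g$ being non-affine, exactly as in \cite{HK}. None of these steps used the sign of $\phi\circ g$ beyond integrability, so the argument goes through for $F=-f$.
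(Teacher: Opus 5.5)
Your equality step via \cite[Remark 11]{HK} is the same as the paper's. The gap is in the inequality. You obtain it by applying the convex result to $F=-f\le 0$. But Lemma~\ref{convex}, which is how the paper packages \cite[Theorem 5.3]{DS09}, assumes $f\ge 0$, and ``these results should therefore apply to $F$ directly'' is exactly the point that needs proof.

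The hypothesis is not cosmetic. Any argument run at the level of singular value functions sees only $\mu(F(x))=\mu(f(x))$. The convex-type submajorization it would deliver, $\mu(f(x)\oplus f(y))\prec\prec\mu(f(x+y))$, is false for $f(t)=\log(1+t)$ when $x=y$ is a projection of trace one, since $2\log 2>\log 3$. This $f$ meets the hypotheses with $g(t)=-\log(1+t)$ and $\phi(t)=t$. The trace inequality for $F$ does hold, but only because it restates the concave subadditivity you are trying to prove, so the sign flip is circular. The paper avoids this entirely by citing the concave counterpart \cite[Corollary 5.4]{DS09}, which applies to $f\ge 0$ as it stands.

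Your fallback does not close the gap either. First, the operator inequality you propose points the wrong way for what follows. Take $g(x+y)\ge M$, with $M$ a mixture of unitary conjugates of $g(x)\oplus g(y)$. Monotonicity of $\phi$ gives $\tau(\phi(g(x+y)))\ge\tau(\phi(M))$, while convexity of $\phi$ gives $\tau(\phi(M))\le\tau(\phi(g(x)\oplus g(y)))$, and these do not chain.

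What does chain is the reverse, pinching-type inequality $g(x)\oplus g(y)\le\frac12\bigl(g(z)+ug(z)u^*\bigr)$. Here $z=X^*X$ for the row operator $X=(x^{1/2}\ \ y^{1/2})$, so $XX^*=x+y$ and the diagonal part of $z$ is $x\oplus y$, and $u=\mathbf{1}\oplus(-\mathbf{1})$. But you would then have to justify this, together with trace monotonicity and trace convexity of $\phi$, for unbounded $\tau$-measurable operators and a non-positive $\phi\circ g$. That is precisely the work the citation spares you.

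Second, the submajorization $\mu(x\oplus y)\prec\prec\mu(x+y)$ transfers through increasing convex functions. So it gives superadditivity for non-negative convex $f$, but nothing in the concave direction. Working with singular values also discards the sign of $F$ anyway, so your claim that no step uses the sign is wrong.

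Replace the sign flip by \cite[Corollary 5.4]{DS09} and keep \cite[Remark 11]{HK} for the equality case; this is the paper's proof.
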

\begin{proof}
    By \cite[Corollary 5.4]{DS09}, we obtain that
\[\tau\left(f\left(x+y\right)\right)\le\tau\left(f\left(x\right)\right)+\tau\left(f\left(y\right)\right).\]
An application of \cite[Remark 11]{HK} yields the remaining statements.
\end{proof}

Let $\mathcal{M}$ be a semifinite von Neuman algebra equipped with a semifinite faithful normal trace $\tau$.
Recall that $L_{\log}\left(\mathcal{M},\tau\right):=\{x\in S\left(\mathcal{M},\tau\right):\int_0^\infty \log\left(1+\mu\left(t;x\right)\right)dt<\infty\}$.
As is well known, $L_1\left(\mathcal{M},\tau\right)\subset L_{\log}\left(\mathcal{M},\tau\right)$ \cite[Proposition 2.8]{DSZ16}.
On the other hand, when $0\le x,y \in L_p\left(\mathcal{M},\tau\right), ~0<p<\infty,~p\ne1 $, the 
 following result is an immediate consequence of  \cite[Proposition A.2.]{RX}.  

\begin{prop}\label{positive disjoint}
Let $\mathcal{M}$ be  a semifinite von Neumann algebra equipped with a semifinite faithful normal trace $\tau$.
Let $0\le x,y\in L_{\log}\left(\mathcal M,\tau\right)$. 
Then, $\mu\left(x+y\right)=\mu\left(x\oplus y\right)$ if and only if $xy=0$.

\end{prop}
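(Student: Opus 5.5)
The direction ``$xy=0\Rightarrow \mu(x+y)=\mu(x\oplus y)$'' is standard. If $xy=0$, then $r(x)\perp r(y)$, so $x+y$ is unitarily equivalent, on $r(x)H\oplus r(y)H$, to the direct sum $x\oplus y$. Hence the spectral distribution functions satisfy
\[
d(s;x+y)=d(s;x)+d(s;y)=d(s;x\oplus y),\qquad s>0,
\]
and taking right-continuous inverses gives $\mu(x+y)=\mu(x\oplus y)$.

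For the converse, assume $\mu(x+y)=\mu(x\oplus y)$. I will reduce to Lemma~\ref{concave} with the concave function $f(t)=\log(1+t)$. This is natural because $L_{\log}$ is exactly the class on which $\tau(\log(\mathbf{1}+\cdot))$ is finite. Since $\mu(f(z))=f(\mu(z))$ for increasing continuous $f$ with $f(0)=0$, the hypothesis gives
\[
\tau\bigl(f(x+y)\bigr)=\int_0^\infty f(\mu(t;x+y))\,dt=\int_0^\infty f(\mu(t;x\oplus y))\,dt=\tau(f(x))+\tau(f(y)).
\]
All three quantities are finite: $x,y\in L_{\log}$, and $L_{\log}$ is a linear space, since $\mu(x+y)\prec\prec\mu(x\oplus y)$ and $f$ is concave, so $\int f(\mu(x+y))\le\int f(\mu(x))+\int f(\mu(y))$. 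Lemma~\ref{concave} then gives $xy=0$, provided I can realise $f$ in the required form.

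The step I expect to be the obstacle is exhibiting the decomposition $-f=\phi\circ g$, with $g$ non-affine operator convex on $[0,\infty)$, $g(0)=0$, and $\phi$ convex, strictly increasing, $\phi(0)=0$. I would first take $g(t)=-\log(1+t)$ and $\phi=\mathrm{id}$. Indeed, $t\mapsto\log(1+t)$ is operator monotone and operator concave on $[0,\infty)$, so $g$ is operator convex, non-affine, and $g(0)=0$. The identity $\phi$ is convex and strictly increasing with $\phi(0)=0$, and its domain, the range of $g$, is $(-\infty,0]$. So the hypotheses of Lemma~\ref{concave} are met directly.

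If one worries about integrability conventions in Lemma~\ref{concave}, it suffices to check that $f(x),f(y),f(x+y)\in L_1(\mathcal{M},\tau)$. This is exactly the $L_{\log}$ membership established above.

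Alternatively, if one prefers to avoid the log, the same scheme works with $f(t)=t^p$ for $0<p<1$ via \cite[Proposition A.2]{RX}. However, that requires $x,y\in L_p$, which is not assumed here. This is why the $\log$-choice is the one I commit to.
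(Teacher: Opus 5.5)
Your argument is the paper's proof: the hypothesis gives $\tau(\log(\mathbf{1}+x+y))=\tau(\log(\mathbf{1}+x))+\tau(\log(\mathbf{1}+y))$, and Lemma~\ref{concave} applied with $g(t)=-\log(1+t)$ and $\phi(t)=t$ yields $xy=0$. One side remark is wrong, though harmless: for positive $x,y$ the correct relation is $\mu(x\oplus y)\prec\prec\mu(x+y)$, not the reverse, and submajorization does not control integrals of concave functions in any case; you do not need it, because finiteness of $\tau(\log(\mathbf{1}+x+y))$ already follows from your displayed equality together with $x,y\in L_{\log}$.
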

\begin{proof}
It suffices to prove the necessity.
Assume that $\mu\left(x+y\right)=\mu\left(x\oplus y\right)$. 
By \cite[Proposition 3.2.8]{DPS}, we obtain that
\[
\begin{aligned}
\tau\left(\log\left(\mathbf{1}+x+y\right)\right)
&=\int_0^\infty \log\left(1+\mu\left(t;x+y\right)\right)\,dt  \\
&=\int_0^\infty \log\left(1+\mu\left(t;x\oplus y\right)\right)\,dt  \\
&=\tau\left(\log\left(\mathbf{1}+x\right)\right)+\tau\left(\log\left(\mathbf{1}+y\right)\right).
\end{aligned}
\]
Note that $t\mapsto \log\left(1+t\right),t\ge0$ is concave and $t\mapsto -\log\left(1+t\right),t\ge0$ is a non-affine operator convex function \cite[Proposition 2.2.27]{DPS}.
Applying Lemma \ref{concave} to $\phi(t)=t,~g(t)=-\log\left(1+t\right), ~t\ge0$, we obtain that $xy=0$.
The proof is complete.
\end{proof}

The following fact is well-known to experts. We include a proof for completeness.  
\begin{lemma} \label{lemma 1}
Let $\mathcal{M}$ be  a semifinite von Neumann algebra equipped with a semifinite faithful normal trace $\tau$.
    Let $0\le z\in \left(L_1+L_\infty\right)\left(\mathcal{M},\tau\right)\cap S_0\left(\mathcal{M},\tau\right)$.
    Then, there exists a continuous strictly increasing convex function $f:[0,\infty)\to [0,\infty)$ such that
    \[f(0)=0,~ 0\le f(t)\le t ~ \text{and}~ \tau\left(f(z)\right)<\infty.\]
\end{lemma}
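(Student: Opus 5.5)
The argument reduces everything to the single function $\mu(z)$ on $(0,\infty)$ and then builds $f$ from a piecewise-linear interpolation adapted to $\mu(z)$. By \cite[Proposition 3.2.8]{DPS}, for any continuous increasing $f\ge 0$ with $f(0)=0$ we have $\tau(f(z))=\int_0^\infty f(\mu(t;z))\,dt$. So it suffices to construct a continuous strictly increasing convex $f$ with $f(0)=0$, $f(t)\le t$, and $\int_0^\infty f(\mu(t;z))\,dt<\infty$.

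The hypotheses give two facts about $\mu(z)$:
\begin{itemize}
\item[(a)] Since $z\in S_0$, $\mu(t;z)\to 0$ as $t\to\infty$.
\item[(b)] Since $z\in L_1+L_\infty$, $\int_0^1\mu(t;z)\,dt<\infty$.
\end{itemize}
Because $f(t)\le t$, the integral over $(0,1)$ is controlled by (b). The real issue is the tail $\int_1^\infty f(\mu(t;z))\,dt$, where $\mu(z)$ is small but need not be integrable. Write $m:=\mu(1;z)<\infty$.

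I will define $f$ as a convex piecewise-linear function that is very flat near $0$. Set $d(s):=d(s;z)=m\{t:\mu(t;z)>s\}$, which is finite for every $s>0$ by (a). Choose a strictly decreasing sequence $s_k\downarrow 0$ with $s_0\ge m$, and let $f$ have slope $\alpha_k$ on $[s_{k+1},s_k]$, where $\alpha_k\downarrow 0$ strictly and $\alpha_0\le 1$. On $[s_0,\infty)$ take slope $1$, adjusted so that $f(t)\le t$. Then:
\begin{itemize}
\item $f$ is convex, since the slopes increase with $t$;
\item $f$ is strictly increasing, since all slopes are positive;
\item $f(0)=\lim_k f(s_k)=0$;
\item $f(t)\le t$, since all slopes are at most $1$ and $f(0)=0$.
\end{itemize}

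For the tail, on the set where $\mu(t;z)\in(s_{k+1},s_k]$ we have $f(\mu)\le f(s_k)=\sum_{j\ge k}\alpha_j(s_j-s_{j+1})\le \alpha_k s_k$. That set has measure at most $d(s_{k+1})$. Hence
\[\int_1^\infty f(\mu(t;z))\,dt\le \sum_k \alpha_k s_k\, d(s_{k+1}).\]
It remains to choose $\alpha_k$ strictly decreasing with $\alpha_k s_k d(s_{k+1})\le 2^{-k}$, for example $\alpha_k:=\min\{2^{-k}/(1+s_k d(s_{k+1})),\,\alpha_{k-1}/2\}$. The part of the integral where $\mu(t;z)>s_0$ lies in $(0,d(s_0))$. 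There $f(\mu)\le\mu$, and $\mu$ is integrable on bounded intervals because $\mu$ is integrable on $(0,1)$ and bounded by $m$ on $[1,\infty)$.

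The main obstacle is purely bookkeeping. One must make sure the slopes are simultaneously strictly decreasing toward $0$, summable against $d(s_{k+1})$, and compatible with $f(t)\le t$. The choice of $\alpha_k$ above handles all three at once. Only the values of $f$ on $[0,\|z\|_\infty)$ affect $\tau(f(z))$, so the shape of $f$ for large arguments is inessential.
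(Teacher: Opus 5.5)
Your proof is correct and follows essentially the same route as the paper. Both reduce $\tau(f(z))$ to $\int_0^\infty f(\mu(t;z))\,dt$, use $z\in S_0$ to get finite-measure level sets of $\mu(z)$, and choose slopes of $f$ decreasing to $0$ fast enough that those level sets contribute a summable amount; the region where $\mu(z)$ is large is handled by $f(t)\le t$ together with $z\in L_1+L_\infty$. The only cosmetic difference is that the paper uses dyadic levels $2^{-n}$ and takes $f=\int_0^t g$ with $g$ piecewise linear, bounding $f(t)\le t\,g(t)$. You instead take $f$ itself piecewise linear and bound each level set's measure by $d(s_{k+1})$.
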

\begin{proof}
    Let
    \[E_n:=\{t\ge0:2^{-\left(n+1\right)}<\mu\left(t;z\right)\le 2^{-n}\}\]
    for all $n\ge 0$.
    Since $z\in S_0\left(\mathcal{M},\tau\right)$, we have that $m\left(E_n\right)<\infty$.
    Choose a sequence $\{\theta_n\}_{n=0}^\infty\subset \left(0,1\right)$ such that $\theta_n\downarrow0$ strictly as $n\to \infty$ and for all $n\in \mathbb{N}$, 
    \begin{align} \label{le1}
    ~\theta_nm\left(E_n\right)\le 1.
    \end{align}
    Let a continuous strictly increasing function $g:[0,\infty)\to [0,1)$ be defined by
    \[
    g(t)=
    \begin{cases}
    0, & t=0. \\
    \frac{\theta_{n+1}-\theta_n}{2^{-\left(n+1\right)}-2^{-n}}\left(t-2^{-n}\right)+\theta_n, & t\in [2^{-\left(n+1\right)},2^{-n}],~\forall n\ge0.\\
    \theta_0+\left(1-\theta_0\right)\frac{t-1}{t}, & t\ge 1.
    \end{cases}
    \]
    Define 
    \[f(t)=\int_0^t g(s)ds \]
    for $t\ge0$.
    Note that $f$ is continuous, strictly increasing and convex satisfying $f(0)=0$ and $0\le f(t)\le t$.
    It suffices to prove that $\tau\left(f(z)\right)<\infty$.
    By \cite[Proposition 3.2.8]{DPS}, we obtain that
    \[\mu\left(t; f(z)\right)=f\left(\mu\left(t;z\right)\right), ~\forall t\ge0.\]
    Observe that 
    \[\{t\ge0:0<\mu\left(t;z\right)\le1\}=\cup_{n=0}^\infty E_n\]
    and 
    \[f(t)=\int_0^tg(s)ds\le tg(t), ~\forall t\ge0.\]
    This implies that for any $t\in E_n$, we have that
    \[f\left(\mu\left(t;z\right)\right)\le \mu\left(t;z\right)g\left(\mu\left(t;z\right)\right)\le 2^{-n}g\left(2^{-n}\right)=2^{-n}\theta_n\]
    and so,
    \begin{align} \label{En}
        \int_{E_n}f\left(\mu\left(t;z\right)\right)dt \le 2^{-n}\theta_n m\left(E_n\right)\stackrel{\eqref{le1}}{\le} 2^{-n}, ~\forall n\in \mathbb{N}.
    \end{align}
    Since $z\in \left(L_1+L_\infty\right)\left(\mathcal{M},\tau\right)\cap S_0\left(\mathcal{M},\tau\right)$, it follows that
    \begin{align} \label{finite}
        \int_{\{t\ge0:\mu\left(t;z\right)>1\}} f\left(\mu\left(s;z\right)\right)ds\le \int_0^{m\{t\ge0:\mu\left(t;z\right)>1\}} f\left(\mu\left(s;z\right)\right)ds<\infty.
    \end{align}
    A combination of \eqref{En} and \eqref{finite} yields that
    \begin{align*}
        \int_0^\infty \mu\left(t;f(z)\right)dt&=\int_0^\infty f\left(\mu\left(t;z\right)\right)dt\\
        &=\sum_{n=0}^\infty\int_{E_n}f\left(\mu\left(t;z\right)\right)dt+\int_{\{t\ge0:\mu\left(t;z\right)>1\}} f\left(\mu\left(s;z\right)\right)ds<\infty.
    \end{align*}
    The proof is complete.
\end{proof}

\begin{lemma} \label{lemma 2}
Let $\mathcal{M}$ be  a semifinite von Neumann algebra equipped with a semifinite faithful normal trace $\tau$.
    Let $0\le z\in \left(L_1+L_\infty\right)\left(\mathcal{M},\tau\right)\cap S_0\left(\mathcal{M},\tau\right)$.
    Then, there exists a continuous increasing convex function $f:[0,\infty)\to [0,\infty)$ such that
    \[f(0)=0,~ 0\le f(t)\le t,~ \tau\left(f(z)\right)<\infty, ~\text{and}~ f=\phi \circ g ,\]
    where $g$ is a non-affine operator convex function on $[0,\infty)$ with $g(0)=0$ and $\phi$ is a continuous, convex and strictly increasing function on $[0,\infty)$ with $\phi(0)=0$.
\end{lemma}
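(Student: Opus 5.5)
Starting from Lemma \ref{lemma 1}, I would fix a continuous strictly increasing convex function $f_0$ with $f_0(0)=0$, $0\le f_0(t)\le t$ and $\tau(f_0(z))<\infty$, and then modify it into the required form $\phi\circ g$. The natural choice for $g$ is a simple non-affine operator convex function on $[0,\infty)$ vanishing at $0$, for instance
\[g(t)=\frac{t^2}{1+t},\qquad t\ge0,\]
which is operator convex on $[0,\infty)$ since $\frac{t^2}{1+t}=t-1+\frac{1}{1+t}$ and $t\mapsto (1+t)^{-1}$ is operator convex there. This $g$ is continuous, strictly increasing, maps $[0,\infty)$ onto $[0,\infty)$, and is not affine. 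It satisfies $g(t)\sim t^2$ near $0$ and $g(t)\sim t$ at infinity. Alternatively $g(t)=t^2$ could be used, but then the behaviour at infinity is worse; the choice above keeps $g(t)\le t$.

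Next I would set $\phi:=f_0\circ g^{-1}$ on $[0,\infty)$, which makes $f=\phi\circ g=f_0$ automatically; it remains only to verify that $\phi$ is convex. This is the main obstacle: $f_0\circ g^{-1}$ need not be convex, since $g^{-1}(s)\approx\sqrt{s}$ near $0$ is concave. Hence I would \emph{not} insist that $f=f_0$. Instead I would take
\[f:=\phi\circ g,\qquad \phi(s):=\int_0^s \psi(r)\,dr,\]
where $\psi$ is a nondecreasing, positive-on-$(0,\infty)$ function, so that $\phi$ is convex and strictly increasing with $\phi(0)=0$. The function $\psi$ is chosen so that $f\le f_0$. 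Then $f\le f_0\le t$ and $\tau(f(z))\le\tau(f_0(z))<\infty$, by the monotonicity $\mu(f(z))=f(\mu(z))\le f_0(\mu(z))$ from \cite[Proposition 3.2.8]{DPS}.

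Concretely, I would define $\psi(r):=\min\{1,\,f_0(g^{-1}(r))/r\}$-type quantities, but monotonicity is easiest to enforce by taking $\phi(s):=\inf$ of a convex minorant. The cleanest route is to let $\phi$ be the greatest convex minorant on $[0,\infty)$ of $s\mapsto f_0(g^{-1}(s))$. This function is nonnegative, vanishes at $0$, and is nondecreasing because the minorized function is increasing and $0$ at $0$. Strict increase and positivity must then be checked: since $f_0(g^{-1}(s))>0$ for $s>0$ and the function grows at least like $c\,s$ for large $s$ (as $f_0$ is convex, strictly increasing, and $g^{-1}(s)\ge s$), the convex minorant is positive on $(0,\infty)$. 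A convex nondecreasing function that is positive on $(0,\infty)$ and zero at $0$ is strictly increasing. Continuity of $\phi$ on $[0,\infty)$ follows from convexity together with $0\le\phi\le f_0\circ g^{-1}\to0$ at $0$.

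Finally, $f=\phi\circ g$ is continuous, increasing and convex, being a convex nondecreasing function composed with the convex $g$. It satisfies $f(0)=0$, $f\le f_0\le t$ and $\tau(f(z))<\infty$, which is exactly the assertion of Lemma \ref{lemma 2}. The only delicate point is the positivity, hence strict monotonicity, of the convex minorant near $0$. If the minorant argument fails there, I would instead build $\psi$ directly as a step-wise nondecreasing function on the dyadic level sets $E_n$ from the proof of Lemma \ref{lemma 1}, redoing that proof with $g$ inserted, so that finiteness of the trace is retained while convexity of $\phi$ is built in.
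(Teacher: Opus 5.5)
Your argument is correct, but it takes a longer route than the paper. The paper uses the same $g(t)=\frac{t^2}{1+t}$, with the same justification that it is operator convex. It then observes that $0\le g(z)\le z$, so $g(z)$ again lies in $(L_1+L_\infty)(\mathcal{M},\tau)\cap S_0(\mathcal{M},\tau)$. It therefore applies Lemma~\ref{lemma 1} to $g(z)$ rather than to $z$. The function that lemma produces is already the required $\phi$: continuous, strictly increasing, convex, with $\phi(0)=0$, $\phi(t)\le t$ and $\tau(\phi(g(z)))<\infty$. Setting $f:=\phi\circ g$ finishes the proof, with $f\le g\le t$. So the convexity of $\phi$, which you identified as the main obstacle, comes for free, and no minorant construction is needed.

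Your route applies Lemma~\ref{lemma 1} to $z$ and replaces $f_0\circ g^{-1}$ by its greatest convex minorant. This also works, and the step you flagged as delicate does go through. Fix $s_0>0$ and pick $0<s_1<s_0$. The function $h:=f_0\circ g^{-1}$ satisfies $h\ge h(s_1)>0$ on $[s_1,\infty)$. It also satisfies $h(s)\ge f_0(s)\ge f_0(1)s$ for $s\ge1$, using $g^{-1}(s)\ge s$ and convexity of $f_0$ with $f_0(0)=0$. Hence $s\mapsto\max\{0,m(s-s_1)\}$ is a convex minorant of $h$ whenever $0<m\le\min\{f_0(1),h(s_1)\}$. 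This gives $\phi(s_0)>0$, so your fallback is unnecessary.

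What your approach buys is a purely real-variable fact: any $f_0$ of the type produced by Lemma~\ref{lemma 1} dominates a function of the form $\phi\circ g$ with $\phi$ convex and strictly increasing. It does not need Lemma~\ref{lemma 1} to be re-applied to a different operator. What the paper's approach buys is brevity, by exploiting that Lemma~\ref{lemma 1} holds for every positive element of $(L_1+L_\infty)(\mathcal{M},\tau)\cap S_0(\mathcal{M},\tau)$.
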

\begin{proof}
    Let \[g(t):=\frac{t^2}{1+t}=t-1+\frac{1}{t+1},~t\ge0.\]
    Note that $g(0)=0,~0\le g(t)\le t$ and $g$ is not-affine, continuous, increasing and operator convex on $[0,\infty)$.
    This implies that $0\le g(z)\le z$ and so, $0\le g(z)\in \left(L_1+L_\infty\right)\left(\mathcal{M},\tau\right)\cap S_0\left(\mathcal{M},\tau\right)$.
    Applying Lemma \ref{lemma 1} to $g(z)$, there exists a continuous strictly  increasing convex function $\phi:[0,\infty)\to [0,\infty)$ such that
    \[\phi(0)=0,~ 0\le \phi(t)\le t ~ \text{and}~ \tau\left(\phi\left(g\left(z\right)\right)\right)<\infty.\]
    Let $f=\phi\circ g$, that is, $f\left(t\right)=\phi\left(g\left(t\right)\right),~t\ge0$.
    Then, 
    \[f\left(0\right)=\phi\left(g\left(0\right)\right)=0,~0\le f\left(t\right)\le g\left(t\right)\le t, ~\tau\left(f\left(z\right)\right)=\tau\left(\phi\left(g\left(z\right)\right)\right)<\infty.\]
    Note  that $f$ is continuous, increasing and convex.
    The proof is complete.
\end{proof}

The following theorem is the first main result of this chapter. 
\begin{theorem} \label{L1+M}
   Let $\mathcal{M}$ be  a semifinite von Neumann algebra equipped with a semifinite faithful normal trace $\tau$.
    Let $0\le x,y\in \left(L_1+L_\infty\right)\left(\mathcal{M},\tau\right)\cap S_0\left(\mathcal{M},\tau\right)$.
    Then, 
    \[\mu\left(x+y\right)=\mu\left(x\oplus y\right)\]
    if and only if $xy=0$.
\end{theorem}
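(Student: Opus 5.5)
The plan is to reduce Theorem \ref{L1+M} to Lemma \ref{convex} through a single test function, chosen once for $z:=x+y$. Sufficiency is standard: if $xy=0$, then $x$ and $y$ have orthogonal supports, so $x+y$ is unitarily equivalent to $x\oplus y$ inside a corner, and hence $\mu(x+y)=\mu(x\oplus y)$ (see \cite[Lemma 2.3]{CDS97} or \cite[Lemma 3.3.7]{LSZ}). The substantive direction is necessity: assume $\mu(x+y)=\mu(x\oplus y)$ and show $xy=0$.

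First, set $z:=x+y$. Since $x,y\ge0$ lie in $(L_1+L_\infty)(\mathcal M,\tau)\cap S_0(\mathcal M,\tau)$ and both spaces are linear, $z\ge0$ lies there too. Apply Lemma \ref{lemma 2} to $z$. This yields a continuous increasing convex $f$ with $f(0)=0$, $0\le f(t)\le t$ and $\tau(f(z))<\infty$, written as $f=\phi\circ g$, where $g(t)=t^2/(1+t)$ is non-affine and operator convex with $g(0)=0$, and $\phi$ is continuous, convex and strictly increasing with $\phi(0)=0$.

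Next, check the integrability hypotheses of Lemma \ref{convex}. Since $0\le x\le z$ and $f$ is increasing, \cite[Proposition 3.2.8]{DPS} gives $\mu(f(x))=f(\mu(x))\le f(\mu(z))=\mu(f(z))$; this uses $\mu(x)\le\mu(z)$, which holds because $0\le x\le z$. Hence $f(x)\in L_1(\mathcal M,\tau)$, and by the same argument $f(y)\in L_1(\mathcal M,\tau)$. Now compute traces through singular value functions. Using $f(\mu(\cdot))=\mu(f(\cdot))$ for increasing continuous $f$ with $f(0)=0$, together with the hypothesis $\mu(z)=\mu(x\oplus y)$,
\[
\tau(f(x+y))=\int_0^\infty f(\mu(t;x+y))\,dt=\int_0^\infty f(\mu(t;x\oplus y))\,dt=\tau(f(x))+\tau(f(y)).
\]
The last equality is the trace of $f(x\oplus y)=f(x)\oplus f(y)$ in $\mathcal M\oplus\mathcal M$. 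This is exactly the equality case in \eqref{equality holds1}, so Lemma \ref{convex} gives $xy=0$.

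The main obstacle is securing the integrability $f(x),f(y),f(x+y)\in L_1$ without assuming $x,y\in L_{\log}$. Lemma \ref{lemma 2} handles exactly this, provided $f$ is genuinely increasing; note that $\phi$ and $g$ are both increasing on $[0,\infty)$. One further point needs care: the condition $S_0$ (that is, $\mu(\infty)=0$) is what is used in Lemma \ref{lemma 1}, because it makes each set $E_n$ have finite measure. So it must be verified that $z$ inherits membership in $S_0$, which follows from $\mu(t+s;x+y)\le\mu(t;x)+\mu(s;y)$.
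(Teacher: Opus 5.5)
Your proposal is correct and follows essentially the same route as the paper's proof. You apply Lemma \ref{lemma 2} to $z=x+y$, use $\mu(f(x)),\mu(f(y))\le\mu(f(z))$ to get $f(x),f(y),f(z)\in L_1(\mathcal{M},\tau)$, rewrite $\mu(x+y)=\mu(x\oplus y)$ as $\tau(f(x+y))=\tau(f(x))+\tau(f(y))$ via the trace formula through singular value functions, and conclude $xy=0$ from the equality case of Lemma \ref{convex}. Your extra checks, namely $\mu(x)\le\mu(z)$ from $0\le x\le z$ and $z\in S_0(\mathcal{M},\tau)$ so that Lemma \ref{lemma 1} applies, are details the paper leaves implicit.
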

\begin{proof}
    The implication $\Leftarrow$ is trivial.
    
    $\left(\Rightarrow\right). $ Let $z=x+y$.
    Applying Lemma \ref{lemma 2} to $z$, there exists a continuous increasing convex function $f:[0,\infty)\to [0,\infty)$ satisfying the conditions in Lemma \ref{convex} such that 
    \[\int_0^\infty f\left(\mu\left(t;x\right)\right)dt,~\int_0^\infty f\left(\mu\left(t;y\right)\right)dt\le \int_0^\infty f\left(\mu\left(t;z\right)\right)dt<\infty,\]
    that is,
    \[f\left(x\right),f\left(y\right),f\left(z\right)\in L_1\left(\mathcal{M},\tau\right).\]
    Since $\mu\left(x+y\right)=\mu\left(x\oplus y\right)$, it follows    that
    \begin{eqnarray*}
    \tau\left(f\left(x+y\right)\right)
    &\stackrel{\mbox{\tiny \cite[Prop. 3.2.8]{DPS}}}{=}&\int_0^\infty f\left(\mu\left(t;x+y\right)\right)\,dt  \\
    &=& \int_0^\infty f\left(\mu\left(t;x\oplus y\right)\right)\,dt  \stackrel{\mbox{\tiny \cite[Prop. 3.2.8]{DPS}}}{=}\tau\left(f\left(x\right)\right)+\tau\left(f\left(y\right)\right).
    \end{eqnarray*}
    It follows from Lemma \ref{convex} that $xy=0$.
    This completes the proof.
\end{proof}

The following corollary is an immediate consequence of Theorem  \ref{L1+M}.
\begin{corollary} 
 Let $\mathcal{M}$ be  a semifinite von Neumann algebra equipped with a semifinite faithful normal trace $\tau$.
    Assume that $1\le p\le \infty$.
    Let $0\le x,y\in \left(L_p+L_\infty\right)\left(\mathcal{M},\tau\right)\cap S_0\left(\mathcal{M},\tau\right)$.
    Then, 
    \[\mu\left(x+y\right)=\mu\left(x\oplus y\right)\]
    if and only if $xy=0$.
\end{corollary}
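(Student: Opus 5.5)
The statement to prove is the last corollary: for $1\le p\le\infty$ and $0\le x,y\in (L_p+L_\infty)(\mathcal M,\tau)\cap S_0(\mathcal M,\tau)$, we have $\mu(x+y)=\mu(x\oplus y)$ if and only if $xy=0$. The plan is to reduce it directly to Theorem \ref{L1+M} by an inclusion of spaces.

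The implication ``$\Leftarrow$'' is immediate. If $xy=0$, then $x$ and $y$ have orthogonal supports, so $x+y$ is unitarily equivalent to $x\oplus y$ restricted to disjoint subspaces. Consequently the distribution functions add, $d(s;x+y)=d(s;x)+d(s;y)$, which is exactly $d(s;x\oplus y)$. Taking right-continuous inverses gives $\mu(x+y)=\mu(x\oplus y)$.

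For ``$\Rightarrow$'', I will first show that $(L_p+L_\infty)(\mathcal M,\tau)\subseteq (L_1+L_\infty)(\mathcal M,\tau)$ for every $1\le p\le\infty$. By the characterization of $L_1+L_\infty$ recalled in Chapter \ref{unit ball}, it suffices to check that $\mu(z)\chi_{(0,1)}\in L_1(0,\infty)$ whenever $z\in L_p+L_\infty$.
\begin{itemize}
\item If $p=\infty$, then $z$ is bounded, so $\mu(z)\le\norm{z}_\infty$ is integrable on $(0,1)$.
\item If $1\le p<\infty$, membership in $L_p+L_\infty$ means $\mu(z)\chi_{(0,1)}\in L_p(0,1)$. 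Since $(0,1)$ has finite measure, Hölder's inequality gives $\int_0^1\mu(t;z)\,dt\le \big(\int_0^1\mu(t;z)^p\,dt\big)^{1/p}<\infty$.
\end{itemize}

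Hence $x,y\in (L_1+L_\infty)(\mathcal M,\tau)\cap S_0(\mathcal M,\tau)$, and Theorem \ref{L1+M} applies verbatim to give $xy=0$ from $\mu(x+y)=\mu(x\oplus y)$. There is no substantial obstacle here; the only point needing care is the description of $L_p+L_\infty$ through $\mu(z)\chi_{(0,1)}\in L_p$, which is the standard characterization.
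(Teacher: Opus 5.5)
Your proposal is correct and follows essentially the same route as the paper, which states the corollary as an immediate consequence of Theorem \ref{L1+M}. The inclusion $(L_p+L_\infty)(\mathcal{M},\tau)\subseteq(L_1+L_\infty)(\mathcal{M},\tau)$ for $p\ge 1$, which you verify via H\"older on $(0,1)$, is exactly the step the paper leaves implicit.
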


Below, we consider the orthogonality of (not necessarily positive) operators.

\begin{lemma}\label{lemma lu rx}
Let $\mathcal{M}$ be  a semifinite von Neumann algebra equipped with a semifinite faithful normal trace $\tau$.
Let $x\in S_0\left(\cM,\tau\right)$ and let $u$ be such that $\norm{u}_\infty \le 1$. 
If $\mu\left(xu\right) =\mu\left(x\right)$, then $r\left(x\right)=uu^*r\left(x\right)$.
\end{lemma}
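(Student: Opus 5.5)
Write $|xu|^2=u^*|x|^2u$ and compare it with $|x|^2$ through a unitarily equivalent picture. Set $e:=r(x)$. The hypothesis $\mu(xu)=\mu(x)$ gives $\mu(u^*x^*xu)=\mu(|xu|^2)=\mu(|x|^2)$. Since $\mu(x^*)=\mu(x)$, we also have $\mu(xu)=\mu(u^*x^*)$, so $\mu(|x|^2)=\mu(xuu^*x^*)$. The aim is to show that $\mu\bigl(x(\mathbf 1-uu^*)x^*\bigr)=0$, i.e. $x(\mathbf 1-uu^*)x^*=0$. That gives $(\mathbf 1-uu^*)^{1/2}x^*=0$, hence $(\mathbf 1-uu^*)r(x)=0$, which is $r(x)=uu^*r(x)$ because $\overline{\mathrm{Ran}}(x^*)=r(x)H$.

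\textbf{Key steps.} Put $a:=xuu^*x^*$ and $b:=x(\mathbf 1-uu^*)x^*$. Both are positive, since $0\le uu^*\le\mathbf 1$, and $a+b=xx^*$. So $0\le a\le xx^*$ with $\mu(a)=\mu(xx^*)$, and $xx^*\in S_0(\cM,\tau)$. I then need the strict-monotonicity fact: if $0\le a\le c$, $c\in S_0$ and $\mu(a)=\mu(c)$, then $a=c$. I would prove this with a trace argument like the one behind Lemma \ref{lemma 1}.

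First choose, via Lemma \ref{lemma 1} applied to $c=xx^*$ (truncating into $L_1+L_\infty$ if necessary), a continuous strictly increasing convex $f$ with $f(0)=0$ and $\tau(f(c))<\infty$. If $(L_1+L_\infty)$-membership is not available, instead use $\min(c,s)$ together with spectral truncations $e^c(\varepsilon,\infty)$, which are $\tau$-finite because $c\in S_0$.

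Next, from $a\le c$ we get $\mu(t;a)\le\mu(t;c)$. Equality of $\mu$ gives $\tau(f(a))=\tau(f(c))<\infty$. On the other hand, operator monotonicity of $t\mapsto t^{1/2}$ combined with a strictly increasing $f$ yields $\tau(f(c))\ge\tau(f(a))+\delta$, with $\delta>0$ whenever $c-a\ne0$. The cleanest route is to pick $f(t)=t$ after compressing to a $\tau$-finite spectral projection $q=e^c(\varepsilon,\infty)$. There $\tau(qcq)-\tau(qaq)=\tau(qbq)\ge0$, while Ky Fan gives $\tau(qaq)\le\int_0^{\tau(q)}\mu(a)=\int_0^{\tau(q)}\mu(c)=\tau(qcq)$ and the reverse inequality $\tau(qaq)\ge$ the same quantity follows from $\mu(a)=\mu(c)$ and the choice of $q$ as a maximizing projection for $c$. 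Hence $\tau(qbq)=0$, so $qbq=0$, and therefore $b^{1/2}q=0$. Letting $\varepsilon\downarrow0$ gives $q\uparrow l(x)$ and $b=l(x)bl(x)$, so $b=0$.

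The step I expect to be the main obstacle is the middle one: making the equality $\tau(qaq)=\int_0^{\tau(q)}\mu(c)$ rigorous when $c$ has spectral mass exactly at the cut level. I would handle this by choosing $\varepsilon$ outside the countable set of atoms of the distribution of $c$, so that $\tau(q)=d(\varepsilon;c)$ and $\int_0^{\tau(q)}\mu(c)=\tau(cq)$. I then need the Ky Fan inequality $\tau(qaq)\le\int_0^{\tau(q)}\mu(a)$ (cf. \cite[Thm.~3.9.9]{DPS}) together with the equality of the integrals forced by $\mu(a)=\mu(c)$.
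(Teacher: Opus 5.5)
Your reduction is the same as the paper's. The paper compares $|x|uu^*|x|\le|x|^2$, and your pair $xuu^*x^*\le xx^*$ is conjugate to it by the partial isometry $v$ in $x=v|x|$. Both sides have the same singular value function, and the paper then invokes strict monotonicity of $\mu$ on positive $\tau$-compact operators, \cite[Prop.~2.2]{CKS}. You instead try to prove that strict monotonicity yourself, and that is where there is a genuine gap.

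\textbf{The missing lower bound.} Take $q=e^{c}(\varepsilon,\infty)$, a spectral projection of $c=xx^*$. Both $a\le c$ and the Ky Fan inequality give only the \emph{upper} bound $\tau(qaq)\le\int_0^{\tau(q)}\mu(c)=\tau(qcq)$. That $q$ maximizes $\tau(pcp)$ among projections of trace $\tau(q)$ says nothing about $\tau(qaq)$. So the claimed reverse inequality is unsupported: it would need $q$ to be a maximizing projection for $a$, or a separate uniqueness-of-maximizer argument, and you give neither. The atom issue you flag is a red herring, since $\tau(ce^c(\varepsilon,\infty))=\int_0^{d(\varepsilon;c)}\mu(c)$ holds for every $\varepsilon>0$.

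\textbf{Finiteness of the traces.} Only $x\in S_0$ is assumed, so $\mu(c)=\mu(x)^2$ need not be integrable near $0$. Then $\tau(qcq)=\infty$ and the subtraction $\tau(qbq)=\tau(qcq)-\tau(qaq)$ is meaningless. Your detour through Lemma~\ref{lemma 1} does not repair this. That lemma requires $c\in L_1+L_\infty$, and it produces a convex $f$, which is not operator monotone, so $a\le c$ gives no comparison between $f(a)$ and $f(c)$. The same objection applies to $\min(\cdot,s)$.

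\textbf{A correct version of your step.} Put $g(t)=t/(1+t)$, which is operator monotone and injective. Then $0\le g(a)\le g(c)\le\mathbf 1$, $\mu(g(a))=g(\mu(a))=g(\mu(c))=\mu(g(c))$, and $g(c)\in S_0$. Compress with a spectral projection of $g(a)$, not of $c$: take $p=e^{g(a)}(\varepsilon,\infty)$. Then $\tau(p)=d(\varepsilon;g(c))<\infty$ and
\[
\tau(pg(a)p)=\int_0^{\tau(p)}\mu(g(a))=\int_0^{\tau(p)}\mu(g(c))\ge\tau(pg(c)p)\ge\tau(pg(a)p),
\]
with all quantities finite. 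Hence $(g(c)-g(a))p=0$, and letting $\varepsilon\downarrow0$ gives $(g(c)-g(a))r(a)=0$, i.e.\ $g(a)\perp g(c)-g(a)$. It follows that $d(s;g(c))=d(s;g(a))+d(s;g(c)-g(a))$. Since $d(s;g(c))=d(s;g(a))<\infty$ for $s>0$, this forces $g(c)=g(a)$, hence $c=a$ and $b=0$. Your closing step, from $b=0$ to $(\mathbf 1-uu^*)r(x)=0$, then goes through. Alternatively, cite \cite[Prop.~2.2]{CKS} directly as the paper does.
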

\begin{proof}
Assume that $\left({\bf 1}-uu^*\right) r\left(x\right) \ne 0$. We have
$r\left(x\right) -r\left(x\right) uu^* r\left(x\right) \ne 0$. 
Since $x\in S_0\left(\cM,\tau\right)$, it follows 
that there exists $t>0$ such that
\begin{align*}
    \mu\left(t;|x|uu^* |x| \right)^{1/2}=\mu\left(t;|x|r\left(x\right)uu^*r\left(x\right)|x|\right)^{1/2}\stackrel{\tiny \mbox{\cite[Prop. 2.2]{CKS}}}{<}\mu\left(t;|x|r(x)|x|\right) ^{1/2}= \mu\left(t;x\right).
\end{align*}
However, 
\[
\mu\left(|x|uu^* |x| \right)^{1/2}=\mu\left(u^* |x|^2u\right)^{1/2}=
\mu\left(|xu |\right)=\mu\left(x\right), 
\]
which is a contradiction. 
Hence, $\left({\bf 1}-uu^*\right) r\left(x\right) =0$, that is, $r\left(x\right)=uu^*r\left(x\right)$.

\end{proof}

The following theorem is the second  main result of this chapter. 
\begin{theorem}  \label{prop disjoint}
Let $\mathcal{M}$ be  a semifinite von Neumann algebra equipped with a semifinite faithful normal trace $\tau$.
    Let $  x,  y \in L_1(\cM,\tau)$.
    Then,   $\mu\left(x+ y \right) =  \mu\left( x \oplus y \right)$ if and only if  $x\perp y$. 
\end{theorem}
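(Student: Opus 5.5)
We reduce the statement for general $x,y\in L_1(\cM,\tau)$ to the positive case, Theorem \ref{L1+M}, via a $2\times2$ matrix trick. The implication ``$\Leftarrow$'' is routine. If $x\perp y$, then $l(x)l(y)=0=r(x)r(y)$. Hence $|x+y|^2=(x+y)^*(x+y)=|x|^2+|y|^2$, because the cross terms $x^*y=x^*l(x)l(y)y$ and $y^*x$ vanish. Moreover $|x|^2|y|^2=0$, since $r(x)r(y)=0$. So $|x+y|=|x|+|y|$ with $|x|\perp|y|$, and therefore $\mu(x+y)=\mu(|x|+|y|)=\mu(|x|\oplus|y|)=\mu(x\oplus y)$.

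For ``$\Rightarrow$'', assume $\mu(x+y)=\mu(x\oplus y)$ with $x,y\in L_1$. The first step uses the triangle-type inequality in $L_1$. We have $\|x+y\|_1\le\|x\|_1+\|y\|_1$, and the hypothesis gives $\|x+y\|_1=\int_0^\infty\mu(t;x\oplus y)\,dt=\|x\|_1+\|y\|_1$. Let $x+y=w|x+y|$ be the polar decomposition. Then
\[
\tau(|x+y|)=\tau(w^*x)+\tau(w^*y),\qquad |\tau(w^*x)|\le\|x\|_1,\qquad |\tau(w^*y)|\le\|y\|_1.
\]
Equality in the sum therefore forces $\tau(w^*x)=\|x\|_1$ and $\tau(w^*y)=\|y\|_1$. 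By the standard equality case in Hölder's inequality for $L_1$ with a contraction, this gives $w^*x=|x|$ and $w^*y=|y|$; this can be argued via $\tau(|x|)=\tau(w^*u|x|)$, writing $x=u|x|$, together with the faithfulness of $\tau$. Consequently $|x+y|=w^*(x+y)=|x|+|y|$, so $\mu(|x|+|y|)=\mu(x\oplus y)=\mu(|x|\oplus|y|)$. Since $|x|,|y|\in L_1\subset (L_1+L_\infty)\cap S_0$, Theorem \ref{L1+M} yields $|x||y|=0$, that is, $r(x)r(y)=0$.

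The symmetric argument applied to the adjoints gives the left supports. We have $\mu(x^*+y^*)=\mu(x+y)=\mu(x\oplus y)=\mu(x^*\oplus y^*)$, so the same reasoning yields $|x^*||y^*|=0$, that is, $l(x)l(y)=0$. Together these give $x\perp y$. As an alternative route for the second half, Lemma \ref{lemma lu rx} can be used: it controls the partial isometries and shows that $w$ restricts to the polar parts of $x$ and $y$, so $l(x)=w r(x)w^*$ and $l(y)=wr(y)w^*$ are orthogonal once $r(x)\perp r(y)$.

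The main obstacle is the equality case in $|\tau(w^*x)|\le\|x\|_1$, namely deducing $w^*x=|x|$ from $\tau(w^*x)=\tau(|x|)$. I would handle it as follows. Write $w^*x=w^*u|x|$ and set $v=w^*u$, a contraction. Then
\[
\tau\big(|x|^{1/2}(\mathbf 1-\mathrm{Re}\,v)|x|^{1/2}\big)=0,
\]
and the integrand is positive because $\mathbf 1-\mathrm{Re}\,v\ge0$. Faithfulness of $\tau$ gives $(\mathbf 1-\mathrm{Re}\,v)^{1/2}|x|^{1/2}=0$. For a contraction $v$, the identity $\mathrm{Re}\,v\,\xi=\xi$ on a vector forces $v\xi=\xi$. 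Hence $v|x|=|x|$, which is exactly $w^*x=|x|$.
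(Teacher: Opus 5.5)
Your proof is correct and has the same skeleton as the paper's. Equality in $\|x+y\|_1\le\|x\|_1+\|y\|_1$, tested against the partial isometry of the polar decomposition of $x+y$, gives $\tau(w^*x)=\tau(|x|)$ and $\tau(w^*y)=\tau(|y|)$. From this, $|x+y|=|x|+|y|$, and the positive case then yields $|x||y|=0$. You differ from the paper in the two delicate steps.

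\textbf{From $\tau(w^*x)=\tau(|x|)$ to $w^*x=|x|$.} The paper states without argument (``clearly'') that $v^*x$ is self-adjoint. It then gets positivity from $\tau((v^*x)_+)-\tau((v^*x)_-)\le\tau(|v^*x|)\le\tau(|x|)$ and deduces $\mu(v^*x)=\mu(x)$. Lemma~\ref{lemma lu rx} gives $l(x)\le vv^*$, and uniqueness of the polar decomposition gives $v^*x=|x|$. You argue instead via $\tau\bigl(|x|^{1/2}(\mathbf{1}-\mathrm{Re}\,w^*u)|x|^{1/2}\bigr)=0$, faithfulness of $\tau$, and the fact that $\mathrm{Re}\,v\,\xi=\xi$ forces $v\xi=\xi$ for a contraction $v$. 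This reaches $w^*x=|x|$ in one step. It is more elementary, bypasses Lemma~\ref{lemma lu rx} and the uniqueness of the polar decomposition, and in fact justifies the self-adjointness the paper takes for granted.

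\textbf{Left supports.} The paper's longer route buys the information that the polar parts of $x$ and $y$ are both restrictions of the same $v$. Hence $x^*y=|x|\,l(|x|)v^*v\,l(|y|)|y|=|x||y|=0$, which settles the left supports at once. You rerun the whole argument for $x^*,y^*$ instead. This is equally valid, since $\mu(x^*+y^*)=\mu(x+y)$ and $\mu(x^*\oplus y^*)=\mu(x\oplus y)$.

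\textbf{Positive case.} The paper invokes Proposition~\ref{positive disjoint} (the $L_{\log}$ statement) where you invoke Theorem~\ref{L1+M}. Both apply, since $|x|,|y|\in L_1(\mathcal{M},\tau)\subset (L_1+L_\infty)(\mathcal{M},\tau)\cap S_0(\mathcal{M},\tau)$. You also write out the sufficiency direction, which the paper treats as trivial.
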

\begin{proof} 
It suffices to prove the necessity. 



By
 $ \mu\left( x +  y \right) =\mu\left(x  \oplus   y \right) $, we have  
\[\tau\left( | x +   y|\right)  = \tau\left(|x|\right) +\tau\left(|y|\right) .  \]
Let $  x +    y= v| x +   y|$ be the polar decomposition.  
Then, we have 
\begin{align*}
\tau\left(|x|\right)+\tau\left(|y|\right) =\tau\left( | x +  y| \right)&  =  \tau\left(v^* x \right)+ \tau\left(v^*  y  \right)  \\
&\le 
| \tau\left(v^* x \right)|+ |\tau\left(v^*  y  \right)| \le  \tau\left(|x|\right) +\tau\left(|y|\right) . 
\end{align*}
This implies that $ \tau\left(v^* x \right)= \tau\left(|x|\right)$ and $\tau\left(v^*  y  \right)  =\tau\left(|y|\right) $. 
Clearly, $v^*x $ is  self-adjoint. 
Moreover, \[\tau\left(v^* x  \right) =\tau\left(\left(v^*x \right)_+ \right)  - \tau\left(\left(v^*x\right)_-\right)
\le \tau\left(\left(v^*x\right)_+ \right)  + \tau\left(\left(v^*x\right)_-\right)
=\tau\left(|v^* x| \right) \le \tau\left(|x|\right).\]
This implies that 
$v^*x$ is positive. 
Hence, 
\[\int_0^\infty \mu(t;v^* x) dt =\tau(v^*x )=\tau(|x|) =\int_0^\infty \mu(t;  x) dt  ,\]
 which together with $\mu(v^*x)\le \mu(x)$ yields that
$\mu(v^*x)=\mu(x)$. 
By Lemma \ref{lemma lu rx}, we have $r\left(v^*\right) \ge l\left(x\right)$ and $r\left(v^*\right) \ge l\left(y\right)$. 
Moreover, let $z_x:=v^*x$.
Then, we have \[ l\left(z_x\right)  \le l\left(v^*\right) =r\left(v\right)  . \]
Hence, $vl\left(z_x\right)$ is a partial isometry.
By $x =v l  \left(z_x\right) z_x$ and the uniqueness of polar decomposition \cite[Theorem 1.7.3]{DPS}, we obtain that $z_x=|x|$. 

The same argument yields that $y= v l \left(|y|\right) |y|$ is the polar decomposition of $y$.

Note that
\[ \mu\left(|x|\oplus |y|\right)=  \mu\left(x\oplus y\right) =  \mu\left(x+y \right)  =\mu\left(v^*   \left(x+y\right)\right) = \mu\left(|x|+|y|\right). \]
By Proposition \ref{positive disjoint}, $|x|\perp |y|$.

Observe that \[ x^* y = |x| l\left(|x|\right) v^*  v l \left(|y|\right) |y| = |x| l\left(|x|\right)  l\left(|y|\right) |y|=|x||y| =0  .\] 
We obtain that $x\perp y$. 
The proof is complete. 
\end{proof}

Theorem \ref{prop disjoint} is however not enough to  prove the disjointness-preserving property of a surjective isometry between weak $L_p$-spaces.
Indeed, for atomless von Neumann algebras, 
 weak $L_p$-spaces are  not  subsets   of $L_1$ for any $0< p\le1$.
On the other hand, even though $\cC_{p,\infty }\subset \cC_1$ when $0<p<1$, 
the sum of two mutually disjoint extreme points $x,y$ in the unit ball is no longer an extreme point of the ball
of any radius in $\cC_{p,\infty}$ (neither is $T\left(x+y\right)$). 
This shows that one does not have enough information on the singular value sequences of $x+y$ and $T\left(x+y\right)$.

The following result is a special case of \cite[Theorem 1.2]{FHH}. For the sake of completeness, we include a short proof below. 

\begin{prop} \label{0<p<2}
    Assume that $0<p<\infty,~ p\ne2$. 
    Let $\mathcal{M}$ be a semifinite von Neumann algebra equipped with a semifinite faithful normal trace $\tau$.
    Let $x,y\in L_{p,\infty}(\mathcal{M},\tau)$.
    Then, $\mu(x+\lambda y)=\mu(x\oplus y),~\lambda=-1,1$ if and only if
    $x\perp y$.
\end{prop}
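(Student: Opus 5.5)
Sufficiency is immediate. If $x\perp y$, then $l(x)l(y)=0=r(x)r(y)$, so $x$ and $\lambda y$ live in orthogonal corners. Hence $|x+\lambda y|$ is unitarily equivalent to $|x|\oplus|y|$ inside the direct sum, and $\mu(x+\lambda y)=\mu(x\oplus y)$ for $\lambda=\pm1$.

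For necessity, the plan is to reduce to a trace identity for a power function where equality in a Clarkson/McCarthy-type inequality forces orthogonality. Fix a $\tau$-finite spectral cutoff: for $s>0$, consider the function $F_r(t)=t^r$ with $r$ chosen close to $p$ so that a suitable truncated integral is finite. More precisely, from $\mu(x\pm y)=\mu(x\oplus y)$ we get, for every bounded interval $[a,b]$ and every continuous $f$ vanishing near $0$ and near $\infty$, the identity $\int f(\mu(t;x\pm y))dt=\int f(\mu(t;x))dt+\int f(\mu(t;y))dt$. Since $x,y\in L_{p,\infty}$, we have $\mu(t;x),\mu(t;y)\to0$, so these are $\tau$-measurable with all spectral projections $e^{|z|}(s,\infty)$ $\tau$-finite for $s>0$. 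The cleanest route is to pick $q\neq2$ with $q>p$. Then, for the truncations $x_s:=x\,e^{|x|}(s,\infty)$, etc., the singular value functions are in $L_q$, but truncation does not commute with sums. So instead I would use the full functions $t\mapsto t^q$ near $0$ combined with the fact that $\mu(t;x)\le Ct^{-1/p}$. This gives $\int_0^\infty \min(\mu,1)^q<\infty$ fails near infinity only if $q\le p$, while $\int$ of $\mu^q$ near $t=0$ converges iff $q<p$. Hence no single power works globally, and I would take instead $f(t)=\min(t^{q_1},t^{q_2})$-type combinations, or simply use the function $f_q(t)=t^q$ applied to $|x\pm y|$ with $p<q$ after checking $\mu^q\chi_{(1,\infty)}$ is integrable and $\mu\chi_{(0,1)}$ is bounded in trace... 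This is the delicate point.

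The key step is a McCarthy-type identity. For $q\ne2$, and for $x,y$ with $|x+y|^q,|x-y|^q,|x|^q,|y|^q$ $\tau$-integrable, the equalities $\tau(|x+y|^q)=\tau(|x-y|^q)=\tau(|x|^q)+\tau(|y|^q)$ force $x\perp y$. This is the noncommutative equality case of Clarkson's inequality $\tau(|x+y|^q)+\tau(|x-y|^q)\ \ge\ (\text{resp.}\le)\ 2(\tau|x|^q+\tau|y|^q)$, strict unless $x^*y=0=xy^*$ (Raynaud--Xu, cf. \cite[Proposition A.2]{RX}, already used above for Proposition~\ref{positive disjoint}). To obtain integrability, I will not apply this to $x,y$ directly but to the generalized function $\phi(t)=t^q\chi$-smoothed. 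Concretely, I take $q$ with $q>p$ and $q\ne2$, and I replace $t^q$ by $\phi_\epsilon(t)=(t-\epsilon)_+^q$, which kills the non-integrable tail near $\mu\to0$ (finite measure where $\mu>\epsilon$) while $t^q$ with $q>p$ keeps integrability near $t\to0$ since $\mu(t)^q\le Ct^{-q/p}$... which diverges. So I instead choose $q<p$ (with $q\ne2$, always possible), making $\int_0 \mu^q<\infty$, and use the cutoff $(t-\epsilon)_+^q$ for the tail. The remaining task is the equality case of the Clarkson inequality for $\phi_\epsilon$ in place of $t^q$. I expect this to be the main obstacle: one needs a version of the Raynaud--Xu equality argument for these truncated powers. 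I would then let $\epsilon\downarrow0$, using that disjointness of spectral pieces on $\{|\cdot|>\epsilon\}$ exhausts the supports, to conclude $l(x)l(y)=0=r(x)r(y)$.
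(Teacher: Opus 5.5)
Your sufficiency argument is fine. Your overall strategy for necessity is also the right one: turn $\mu(x\pm y)=\mu(x\oplus y)$ into the identity $\tau(\phi(|x+y|))+\tau(\phi(|x-y|))=2\tau(\phi(|x|))+2\tau(\phi(|y|))$, then use the equality case of a Clarkson/Lamperti inequality. The gap is the step you flag as ``the main obstacle''. It is not merely unproven: the truncated power $\phi_\epsilon(t)=(t-\epsilon)_+^q$ is the wrong kind of function. The equality characterization needs $\psi(s)=\phi(\sqrt s)$ to be \emph{strictly} concave or \emph{strictly} convex on all of $[0,\infty)$.

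If $p\le 2$ you are forced to take $q<2$. Then $\psi_\epsilon(s)=(\sqrt s-\epsilon)_+^q$ is neither concave nor convex, so no Clarkson-type inequality holds for $\phi_\epsilon$ in either direction, and your identity is not an extremal case of anything. Scalars already show this. With $a=b=3\epsilon/4$ you get $\phi_\epsilon(|a+b|)+\phi_\epsilon(|a-b|)=(\epsilon/2)^q>0=2\phi_\epsilon(|a|)+2\phi_\epsilon(|b|)$. With $a=b=R$ and $R$ large you get the reverse, $(2R-\epsilon)^q<4(R-\epsilon)^q$, since $2^q<4$.

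If $p>2$ and $2<q<p$, then $\psi_\epsilon$ is convex but vanishes on $[0,\epsilon^2]$. So even with an inequality in hand, equality does not force orthogonality: take scalars $a=b\neq 0$ with $|a|\le\epsilon/2$. Your step $\epsilon\downarrow0$ ``exhausting the supports'' would need a noncommutative equality analysis for non-strictly convex functions, which you do not supply.

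Note also that the Raynaud--Xu equality result for powers cannot be applied directly. An element $x\in L_{p,\infty}$ need not lie in any $L_q$ (take $\mu(t;x)=t^{-1/p}$); this is exactly the integrability problem you detected.

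The missing idea is to drop powers and truncations and use a single function $\phi$ with two properties: its growth fits $L_{p,\infty}$ at both ends, and $\phi(\sqrt{\cdot})$ is strictly concave or strictly convex on all of $[0,\infty)$. You then apply the noncommutative Lamperti theorem \cite[Theorem 1.2]{FHH}, which gives the equality case for such general $\phi$, not only for powers.

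For $0<p<2$ the paper takes $\phi(t)=\frac{t^2}{1+t^2}$. Then $\psi(s)=\frac{s}{1+s}$ is strictly concave, and $\tau(\phi(|x|))\le\int_0^\infty\frac{c^2}{c^2+t^{2/p}}\,dt<\infty$, using $\mu(t;x)\le ct^{-1/p}$ and $2/p>1$.

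For $2<p<\infty$ the paper takes $\phi(t)=\bigl(\frac{t^4}{1+t^2}\bigr)^{(p+2)/4}$. This behaves like $t^{p+2}$ near $0$ and like $t^{(p+2)/2}$ near $\infty$, with $(p+2)/2<p$, and its $\psi$ is strictly convex.

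In both cases the hypothesis makes the trace identity finite, and Lamperti's theorem gives $x^*y=xy^*=0$, i.e.\ $x\perp y$. Your discarded suggestion $\phi(t)=\min(t^{q_1},t^{q_2})$ with $q_1<p<q_2<2$ would also work when $p<2$, since a minimum of strictly concave functions is strictly concave. It fails when $p>2$: the analogous minimum has a concave kink at $t=1$, so you need a smooth interpolation such as the paper's.
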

\begin{proof}
    It suffices to prove the necessity.
    The proof will be decomposed in two parts.
    \begin{enumerate} [label=(\roman*)]
    \item\label{(i} Assume that $0<p<2$.
    Let $\phi(t)=\frac{t^2}{1+t^2},~t\ge 0$ and $\psi(t)=\phi(\sqrt t)=\frac{t}{1+t},~t\ge 0$.
    Observe that 
    \[\begin{aligned}
    \tau\left(\phi(|x|)\right)&=\int_0^\infty \phi\left(\mu\left(t;x\right)\right)dt\\
&\le\int_0^\infty \phi\left(ct^{-1/p}\right)dt\\
&=\int_0^\infty \frac{\left(ct^{-1/p}\right)^2}{1+\left(ct^{-1/p}\right)^2}dt\\
&=\int_0^\infty \frac{c^2}{c^2+t^{2/p}}dt<\infty
\end{aligned}\]
for some constant $c>0$ independent of $x$.
    Similarly, $\tau\left(\phi(|y|)\right)<\infty$.
    Since $\mu\left(x+\lambda y\right)=\mu\left(x\oplus y\right),~\lambda=-1,1$, it follows that
    \[\tau\left(\phi\left(|x+y|\right)\right)+\tau\left(\phi\left(|x-y|\right)\right)=2\tau\left(\phi\left(|x|\right)\right)+2\tau\left(\phi\left(|y|\right)\right)<\infty.\]
    Note that $\psi$ is strictly concave.
    Applying the noncommutative Lamperti's theorem (see \cite[Theorem 1.2]{FHH}) to $\psi$, we obtain that $x^*y=xy^*=0$.
    \item Assume that $2<p<\infty$.
    Let $\phi(t)=\left(\frac{t^4}{1+t^2}\right)^{\frac{p+2}{4}}, ~t\ge0$.
    The remaining statements follow from a similar argument as used in the proof of \ref{(i}. 
    \end{enumerate}
    
    The proof is complete.
\end{proof}

Although the above proposition applies to weak $L_p$-space  $L_{p,\infty}\left(\mathcal{M},\tau\right), ~0<p\le 1$, the unit ball of $L_{p,\infty}\left(\mathcal{M},\tau\right)$ has no extreme point when $\mathcal{M}$ is non-atomic (see Corollary \ref{0<p le1}).


\begin{remark}
    The condition in Theorem  \ref{L1+M} that $0\le x,y\in \left(L_{1}+L_\infty\right)\left(\mathcal{M},\tau\right)\cap S_0(\mathcal{M},\tau)$ cannot be weakened to assuming $0\le x,y\in S_0\left(\mathcal{M},\tau\right)$.
    Indeed, let $\mathcal{M}=L_\infty\left(0,\infty\right)$ be equipped with the usual integral trace $\tau$. 
    Assume that $x=y=\frac1t,~t\in \left(0,\infty\right)$. We have $x,y\in S_0(\cM,\tau)$.
    Note that $\mu\left(x+y\right)=\mu\left(x\oplus y\right)=\frac2t$.
    However, $x\not\perp y$.
\end{remark}

 The assertion of Theorem \ref{L1+M}   is not valid when $x,y$ are not necessarily positive, even in the
commutative case. That is,   equality $\mu(x+y)=\mu(x\oplus y)$ may  not imply orthogonality of $x$  and $y$. 
We would like to thank Zhizheng Yu for providing the following example.

\begin{remark}
   Let 
\[x=\left(1,-\frac{1}{2},\frac{1}{2},\frac{1}{2},\frac{1}{4},\frac{1}{4},\frac{1}{4},\frac{1}{4},\frac{1}{8},\frac{1}{8} ,\frac{1}{8},\frac{1}{8},\frac{1}{8},\frac{1}{8},\frac{1}{8},\frac{1}{8},\frac{1}{16},\dots\right),\]
\[y=\left(-\frac{1}{2},1,\frac{1}{2},\frac{1}{2},\frac{1}{4},\frac{1}{4},\frac{1}{4},\frac{1}{4},\frac{1}{8},\frac{1}{8},\frac{1}{8},\frac{1}{8},\frac{1}{8},\frac{1}{8},\frac{1}{8},\frac{1}{8},\frac{1}{16},\dots\right).\]
We have 
\[x+y = \left(\frac{1}{2},\frac{1}{2},1,1, \frac{1}{2},\frac{1}{2},\frac{1}{2},\frac{1}{2},\frac{1}{4},\frac{1}{4},\frac{1}{4},\frac{1}{4},\frac{1}{4},\frac{1}{4},\frac{1}{4},\frac{1}{4},\frac{1}{8},\dots\right),\]
whose  decreasing rearrangement is the same as that of $|x|\oplus |y|$.

\end{remark}

\backmatter

\end{document}